\definecolor{mat}{HTML}{FFD6AD}
\definecolor{eric}{HTML}{ffadad}
\definecolor{andre}{HTML}{a9f5ae}
\definecolor{georg}{HTML}{faffad}
\newtheorem{thm}{Theorem}[subsection]
\newtheorem{theorem}[thm]{Theorem}
\newtheorem{lemma}[thm]{Lemma}
\newtheorem{proposition}[thm]{Proposition}
\newtheorem{corollary}[thm]{Corollary}
\theoremstyle{definition}
\newtheorem{definition}[thm]{Definition}
\newtheorem{examples}[thm]{Examples}
\newtheorem{remark}[thm]{Remark}
\newlist{examplenum}{enumerate}{1}
\setlist[examplenum]{label=\alph*), ref=\theproposition~(\alph*)}
\newlist{exmpenum}{enumerate}{1}
\setlist[exmpenum]{label=\alph*), ref=\theproposition~(\alph*)}
\newenvironment{thm-intro}[1]
  {\thmintro\itshape}
  {\endthmintro}
\newenvironment{thm-introref}[2]
  {\thmintro(#2)\itshape}
  {\endthmintro}
\newenvironment{defn-intro}[1]
  {\defnintro}
  {\enddefnintro}
\newcommand{\tto}{{\begin{tikzcd}[ampersand replacement=\&]{}\ar[r]\&{}\end{tikzcd}}}
\newcommand{\mto}{{\begin{tikzcd}[ampersand replacement=\&]{}\ar[r,mapsto]\&{}\end{tikzcd}}}
\newcommand{\stto}{{\begin{tikzcd}[ampersand replacement=\&, sep=small]{}\ar[r]\&{}\end{tikzcd}}}
\newcommand{\nstto}[1]{{\begin{tikzcd}[ampersand replacement=\&, sep=small]{}\ar[r,"{#1}"]\&{}\end{tikzcd}}}
\newcommand\pbmark{\ar[dr, phantom, "\ulcorner" very near start, shift right=1ex]}
\newcommand\pbmarkk{\ar[drr, phantom, "\ulcorner" very near start, shift right=1ex]}
\newcommand\fperp{\upModels}
\newcommand\lorth[1]{{}^\perp{#1}}
\newcommand\rorth[1]{{#1}^\perp}
\newcommand\Map[2]{\mathsf{Map}\left(#1,#2\right)}
\newcommand\Loc[2]{\mathsf{Loc}\left(#1,#2\right)}
\newcommand\PSh[1]{\mathsf{PSh}\left(#1\right)}
\newcommand\Sh[2]{\mathsf{Sh}\left(#1,#2\right)}
\newcommand\HSh[2]{\mathsf{HSh}\left(#1,#2\right)}
\newcommand\cA{\mathscr{A}}
\newcommand\cB{\mathscr{B}}
\newcommand\cC{\mathscr{C}}
\newcommand\cD{\mathscr{D}}
\newcommand\cE{\mathscr{E}}
\newcommand\cF{\mathscr{F}}
\newcommand\cG{\mathscr{G}}
\newcommand\cJ{\mathscr{J}}
\newcommand\cK{\mathscr{K}}
\newcommand\cL{\mathscr{L}}
\newcommand\cM{\mathscr{M}}
\newcommand\cO{\mathscr{O}}
\newcommand\cR{\mathscr{R}}
\newcommand\cS{\mathscr{S}}
\newcommand\cT{\mathscr{T}}
\newcommand\cV{\mathscr{V}}
\renewcommand\powerset[1]{\mathcal{P}\left(#1\right)}
\newcommand\calP{\mathcal{P}}
\newcommand\xto\xrightarrow
\newcommand\ot\leftarrow
\newcommand\Set{\mathsf{Set}}
\newcommand\Fin{\mathsf{Fin}}
\newcommand\CAT{\mathsf{CAT}}
\newcommand\op{^\mathrm{op}}
\newcommand\sfop{^\mathsf{op}}
\newcommand\slice[1]{_{/#1}}
\newcommand\ho[1]{\mathsf{ho}(#1)}
\newcommand\fun[2]{\left[#1,#2\right]}
\newcommand\LOC[2]{#1[#2^{-1}]}
\newcommand\LOCcc[2]{#1[#2^{-1}]_{\mathrm{cc}}}
\newcommand\Topos{\mathsf{Topos_{geom}}}
\newcommand\Toposop{\mathsf{Topos_{geom}^{op}}}
\newcommand\Toposalg{\mathsf{Topos_{alg}}}
\newcommand\Toposalghc{\mathsf{Topos_{alg}^{hc}}}
\newcommand\lex{^\mathrm{lex}}
\newcommand\rex{^\mathrm{rex}}
\newcommand\cclex{_\mathrm{cclex}}
\newcommand\alg{_\mathsf{alg}}
\newcommand\geom{_\mathsf{geom}}
\newcommand\oo{$\infty$\=/}
\newcommand\ooo{$(\infty,1)$\=/}
\newcommand\arr{^\to}
\newcommand\ac{^\mathsf{a}}
\renewcommand\cong{^\mathsf{c}}
\newcommand\gtop{^\mathsf{G}}
\newcommand\cov{^\mathsf{cov}}
\newcommand\hcov{^\mathsf{hcov}}
\newcommand\diag{^\Delta}
\newcommand\bc{^\mathsf{bc}}
\newcommand\topo{^\mathsf{top}}
\newcommand\cotop{^\mathsf{cotop}}
\newcommand\loc{^\mathsf{loc}}
\newcommand\cons{^\mathsf{cons}}
\newcommand\hyper{^\mathsf{hc}}
\newcommand\Iso{\mathsf{Iso}}
\newcommand\All{\mathsf{All}}
\newcommand\Mono{\mathsf{Mono}}
\newcommand\Surj{\mathsf{Surj}}
\newcommand\Trunc[1]{\mathsf{Trunc}_{#1}}
\newcommand\Conn[1]{\mathsf{Conn}_{#1}}
\renewcommand\Im{\mathrm{Im}}
\newcommand\im[1]{\mathsf{im}\left(#1\right)}
\newcommand\coim[1]{\mathsf{coim}\left(#1\right)}
\newcommand\surj[1]{\mathsf{coim}\left(#1\right)}
\newcommand\truncated[1]{^{\leq #1}}
\newcommand\connected[1]{_{> #1}}
\newcommand\Class{\mathsf{ClassMaps}}
\newcommand\ClassMono{\mathsf{ClassMono}}
\newcommand\LClass{\mathsf{LCMaps}}
\newcommand\LClassMono{\mathsf{LCMono}}
\newcommand\Acyclic{\mathsf{Acy}}
\newcommand\MAcyclic{\mathsf{MAcy}}
\newcommand\EAcyclic{\mathsf{EAcy}}
\newcommand\Cong{\mathsf{Cong}}
\newcommand\sg{_\mathsf{sg}}
\newcommand\TCong{\mathsf{TCong}}
\newcommand\HCong{\mathsf{HCong}}
\newcommand\GTop{\mathsf{GTop}}
\newcommand\LTTop{\mathsf{LTTop}}
\newcommand\CTop{\mathsf{CTop}}
\newcommand\TLoclex{\mathsf{TLoc}}
\newcommand\Cover[1]{#1^\mathsf{cov}}
\newcommand\HCover[1]{#1^\mathsf{hcov}}
\newcommand\dec[1]{\mathsf{D}\left(#1\right)}
\newcommand\decn[2]{\mathsf{D}^{#1}\left(#2\right)}
\newcommand\decinfty[1]{\mathsf{D}^\infty\left(#1\right)}
\renewcommand\S[1]{\mathscr S\left[#1\right]}
\newcommand\Dense[1]{\mathsf{Dns}(#1)}
\newcommand\Close[1]{\mathsf{Cls}(#1)}
\newcommand\forcing{:}
\newcommand\Forcing[2]{\left\lsem #1 \forcing #2\right\rsem}
\title{
Left-exact Localizations of \texorpdfstring{$\infty$-Topoi}{Infinity-Topoi} II:\\
Grothendieck Topologies
}
\author{
Mathieu Anel%
\footnote{Department of Philosophy, Carnegie Mellon University, mathieu.anel@protonmail.com} 
\and Georg Biedermann%
\footnote{Departamento de Matem\'{a}ticas y Estad\'{i}stica, Universidad del Norte, gbm@posteo.de}
\and Eric Finster%
\footnote{University of Birmingham, e.l.finster@bham.ac.uk}
\and Andr\'{e} Joyal%
\footnote{CIRGET, UQ\`AM. joyal.andre@uqam.ca} 
}
\date{}
\begin{document}

\maketitle

\begin{abstract}
We revisit the work of To\"en--Vezzosi and Lurie on Grothendieck topologies, using the new tools of acyclic classes and congruences.
We introduce a notion of extended Grothendieck topology on any \oo topos, 
and prove that the poset of extended Grothendieck topologies is isomorphic to that of topological localizations, hypercomplete localizations, Lawvere--Tierney topologies, and covering topologies (a variation on the notion of pretopology).
It follows that these posets are small and have the structure of a frame.
We revisit also the topological--cotopological factorization by introducing the notion of a cotopological morphism.
And we revisit the notions of hypercompletion, hyperdescent, hypercoverings and hypersheaves associated to an extended Grothendieck topology.

We also introduce the notion of forcing, which is a tool to compute with localizations of \oo topoi.
We use this in particular to show that the topological part of a left-exact localization of an \oo topos is universally forcing the generators of this localization to be \oo connected instead of inverting them.
\end{abstract}

\setcounter{tocdepth}{2}
\tableofcontents

\section{Introduction}

This paper continues the study of left-exact localizations of \oo topoi started in \cite{ABFJ:HS}.
The focus there was to provide tools to work with general localizations.
The focus here is to study the special case of localizations controlled by Grothendieck topologies by revisiting the work of To\"en--Vezzosi on hypercomplete localizations \cite{TV:hag1} and the work of Lurie on topological localizations \cite{Lurie:HTT}.

\medskip

Any \oo topos can be presented as a left-exact localization of an \oo category $\PSh C = \fun { C\op} \cS$ of presheaves of spaces over a small \oo category $ C$.
To\"en and Vezzosi introduce the notion of a Grothendieck topology on a small \oo category $ C$ as an ordinary Grothendieck topology on the homotopy 1-category $\ho C$.
They prove in \cite[Theorem 3.8.3]{TV:hag1}, that Grothendieck topologies on $ C$ are in bijective correspondence with the left-exact localizations of $\PSh C$ which are {\it t-complete}, that is {\it hypercomplete} in the sense of \cite[Section~6.2.5]{Lurie:HTT}
(a hypercomplete \oo topos is one in which the Whitehead theorem holds: every map inducing an isomorphism on homotopy sheaves is invertible).
On the other hand, Lurie proves in \cite[Proposition 6.2.2.17]{Lurie:HTT} that Grothendieck topologies on $ C$ are in bijective correspondence with {\it topological} localizations of $\PSh C$ which are those left-exact localizations that can be generated by inverting monomorphisms (rather than arbitrary maps).

A topological localization may not be hypercomplete and vice-versa.
But together, To\"en--Vezzosi's and Lurie's correspondences provide a non-trivial bijection between  hypercomplete localizations and topological localizations.

\begin{equation}
\label{eq:mainbij}
\tag{TVL}
\begin{tikzcd}
\begin{array}{c}\text{Topological}\\ \text{localizations}\end{array}
\ar[rrdd,equal,"\begin{array}{c}\text{Lurie}\\ \text{\cite[Proposition 6.2.2.17]{Lurie:HTT}}\end{array}"']
\ar[rr,hook,"j"]&&
\begin{array}{c}\text{General}\\\text{left-exact}\\ \text{localizations}\end{array}\ar[from=rr,hook',"i"'] \ar[dd,dashed,"t"]
&&
\begin{array}{c}\text{Hypercomplete}\\ \text{localizations}\end{array}
\\
\\
&&\begin{array}{c}\text{Grothendieck}\\ \text{topologies}\end{array}
\ar[rruu,equal,"\begin{array}{c}\text{To\"en--Vezzosi}\\ \text{\cite[Theorem 3.8.3]{TV:hag1}}\end{array}"']
\end{tikzcd}
\end{equation}
\noindent The purpose of this paper is to investigate further these correspondences.
We shall do so in the context of left-exact localizations of an {\it arbitrary} \oo topos $\cE$ and not necessarily a presheaf \oo topos.
We will introduce an extended notion of Grothendieck topology on an arbitrary \oo topos (and not only a presheaf topos, see \cref{sec:Grothendieck}) and define the map $t$ of the diagram, extracting an extended Grothendieck topology from any left-exact localization.
One of our main results 
will be to prove that the inclusions $i$ and $j$ are (up to the bijections of To\"en--Vezzosi and Lurie) respectively right and left adjoint of the map $t$ (\cref{thm:tripleadj}).
The composite $it$ takes a left-exact localization to its hypercompletion,
and the composite $jt$ takes a left-exact localization to its topological part in the sense of \cite[Proposition 6.5.2.19]{Lurie:HTT}.

\begin{center}
*    
\end{center}

We now describe our results in detail.
Our starting point is to define extended Grothendieck topologies on an arbitrary \oo topos $\cE$.
Let us say here that throughout the paper, we are working in the category of
\oo topoi and {\it algebraic morphisms} which are cocontinuous left-exact functors (see \cref{sec:topos}).

\begin{defn-intro}{\cref{definitionGrothtop}}
A class of monomorphisms $\cG$ in an \oo topos $\cE$ is an {\it extended Grothendieck topology} if     
\begin{enumerate}[label=\roman*)]
\item $\cG$ contains the isomorphisms and is closed under composition; 
\item $\cG$ is closed under base change and is a local class (\cref{locclass}); 
\item if the composite of two monomorphisms $u:A\to B$ and $v:B\to C$ belongs to $\cG$, then $v\in \cG$. 
\end{enumerate}
\end{defn-intro}

If $\phi:\cE\to \cF$ is an algebraic morphism of \oo topoi, then the class $\cG$ of monomorphisms in $\cE$ inverted by $\phi$ is an extended Grothendieck topology in $\cE$.
We prove in \cref{Gtop=Gtop} that an extended Grothendieck topology on the presheaf \oo topos $\PSh C$ is equivalent to a Grothendieck topology on the \oo category $ C$ in the sense of \cite[Definition 6.2.2.1]{Lurie:HTT}.
Moreover, the definition of an extended Grothendieck topology make sense in a 1-topos and not only in \oo topos.
Our first important result is the following characterization of extended Grothendieck topologies.

\begin{thm-intro}{\cref{thm:equivalences}}
There are canonical isomorphisms between
\begin{enumerate}
\item the poset $\GTop(\cE)$ of extended Grothendieck topologies on an \oo topos $\cE$,
\item the poset of Lawvere--Tierney topologies on the Lawvere object $\Omega$ of $\cE$ (\cref{LawvereTierney}),
\item the poset of extended Grothendieck topologies on the 1-topos $\cE\truncated 0$ of discrete objects of $\cE$, and
\item the poset of covering topologies on the \oo topos $\cE$ (\cref{def:covering-top}).
\end{enumerate}
The posets are small and have the structure of a frame.
\end{thm-intro}

Let us elaborate on the content of this theorem.
We define a {\it Lawvere--Tierney topology} by importing the classical definition from 1-topos theory: it is a closure operator $j:\Omega\to \Omega$ on the Lawvere object (aka the subobject classifier) of the \oo topos.
Such a topology provides a factorization system on the class of monomorphisms of the \oo topos $\cE$ which is stable under base change along arbitrary maps.
The bijection of Lawvere--Tierney topologies with extended Grothendieck topologies is done in \cref{LT-topoGroth} by showing that every extended Grothendieck topology is the left class of such a factorization system.

Since the Lawvere object $\Omega$ is discrete, the notion of Lawvere--Tierney topology on an \oo topos (and the notion of extended Grothendieck topologies since they are equivalent) depends only on the 1-topos of discrete objects $\cE\truncated 0\subseteq \cE$ (where it recovers the classical notion of a Lawvere--Tierney topology).
This remark generalizes the fact that a Grothendieck topology in the sense of \cite{Lurie:HTT, TV:hag1} on an \oo category $ C$ is a Grothendieck topology in the ordinary sense on the homotopy category $\ho C$ (see \cref{cor:TVGtop}).

The notion of {\it covering topology} seems new.
It is a variation on the notion of pretopology.
In the same way that an extended Grothendieck topology is a class of monomorphisms meant to be inverted, a covering topology is a class of maps meant to be surjective.
If $\phi:\cE \to \cF$ is an algebraic morphism of \oo topoi, then the inverse image by $\phi$ of the class of surjective maps in $\cF$ is a covering topology in $\cE$.
If $\cG$ is an extended Grothendieck topology, and $f = \im f \circ \coim f$ is the image factorization of a map $f$ (into a surjection followed by a monomorphism), then $f$ is a $\cG$-covering if $\im f\in \cG$.
The class $\Cover\cG$ of all $\cG$-coverings is a covering topology.
The topology can be recovered as the {\it covering sieves}, that is $\cG = \Cover\cG\cap \Mono$ and this is essentially the proof of the bijection between the two notions (\cref{thm:covering-top}).

\medskip

The motivation to introduce covering topologies is that it is sometimes more convenient to describe a localization by means of forcing some maps to be surjective than inverting some monomorphisms (for example, in logic, this corresponds to forcing existential axioms).
To handle the passage between the two kinds of conditions, we introduce in \cref{sec:forcing} the notion of {\it forcing}, which is a general theory for imposing universally conditions on maps (like becoming surjective, connected, truncated...).
All our examples of forcing will be equivalent to actual localizations, but presenting them in this more general setting provides efficient tools to navigate between the equivalent presentations of a localization (see \cref{thm:forcing}).

For example, if, for a class of maps $\Sigma$ in an \oo topos $\cE$, we define $\im\Sigma :=\{\im f\,|\, f\in \Sigma\}$, then, an example of forcing rewriting is 
$
\Forcing \Sigma\Surj = \Forcing {\im\Sigma}\Iso
$
which says that forcing a class $\Sigma$ to be surjective (by a left-exact localization of topoi) is equivalent to forcing the class $\im\Sigma$ to be invertible.
We shall particularly be interested with the forcing condition $\Forcing\Sigma{\Conn\infty}$ which means that we want to force the maps in the class $\Sigma$ to be \oo connected (e.g. in \cref{thm:meaningtopcotopfacto}).

If $\Theta$ is a class of maps existing uniformly in every \oo topos (like isomorphism, surjections, \oo connected maps...) and if $\Sigma$ is a class of maps in an \oo topos $\cE$, the forcing condition $\Forcing\Sigma\Theta$ may or may not be representable in the category of topoi.
If it is, we denote the corresponding \oo topos by $\cE\Forcing\Sigma\Theta$. 
In the previous examples, this would give $\cE\Forcing\Sigma\Surj$, $\cE\Forcing{\im \Sigma}\Iso$, or $\cE\Forcing\Sigma{\Conn\infty}$.
In particular, we shall use throughout the whole paper the notation $\cE\Forcing\Sigma\Iso$ for the left-exact localization generated by inverting a class of maps $\Sigma$ in $\cE$ (instead of the more classical notations $\cE[\Sigma^{-1}]$, $\Sigma^{-1}\cE$, $L(\cE,\Sigma)$...).

\medskip
An extended Grothendieck topology $\cG$ comes with a notion of sheaf, which is simply a local object for the class $\cG$.
It comes also with a notion of {\it hypercovering} (\cref{def:hypercovering}) and {\it hypersheaf} (\cref{def:GHsheaf}).
A hypercovering is a map $f$ for which all its iterated diagonals $\Delta^nf$ are $\cG$-coverings as above.
A hypersheaf is then a local object for the class $\HCover\cG$ of $\cG$-hypercovers.
Any hypersheaf is a sheaf but the converse is not necessarily true.
The subcategories $\HSh\cE\cG\subseteq \Sh\cE\cG\subseteq \cE$ of hypersheaves and sheaves are reflective and enjoy the following universal properties.

\begin{thm-intro}{\cref{prop:Gsheaf,prop:GHsheaf}}
The reflection $\cE\to \Sh\cE\cG$ is left-exact and universal for the following forcing conditions:
\[
\Sh\cE\cG
\ =\ 
\cE\Forcing \cG\Iso
\ =\ 
\cE\Forcing {\Cover\cG}\Surj
\ =\ 
\cE\Forcing {\HCover\cG}{\Conn\infty}\,.
\]
The reflection $\cE\to \HSh\cE\cG$ is left-exact and universally inverts all $\cG$-hypercoverings:
\[
\HSh\cE\cG
\ =\ 
\cE\Forcing {\HCover\cG}{\Iso}\,.
\]
In particular, both $\Sh\cE\cG$ and $\HSh\cE\cG$ are \oo topoi.
Moreover $\HSh\cE\cG$ is a hypercomplete topos and the reflection $\Sh\cE\cG \to \HSh\cE\cG$ is the hypercompletion of $\Sh\cE\cG$
in the sense of \cite[Proposition 6.5.2.13]{Lurie:HTT}.
\end{thm-intro}

\medskip
To build the connection between extended Grothendieck topologies and left-exact localizations, we describe the latter in terms of the classes of maps they invert, which we called {\it congruences} in \cite{ABFJ:HS}.
A congruence on an \oo topos $\cE$ is a class of maps $\cK$ 
which contains all isomorphisms, is closed by composition, and
is stable by colimits and finite limits in the arrow category $\cE \arr$.
If $\phi:\cE\to \cF$ is a cocontinuous left-exact functor between \oo topoi, the class $\cK_\phi=\phi^{-1}(\Iso(\cF))$ is a congruence on $\cE$.
If $\Sigma$ is a class of maps in an \oo topos $\cE$, it is contained in a smallest congruence denoted $\Sigma\cong$.
We proved in \cite[Proposition 4.2.3]{ABFJ:HS} that congruences are the same thing as the strongly saturated classes closed under base change introduced in \cite[Section~6.2.1]{Lurie:HTT}, but the definition of congruence is handier since it does not involve the 3-for-2 condition.
If $\phi:\cE\to \cF$ is a left-exact cocontinuous functor between topoi, then the class $\cK_\phi$ of maps of $\cE$ inverted by $\phi$ is a congruence on $\cE$.
Then we can deduce from results of \cite{Lurie:HTT} (see \cref{thm:bij-congruence-lexloc}), that, for an \oo topos $\cE$, the function $\phi\mapsto \cK_\phi$ defines an isomorphism
\[
\mathsf{LexLoc}(\cE)
\ =\ 
\Cong(\cE)
\]
between the poset of left-exact localizations of $\cE$
and the poset of congruences in $\cE$ ordered by inclusion.
(In this introduction we skip all accessibility questions, and refer to 
\cref{sec:left-exact-localization} for more details on this issue.)

We use this isomorphism to formulate the isomorphisms \eqref{eq:mainbij} in terms of congruences.
A congruence $\cK$ is said to be {\it topological}  if $\cK=\Sigma\cong$ for $\Sigma$ a class of monomorphisms, that is if the corresponding localization $\cE\to \cE\Forcing\cK\Iso$ is topological (see \cref{deftopcong} and \cite[Definition 6.2.1.4]{Lurie:HTT}).
We define a congruence $\cK$ to be {\it hypercomplete} if the corresponding localization $\cE\Forcing\cK\Iso$ is a hypercomplete \oo topos (\cref{def:hypercongruence}).
We denote by $\TCong(\cE)$ and $\HCong(\cE)$ the subposets of $\Cong(\cE)$ spanned by topological  and hypercomplete congruences.

If $\cK$ is a congruence, the intersection $\cK\cap \Mono$ is immediately an extended Grothendieck topology (and this fact was our motivation for introducing the notion).
This defines the morphism of posets $t:\Cong(\cE)\to \GTop(\cE)$ mentioned before.
Let us also introduce the poset $\CTop(\cE)$ of covering topologies, and the map $t:\Cong(\cE)\to \CTop(\cE)$ sending a congruence $\cK$ to its class of coverings $\Cover\cK = \{f\,|\,\im f \in \cK\}$ (i.e. the class of maps that become surjective in the localization by $\cK$).
Then, the isomorphisms of \eqref{eq:mainbij} enter the more complete diagram
\[
\begin{tikzcd}
\TCong(\cE)
\ar[rr, "j", hook]
&&
\Cong(\cE)
\ar[from=rr, "i"', hook']
\ar[ddl,"t = -\cap \Mono" description]
\ar[ddr,"\Cover{(-)}" description]
&&
\HCong(\cE)
\\
\\
&
\GTop(\cE)
\ar[rr,equal,"\text{\cref{thm:covering-top}}"']
\ar[luu,equal,"\text{\cref{thm:equivTcongGtop}}"]
&&\CTop(\cE)
\ar[ruu,equal,"\text{\cref{thm:equivPtopHcong}}"']
\end{tikzcd}
\]
The isomorphism of To\"en--Vezzosi between Grothendieck topologies and hypercomplete localizations is generalized to any \oo topos in \cref{generalTV} by composing the two equivalences of \cref{thm:covering-top} and \cref{thm:equivPtopHcong}.
And the isomorphism of Lurie between Grothendieck topologies and topological localizations is generalized in \cref{generalL}.

\medskip

The interpretation of the Diagram~\eqref{eq:mainbij} can now be stated properly.
\begin{thm-intro}{\cref{thm:tripleadj}}
The morphism of posets $t:=\Mono\cap -:\Cong(\cE) \to \GTop(\cE)$ admits 
\begin{enumerate}
\item a fully faithful left adjoint $j$ whose image is the subposet $\TCong(\cE)$ of topological congruences, and
\item a fully faithful right adjoint $i$ whose image is the subposet $\HCong(\cE)$ of hypercomplete congruences.
\end{enumerate}
\[
\begin{tikzcd}
\Cong(\cE) \ar[rr,"t" description] \ar[from=rr, shift left = 3,"i",hook'] \ar[from=rr, shift right = 3,"j"', hook']
&& \GTop(\cE)
\end{tikzcd}
\]
\end{thm-intro}
This triple adjunction defines a coreflection and a reflection.
For a congruence $\cK$, we have two other congruences
\[
\cK\topo := jt(\cK) \quad\subseteq\quad \cK
\quad\subseteq\quad
it(\cK) =: \HCover\cK
\]
which we call the {\it topological part}  of $\cK$ and the {\it hypercompletion} of $\cK$.
The congruence $\cK$ is topological if and only if $\cK=\cK\topo$ and hypercomplete if and only if $\cK=\HCover\cK$.
The corresponding localizations fit in a diagram
\[
\begin{tikzcd}
&\cE
\ar[ld,"\text{topological part}"']
\ar[d]
\ar[rd,"\text{hypercompletion}"]
\\
\cE\Forcing{\cK\topo}\Iso \ar[r]
&\cE\Forcing{\cK}\Iso  \ar[r]
&\cE\Forcing{\HCover\cK}\Iso
\end{tikzcd}
\]
where
$\cE\Forcing{\cK\topo}\Iso$ is the topological part of the localization $\cE\Forcing{\cK}\Iso$ 
in the sense of \cite[Proposition 6.5.2.19]{Lurie:HTT}, and
$\cE\Forcing{\HCover\cK}\Iso$ is the hypercompletion of $\cE\Forcing{\cK}\Iso$ 
in the sense of \cite[Section~6.5.2]{Lurie:HTT}.
In particular, the congruence $\HCover\cK$ can be understood as the class of maps in $\cE$ inverted in the hypercompletion $\cE\Forcing{\cK}\Iso$, that is the class of maps in $\cE$ that become \oo connected in $\cE\Forcing{\cK}\Iso$.
In \cref{sec:hypercoverings}, we characterize $\HCover\cK$ as the class of all hypercoverings for the topology $\cK\cap \Mono$ .

\medskip

From such a triple adjunction, we can deduce formally that the fixed points of $(-)\topo$ and $\HCover{(-)}$ are are isomorphic large posets, and we get the following explicit correspondence between topological and hypercomplete localizations.

\begin{thm-intro}{\cref{thm:adjTCongHCong}}
The following adjunction is an isomorphism of posets
\[
\begin{tikzcd}
\TCong(\cE) \ar[rr,shift left = 1.6, "\HCover{(-)}"] \ar[from=rr,shift left = 1.6, "(-)\topo"]
&&\HCong(\cE)\,.
\end{tikzcd}
\]
\end{thm-intro}
\noindent (In the logic of the paper though, we proceed the other way. 
We start by defining explicitly $\cK\topo$ and $\HCover\cK$, then we prove \cref{thm:adjTCongHCong} as a step toward \cref{thm:tripleadj}.)

\bigskip

As an application of this setting, we revisit the topological--cotopological factorization of left-exact localization introduced in \cite[Proposition 6.5.2.19]{Lurie:HTT}.
First, we use our notion of forcing to get the following interpretation of the topological part of a localization.

\begin{thm-intro}{\cref{thm:meaningtopcotopfacto}}
Let $\Sigma$ be a set of maps in an \oo topos $\cE$.
Then the topological part of the localization 
$\cE\to \cE\Forcing\Sigma\Iso$
is the localization $\cE\to \cE\Forcing\Sigma{\Conn\infty}$
universally forcing the maps in $\Sigma$ to be \oo connected.     
\end{thm-intro}

\noindent Recall from \cite[Section~6.2.5]{Lurie:HTT} that a localization is called {\it cotopological} if it inverts only \oo connected maps (see \cref{prop:caractopcong} for other equivalent characterizations).
Then, the topological--cotopological factorization of a localization $\cE\Forcing\cK\Iso$ corresponds to the factorization
\[
\begin{tikzcd}
\cE \ar[rr,"\phi"] \ar[rd,"\phi\topo"']&&\cE\Forcing\cK\Iso \\
& \cE\Forcing\cK{\Conn\infty} \ar[ru,"\phi\cotop"']
\end{tikzcd}
\]
In other words, the topological part forces the map in $\cK$ to be \oo connected and the cotopological part inverts these \oo connected maps, thus fully inverting the maps in $\cK$.

For an application, we consider $\S X = \fun \Fin \cS$ (where $\Fin$ is the \oo category of finite spaces).
The topos $\S X$ is freely generated by a single object $X$ in the sense that an algebraic morphism $\phi:\S X\to \cE$ to another topos $\cE$ is entirely determined by its value $\phi(X)$ in $\cE$ (see \cref{def:freetopos}).
The universal object $X$ is the canonical inclusion $X:\Fin\to \cS$.
If $\cK= \{X\to 1\}\cong$ is the congruence generated by the object $X$, the corresponding localization is the functor $\S X\to \cS$ sending $F:\Fin \to \cS$ to $F(1)$.
The topological part of this localization is the morphism $\S X \to \S {X\connected \infty}$, where $\S {X\connected \infty}$ is the \oo topos freely generated by an \oo connected object (see \cref{ex:toppart:ooconn}).

\smallskip

We then generalize the topological--cotopological factorization to arbitrary morphisms of topoi.
By considering the factorization of an algebraic morphism of \oo topoi $\phi:\cE\to \cF$ into a left-exact localization followed by a conservative morphism (\cref{factloccons}), we can use Lurie's factorization on the localization part to get a triple factorization $\phi = \phi\cons\circ \phi^\mathsf{cotop.loc} \circ \phi \topo$.
\begin{equation}
\begin{tikzcd}
\cE
\ar[rr,"\phi"] \ar[dd,"\phi\topo"']
\ar[rrdd,"\phi\loc"', near start]
&&\cF
\\
\\
\cE\Forcing{\cK\topo}\Iso
\ar[rr,"(\phi\topo)\cotop"']
\ar[rruu,"\phi\cotop"', crossing over, near end]
&& \cE\Forcing\cK\Iso \ar[uu,"{\phi\cons}"']\,.
\end{tikzcd}
\end{equation}
This suggest the introduction of the notion of a {\it cotopological morphism} as the composite $\phi\cons\circ (\phi\topo)\cotop$ of a cotopological localization followed by a conservative morphism.
In particular any conservative morphism is cotopological.
We define cotopological morphisms in \cref{def:cotopcong} and prove in \cref{morphismvslemmacotop} that they can be characterized as the morphisms reflecting \oo connected maps (making them a weaker version of conservative functors).
Then we prove in \cref{facttopcotop} that the pair of classes (topological localizations, cotopological morphisms) form a factorization system on the category of topoi.
Geometrically, this triple factorization system is a refinement of the image factorization (see \cref{rem:image}).

\medskip
Finally, let us say a word on the main technical device used throughout the paper, which is the notion of an {\it acyclic class} introduced in \cite{ABFJ:HS}, inspired by the notion of modality of Homotopy Type Theory \cite{RSS}.
A class of maps $\cA$ in an \oo topos $\cE$ is acyclic if it 
contains all isomorphisms, is closed by composition, and
is stable by colimits in the arrow category $\cE \arr$.
Acyclic classes abound in \oo topos theory and homotopy theory.
Any congruence is acyclic. 
The class of surjections and $n$-connected maps in $\cE$ are all acyclic.
Extended Grothendieck topologies are not acyclic classes, but the associated covering topologies are.
And, most importantly in this paper, the acyclic class generated by a class of monomorphisms is always a congruence (\cref{sigmaac=sigmacong}).
We recall their definition and develop a number of new results in \cref{sec:acyclic,sec:acvcong}.

\bigskip

\noindent {\bf Acknowledgments:} 
The authors would like to thank 
Reid Barton,
Dan Christensen, 
Simon Henry,
Egbert Rijke, and 
Mike Shulman,
for useful discussions on the material of this paper.
We would also like to thank the reviewer for the high quality of his report.
The first author gratefully acknowledges the support of the 
Air Force Office of Scientific Research through grant FA9550-20-1-0305.
The last author acknowledges the support of the 
Natural Sciences and Engineering Research Council of 
Canada through grant 371436.

\section{Preliminaries}
\label{sec:preliminaries}

\subsection{Conventions, notations and miscellaneous}
\label{sec:convents}

Throughout the paper, we use the language of higher category theory.
We will simplify the vocabulary and drop the prefix ``$\infty$'' when referring to higher categories and their associated notions.
The word \emph{category} refers to \ooo category, and all constructions are assumed to be homotopy invariant.
When necessary, we shall refer to an ordinary category as a \emph{1-category} and to an ordinary Grothendieck topos as a \emph{1-topos}.
Furthermore, we work in a model independent style, which is to say, we do not choose an explicit combinatorial model for \ooo categories such as quasicategories, but rather give arguments which we feel are robust enough to hold in any model. 
We will refer to the work of Lurie \cite{Lurie:HTT} for the general theory of \oo categories and \oo topoi.
Other references are \cite{Cisinski} and \cite{Riehl-Verity:EICT}.

\medskip

We use the word \emph{space} to refer generically to a homotopy type or \oo groupoid.
We denote the category of spaces by $\cS$.
We shall say that a map between two spaces $f:X\to Y$ is an {\it isomorphism} if it is a homotopy equivalence.
We say an object is \emph{unique} if the space it inhabits is contractible.
For example, the inverse of an isomorphism is unique in this sense.

\medskip

We shall denote by $\cC(A,B)$ or by $\mathsf{Map}_{\cC}(A,B)$ the space of maps between two objects $A$ and $B$ of a category $\cC$ and write $f:A\to B$ to indicate that $f \in \cC(A,B)$.
We write $A \in\cC$ to indicate that $A$ is an object of $\cC$. 
 The \emph{opposite} of a category $\cC$ is denoted $\cC\op$ and defined by the fact that $\cC\op(B,A):=\cC(A,B)$ with its category structure inherited from $\cC$.
We write $\cC\slice{A}$ for the slice category of $\cC$ over an object $A$. 
If $f : X \to A$ is a morphism of $\cC$, we often write $(X,f) \in \cC\slice{A}$, as it is frequently convenient to have both the object and structure map visible when working in a slice category.
If a category $\cC$ has a terminal object, we denote it by $1$.
Every category $\cC$ has a {\it homotopy category} $\ho\cC$ which is a 1-category with the same objects as $\cC$, but where $\ho\cC(A,B)=\pi_0\cC(A,B)$.
We shall say that a morphism $f:A\to B$ in $\cC$ is {\it invertible}, or that it is an {\it isomorphism}, if the morphism is invertible in the homotopy category $\ho\cC$. 
We make a small exception to this terminology with regard to equivalence of categories: we continue to employ the more traditional term {\it equivalence}.
We shall say that a functor $F:\cC\to \cD$ is {\it essentially surjective} if for every object $X\in\cD$ there exists an object $A\in\cC$ together with an isomorphism $X\simeq FA$.
We shall say that $F$ is {\it fully faithful} if the induced map $\cC(A,B)\to \cD(FA,FB)$ is invertible for every pair of objects $A,B\in\cC$.
We shall say that $F$ is an {\it equivalence} (of categories) if it is fully faithful and essentially surjective.
We assume that all subcategories and classes of maps in a category are defined by properties which are invariant under isomorphism, and consequently {\it we adopt the convention that all subcategories are replete}.
We denote the category of functors from $\cC$ to $\cD$ alternatively by $\fun \cC \cD$ or $\cD^{\cC}$ as seems appropriate from the context.
For a small category $ C$, we will write $\PSh C := \fun { C\op} \cS$ for the category for presheaves on $ C$.
Recall that the {\it Yoneda functor} $Y: C\to \PSh C$ is defined by putting $Y(A)(B):=\Map B A$ for objects $A,B\in  C$.

\medskip

All limits and colimits are homotopy limits and colimits; in particular, all pullback squares are homotopy pullbacks and all pushout squares are homotopy pushouts. 
A category $\cE$ is {\it complete} if any small diagram $I\to \cE$ has a limit; a functor is {\it continuous} if it preserves all small limits.
A category $\cE$ is {\it finitely complete}, or {\it lex}, if it has a terminal object and all pullbacks;
a functor between lex categories is {\it left-exact}, or {\it lex}, if it preserves terminal objects and pullbacks.
Dually, a category $\cE$ is {\it cocomplete} if any small diagram $I\to \cE$ has a colimit;
a functor is {\it cocontinuous} if it preserves all small colimits. 
A category $\cE$ is {\it finitely cocomplete}, or {\it rex}, if it has an initial object and all pushouts;
a functor between rex categories is {\it right-exact}, or {\it rex}, if it preserves initial objects and pushouts.
The category of presheaves $\PSh C=\fun { C\op} \cS$ on a small category $ C$ is cocomplete and the Yoneda functor $Y: C\to \PSh C$ exhibits the {\it free cocompletion} of $ C$ \cite[Theorem 5.1.5.6]{Lurie:HTT}.

\medskip

We shall say that a space $X\in \cS$ is {\it finite}
if it has the homotopy type of a $CW$-complex
with a finite number of cells.
We shall denote the category of finite spaces 
by $\Fin$; it is the smallest full subcategory
of $\cS$ which is closed under
finite colimits (=which is closed under pushout
and contains the initial object) 
and which contains the space $1\in \cS$.
The rex category $\Fin$ is actually
freely generated by the object $1\in  \cS$.
More precisely, for every object $A$ in a rex category $\cE$
there exists a unique rex functor $\phi_A:\Fin\to \cE$
such that $\phi_A(1)=A$. 
By construction, $\phi_A(F)=\bigsqcup_F A$ is the colimit of the constant diagram $c(A):F\to \cE$ with values $A$. 
Every small category $\cC$
generates freely a rex category $\cC\rex$.
By construction, $\cC\rex$
is the smallest full rex subcategory of $\PSh C$
which contains representable functors.
The Yoneda functor 
$Y:\cC\to \PSh C$
induces the functor $y:\cC\to \cC\rex$
which exhibits the 
{\it free rex completion} of $\cC$.
(This is a special case of \cite[Theorem~5.3.6.2]{Lurie:HTT}.)

\smallskip
Dually, the lex category $\Fin\op$ is 
freely generated by the object $1\op$.
More precisely, for every object $A$ in a lex category $\cE$ there exists a unique lex functor $\phi_A:\Fin\op\to \cE$ such that $\phi_A(1\op)=A$. 
By construction, $\phi_A(F\op)=\prod_F A$ is the limit of the constant diagram $c(A):F\to \cE$ with values $A$. 
Every small category $\cC$ generates freely a lex category $\cC\lex$.
By construction, $(\cC\lex)\op=(\cC\op)\rex$.

\medskip  
  
For an object $A$ of a category $\cC$ with finite limits, we will write $\Delta(A)=(1_A,1_A):A\to A\times A$ for the canonical map, which we refer to as the \emph{diagonal of $A$}.
More generally, the \emph{diagonal} of a map $u:A\to B$ is defined to be the canonical map $\Delta(u)=(1_A,1_A):A\to A\times_B A$
\[
  \begin{tikzcd}
    A \ar[dr, "\Delta(u)"] \ar[drr, "1_A", bend left] \ar[ddr, "1_A"', bend right] && \\
    & A\times_B A\ar[r, "p_2"] \ar[d, "p_1"'] \pbmark & A\ar[d, "u"] \\
    & A \ar[r, "u"']  & B
  \end{tikzcd}
\]
induced by the universal property of the pullback.
This construction can be iterated, and we use the notation $\Delta^n(u)$ for the $n$-th iterated diagonal of a map,
starting with $\Delta^0(u)=u$.
The $n$-th iterated diagonal $\Delta^n(A)$
of an object $A$ is defined to be $\Delta^n(A\to 1)$.
The map $\Delta^n(A):A\to A^{S^{n-1}}$
is induced to the map $S^{n-1}\to 1$, where $S^{n-1}$ is the $(n-1)$-sphere.

\medskip
When a functor $F:\cC\to \cD$ is left adjoint to a functor $G:\cD\to \cC$, we shall write $F\dashv G$.
When representing adjoint functors horizontally, our convention will be that the functor on top is left adjoint to the one below.
For example, for three adjoint functors $F\dashv G\dashv H$, we shall write 
\[
\begin{tikzcd}
\cC \ar[rr,"G" description] \ar[from=rr, shift right = 3,"F"'] \ar[from=rr, shift left = 3,"H"]
&& \cD
\end{tikzcd}
\]
Beware that, with this convention, the left adjoint functors are not always oriented from the left to the right (and vice-versa for right adjoints)

\subsection{Topoi, congruences and acyclic classes}

\subsubsection{Localizations}
\label{sec:localizations}
A functor $F:\cE\to \cF$ is said to {\it invert} a map $f\in \cE$ if the map $\phi(f)\in \cF$ is invertible, and to invert a class of maps $\Sigma \subseteq \cE$ if it inverts all maps in $\Sigma$.
The functor $F$ is said to be a {\it $\Sigma$-localization}, or to invert $\Sigma$ {\it universally}, if it is initial in the category of functors which invert $\Sigma$.
We shall say that $F$ is a {\it localization} if it is a $\Sigma$-localization with respect to some class of maps $\Sigma\subseteq \cE$ (equivalently, if it is a localization with respect to the class of all maps inverted by $F$).
If $\Sigma$ is a class of maps in a category $\cE$, then the codomain $\cF$ of any $\Sigma$-localization $\cE\to \cF$ is unique up to equivalence of categories, and we denote the codomain $\cF$ generically by $\LOC\cE\Sigma$.

If $\cE$ and $\cF$ are cocomplete categories and $\Sigma \subseteq \cE$ is a class of maps in $\cE$, then a cocontinuous functor $F : \cE \to \cF$ is said to be a {\it cocontinuous $\Sigma$-localization}, or to invert $\Sigma$ {\it universally among cocontinuous functors} if it is initial in the category of functors which invert $\Sigma$.
More precisely, this means that if a cocontinuous functor $G:\cE\to \cG$ (with values in a cocomplete category) inverts $\Sigma$, then there exists a unique pair $(G',\alpha)$ where $G':\cF\to \cG$ is a cocontinuous functor and $\alpha$ is an isomorphism $G\simeq G'\circ F$.
We denote generically by $\LOCcc\cE\Sigma$ the codomain of the cocontinuous localization with respect to $\Sigma$.

\medskip

We shall say that a functor $\rho:\cE\to \cF$ is a {\it reflector}, or a {\it reflection} if it has a fully faithful right adjoint $\iota: \cF\to \cE$.
A full subcategory $\cE'$ of $\cE$ is called {\it reflective} if the inclusion functor $\iota:\cE'\hookrightarrow \cE$ has a left adjoint $\rho:\cE\to \cE'$.
Beware that \cite[Definition 5.2.7.2]{Lurie:HTT} defines a \emph{localization} to be what we have here called a \emph{reflection}.
Any reflection is a localization, and conversely a localization is a reflection as soon as it has a right adjoint \cite[Proposition 2.2.1]{ABFJ:HS}.

Recall from \cite[Definition 5.5.4.1]{Lurie:HTT} that an object $X$ in a category $\cE$ is said to be \emph{local} with respect to a map $u:A\to B$ in $\cE$ if the map 
\[
\Map u X : \Map B X \to \Map A X
\]
is invertible.
The object $X$ is said to be local with respect to a class of maps $\Sigma\subseteq \cE$ if it is local with respect to every map in $\Sigma$.
We shall denote by $\Loc \cE \Sigma \subseteq \cE$ the full subcategory spanned by the $\Sigma$-local objects.

\medskip

\begin{definition}[Accessibility, Presentability]
\label{def:accessible}
We shall say that a reflector $\phi:\cE\to \cF$ is {\it accessible} if $\cF = \Loc \cE \Sigma$ for a set of maps $\Sigma$ in $\cE$.
We shall say that such a category $\cF$ is an accessible reflection of $\cE$.
A category $\cE$ is said to be \emph{presentable} if it is an accessible reflection of a presheaf category $\PSh C$ for a small category $ C$.
In particular, $\PSh C$ is presentable.

When $\cE$ is a presentable category, the notion of accessible reflector is equivalent to the notion of accessible localization defined in \cite{Lurie:HTT}.     
\end{definition}

\begin{proposition}[{\cite[Propositions 5.5.4.15 and 5.5.4.20]{Lurie:HTT}}]
\label{localizationofpresentable}
If $\Sigma$ is a set of maps in a presentable category $\cE$, then the full subcategory $\Loc \cE \Sigma \subseteq \cE$ of $\Sigma$-local objects is presentable, reflective, and the reflector $\cE\to \Loc \cE \Sigma$ is a cocontinuous localization $\cE\to \LOCcc \cE \Sigma$.
\end{proposition}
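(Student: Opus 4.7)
\smallskip

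\emph{Plan.} The strategy is to produce the reflection $\rho \dashv \iota$ onto $\Loc{\cE}{\Sigma}$ by a small-object argument, then identify $\rho$ as the universal cocontinuous functor inverting $\Sigma$, and finally deduce presentability by exhibiting $\Loc{\cE}{\Sigma}$ as an accessible reflection of a presheaf category.

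\emph{Step 1 (construction of the reflection).} Since $\cE$ is presentable and $\Sigma$ is a set, choose a regular cardinal $\kappa$ such that each domain and codomain of a map in $\Sigma$ is $\kappa$-compact. For $X \in \cE$, define $L_0(X) := X$ and inductively form $L_{\alpha+1}(X)$ as the pushout attaching a filler along $B$ to every square with horizontal maps in $\Sigma$ and right-hand side $L_\alpha(X)$; at limits take the colimit. Setting $L(X) := L_\kappa(X)$, the unit $\eta_X : X \to L(X)$ is a transfinite composition of pushouts of coproducts of maps in $\Sigma$. By $\kappa$-compactness of the domains, any map $A \to L(X)$ with $A$ the domain of some $u \in \Sigma$ factors through some $L_\alpha(X)$ with $\alpha < \kappa$, from which one checks the lifting property that makes $L(X)$ a $\Sigma$-local object. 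Functoriality and the universal property of the lifts assemble into an adjunction $\rho \dashv \iota$ between $\cE$ and $\Loc{\cE}{\Sigma}$.

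\emph{Step 2 (cocontinuous localization).} Being a left adjoint, $\rho$ is cocontinuous. To show it is universal among cocontinuous functors inverting $\Sigma$, let $F : \cE \to \cG$ be cocontinuous and invert $\Sigma$. It suffices to prove that $F$ inverts every component of $\eta$, for then the composite $F' := F \circ \iota : \Loc{\cE}{\Sigma} \to \cG$ is cocontinuous and satisfies $F'\rho \simeq F$, with uniqueness forced by the essential surjectivity of $\iota$ onto local objects. The point is that $\eta_X$ was built as a transfinite composition of pushouts of coproducts of maps in $\Sigma$; since $F$ preserves these colimits and turns each generator into an isomorphism, and since pushouts and sequential colimits of isomorphisms are isomorphisms, $F(\eta_X)$ is invertible. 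This identifies $\rho$ with $\cE \to \LOCcc{\cE}{\Sigma}$.

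\emph{Step 3 (presentability).} By definition of presentability (\cref{def:accessible}), $\cE$ is itself the accessible reflection $\rho_0 \dashv \iota_0$ of some $\PSh{\cK}$ at a set of maps $\Sigma_0$. Transport $\Sigma$ along $\iota_0$ to a set $\iota_0(\Sigma)$ of maps in $\PSh{\cK}$. Then an object of $\PSh{\cK}$ is local for $\Sigma_0 \cup \iota_0(\Sigma)$ iff it lies in $\cE$ and is $\Sigma$-local, so $\Loc{\cE}{\Sigma} = \Loc{\PSh{\cK}}{\Sigma_0 \cup \iota_0(\Sigma)}$ as subcategories. This exhibits $\Loc{\cE}{\Sigma}$ as an accessible reflection of a presheaf category, hence as presentable.

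\emph{Main obstacle.} The delicate point is Step~1: making the $\infty$-categorical small-object argument rigorous, in particular showing that the transfinitely constructed $L(X)$ is actually $\Sigma$-local and that the construction is functorial up to coherent homotopy. The accessibility of $\cE$ is used to select $\kappa$ so that the filtered-colimit arguments (both factoring maps from $\kappa$-compact objects and stabilizing the tower) go through; everything else (cocontinuity, universal property, presentability) follows formally once the reflection is in hand.
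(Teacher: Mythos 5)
The paper offers no proof of this proposition: it is imported verbatim from Lurie (HTT, 5.5.4.15 and 5.5.4.20), so there is nothing in-paper to compare your argument against. Judged on its own, your outline has a sensible global shape (build the reflector by a cell-attachment, read the universal property off the cell structure of the unit, reduce presentability to the presheaf case), and Steps 2 and 3 are essentially correct modulo Step 1. But Step 1 contains a genuine mathematical gap, not merely an issue of $\infty$-categorical rigor.

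The problem is that attaching cells only along the maps of $\Sigma$ produces an object $L(X)$ for which each $\Map{B}{L(X)}\to\Map{A}{L(X)}$ is surjective on $\pi_0$ (every map out of a domain extends along $u$), whereas $\Sigma$-locality requires this map of spaces to be an \emph{equivalence}. Injectivity on $\pi_0$ and the isomorphisms on higher homotopy groups are governed by extension properties against the iterated codiagonals $\nabla^n u$ (with $\nabla u : B\sqcup_A B\to B$), and your construction attaches nothing along these. Concretely, for $\cE=\cS$ and $u:\ast\sqcup\ast\to\ast$ the local objects are the empty and the contractible spaces, but your tower merely glues pairs of points together, creating new loops at every stage that are never killed; the colimit is connected with large $\pi_1$ and is not $u$-local. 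The standard repair is to first replace $\Sigma$ by a set closed under (iterated) codiagonals and run the cell attachment for that larger set; Step 2 then still goes through because a cocontinuous functor inverting $u$ automatically inverts $\nabla u$, the latter being built from $u$ by pushouts. A second, lesser issue — which you do flag — is the functoriality and coherence of the transfinite construction, which is genuinely delicate in the $\infty$-categorical setting and is one reason Lurie's actual proof is organized around strongly saturated classes rather than a bare small-object argument. As written, though, the object your Step 1 constructs need not be local at all, so the proof is not complete.
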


\subsubsection{Topoi and algebraic morphisms}
\label{sec:topos}

If $\Sigma$ is a set of maps in a presheaf category $\PSh C$ we shall say that the reflector
$\rho:\PSh C\to \Loc {\PSh C} \Sigma$ is a {\it left-exact reflection} if it preserves finite limits.

\begin{definition}[Topos {\cite[Definitions 6.1.0.4 and 6.3.1.1]{Lurie:HTT}}]
A category $\cE$ is a {\it topos} if it is an accessible left-exact reflection of the category $\PSh C$ of presheaves over a small category $ C$.
A {\it geometric morphism of topoi } $\cF\to\cE$ is a functor $\phi_*:\cF\to \cE$ admitting a left adjoint $\phi^*$ which is a left-exact functor.
We denote by $\fun\cF\cE\geom$ the category of geometric morphisms $\cF\to \cE$,
and by $\Topos$ the category of topoi and geometric morphisms.
\end{definition}

The theory of topoi has two sides, a geometric side and an algebraic side \cite{Anel-Joyal:topo-logie}, \cite[Remark 6.1.1.3]{Lurie:HTT}.
Since our work is focused on the algebraic side, we will work with the ``algebraic'' category of topoi.

\begin{definition}(Algebraic morphisms)
\label{def:algebraicmorphism}
We define an {\it algebraic morphism of topoi} $\phi:\cE\to \cF$ as a cocontinous and left-exact functor.
We denote by $\fun \cE\cF\alg$ the category of algebraic morphisms $\cE\to \cF$.
We denote the category of topoi and algebraic morphisms by $\Toposalg$.
\end{definition}

Since topoi are presentable categories, any algebraic morphism $\phi:\cE\to \cF$ has a right adjoint $\phi_*$ which defines a geometric morphism $\cF\to \cE$.
This provides an equivalence $\fun \cE\cF\alg = \fun \cF\cE\geom\sfop$.
This also provides an identification $\Toposalg=\Toposop$.
In the notations of \cite{Lurie:HTT}, we have 
\begin{align*}
\fun \cF\cE\geom &= \mathrm{Fun}_*(\cF,\cE) &\Topos &= \mathrm{\cR\cT op}  \\
\fun \cE\cF\alg &= \mathrm{Fun}^*(\cE,\cF)  &\Toposalg &= \mathrm{\cL\cT op}\ .
\end{align*}

The algebraic category of topoi has the advantage to have a nice forgetful functor to the category of large categories $\Toposalg\to \CAT$.
This functor has a left adjoint defined on small categories.
If $C$ is a small category, recall that we denote $C\lex$ the completion of $C$ for finite limits.
This category is still small and the presheaf category $\PSh{C\lex}$ is a topos.
The following result is \cite[Proposition 6.1.5.2]{Lurie:HTT} and \cite[Proposition 2.3.2]{Anel-Lejay:topos-exp}.

\begin{theorem}[Algebraically free topos]
Let $\cE$ be a topos and $C$ a small category
The restriction along the composite functor $C\to C\lex \to \PSh{C\lex}$ induces an equivalence of categories
\[
\fun {\PSh{C\lex}}\cE\alg
\ =\ 
\fun C \cE \,.
\]
\end{theorem}

\begin{definition}[Algebraically free topos]
\label{def:freetopos}
Following \cite{Anel-Lejay:topos-exp}, we shall denote the topos $\PSh{C\lex}$ by $\S C$ and call it the {\it algebraically free topos on $C$}.
\end{definition}

When $C=1$ is the terminal category, we denote the algebraically free topos on 1 by $\S X$.
This topos is known as the ``object classifier'' since its universal property says
\[
\fun {\S X}\cE\alg
\ =\ 
\cE\,.
\]
In other words, an algebraic morphism $\S X\to \cE$ is the same thing as an object of $\cE$.    
The ``universal object'' $X$ is the functor represented by the terminal object $1\in 1\lex = \Fin$, that is the canonical inclusion $\Fin\to \cS$.

Any left-exact localization of $\S X$ corresponds to a property that can be enforced universally on $X$ and that is preserved by algebraic morphisms.
We refer to \cite[Section~5]{ABFJ:HS} for a detailed study of the localizations forcing $X$ to become $n$-truncated or $n$-connected.
We shall use these as examples throughout the paper.

\subsubsection{Surjections and connected maps}
\label{sec:surjection}
\label{sec:connected}
\label{sec:truncated}

\begin{definition}[Monomorphisms and surjections]
\label{def:surjection}
A map $f:X\to Y$ in a topos $\cE$ is a {\it monomorphism} if the square
\[
\begin{tikzcd}
X \ar[r, equal] \ar[d, equal] & X \ar[d, "f"] \\
X\ar[r, "f"'] \ar[r] & Y
\end{tikzcd}
\]
is a pullback.
We denote by $\Mono(\cE)$ the class of monomorphisms in $\cE$.
We shall say that a map $f: X \to Y$ in a topos $\cE$ is \emph{surjective}, or that $f$ is a \emph{surjection}, or a {\it cover}, if it is left orthogonal to $\Mono(\cE)$ (surjective maps are called \emph{effective epimorphisms} in \cite{Lurie:HTT}).
We denote by $\Surj(\cE)$ the class of surjections in $\cE$.
A family of maps $f_i:X_i\to Y$ is said to be a {\it surjective family} if the corresponding map $\coprod_iX_i\to Y$ is surjective.
\end{definition}

If $f:A\to B$ is a map in a topos $\cE$.
We define the {\it nerve} of $f$ to be the simplicial diagram in $N(f):\Delta\op \to \cE\slice B$
sending $[n]$ to $(A,f)^{\times n+1}$, the $(n+1)$-iterated product of $(A,f)$ in $\cE\slice B$ (i.e. the iterated fiber product over $B$ in $\cE$).
The colimit of $N(f)$ is denoted $\im f$ and called the {\it image} of $f$.
Its domain is denoted $\Im(f)$. 
The canonical map $A\to \Im(f)$ will be denoted $\coim f$ and called the {\it coimage} of $f$.
It can be proven that the image of $f$ is a monomorphism \cite[Proposition 6.2.3.4]{Lurie:HTT} and that the coimage is surjective:

\begin{proposition}[{\cite[6.2.3]{Lurie:HTT}, \cite[Lecture 4]{Rezk:Leeds}}]
\label{prop:charac-surjection}
If $f:A\to B$ is a map in a topos $\cE$. 
The following conditions are equivalent:
\begin{enumerate}
\item $f$ is surjective;
\item the colimit of $N(f)$ is terminal in $\cE\slice B$.
\end{enumerate}
\end{proposition}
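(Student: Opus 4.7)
The plan is to prove the equivalence via the surjection–monomorphism factorization. By construction, the colimit of $N(f)$ in $\cE\slice B$ is an object $\im f \to B$ whose structure map is a monomorphism (as stated just above the proposition, citing \cite[Proposition 6.2.3.4]{Lurie:HTT}). The first step is to unpack what this colimit in the slice means: computing it as a colimit in $\cE$ of the underlying diagram $[n] \mapsto A^{\times_B(n+1)}$ and equipping the result with its canonical map to $B$. The zeroth component of the colimiting cocone gives a factorization $f = (\im f \to B) \circ (\coim f)$ where $\coim f : A \to \im f$. The terminal object of $\cE\slice B$ is $1_B : B \to B$, so condition (2) is literally the statement that $\im f \to B$ is an isomorphism.

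Next I would establish that $\coim f$ is always surjective. Since $N(f)$ is the Čech nerve of $f$ (which is a groupoid object in $\cE\slice B$), this follows from the fact that in a topos every groupoid is effective: the canonical map from its object of 0-simplices to the colimit is an effective epimorphism. Equivalently, one may argue directly that $\coim f$ is left orthogonal to every monomorphism, using that any factorization through a mono of $A\to \im f$ would extend to a cocone under $N(f)$ and hence descend to $\im f$, forcing that mono to be invertible by the universal property of $\im f$ as a colimit. Either way, we obtain a factorization $f = m \circ e$ with $m := (\im f \to B)$ a monomorphism and $e := \coim f$ a surjection.

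Now both directions of the equivalence are easy. If (2) holds, then $m$ is invertible, so $f \simeq e$ is surjective, giving (1). Conversely, assume $f$ is surjective. Then applying the orthogonality defining $\Surj(\cE) \perp \Mono(\cE)$ to the square
\[
\begin{tikzcd}
A \ar[r,"e"] \ar[d,"f"'] & \im f \ar[d,"m"] \\
B \ar[r,equal] & B
\end{tikzcd}
\]
yields a section $s: B \to \im f$ of $m$ with $se = e$; then $m$ is a monomorphism equipped with a section, hence an isomorphism, which is exactly (2).

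The only nontrivial input is the surjectivity of $\coim f$, which is essentially an instance of Giraud's effectivity axiom for groupoids in a topos and is the reason the whole argument is specific to $\cE$ being a topos rather than an arbitrary presentable category. Once that is in hand, the rest of the proof is a formal consequence of the orthogonal factorization system (surjections, monomorphisms).
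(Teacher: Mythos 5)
The paper states \cref{prop:charac-surjection} without proof, quoting it from Lurie and Rezk, so there is no internal argument to compare yours against; what you have written is essentially the standard proof from those sources, and it is correct. Note, though, that the paper runs the logic in the opposite direction: it deduces the surjectivity of $\coim f$ \emph{from} the proposition, whereas you use it as the key lemma, so you must establish it independently --- and that is where all the content lies. Your first route (``$\coim f$ is an effective epimorphism by effectivity of groupoid objects'') is, as literally stated, circular: with Lurie's definition, ``effective epimorphism'' \emph{means} that the colimit of the \v{C}ech nerve is the target, so the identification of effective epimorphisms with maps left orthogonal to $\Mono$ is precisely the equivalence being proved. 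It is your second route that closes the argument: a factorization $A\to S\to \Im(f)$ through a monomorphism forces the colimit cocone of $N(f)$ to descend to $S$ (levelwise factorizations through a mono assemble, essentially uniquely, into a factorization of the whole cocone, using also that $\im f$ being monic gives $A\times_{\Im(f)}A\simeq A\times_B A$), whence a retraction $\Im(f)\to S$ and the invertibility of $S\to\Im(f)$; combined with the standard reduction of orthogonality against monomorphisms to the non-existence of proper monic factorizations (pull the mono back, and use that fillers against a mono are unique when they exist), this gives that $\coim f$ is surjective. Granting that, both implications are exactly as you say; the only slip is notational --- the diagonal filler $s$ in your final square satisfies $sf=e$ and $ms=1_B$, not $se=e$, but only the second identity is used.
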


By definition of the image $\im f$, the map $f$ factors into
\[
\begin{tikzcd}
A \ar[rr,"f"] \ar[rd,"\coim f "']&& B \\
& \Im(f) \ar[ru,"\im f"']
\end{tikzcd}
\]
Using \cref{prop:charac-surjection} one can prove that the map $\coim f$ is a surjection (called the {\it surjective part} of $f$).
We shall refer to this factorization as the {\it image factorization} of $f$.
The pair $(\Surj(\cE),\Mono(\cE))$ is an example of a modality, which is a factorization system on $\cE$ stable under base change.
We refer to \cref{sec:modality} for a quick reminder on factorization systems and modalities. 
For more details we refer to \cite[Section~5.2.8]{Lurie:HTT} or \cite[Section~3.1]{ABFJ:HS} for factorization systems, and to \cite[Section~3.2]{ABFJ:HS} for modalities.
In particular, a map which is both a surjection and a monomorphism is an isomorphism.
Any algebraic morphism preserves colimits and finite limits, thus it preserves monomorphisms and also surjections by \cref{prop:charac-surjection}.
Thus, any algebraic morphism preserves the image factorization.

\bigskip
\begin{definition}[Truncated and connected maps]
\label{def:truncated}
\label{def:connected}
For any $-1\leq n<\infty$, an object $X$ in a topos $\cE$ is said to be {\it $n$-truncated} if the diagonal map 
$X\to X^{S^{n+1}}$ is invertible.
A map $f$ is said to be {\it $n$-truncated} if the map 
$\Delta^{n+2}f$ is invertible.
An object  $X$ is said to be  {\it $n$-connected}
if the diagonal $X\to X^{S^{k}}$ is surjective
for every $-1\leq k\leq n$  (with the convention that $S^{-1} =\emptyset$). 
A map $f$ is said to be {\it $n$-connected} if the map 
$\Delta^k(f)$ is surjective for every $0\leq k\leq n+1$.
\end{definition}

Beware that an $n$-connected map in our sense is $(n+1)$-connected in the conventional topological indexing and is called $(n+1)$-connective in \cite{Lurie:HTT}.
We refer to \cite[Section~6.5.1]{Lurie:HTT} and \cite[Section~3.3]{ABFJ:GBM} for a study of properties of truncated and connected maps in a topos.

\medskip
If $\Trunc n$ (resp. $\Conn n$) denotes the class of $n$-truncated maps (resp. $n$-connected maps), then the pair $(\Conn n,\Trunc n)$ is another example of modality \cite[Example 3.4.2(2)]{ABFJ:GBM}.
In particular, a map which is both $n$-connected and $n$-truncated is an isomorphism.
Notice that $\Conn {-1} = \Surj$ and $\Trunc {-1} = \Mono$.
Any algebraic morphism preserves diagonals and surjective maps, therefore it preserves the two classes of $n$-connected and $n$-truncated maps, and the $n$-connected--$n$-truncated factorization.

\begin{definition}[\oo connected map, hypercomplete topos]
\label{def:oo-connected}
A map $f:X\to Y$ is {\it \oo connected}
if it is $n$-connected for all $n$.
We denote by $\Conn\infty(\cE)$ the class of \oo connected maps in $\cE$.
An object of $\cE$ is {\it hypercomplete} if it is local with respect to $\Conn\infty(\cE)$.
The subcategory of hypercomplete objects is denoted $\cE\hyper:=\Loc \cE {\Conn\infty}$.
A topos $\cE$ is {\it hypercomplete} if $\cE=\cE\hyper$ if and only if all \oo connected maps are invertible ($\Conn\infty(\cE) = \Iso(\cE)$).
We denote $\Toposalghc\subseteq\Toposalg$ the full subcategory spanned by hypercomplete topoi.
The next result says that it is a reflective subcategory.
\end{definition}

\begin{proposition}[{\cite[Lemmas 6.5.2.10, 6.5.2.12, and Proposition 6.5.2.13]{Lurie:HTT}}]
\label{prop:luriehypercompletion}
For a topos $\cE$, the full subcategory $\cE\hyper \subseteq \cE$ is reflective, the reflection is left-exact, and  the class of maps inverted by the reflector $\rho:\cE\to \cE\hyper$ is exactly $\Conn\infty$.
In particular $\cE\hyper$ is a topos and it is hypercomplete.
Moreover, $\rho$ is the reflection of $\cE$ in hypercomplete topoi.
\end{proposition}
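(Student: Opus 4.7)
I would split the proof into three parts: establishing the reflector and its accessibility, identifying the class of maps it inverts and deducing left exactness, and finally showing the target is hypercomplete and has the stated universal property.

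\emph{Part 1 (reflector exists and is accessible).} Since $\cE$ is presentable, I would fix a regular cardinal $\kappa$ such that the subcategory $\cE_\kappa$ of $\kappa$-compact objects is essentially small and $\kappa$-accessibly generates $\cE$. Let $S \subseteq \Conn\infty(\cE)$ be a (small) set of representatives for $\infty$-connected maps between $\kappa$-compact objects. Every $f \in \Conn\infty(\cE)$ is a $\kappa$-filtered colimit in $\cE^\to$ of maps in $S$, and locality with respect to a map is preserved by colimits in the arrow category, so an object is $S$-local iff it is $\Conn\infty$-local. \cref{localizationofpresentable} then provides an accessible cocontinuous reflector $\rho: \cE \to \Loc\cE S = \cE\hyper$.

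\emph{Part 2 (left exactness and inverted class).} Here the key observation is that $\Conn\infty = \bigcap_n \Conn n$ is already a congruence on $\cE$: it contains isomorphisms, is closed under composition, closed under colimits in the arrow category, and stable under pullback, all because each $\Conn n$ is the left class of a modality and these properties pass to the intersection. Since $\Conn\infty$ is itself a congruence containing the set $S$ above, the class of maps inverted by $\rho$ is exactly $\Conn\infty$: the ``one direction'' is definitional, while the converse follows from the fact that the smallest congruence containing $S$ is contained in $\Conn\infty$. Stability of $\Conn\infty$ under pullback yields left exactness of $\rho$ via Lurie's criterion that a cocontinuous localization whose inverted class is stable under base change is left exact. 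Consequently $\cE\hyper$ is an accessible left exact reflection of a topos, hence itself a topos, and $\rho$ is an algebraic morphism.

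\emph{Part 3 (hypercompleteness and universal property).} For the universal property, any algebraic morphism $\phi:\cE\to\cF$ with $\cF$ hypercomplete preserves $\infty$-connected maps (a left exact cocontinuous functor preserves diagonals and surjections, hence $n$-connected maps for every $n$) and sends them to isomorphisms, since $\cF$ is hypercomplete. Thus $\phi$ inverts $\Conn\infty(\cE)$ and by the universal property of the cocontinuous localization factors uniquely through $\rho$. To finish, I must verify that $\cE\hyper$ is itself hypercomplete. Given $g:X\to Y$ with $X,Y\in\cE\hyper$ and $g\in\Conn\infty(\cE\hyper)$, I would use that $\rho$ preserves image factorizations and all iterated diagonals to show that $g$, viewed inside $\cE$ via the fully faithful right adjoint, is a $\rho$-local object for each iterated coimage, and then bootstrap using the surjection--monomorphism factorization and the fact that a monomorphism inverted by a left exact localization equals its image in $\cE\hyper$, forcing $g$ to be invertible.

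The main obstacle I expect is precisely this final verification in Part 3: $\infty$-connectedness in $\cE\hyper$ is defined via image factorizations computed there, and these are only obtained from those of $\cE$ by applying $\rho$, so one must argue carefully that iterated surjectivity upstairs forces the preimage maps in $\cE$ to be inverted by $\rho$ and hence to lie in $\Conn\infty(\cE)$. Once that comparison is in hand, hypercompleteness and the rest of the statement follow formally.
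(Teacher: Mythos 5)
The paper does not prove this statement; it is imported wholesale from \cite[Lemmas 6.5.2.10, 6.5.2.12, and Proposition 6.5.2.13]{Lurie:HTT}, so what you have written is a reconstruction of Lurie's argument rather than an alternative to anything in the paper. As a reconstruction it has the right architecture, but it rests on two load-bearing claims that are asserted rather than proved, and each is the actual mathematical content of one of the cited lemmas.

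First, in Part 1 you claim that every $f\in\Conn\infty(\cE)$ is a $\kappa$-filtered colimit in $\cE\arr$ of $\infty$-connected maps between $\kappa$-compact objects. Writing $f$ as a filtered colimit of maps between $\kappa$-compact objects is standard; what is not automatic is that the terms of that diagram can be taken to be $\infty$-connected. Each $\Conn n$ is of small generation, but $\Conn\infty=\bigcap_n\Conn n$ is an infinite intersection, and an intersection of strongly saturated classes of small generation need not be of small generation. This is precisely Lurie's Lemma 6.5.2.10, whose proof requires choosing $\kappa$ so that the $\kappa$-compact objects are closed under the relevant finite limits and truncations and then arranging $n$-connectivity of the approximating maps for all $n$ simultaneously. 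Everything in your Parts 1--2 (the identification of the inverted class with $\Conn\infty$, hence left exactness via stability under base change) hinges on this; without it the reflector $\rho$ could invert strictly fewer maps than $\Conn\infty$. Second, you concede that the hypercompleteness of $\cE\hyper$ in Part 3 is incomplete. The missing step (Lurie's Lemma 6.5.2.12) is cleaner than your sketch suggests: for a map $g$ between objects of $\cE\hyper$, the image factorization in $\cE\hyper$ is $\rho$ applied to the one in $\cE$, so $g$ is surjective in $\cE\hyper$ iff $\rho(\im{g})$ is invertible iff $\im{g}\in\Conn\infty(\cE)$; but an $\infty$-connected monomorphism is already invertible (it is both in $\Surj$ and in $\Mono$), so $g$ is surjective in $\cE\hyper$ iff it is surjective in $\cE$. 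Since the fully faithful inclusion is left exact, iterated diagonals agree in $\cE$ and $\cE\hyper$, so by induction $\Conn\infty(\cE\hyper)$ consists exactly of the maps of $\cE\hyper$ lying in $\Conn\infty(\cE)$; these are inverted by $\rho$, which acts as the identity on $\cE\hyper$, hence they are isomorphisms. With these two points supplied your outline closes up.
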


The topos $\cE\hyper$ is called the {\it hypercompletion} of $\cE$.

\subsubsection{Congruences and left-exact localizations}
\label{sec:left-exact-localization}

The class of maps $f\in \cE$ inverted by an algebraic morphism $\phi:\cE\to \cF$ is a {\it congruence} $\cK_\phi \subseteq \cE$ in the following sense.

\begin{definition}[Congruence {\cite[Definition 4.2.1]{ABFJ:HS}}]
\label{defcongruenceclass2}
We say that a class of maps $\cK$ in a topos $\cE$ is a {\it congruence} if the following conditions hold:
\begin{enumerate}[label=\roman*)]
\item $\cK$ contains the isomorphisms and is closed under composition;
\item ${\cK}$ is closed under colimits and finite limits (in the arrow category of $\cE$).
\end{enumerate}
A class of maps $\cK\subseteq \cE$ is a congruence if and only if it is closed under base changes
and strongly saturated in Lurie's sense \cite[Proposition 4.2.3]{ABFJ:HS}.
In particular, any congruence satisfies the 3-for-2 property.
\end{definition}

\begin{examples}
\label{ex:congruence} 
Let $\cE$ be a topos.
\begin{examplenum}
\item\label{exmpcongruence1} 
The classes $\Iso$ and $\All$ of isomorphism and all maps in $\cE$ are respectively the smallest and the largest congruences (for the inclusion relation).

\item\label{exmpcongruence4} 
Let $\phi:\cE\to \cF$ be an algebraic morphism of topoi.
Recall that algebraic morphisms preserve isomorphisms, compositions, colimits and finite limits.
Then, for any congruence $\cK$ in $\cF$, the class $\phi^{-1}(\cK) =\{f\in \cE\ |\ \phi(f)\in \cK\}$ is a congruence on $\cE$.
In particular, the class $\cK_\phi:= \phi^{-1}(\Iso)$ of maps inverted by $\phi$ is a congruence.
We shall refer to $\cK_\phi$ as {\it the congruence of $\phi$}.

\item\label{exmpcongruence5} 
Let $\phi:\cE\to \cF$ be a left-exact localization.
Then, for any congruence $\cK$ in $\cE$ such that $\cK_\phi\subseteq \cK$, its image $\phi(\cK)$ is a congruence on $\cF$.
Moreover, we have $\cK = \phi^{-1}(\phi(\cK))$.

\item\label{exmpcongruence3}
The class $\Conn \infty$ of \oo connected maps is a congruence (see \cite[Proposition 6.5.2.8]{Lurie:HTT} and also \cite[Example 4.2.5.d]{ABFJ:HS}).
By \cref{prop:luriehypercompletion}, $\Conn \infty$ is the congruence of $\phi:\cE\to \cE\hyper$, the hypercompletion of $\cE$ (\cref{def:oo-connected}).

\item\label{exmpcongruence6}
Any intersection of congruences is a congruence.

\end{examplenum}
\end{examples}

Any class of maps $\Sigma$ in a topos $\cE$ is contained in a smallest congruence $\Sigma\cong \subseteq \cE$.
We say that ${\Sigma}\cong$ is the congruence \emph{generated} by the class of maps $\Sigma\subseteq \cE$.
An algebraic morphism $\phi:\cE\to \cF$ inverts $\Sigma$ if and only if it inverts the whole congruence $\Sigma\cong$ ($\Sigma\subseteq \cK_\phi \Leftrightarrow \Sigma\cong\subseteq \cK_\phi$).

\medskip

If $\Sigma$ is a class of maps in a topos $\cE$, an algebraic morphism $\phi:\cE\to \cF$ is said to be a {\it $\Sigma$-localization in algebraic morphisms}, or to invert $\Sigma$ {\it universally among algebraic morphisms}, or to be the {\it left-exact localization generated by $\Sigma$},
if it is initial in the category of algebraic morphisms inverting $\Sigma$.
More precisely, if we denote by $\fun\cE\cG\alg^\Sigma$, the category of algebraic morphisms inverting $\Sigma$, we will say that $\phi:\cE\to \cF$ is a {\it $\Sigma$-localization in algebraic morphisms}, or that it inverts $\Sigma$ {\it universally} in algebraic morphisms, if it inverts $\Sigma$ and if the induced functor
\[
(-)\circ \phi: \fun\cF\cG\alg
\stto
\fun\cE\cG\alg^\Sigma
\]
is an equivalence of categories for every topos $\cG$.
More prosaically, this means that if an algebraic morphism $\gamma:\cE\to \cG$ inverts every map in $\Sigma$, then there exists a unique pair $(\gamma',\alpha)$ where $\gamma':\cF\to \cG$ is an algebraic morphism and $\alpha$ is an isomorphism $\gamma\simeq \gamma'\circ \phi$.
We shall say that an algebraic morphism $\phi:\cE\to \cF$ is a {\it left-exact localization} if it is the left-exact localization generated by some class $\Sigma\subseteq \cE$ (which we can always take to be the class of all maps inverted by $\phi$).
Recall from \cref{sec:localizations} that any reflection is a localization in cocontinuous functors.
The universality property shows immediately that any left-exact reflection is a left-exact localization.
Since any algebraic morphism has automatically a right adjoint, any left-exact localization is a reflection by \cite[Proposition 2.2.1]{ABFJ:HS}.
This reflection is always left-exact and this proves that, conversely, any left-exact localization is a left-exact reflection.
If $\Sigma$ is a class of maps in a topos $\cE$, then the codomain of any left-exact $\Sigma$-localization $\cE\to \cF$ is unique up to equivalence of categories and we denote this codomain generically by $\cE\Forcing\Sigma\Iso$ (this non-classical notation will be justified in \cref{sec:forcing}).

\medskip
For an arbitrary class $\Sigma$, the topos $\cE\Forcing\Sigma\Iso$ may not exist, but it does when $\Sigma$ is a set.
We say that a congruence $\cK\subseteq \cE$ is of {\it small generation} if $\cK={\Sigma}\cong$ for a set of maps $\Sigma\subseteq \cE$.
The congruence $\cK_\phi$ of an algebraic morphism $\phi$ is of small generation by \cite[Lemma 4.2.7]{ABFJ:HS}.
A left-exact localization is said to be {\it accessible} if it is accessible in the sense of \cref{def:accessible}.
The following theorem is not explicitly stated in \cite{Lurie:HTT}, but it is an easy consequence of Propositions 5.5.4.15, 6.2.1.1, and 6.2.1.2 therein combined.

\begin{theorem}[{\cite{Lurie:HTT}}]
\label{Luriethm1}
\label{Luriethmloc}
If $\Sigma$ is a set of maps in a topos $\cE$, then the subcategory $\Loc\cE{\Sigma\cong}$
is reflective, it is a topos, the reflector 
$\rho:\cE\to \Loc\cE{\Sigma\cong}$ is accessible, left-exact, and universal for inverting $\Sigma$ in algebraic morphisms.
Moreover, $\Sigma\cong$ is the class of maps inverted by $\rho$.
Symbolically,
\[
\cE\Forcing\Sigma\Iso
\ =\ 
\Loc\cE{\Sigma\cong}\,.
\]
\end{theorem}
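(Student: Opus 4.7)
The plan is to assemble three ingredients already set up in the paper: the existence of the cocontinuous $\Sigma$-localization of a presentable category (\cref{localizationofpresentable}), the characterization of congruences as strongly saturated classes closed under base change (\cref{defcongruenceclass2}, citing [ABFJ:HS, Proposition 4.2.3]), and Lurie's criterion that the latter property implies left-exactness of the associated accessible reflection (HTT Proposition 6.2.1.1).

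First I would apply \cref{localizationofpresentable} to the set $\Sigma$ in the presentable category $\cE$: this produces a reflective subcategory $\Loc\cE\Sigma \subseteq \cE$ and identifies the reflector $\rho$ with the cocontinuous localization $\cE \to \LOCcc\cE\Sigma$. I would then observe that for any object $X$, the class of maps $u$ such that $X$ is $u$-local is itself a congruence (it contains isomorphisms, and is closed under composition, colimits, and finite limits in the arrow category, since these operations are preserved by $\Map{-}{X}$). Hence $\Loc\cE\Sigma = \Loc\cE{\Sigma\cong}$. The class $\cW_\rho$ of maps inverted by $\rho$ is a congruence containing $\Sigma$, hence contains $\Sigma\cong$; conversely, $\rho$ being the cocontinuous $\Sigma$-localization, $\cW_\rho$ agrees with the strongly saturated class generated by $\Sigma$, which by \cref{defcongruenceclass2} coincides with $\Sigma\cong$. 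This yields $\cW_\rho = \Sigma\cong$.

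Second, I would establish left-exactness. By \cref{defcongruenceclass2}, $\Sigma\cong$ is a strongly saturated class closed under base change; this is exactly the hypothesis of HTT Proposition 6.2.1.1, which yields that the accessible reflector $\rho$ preserves finite limits. Thus $\rho:\cE \to \Loc\cE{\Sigma\cong}$ is an accessible left-exact reflection, and composing with an accessible left-exact reflection $\PSh\cK \to \cE$ presents $\Loc\cE{\Sigma\cong}$ itself as an accessible left-exact reflection of $\PSh\cK$, hence as a topos.

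For the universal property among algebraic morphisms, let $\phi:\cE\to \cG$ be algebraic with $\Sigma \subseteq \cW_\phi$. Since $\cW_\phi$ is a congruence, $\Sigma\cong \subseteq \cW_\phi$, so $\phi$ inverts every map inverted by $\rho$. The universal property of the cocontinuous localization $\rho$ then produces a unique cocontinuous functor $\phi':\Loc\cE{\Sigma\cong}\to \cG$ with $\phi \simeq \phi'\circ \rho$; left-exactness of $\phi'$ transfers from that of $\phi$ because finite limits in $\Loc\cE{\Sigma\cong}$ are computed as $\rho$ applied to finite limits in $\cE$. The main obstacle in this plan is the left-exactness of $\rho$ itself, which rests on Lurie's nontrivial argument that base-change stability of the inverted class forces the reflection to preserve pullbacks; in the present framework this work has been absorbed into the equivalence between congruences and strongly saturated base-change-stable classes, reducing the present theorem to a bookkeeping assembly.
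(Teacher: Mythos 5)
There is a genuine gap, and it sits at the very first step. Your ``observation'' that, for a fixed object $X$, the class of maps $u$ such that $X$ is $u$-local is a congruence is false: that class is closed under composition and under colimits in the arrow category (because $\Map{-}{X}$ sends colimits of $\cE\arr$ to limits of spaces), but it is \emph{not} closed under base change or under finite limits in $\cE\arr$ --- the functor $\Map{-}{X}$ is contravariant and does nothing useful to limits in the arrow category. Concretely, in $\cS$ the object $S^0$ is local with respect to $1\to S^1$ (both mapping spaces are $S^0$), but not with respect to its base change $\Omega S^1=\mathbb{Z}\to 1$. Consequently the identification $\Loc\cE\Sigma=\Loc\cE{\Sigma\cong}$ on which your whole plan rests fails in general: for $\Sigma=\{S^1\to 1\}$ in $\cS$, the $\Sigma$-local objects are exactly the $0$-truncated spaces, so $\Loc\cS\Sigma=\Set$ with reflector $\pi_0$, which is not left-exact; whereas $\Sigma\cong$ contains $\mathbb{Z}\to 1$ (a base change of the diagonal of $S^1\to 1$) and then $\emptyset\to 1$, hence $\Sigma\cong=\All$ and $\Loc\cS{\Sigma\cong}$ is the trivial topos. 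For the same reason your claim that $\cW_\rho$ ``agrees with the strongly saturated class generated by $\Sigma$, which \dots coincides with $\Sigma\cong$'' is wrong: by \cite[Proposition 4.2.3]{ABFJ:HS}, $\Sigma\cong$ is the smallest strongly saturated class containing $\Sigma$ \emph{and stable under base change}, which is in general strictly larger than the strongly saturated class generated by $\Sigma$.

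The repair is to work with $\Sigma\cong$ from the outset, which is what the citation of \cite[Propositions 5.5.4.15, 6.2.1.1 and 6.2.1.2]{Lurie:HTT} is doing: identify $\Sigma\cong$ with Lurie's class $\overline{\Sigma}$, the smallest strongly saturated class containing $\Sigma$ and stable under pullbacks. The two nontrivial inputs you then need from \cite[Proposition 6.2.1.1]{Lurie:HTT} are (i) that $\overline{\Sigma}$ is of \emph{small generation} --- without this you cannot invoke \cref{localizationofpresentable} to obtain the reflective subcategory $\Loc\cE{\Sigma\cong}$ at all, and your plan never addresses this point because the erroneous identification with $\Loc\cE\Sigma$ hides it --- and (ii) that the resulting accessible reflector is left-exact. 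Once those two facts are in place, your arguments for $\cW_\rho=\Sigma\cong$, for $\Loc\cE{\Sigma\cong}$ being a topos, and for the universal property among algebraic morphisms go through essentially as written.
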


\begin{theorem}[{\cite[Proposition 5.5.4.16]{Lurie:HTT}}]
\label{Luriethm2} 
If $\phi:\cE\to \cF$ is an algebraic morphism of topoi, then the congruence $\cK_\phi$ is of small generation.
\end{theorem}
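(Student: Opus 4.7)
The plan is to exploit the adjunction $\phi \dashv \phi_*$ and reduce the statement to the general fact that accessible reflective subcategories of presentable categories are generated by a small set of maps. This is essentially the strategy of the cited \cite[Proposition~5.5.4.16]{Lurie:HTT}, now specialized to the setting of topoi.

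First I would invoke the adjoint functor theorem to produce a right adjoint $\phi_* : \cF \to \cE$, using that both $\cE$ and $\cF$ are presentable and that $\phi$ is cocontinuous. By \cite[Corollary~5.5.2.9]{Lurie:HTT}, any limit-preserving functor between presentable categories that admits a left adjoint is accessible; hence $\phi_*$ preserves $\kappa$-filtered colimits for some regular cardinal $\kappa$. I would then identify the full subcategory $\cE' \subseteq \cE$ of $\cW_\phi$-local objects: by a direct adjunction argument, a map $f \in \cE$ lies in $\cW_\phi$ if and only if every object of the form $\phi_* Y$ is $f$-local, so $\cE'$ coincides with the essential image of $\phi_*$. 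The reflection $\rho : \cE \to \cE'$ extracted from the unit $\eta : \id \to \phi_* \phi$ thus inverts exactly the class $\cW_\phi$, and $\rho$ is an accessible reflection because $\cE'$ is closed under $\kappa$-filtered colimits in $\cE$.

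Next I would apply \cite[Proposition~5.5.4.15]{Lurie:HTT} to this accessible reflection to obtain a small set $\Sigma \subseteq \cE$ with $\cE' = \Loc\cE\Sigma$. Combining with Theorem~\ref{Luriethm1}, the reflector onto $\Loc\cE\Sigma$ inverts precisely the congruence $\Sigma\cong$; since this reflector agrees with $\rho$ (they share the same domain and the same local subcategory), it follows that $\cW_\phi = \Sigma\cong$, so $\cW_\phi$ is of small generation.

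The principal obstacle is establishing the accessibility of the right adjoint $\phi_*$: without it the subcategory of $\cW_\phi$-local objects need not admit a small presentation, and there would be no way to extract a generating set for the reflection. Once accessibility is in hand, the remainder of the argument is essentially an unpacking of definitions, combined with Theorem~\ref{Luriethm1} to match $\cW_\phi$ with the congruence $\Sigma\cong$ generated by the set $\Sigma$.
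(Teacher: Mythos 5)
The paper offers no proof of this statement: it is quoted verbatim from \cite[Proposition 5.5.4.16]{Lurie:HTT}, so the relevant comparison is with Lurie's argument. Your proposal contains a genuine gap at its load-bearing step. From the (correct) observation that $f\in\cW_\phi$ if and only if every object $\phi_*Y$ is $f$-local, you conclude that the subcategory $\cE'$ of $\cW_\phi$-local objects \emph{coincides} with the essential image of $\phi_*$; but that observation only yields the inclusion of the essential image into $\cE'$, and the reverse inclusion is a double-orthogonality statement that does not follow and that you would have to prove. Worse, the next step fails outright: for a general algebraic morphism the composite $\phi_*\phi$ is not idempotent and its unit is not inverted by $\phi$, so there is no ``reflection extracted from $\eta:\id\to\phi_*\phi$''. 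For instance, for $\phi=(-)\times S^1:\cS\to\cS\slice{S^1}$ one has $\phi_*\phi X= X^{S^1}$, and $\phi(\eta_{S^1}):S^1\times S^1\to (LS^1)\times S^1$ is not invertible even though $\cW_\phi=\Iso$. Your strategy implicitly assumes that $\phi$ is itself a reflection, which is exactly the case the theorem is \emph{not} needed for; and establishing that $\cE'$ is an accessible reflective subcategory without already knowing that $\cW_\phi$ is of small generation is essentially the content of the theorem, so the plan is circular at that point.

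The standard route (Lurie's) avoids local objects entirely. The class $\cW_\phi$, viewed as a full subcategory of the arrow category $\cE\arr$, is the preimage of the subcategory of equivalences of $\cF\arr$ under the functor induced by $\phi$; since $\phi$ is cocontinuous between presentable categories it is accessible, and a pullback of accessible categories along accessible functors is accessible. Hence $\cW_\phi$ is an accessible subcategory of $\cE\arr$ and is generated under $\kappa$-filtered colimits by a small set of maps $\Sigma\subseteq\cW_\phi$. A congruence is closed under colimits in the arrow category, so $\cW_\phi\subseteq\Sigma\cong$, while $\Sigma\cong\subseteq\cW_\phi$ because $\cW_\phi$ is a congruence containing $\Sigma$; thus $\cW_\phi=\Sigma\cong$. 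Note that the accessibility you invoke (that of the right adjoint $\phi_*$) is not the one that matters; what is needed is the accessibility of $\phi$ itself, applied at the level of arrow categories.
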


The following theorem from \cite{Lurie:HTT} shows that the notion of congruence plays a role similar to that of Grothendieck topologies in controlling left-exact localizations.
Contrary to the case of 1-topoi, it is not known whether all left-exact localizations of topoi are accessible.
Therefore, a condition of small generation must be imposed.
For $\cE$ a fixed topos, we consider
the poset $\Cong(\cE)$ of all congruences in $\cE$ (ordered by inclusion), 
the subposet $\Cong\sg(\cE)\subseteq \Cong(\cE)$ of congruences of small generation in $\cE$,
and the poset $\mathsf{LexLoc_{acc}}(\cE)$ of (isomorphism classes of) accessible left-exact localizations of $\cE$ and algebraic morphisms between them.
The map $\phi\mapsto \cK_\phi$ defines a morphism of posets $\mathsf{LexLoc_{acc}}(\cE) \to \Cong\sg(\cE)$.
Conversely, if $\cK=\Sigma\cong$ is a congruence of small generation, then the localization $\phi_\cK:\cE\to \cE\Forcing\cK\Iso$ exist and is accessible by \cref{Luriethmloc} and we get a function
$\Cong\sg(\cE) \to \mathsf{LexLoc_{acc}}(\cE)$.

\begin{theorem}[{\cite[Propositions 5.5.4.2 and 6.2.1.1 together]{Lurie:HTT}}]
\label{thm:bij-congruence-lexloc}
The functions $\phi\mapsto \cK_\phi$ and $\cK \mapsto \phi_\cK$ define inverse isomorphisms of posets
\[
\mathsf{LexLoc_{acc}}(\cE)
\ \simeq\ 
\mathsf{Cong_{sg}}(\cE)\, .
\]
\end{theorem}

\begin{remark}
In the rest of this paper, we will work with arbitrary congruences, not only those of small generation.
We shall mention explicitly when the hypothesis of small generation is needed.    
\end{remark}

The following lemma will be useful.
Let $\alpha$ be the inaccessible cardinal bounding the size of small objects.
Let $\beta>\alpha$ be the inaccessible cardinal bounding the size of large objects.
A topos $\cE$ is a $\beta$-small category and the poset of all congruences in $\Cong(\cE)$ is always $\beta$-small.
The arities of the operations used in the definition of congruences are $\alpha$-small categories, hence the following result (recall that we denote by $\Class(\cE)$ the poset of all classes of maps in a topos $\cE$).

\begin{lemma}
\label{lem:filtcong}
The inclusion $\Cong(\cE)\to \Class(\cE)$ commutes with $\alpha$-filtered unions.
\end{lemma}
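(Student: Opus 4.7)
The plan is to verify directly that an $\alpha$-filtered union of congruences satisfies the four defining conditions of a congruence: containing the isomorphisms, closure under composition, closure under finite limits in the arrow category, and closure under arbitrary (small) colimits in the arrow category. The key observation is that each of these closure operations has \emph{arity strictly less than $\alpha$}: being an isomorphism is a $0$-ary condition, composition is binary, finite limits are finitely indexed, and since $\cE$ is a topos---hence a locally small (i.e.\ $\alpha$-small) presentable category, in particular a $\beta$-small category whose hom-spaces and diagram categories all have size $<\alpha$---every small colimit in $\cE\arr$ is indexed by an $\alpha$-small category and therefore involves an $\alpha$-small family of maps.

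The abstract principle I would isolate first is the following: if $P$ is a property of classes of maps expressing closure under an operation whose input is an $\alpha$-small family $\{f_\lambda\}_{\lambda\in\Lambda}$ of maps (so $|\Lambda|<\alpha$), then an $\alpha$-filtered union of classes satisfying $P$ also satisfies $P$. Indeed, given $\{\cW_i\}_{i\in I}$ an $\alpha$-filtered diagram of classes satisfying $P$ and an $\alpha$-small family $\{f_\lambda\}\subseteq \bigcup_i \cW_i$, for each $\lambda$ choose $i_\lambda\in I$ with $f_\lambda\in \cW_{i_\lambda}$; by $\alpha$-filteredness (applied to the $\alpha$-small subdiagram spanned by the $i_\lambda$) there is some $j\in I$ with $i_\lambda\leq j$ for all $\lambda$, hence $\{f_\lambda\}\subseteq \cW_j$, and the output of the operation lies in $\cW_j\subseteq \bigcup_i \cW_i$ because $\cW_j$ satisfies $P$.

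Applying this principle to each of the four closure conditions gives the result, since each has arity strictly less than $\alpha$ as discussed above. The only step that requires a moment's thought is the colimit condition; here I would cite that $\cE$ is accessible, so the indexing categories of its colimits are $\alpha$-small, and the same holds for $\cE\arr$. There is no real obstacle, the lemma is essentially a piece of cardinal bookkeeping: all of the ``finitary or small'' operations that define a congruence commute past $\alpha$-filtered unions, exactly as in the classical fact that $\alpha$-presentable objects (or $\alpha$-accessible structures more generally) are stable under $\alpha$-filtered colimits.
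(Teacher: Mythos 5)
Your proof is correct and follows exactly the argument the paper intends: the paper states the lemma without a formal proof, justifying it only by the preceding observation that the arities of the operations defining a congruence are $\alpha$-small, which is precisely the "abstract principle" you isolate and then apply condition by condition. Your write-up is just a more explicit version of the same cardinal-bookkeeping argument.
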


\subsubsection{Modalities and fiberwise orthogonality}
\label{sec:modality}

In this short section, we recall the definitions of modalities and fiberwise orthogonality from \cite{ABFJ:GC,ABFJ:HS}.
They will be needed in some statements and proofs.
We refer to \cite[Section~5.2.8]{Lurie:HTT} or \cite[Section~3.1]{ABFJ:HS} for more details on factorization systems, and to \cite[Section~3.2]{ABFJ:HS} for modalities.

\medskip

Recall that a map $u:A\to B$ in a category
$\cE$ is said to be (left) \emph{orthogonal} 
to a map $f:X\to Y$ and we write $u\perp f$ (and $f$ is said to be
right \emph{orthogonal} to $u$)
if the following commutative square in the category of spaces $\cS$ is cartesian.
\begin{equation}\label{eq:def-orthogonality2}
\begin{tikzcd}
\Map B X \ar[rr,"{\Map u X}"] \ar[d,"{\Map B f}"']  && 
\Map A X \ar[d,"{\Map A f}"] \\
\Map B Y \ar[rr,"{\Map u Y}"]
&& \Map A Y
\end{tikzcd} \,
\end{equation}

If $\cA$ and $\cB$ are two classes of maps in a category $\cE$, we
shall write $\cA\perp \cB$ to mean that we have $u\perp v$ for
every $u\in \cA$ and $v\in \cB$.  We shall denote by $\rorth\cA$
(resp.  $\lorth \cA$ ) the class of maps in $\cE$ that are
right orthogonal (resp. left orthogonal) to every map in $\cA$.  We
have
\[
\cA\subseteq \lorth\cB 
\quad \Leftrightarrow \quad \cA\perp \cB 
\quad \Leftrightarrow  \quad \rorth\cA \supseteq \cB\ .
\]

Recall that a class $\cO$ of objects in a category $\cE$
is said to be {\it replete} if every object isomorphic to
an object in $\cO$ belongs to $\cO$.
We shall say that a class of maps $\cM$ in a category $\cE$
is {\it replete} if it is replete as a class of objects of
the arrow category $\cE\arr $.
Most classes of maps considered in this paper
are replete.

\medskip

A pair $(\cL, \cR)$ of classes of maps in a category $\cE$ is said to be a \emph{factorization system} if the following three conditions hold:
\begin{enumerate}[label=\roman*)]
\item\label{defFactSystem:1} the classes $\cL$ and $\cR$ are replete;
\item\label{defFactSystem:2} $\cL\perp \cR$;
\item\label{defFactSystem:3} every map $f:X\to Y$ in $\cE$ admits a factorization $f=pu:X\to E\to Y$ with $u\in \cL$ and $p\in \cR$.
\end{enumerate}

If $(\cL, \cR)$ is a factorization system, then $\cR=\rorth \cL$ and $\cL=\lorth \cR$.
The class $\cL$  is said to be the \emph{left class} of the factorization system and the class $\cR$ to be the \emph{right class}. 
We shall say that the factorization in \ref{defFactSystem:3} is a {\it $(\cL,\cR)$-factorization} of the map $f:X\to Y$. 
The $(\cL, \cR)$-factorization of a map is unique (up to unique isomorphism).

\begin{definition}[Modality]
\label{defmodality} 
We shall say that a factorization system $(\cL, \cR)$ in 
a topos $\cE$ is a \emph{modality} if its left class $\cL$ is closed under base change. 
\end{definition}

We refer to \cite{ABFJ:GC,ABFJ:GBM,ABFJ:HS,RSS} for more on modalities.

\begin{examples}
\label{examplemodality}
Let $\cE$ be a topos.
\begin{examplenum}
\item\label{examplemodality:1}
    The factorization system $(\Surj,\Mono)$ of surjections and monomorphisms is a modality \cite[Example 3.2.12~(b)]{ABFJ:HS}.

\item\label{examplemodality:2} 
    The factorization system system $(\Conn n,\Trunc n)$ of $n$-connected maps and $n$-truncated maps is a modality for every $n \geq -1$ \cite[Example 3.4.2]{ABFJ:GBM}.
    Notice that $(\Conn {-1},\Trunc {-1})=(\Surj,\Mono)$.

\item\label{examplemodality:3} 
    If $\phi:\cE\to \cF$ is an algebraic morphism, the pair $(\cK_\phi,\cK_\phi^\perp)$ is a modality in $\cE$.
\end{examplenum}
\end{examples}

\medskip

\begin{definition}[Fiberwise orthogonality]
\label{fperp} 
Let $\mathcal{E}$ be a category with finite limits.
We shall say that a map $u:A\to B$ in $\mathcal{E}$ is \emph{fiberwise left orthogonal} to a map $f:X\to Y$, and write $u\fperp f$ (and say that $f$ is \emph{fiberwise right orthogonal} to $u$) if every base change $u'$ of $u$ is left orthogonal to $f$.
\end{definition}

If $\cA$ and $\cB$ are two classes of maps in a category with finite limits $\cE$, we shall write $\cA\fperp \cB$ to mean that we have $u\fperp f$ for every $u\in \cA$ and $f\in \cB$.
We shall denote by $\cA^\fperp$ (resp. ${}^\fperp \cA$) the class of maps in $\cE$ that are fiberwise right orthogonal (resp.  fiberwise left orthogonal) to every map in $\cA$.
We have
\[
\cA\subseteq {}^\fperp\cB 
\quad \Leftrightarrow \quad
\cA\fperp \cB
\quad \Leftrightarrow
\quad \cA^\fperp \supseteq \cB
\]

Let $\cE$ be a category with finite limits.
Then a factorization system $(\cL, \cR)$ in $\cE$ is a modality if and only if $\cL \fperp \cR$, in which case $\cR=\cL^\fperp$ and $\cL={}^\fperp \cR$.

\medskip
For any set of maps $\Sigma$ in a topos $\cE$, the pair $({}^\fperp(\Sigma^\fperp),\Sigma^\fperp)$ is a modality \cite[Theorem 3.2.20]{ABFJ:HS}.
Let us see that the modality $(\Conn n, \Trunc n)$ admits such generator.
Let $S^n$ be the $n$-sphere in $\cS$.
We shall abuse notation and denote again $S^n$ the image of $S^n$ by the unique algebraic morphism $\cS\to \cE$.
We denote by $s^n$ the unique map $S^n\to 1$ in $\cE$.
Recall the class $\Trunc n$ of $n$-truncated maps and the class $\Conn n$ of $n$-connected maps from \cref{def:truncated}.

\begin{lemma}
\label{lem:gen-conn-trunc}
For $-1\leq n <\infty$, the modality $(\Conn n, \Trunc n)$ is generated by the single map $s^{n+1}$, that is we have
\[
\{s^{n+1}\}^\fperp = \Trunc n
\qquad\textrm{and}\qquad
{}^\fperp\big(\{s^{n+1}\}^\fperp\big) = \Conn n \,.
\]
\end{lemma}
\begin{proof}
Since $(\Conn n, \Trunc n)$ is a modality, we have $\Conn n = {}^\fperp\Trunc n$.
So it is enough to show that $\{s^{n+1}\}^\fperp = \Trunc n$.

Any base change of the map $s^n:S^n\to 1$ is of the type $S^n\times A\to A$ for some $A\in \cE$.
Thus, for a map $f:X\to Y$, the orthogonality relation $s^{n+1} \fperp f$ is true if and only if all squares
\[
\begin{tikzcd}
\Map A X \ar[r]\ar[d] \pbmark & \Map A Y \ar[d]\\
\Map A {X^{S^{n+1}}} \ar[r] & \Map A {Y^{S^{n+1}}}
\end{tikzcd}
\]
are cartesian in $\cS$, if and only if the single square
\[
\begin{tikzcd}
X \ar[r,"f"]\ar[d] \pbmark & Y \ar[d]\\
{X^{S^{n+1}}} \ar[r] & {Y^{S^{n+1}}}
\end{tikzcd}
\]
is cartesian in $\cE$.
The cartesian gap map of this last square is $\Delta^{n+2} f$, so we have that
$s^{n+1} \fperp f$
if and only if $\Delta^{n+2} f$ is invertible
if and only if $f$ is $n$-truncated.
\end{proof}

\subsubsection{Acyclic classes}
\label{sec:acyclic}

In this section, we recall the notion of acyclic class from \cite{ABFJ:HS} and develop a number of new results about them.

\begin{definition}[Acyclic class {\cite[Definition 3.2.8]{ABFJ:HS}}]
\label{defacyclicclass2}
We say that a class of maps $\cA$ in a topos $\cE$ is {\it acyclic} if the following conditions hold:
\begin{enumerate}[label=\roman*)]
\item\label{defacyclicclass2:1} the class $\cA$ contains the isomorphisms and is closed under composition;
\item\label{defacyclicclass2:2} the class ${\cA}$ is closed under colimits (in the arrow category of $\cE$);
\item\label{defacyclicclass2:3} the class ${\cA}$ is closed under base change.
\end{enumerate} 
\end{definition}

\begin{examples}
\label{ex:acyclic} 
Acyclic classes abound in topos theory and homotopy theory.
\begin{examplenum}
\item\label{ex:acyclic:1} 
In a topos $\cE$, the classes $\Iso$ of isomorphisms and $\All$ of all maps are respectively the smallest and the largest acyclic classes (for the inclusion relation).

\item\label{ex:acyclic:2}
Any congruence is an acyclic class, in particular the class $\cK_\phi$ of maps inverted by an algebraic morphism $\phi:\cE\to \cF$ is acyclic.

\item\label{ex:acyclic:3bis}
The left class of a modality is always acyclic.
In fact, by \cite[Proposition 3.2.14]{ABFJ:HS}, the class  ${}^\fperp \cM$ is acyclic for any class of maps $\cM$ in a topos $\cE$.

\item\label{ex:acyclic:3}
In particular, the class $\Surj$ and the classes $\Conn n$ (for $-1\leq n\leq \infty$) are acyclic since they are the left classes of some modalities \cite[Examples 3.2.12~(b) and (c)]{ABFJ:HS}.

\item\label{ex:acyclic:3ter} 
The class of acyclic maps in a topos in the sense of \cite{Raptis:acyclic,Hoyois:acyclic} is an acyclic class since it is the left class of a modality.

\item\label{ex:acyclic:4} 
Any algebraic morphism of topoi $\phi:\cE\to \cF$ preserves isomorphisms, composition, colimits and pullbacks, therefore the class $\phi^{-1}(\cA) =\{f\in \cE\ |\ \phi(f)\in \cA\}$ is an acyclic class of $\cE$, for any acyclic class $\cA\subseteq \cF$.
In particular, the class $\phi^{-1}(\Surj)$ of maps sent to surjections by $\phi$ is acyclic. 
More generally, $\phi^{-1}(\Conn n)$ is acyclic for $-1\leq n\leq \infty$.

\item\label{ex:acyclic:5} 
Any intersection of acyclic classes is acyclic.

\end{examplenum}
\end{examples}

Every class of maps $\Sigma$ in a topos $ \cE$ is contained in a smallest acyclic class $\Sigma\ac$ called the acyclic class {\it generated} by $\Sigma$.
We shall say that an acyclic class $\cA$ is of small generation if $\cA=\Sigma\ac$ for a set of maps $\Sigma\subseteq \cA$.
We refer to \cite[Corollary 3.2.19]{ABFJ:HS} for a description of $\Sigma\ac$ in terms of saturated classes.
We shall need only the following lemma.
Recall the notion of fiberwise orthogonality of \cref{fperp}.

\begin{lemma}[{\cite[Lemma~3.2.15]{ABFJ:HS}}]
\label{lem:acyclicsat}
For any class of maps $\Sigma$, we have $\Sigma\ac = {}^\fperp(\Sigma^\fperp)$.
\end{lemma}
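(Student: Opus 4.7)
The plan is to establish the equality $\Sigma\ac = {}^\fperp(\Sigma^\fperp)$ by proving the two inclusions separately, with the easy direction coming from closure properties and the hard direction relying on a constructive description of the left class of a modality.

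For the inclusion $\Sigma\ac \subseteq {}^\fperp(\Sigma^\fperp)$, I would verify that ${}^\fperp(\Sigma^\fperp)$ is itself an acyclic class containing $\Sigma$; it must then contain the smallest such class $\Sigma\ac$. The inclusion $\Sigma \subseteq {}^\fperp(\Sigma^\fperp)$ is immediate from the definition of $\Sigma^\fperp$. For the acyclic axioms of \cref{defacyclicclass2}: closure under base change is built into the very definition of fiberwise orthogonality; closure under composition follows by decomposing any base change of a composite $v \circ u$ as the composite of a base change of $u$ followed by a base change of $v$ (a standard pasting of pullback squares), together with the standard fact that ordinary left orthogonal classes are closed under composition; closure under colimits in the arrow category uses universality of colimits in a topos, which identifies any base change of a colimit of maps with a colimit of base changes, after which the closure of ordinary left orthogonal classes under colimits in arrow categories finishes the argument.

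For the harder inclusion ${}^\fperp(\Sigma^\fperp) \subseteq \Sigma\ac$, my approach is to invoke the modality-generation theorem (Theorem 3.2.20 of \cite{ABFJ:HS}), which asserts that $({}^\fperp(\Sigma^\fperp), \Sigma^\fperp)$ is a modality whose left class ${}^\fperp(\Sigma^\fperp)$ is obtained by a small object argument: an iterative transfinite procedure that starts from $\Sigma$ and repeatedly applies pullback along arbitrary maps, pushout along arbitrary spans, and transfinite composition. Because $\Sigma\ac$ already contains $\Sigma$ and is, by definition of an acyclic class, closed under pullback, under colimits in the arrow category (which subsume both pushout and transfinite composition), and under composition, this iterative construction never leaves $\Sigma\ac$, giving the desired inclusion.

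The main obstacle is this second inclusion, which rests on the availability of a concrete constructive description of ${}^\fperp(\Sigma^\fperp)$ in terms of iterated applications of a specific list of operations to $\Sigma$. The abstract closure properties defining an acyclic class do not on their own provide such a description, so without the small-object-argument construction of the modality (or an equivalent explicit generation) there is no direct way to realize an arbitrary $u \in {}^\fperp(\Sigma^\fperp)$ as a member of $\Sigma\ac$.
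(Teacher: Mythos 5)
This paper does not actually prove the lemma: it is imported verbatim from Lemma~3.2.15 of \cite{ABFJ:HS}, so there is no in-paper argument to compare against, and your proposal has to stand on its own. Your first inclusion $\Sigma\ac \subseteq {}^\fperp(\Sigma^\fperp)$ is correct and complete: ${}^\fperp(\Sigma^\fperp)$ contains $\Sigma$, and it is acyclic (this is exactly \cref{ex:acyclic:3bis}, i.e.\ Proposition~3.2.14 of \cite{ABFJ:HS}); your verifications of the three axioms of \cref{defacyclicclass2} via pasting of pullbacks and universality of colimits are the standard ones and are fine.

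The gap is in the reverse inclusion. You attribute to Theorem~3.2.20 of \cite{ABFJ:HS} the statement that ${}^\fperp(\Sigma^\fperp)$ \emph{equals} the class produced by the transfinite cell-complex construction (base changes, pushouts, transfinite composites starting from $\Sigma$). What the small object argument actually delivers is weaker: every map $f$ admits a factorization $f = p\circ i$ with $i$ such a cell complex and $p\in\Sigma^\fperp$. To get from there to $u\in\Sigma\ac$ for an arbitrary $u\in{}^\fperp(\Sigma^\fperp)$ you must still use the hypothesis on $u$: factor $u=p\circ i$, note that $u\perp p$, and use the resulting (unique) diagonal filler of the square with top $i$, right $p$ and bottom $\mathrm{id}$ to exhibit $u$ as a retract of $i$ in the arrow category; then $u\in\Sigma\ac$ because a retract is a colimit (an idempotent splitting) and $\Sigma\ac$ is closed under colimits. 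As written, your argument never uses that $u$ is fiberwise left orthogonal to $\Sigma^\fperp$, which is the telltale sign that this step is missing. Two further cautions: Theorem~3.2.20 is stated only for a \emph{set} $\Sigma$, whereas the lemma concerns an arbitrary class, so a reduction is needed (e.g.\ writing $\Sigma\ac$ as a filtered union of $S\ac$ over small $S\subseteq\Sigma$ and checking that ${}^\fperp\big((-)^\fperp\big)$ is compatible with this union, which is itself not automatic); and since Lemma~3.2.15 precedes Theorem~3.2.20 in \cite{ABFJ:HS}, quoting the latter risks circularity --- the safe route is to run the retract argument directly against the factorization produced by the small object argument rather than against the modality statement as a black box.
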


\begin{remark}[Acyclic classes and modalities]
\label{rem:acyclic2modality}
\Cref{lem:acyclicsat} implies in particular that any acyclic class of small generation $\cA=\Sigma\ac$ is the left class of the modality $(\Sigma\ac,\Sigma^\fperp)$ (see \cite[Theorem 3.2.20]{ABFJ:HS}).
\end{remark}

Acyclic classes and congruences are intimately related.
We recall some results from \cite{ABFJ:HS}.
For a class of maps $\Sigma\subseteq \cE$, we define
\[
\Delta(\Sigma)
\ :=\ 
\big\{\,\Delta u \ | \ u\in \Sigma\,\big\}\ ,
\]
\[
\Delta^0(\Sigma)
\ :=\ 
\Sigma
\ ,\qquad
\Delta^{n+1}(\Sigma)
\ :=\ 
\Delta(\Delta^n(\Sigma))\,,
\]
\[
\Delta^{\leq n}(\Sigma)
\ :=\ 
\bigcup_{k=0}^n \Delta^k(\Sigma)
\ =\ 
\big\{ \Delta^i u \, | \, u \in \Sigma,\, 0\leq k\leq n \big\}
\ ,
\]
\[
{\rm and} \qquad
\Sigma\diag
\ :=\ 
\bigcup_{k=0}^\infty \Delta^k(\Sigma)
\ =\ 
\big\{ \Delta^k u \, | \, u \in \Sigma,\, k\geq 0 \big\}
\ .
\]

\begin{proposition}[Recognition of congruences {\cite[Theorem~4.1.8(3)]{ABFJ:HS}}]
\label{lem:caraccong}
An acyclic class $\cA$ is a congruence if and only if $\Delta(\cA)\subseteq \cA$ if and only if $\cA\diag = \cA$. 
\end{proposition}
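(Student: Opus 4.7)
The plan proceeds in three parts.

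First, the equivalence $\Delta(\cA)\subseteq \cA \iff \cA\diag = \cA$ is a formality: the forward direction follows by induction on $k$ to show $\Delta^k(\cA)\subseteq \cA$ for all $k\geq 0$, and the reverse is tautological since $\Delta(\cA)\subseteq \cA\diag$ always holds.

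Second, for the implication $(\cA \text{ is a congruence}) \Rightarrow \Delta(\cA)\subseteq \cA$, I exhibit $\Delta u$ as a finite limit in the arrow category. Given $u:A\to B$ in $\cA$, consider the cospan in $\cE\arr$
\[
1_A \xrightarrow{(1_A,\,u)} u \xleftarrow{(1_A,\,u)} 1_A.
\]
Its pullback in $\cE\arr$, computed componentwise, has top $A\times_A A=A$, bottom $A\times_B A$, and induced arrow $\Delta u$. Since $1_A$ and $u$ both lie in $\cA$ and $\cA$ is closed under finite limits in $\cE\arr$, we conclude $\Delta u \in \cA$.

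Third, for the main implication $(\cA \text{ acyclic with } \Delta(\cA)\subseteq \cA) \Rightarrow (\cA\text{ is a congruence})$, I show $\cA$ is closed under all finite limits in $\cE\arr$. Since finite limits are generated by the terminal object and pullbacks, and the terminal $1_1\to 1_1$ is the identity and trivially in $\cA$, it suffices to treat pullbacks. Let $f\to h \ot g$ be a cospan in $\cE\arr$ with $f:A\to B$, $g:C\to D$, $h:X\to Y$ all in $\cA$, connecting squares $(\alpha_0,\alpha_1)$ and $(\beta_0,\beta_1)$. The pullback is the arrow $f\times_h g : A\times_X C \to B\times_Y D$, which I factor through the arrow-category object $A\times_Y C \to B\times_Y D$ (the fiber products on $Y$ use $\alpha_1 f$, $\beta_1 g$, $\alpha_1$, $\beta_1$):
\[
A\times_X C \longrightarrow A\times_Y C \longrightarrow B\times_Y D.
\]
The second arrow is the composition of two base changes, of $f$ along $B\times_Y C\to B$ and of $g$ along $B\times_Y D\to D$, hence lies in $\cA$ by acyclicity. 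For the first arrow, the factorization $\Delta X = \iota\circ \Delta h$, with $\iota: X\times_Y X\hookrightarrow X\times X$ the mono induced by $h\times h$, identifies $A\times_X C = (A\times_Y C)\times_{X\times_Y X} X$, exhibiting $A\times_X C \to A\times_Y C$ as the base change of $\Delta h$ along the map $(\alpha_0,\beta_0): A\times_Y C\to X\times_Y X$. Since $\Delta h\in \cA$ by the diagonal hypothesis, this base change lies in $\cA$, and composing yields the required pullback in $\cA$.

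The principal obstacle is this third step: one must locate the right decomposition of the arrow-category pullback so that the diagonal hypothesis enters cleanly as a single base change of $\Delta h$, rather than being hidden in an opaque manipulation of iterated fiber products. The intermediate object $A\times_Y C$, which forces the computation to pass through $h$ before invoking the diagonal, is the crux of the argument.
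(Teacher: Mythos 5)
Your proof is correct, and it follows essentially the route of the cited reference [ABFJ:HS, Theorem~4.1.8(3)] (the present paper gives no proof of this statement, only the citation): reduce closure under finite limits to the terminal object and pullbacks, exhibit $\Delta u$ as a pullback of $1_A\to u\ot 1_A$ for the forward direction, and for the converse split $f\times_h g$ as the base change of $\Delta h$ along $(\alpha_0,\beta_0)$ followed by the composite of base changes of $f$ and $g$. One cosmetic slip: the canonical map $\iota:X\times_Y X\to X\times X$ is a base change of $\Delta Y$ and is a monomorphism only when $Y$ is $0$-truncated, but nothing in your argument depends on this, since the identification $A\times_X C\simeq (A\times_Y C)\times_{X\times_Y X}X$ holds regardless.
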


\begin{theorem}[Generation of congruences {\cite[Theorem 4.2.12 and Proposition 4.3.6]{ABFJ:HS}}]
\label{congruencegenerated17}
\label{sigmaac=sigmacong}
If $\Sigma$ is a class of maps in a topos $\cE$, then $\Sigma\cong=(\Sigma\diag)\ac$.
Moreover, if $\Sigma$ is a class of monomorphisms, then $\Sigma\cong=\Sigma\ac$.
\end{theorem}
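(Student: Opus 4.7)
The strategy is to establish the two inclusions of the first identity separately, and to deduce the monomorphism statement as a direct corollary. The forward inclusion $(\Sigma\diag)\ac \subseteq \Sigma\cong$ is essentially tautological: the congruence $\Sigma\cong$ is acyclic by \cref{ex:acyclic:2} and closed under iterated diagonals by \cref{lem:caraccong}, so it contains $\Sigma\diag$, and then $(\Sigma\diag)\ac \subseteq \Sigma\cong$ by acyclicity of $\Sigma\cong$.

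The heart of the argument is the reverse inclusion $\Sigma\cong \subseteq (\Sigma\diag)\ac$. By minimality of $\Sigma\cong$ it suffices to check that $(\Sigma\diag)\ac$ is a congruence containing $\Sigma$. Containment is clear, so by \cref{lem:caraccong} the only remaining condition is that $(\Sigma\diag)\ac$ be stable under the diagonal operation. The plan is to reduce this to the following key lemma, which is of independent interest:
\[
\Delta(\Gamma\ac) \;\subseteq\; (\Delta\Gamma)\ac \qquad \text{for every class of maps } \Gamma \text{ in } \cE.
\]
Applying it with $\Gamma = \Sigma\diag$ and combining with the obvious containment $\Delta(\Sigma\diag) \subseteq \Sigma\diag$ yields exactly the desired diagonal-stability.

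To prove the key lemma, I would introduce the class $\cC := \{f \in \cE\arr \mid \Delta f \in (\Delta\Gamma)\ac\}$ and show it is acyclic; since $\Gamma \subseteq \cC$ tautologically, this would give $\Gamma\ac \subseteq \cC$, which is the lemma. Verifying the three clauses of \cref{defacyclicclass2}: stability under base change is essentially free, since base change commutes with the diagonal and $(\Delta\Gamma)\ac$ is itself closed under base change. Stability under composition uses the standard factorization of the diagonal of a composite: for $f:A\to B$ and $g:B\to C$, the map $\Delta(gf):A\to A\times_C A$ factors as $\Delta f$ followed by the canonical map $A\times_B A \to A\times_C A$, and this second map is itself a pullback of $\Delta g$ along $(f,f):A\times_C A \to B\times_C B$; so both factors lie in $(\Delta\Gamma)\ac$ and hence so does their composite. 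The main obstacle is stability under colimits in $\cE\arr$: given a diagram of maps $f_i : A_i \to B_i$ each with $\Delta f_i \in (\Delta\Gamma)\ac$ and colimit $f=\colim f_i : A \to B$, one must exhibit $\Delta f$ as obtained from the $\Delta f_i$ by operations preserving $(\Delta\Gamma)\ac$. I expect this to follow from a descent argument using universality of colimits in the topos $\cE$: universality expresses $A\times_B A$ as a colimit of pullbacks $A_i\times_B A_j$, which one then relates to the fiberwise diagonals $A_i\times_{B_i} A_i$ via further base changes, using the composition and base-change steps already established.

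Finally, the ``moreover'' clause is immediate from the main identity: if $\Sigma$ consists of monomorphisms then $\Delta u$ is an isomorphism for each $u \in \Sigma$, so by induction $\Sigma\diag \subseteq \Sigma \cup \Iso \subseteq \Sigma\ac$; hence $(\Sigma\diag)\ac = \Sigma\ac$, and combined with the first identity this gives $\Sigma\cong = \Sigma\ac$.
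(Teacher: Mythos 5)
Your overall reduction --- show $(\Sigma\diag)\ac$ is stable under $\Delta$ and conclude by minimality --- is the right shape, and your forward inclusion, your composition and base-change verifications, and your deduction of the ``moreover'' clause are all correct. But the key lemma $\Delta(\Gamma\ac)\subseteq(\Delta\Gamma)\ac$ is \emph{false}, and the proposed proof of it breaks exactly at the colimit step, which you flagged as the remaining obstacle but which is not merely technical. Take $\Gamma=\{\emptyset\to 1\}$ in $\cE=\cS$. Then $\Delta(\emptyset\to 1)=\mathrm{id}_\emptyset$, so $(\Delta\Gamma)\ac=\Iso$ and your class $\cC=\{f\mid \Delta f\in(\Delta\Gamma)\ac\}$ is exactly $\Mono$. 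But $\Mono$ is not closed under colimits in $\cE\arr$: the pushout of the diagram $(1\xrightarrow{\mathrm{id}}1)\leftarrow(\emptyset\to 1)\to(1\xrightarrow{\mathrm{id}}1)$, all of whose objects are monomorphisms, is $S^0\to 1$, which is not one. The same pushout shows $S^0\to 1\in\Gamma\ac$ while $\Delta(S^0\to 1)=(2\hookrightarrow 4)\notin\Iso=(\Delta\Gamma)\ac$, refuting the lemma itself. Worse, the failure occurs precisely in the situation of your ``moreover'' clause: for $\Sigma$ a class of monomorphisms one has $\Delta(\Sigma\diag)\subseteq\Iso$, so your key lemma would give $\Delta(\Sigma\ac)\subseteq\Iso$, i.e.\ $\Sigma\ac\subseteq\Mono$, hence $\Sigma\ac=\Iso$ by \cref{lem:nogo} --- contradicting the very statement you are proving. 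The structural reason is that $\Delta(\colim f_i)$ is not governed by the $\Delta f_i$ alone: by descent $A\times_B A=\colim_{i,j}A_i\times_B A_j$, and comparing $A_i\times_B A_i$ with $A_i\times_{B_i}A_i$ brings in the diagonals of the cocone maps $B_i\to B$, which $(\Delta\Gamma)\ac$ does not control.

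The correct statement is weaker and the hypothesis $f\in\cA$ cannot be dropped: what is true is that $\susp\cA=\{f\in\cA\mid \Delta f\in\cA\}$ is acyclic for any acyclic class $\cA$ (\cref{lem:carrier}, quoting \cite[Remark 3.3.10]{ABFJ:HS}); the intersection with $\cA$ is essential, and even then closure under colimits is not checked by hand but obtained by exhibiting $\susp\cA$ as a class of the form ${}^\fperp\cM$ via the modality and fiberwise-orthogonality machinery of \cref{sec:modality}, using the compatibility of the pullback-hom with diagonals. Granting that lemma, the argument closes as you intend: for $\cA=(\Sigma\diag)\ac$ one has $\Sigma\diag\subseteq\susp\cA$ since $\Sigma\diag$ is closed under $\Delta$, hence $\cA\subseteq\susp\cA$ by minimality, i.e.\ $\Delta(\cA)\subseteq\cA$ and $\cA$ is a congruence by \cref{lem:caraccong}.
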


Let $\Class(\cE)$ be the poset of classes of maps in a topos $\cE$ (i.e. the poset of full and replete subcategories of the arrow category $\cE\arr$).
Let $\Acyclic(\cE)$ be the subposet of acyclic classes and $\Cong(\cE)$ the subposet of congruences.
All the inclusions have left adjoints.
\[
\begin{tikzcd}
\Class(\cE)
\ar[rr,shift left = 1.6, "(-)\ac"]
\ar[from=rr,shift left = 1.6, hook']
\ar[rrrr,bend left = 30, "(-)\cong = \left((-)\diag\right)\ac"]
&
&
\Acyclic(\cE)
\ar[rr,shift left = 1.6, "(-)\cong"]
\ar[from=rr,shift left = 1.6, hook']
&
&
\Cong(\cE)
\end{tikzcd}
\]   
We shall see in \cref{thm:adjCongAcyclic} that the inclusion $\Cong(\cE)\to \Acyclic(\cE)$ has also a right adjoint.

\medskip

If $\Sigma$ is a class of maps in a topos $\cE$, we shall denote by $\Sigma\bc$ the smallest class of maps which contains $\Sigma$ and is closed under base change. 

\begin{definition}[{\cite[Definition 4.3.1]{ABFJ:HS}}]
\label{def:ABFJmain}
If $\Sigma$ is a class of maps in a topos $\cE$, 
we shall say that an object $X\in \cE$
is a $\Sigma$-{\it sheaf} if it
is local with respect to the class $(\Sigma\diag)\bc$.
We write  $\Sh \cE \Sigma:=\Loc {(\Sigma\diag)\bc} \cE$ for the full subcategory of $\Sigma$-sheaves.
\end{definition}

The following result is easy consequence 
of \cref{congruencegenerated17,Luriethmloc}.

\begin{theorem}[{\cite[Theorem 4.3.3]{ABFJ:HS}}]
\label{thm:ABFJmain}
Let $\Sigma$ be a set of maps in a topos $\cE$.
Then $\Sh \cE \Sigma=\Loc\cE{\Sigma\cong}=\cE\Forcing\Sigma\Iso$.
\end{theorem}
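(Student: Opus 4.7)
The plan is to derive both equalities by chaining together the earlier results of the section. The second equality $\Loc\cE{\Sigma\cong}=\cE\Forcing\Sigma\Iso$ is already established: it is the content of \cref{Luriethmloc}, which identifies the left-exact localization generated by a set $\Sigma$ with the full subcategory of $\Sigma\cong$-local objects. So the real content is the first equality $\Sh\cE\Sigma=\Loc\cE{\Sigma\cong}$. Unfolding \cref{def:ABFJmain}, we have $\Sh\cE\Sigma=\Loc\cE{(\Sigma\diag)\bc}$, while \cref{sigmaac=sigmacong} gives $\Sigma\cong=(\Sigma\diag)\ac$. So the claim reduces to the identity
\[
\Loc\cE{(\Sigma\diag)\bc}\;=\;\Loc\cE{(\Sigma\diag)\ac}.
\]
The inclusion $(\Sigma\diag)\bc\subseteq(\Sigma\diag)\ac$ (acyclic classes are stable under base change, cf.\ \cref{defacyclicclass2}) immediately yields $\Loc\cE{(\Sigma\diag)\ac}\subseteq\Loc\cE{(\Sigma\diag)\bc}$, so only the reverse inclusion is at stake.

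For the nontrivial inclusion, I would use the fiberwise orthogonality characterization of the acyclic closure provided by \cref{lem:acyclicsat}, namely $\cM\ac={}^\fperp(\cM^\fperp)$ for any class $\cM$. The pivotal observation is that for an object $X\in\cE$ and any class $\cM$, being $\cM\bc$-local is equivalent to the terminal map $X\to 1$ lying in $\cM^\fperp$. Indeed, since $\Map{-}{1}$ is terminal, one sees that $X$ is $u$-local iff $u\perp(X\to 1)$, and applying this to every base change of $u$ gives $X$ is $\{u\}\bc$-local iff $u\fperp(X\to 1)$; taking the conjunction over $u\in\cM$ yields precisely $X\to 1\in\cM^\fperp$.

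Applying this with $\cM=\Sigma\diag$: if $X$ is $(\Sigma\diag)\bc$-local then $X\to 1\in(\Sigma\diag)^\fperp$, and then every $u\in(\Sigma\diag)\ac={}^\fperp((\Sigma\diag)^\fperp)$ satisfies $u\fperp(X\to 1)$, a fortiori $u\perp(X\to 1)$, whence $X$ is $u$-local. This gives $\Loc\cE{(\Sigma\diag)\bc}\subseteq\Loc\cE{(\Sigma\diag)\ac}$ and finishes the proof. There is no real obstacle once \cref{lem:acyclicsat} and \cref{sigmaac=sigmacong} are in hand; the statement is essentially a bookkeeping consequence of the fiberwise-orthogonality description of acyclic closures developed in \cref{sec:acyclic}.
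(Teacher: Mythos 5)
Your proof is correct and takes essentially the route the paper intends: the paper states this theorem as an import from [ABFJ:HS, Theorem 4.3.3], remarking only that it is an "easy consequence" of \cref{congruencegenerated17} and \cref{Luriethmloc}, and your argument is precisely that derivation, with the one genuinely nontrivial bridge --- that $(\Sigma\diag)\bc$-locality and $(\Sigma\diag)\ac$-locality coincide --- correctly supplied via \cref{lem:acyclicsat} and the observation that $X$ is $\{u\}\bc$-local iff $u\fperp(X\to 1)$.
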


\medskip

\begin{lemma}[{\cite[Lemma 3.1.5]{ABFJ:HS}}]
\label{acyclic-cancell} 
Any acyclic class is right cancellable ($vu, u\in \cA \Rightarrow v\in \cA$).
\end{lemma}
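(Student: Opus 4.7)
The strategy is to invoke the characterization $\cA = {}^\fperp(\cA^\fperp)$ provided by \cref{lem:acyclicsat}. This reduces the goal ``$v \in \cA$'' to checking that, for every $f : X \to Y$ in $\cA^\fperp$, every base change of $v$ is left orthogonal to $f$. I would fix such an $f$, fix a base change $v' : B' \to C'$ of $v$ along some map $\gamma : C' \to C$, and consider a typical lifting problem with $v'$ on the left and $f$ on the right, with top arrow $a : B' \to X$ and bottom arrow $b : C' \to Y$.

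The key move is to pull back $u : A \to B$ along the induced map $B' \to B$, obtaining $u' : A' \to B'$. Stability of $\cA$ under base change gives $u' \in \cA$, and stacking this new pullback square on top of the pullback defining $v'$ exhibits $v' u'$ as a base change of $vu$, whence $v' u' \in \cA$ as well. In particular, both $u' \perp f$ and $v' u' \perp f$.

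A candidate diagonal for the original square is then produced by solving the auxiliary lifting problem with $v' u'$ on the left, $f$ on the right, top $a u'$ and bottom $b$: this yields a unique $h : C' \to X$ with $fh = b$ and $h v' u' = a u'$. To finish, I would verify $h v' = a$ by observing that $h v'$ and $a$ are both lifts of the square with $u'$ on the left, top $a u'$ and bottom $f a = b v'$; since $u' \perp f$ has unique lifts, the two maps coincide. Uniqueness of $h$ as a lift against $v'$ reduces by the analogous argument to uniqueness of lifts against $v' u' \perp f$.

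The only genuinely non-trivial step is the opening translation of the cancellation property into a lifting problem via the fiberwise-orthogonal characterization of $\cA$; once that is in hand, the remainder is a clean diagram chase exploiting that both $u'$ and $v' u'$ lift uniquely against any $f \in \cA^\fperp$.
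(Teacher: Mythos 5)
Your argument is correct and is essentially the proof behind the cited reference (the present paper only cites \cite[Lemma 3.1.5]{ABFJ:HS} and does not reprove the statement): reduce via $\cA={}^\fperp(\cA^\fperp)$ from \cref{lem:acyclicsat} to the right-cancellation property of plain orthogonality, using that the composite of a base change $u'$ of $u$ with the corresponding base change $v'$ of $v$ is a base change of $vu$. One presentational refinement: since orthogonality in this setting means that the square of mapping spaces \eqref{eq:def-orthogonality2} is cartesian (i.e.\ the \emph{space} of diagonal fillers is contractible, not merely that fillers exist and are unique up to homotopy), the cleanest way to conclude $v'\perp f$ from $u'\perp f$ and $v'u'\perp f$ is to paste the two cartesian mapping-space squares for $u'$ and $v'u'$ and apply the pasting law for pullbacks; your explicit lift-chasing is the $\pi_0$-shadow of exactly that argument.
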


\begin{proposition}[Transport]
\label{prop:transport-acyclic}
\label{prop:transport-cong}
Let $\phi:\cE\to \cF$ be a left-exact localization of topoi, with associated congruence $\cK_\phi$.
\begin{enumerate}
\item\label{prop:transport-acyclic:0}
For any acyclic class $\cA$ in $\cE$ such that $\cK_\phi\subseteq \cA$, its image $\phi(\cA)$ is an acyclic class in $\cF$.

\item\label{prop:transport-acyclic:1}
    The inverse image $\phi^{-1}$ induces an isomorphism between the poset $\Acyclic(\cF)$ of acyclic classes in $\cF$ and the poset $\Acyclic(\cE)\upslice {\cK_\phi}$ of acyclic classes in $\cE$ containing $\cK_\phi$.

\item\label{prop:transport-acyclic:2}
    The previous isomorphism restricts to an isomorphism between the poset $\Cong(\cF)$ of congruences in $\cF$ and the poset $\Cong(\cE)\upslice {\cK_\phi}$ of congruences in $\cE$ containing $\cK_\phi$.
\end{enumerate}
\end{proposition}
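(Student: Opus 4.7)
The plan is to reduce all three statements to a single characterization lemma: letting $\iota: \cF \hookrightarrow \cE$ denote the fully faithful right adjoint to $\phi$, for any acyclic class $\cA \subseteq \cE$ with $\cW_\phi \subseteq \cA$ and any map $f \in \cE$, one has the equivalence
\[
f \in \cA \quad\Longleftrightarrow\quad \iota\phi f \in \cA\,.
\]
By applying this to maps of the form $f = \iota g$ and using $\phi\iota \simeq \id_\cF$, one obtains the reformulation $g \in \phi(\cA) \iff \iota g \in \cA$, which will be the workhorse for checking the axioms of $\phi(\cA)$.

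To prove the characterization lemma, I would exploit the naturality square $\iota\phi f \circ \eta_X = \eta_Y \circ f$ of the unit $\eta$ of the adjunction $\phi \dashv \iota$. The triangle identities together with the fully-faithfulness of $\iota$ force $\phi(\eta_Z)$ to be an isomorphism for every $Z \in \cE$, so all unit maps lie in $\cW_\phi$ and hence in $\cA$. The forward direction then follows from $\eta_Y \circ f \in \cA$ by applying right cancellation (\cref{acyclic-cancell}) to strip off $\eta_X$. For the reverse direction, I would form the pullback of $\iota\phi f$ along $\eta_Y$, factoring $f$ through the induced comparison map $c: X \to P$. The base change $p_1: P \to Y$ of $\iota\phi f$ lies in $\cA$ by closure of $\cA$ under base change; applying $\phi$ to the pullback (preserved by left-exactness) and invoking the triangle identities together with $\phi\iota \simeq \id$ forces $\phi c$ to be an isomorphism, so $c \in \cW_\phi \subseteq \cA$, and $f = p_1 \circ c \in \cA$ by composition. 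This pullback-comparison argument is the main obstacle, since it requires combining left-exactness of $\phi$ with the triangle identities in order to identify $c$ as a member of $\cW_\phi$.

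Granted the characterization, part (1) is routine: closure of $\phi(\cA)$ under isomorphisms, composition, and base change in $\cF$ transports along $\iota$ to the corresponding properties of $\cA$ in $\cE$, since $\iota$ preserves these operations (as a functor and right adjoint). Closure under colimits uses cocontinuity of $\phi$ together with $\phi\iota \simeq \id$: for a diagram $g_i \in \phi(\cA)$, one has $\iota g_i \in \cA$, hence $\colim \iota g_i \in \cA$, and $\phi(\colim \iota g_i) = \colim g_i$, so $\colim g_i \in \phi(\cA)$. For part (2), both $\phi^{-1}: \Acyclic(\cF) \to \cW_\phi\backslash\Acyclic(\cE)$ and $\phi(-): \cW_\phi\backslash\Acyclic(\cE) \to \Acyclic(\cF)$ are well-defined (by \cref{ex:acyclic:4} and part (1)); they are mutually inverse, since $\phi^{-1}\phi(\cA) = \cA$ is immediate from the characterization, and $\phi\phi^{-1}(\cB) = \cB$ follows from essential surjectivity of $\phi$ via $\phi\iota g \simeq g$. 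For part (3), a congruence is an acyclic class closed under diagonals (\cref{lem:caraccong}); since $\iota$ is left-exact it preserves diagonals, so the characterization transports closure under diagonals from $\cA$ to $\phi(\cA)$, while the reverse inclusion $\phi^{-1}(\Cong(\cF)) \subseteq \Cong(\cE)$ is \cref{exmpcongruence4}.
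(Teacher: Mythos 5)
Your proposal is correct and follows essentially the same route as the paper: the forward direction of your characterization lemma (unit maps lie in $\cW_\phi\subseteq\cA$ plus right cancellation) is the paper's proof that $\phi(\cA)=\cA\cap\cF$, and your pullback-comparison argument for the reverse direction is exactly the paper's cartesian gap-map argument for $\phi^{-1}(\phi(\cA))=\cA$. The only difference is organizational (you keep $\iota$ explicit and package everything as $f\in\cA\iff\iota\phi f\in\cA$, whereas the paper identifies $\cF$ with a full subcategory), and you are in fact slightly more complete on part (3) by checking both directions of the restriction to congruences.
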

\begin{proof}
\eqref{prop:transport-acyclic:0}
Since any left-exact localization is a reflection, we can assume that $\cF$ is a full subcategory of $\cE$.
With this convention, let us show that $\phi(\cA) = \cA\cap \cF$.
We show first that $\phi(\cA)\subseteq \cA$.
For any map $f:A\to B$ in $\cE$, we have a commutative square
\begin{equation}
\label{eq:square}    
\begin{tikzcd}
A\ar[r]\ar[d,"f"'] & \phi(A)\ar[d,"\phi(f)"]\\
B\ar[r] & \phi(B)
\end{tikzcd}
\end{equation}
where the horizontal maps are in $\cK_\phi$, thus in $\cA$ by hypothesis.
When $f\in \cA$, the map $\phi(f)$ is in $\cA$ by right cancellation (see \cref{acyclic-cancell}).
This proves $\phi(\cA) \subseteq \cA\cap \cF$.
Conversely, for any map $f\in \cA\cap \cF$, we have $\phi(f) = f$.
This proves that $\cA\cap \cF \subseteq \phi(\cA)$, and therfore $\cA\cap \cF = \phi(\cA)$.

Now, let us show that $\cA\cap \cF$ is acyclic.
Conditions \ref{defacyclicclass2:1} and \ref{defacyclicclass2:3} of \cref{defacyclicclass2} are trivial since $\cF$ is stable by limits in $\cE$.
We are left to prove condition \ref{defacyclicclass2:2}.
Let $f_i$ be a diagram of maps in $\cA\cap \cF$, and let $f$ be its colimit computed in $\cE$.
It is in $\cA$ since $\cA$ is acyclic.
The colimit of the diagram computed in $\cF$ is $\phi(f)$, which is then in $\phi(\cA) =\cA\cap \cF$.
Hence $\cA\cap \cF$ is closed under colimits and $\phi(\cA)$ is acyclic.

\smallskip
\noindent\eqref{prop:transport-acyclic:1}
Since every acyclic class $\cB$ in $\cF$ contains the class $\Iso$, we have always $\cK_\phi =\phi^{-1}(\Iso) \subseteq \phi^{-1}(\cA)$. 
This proves the restriction of $\phi^{-1}$ is well defined.
We use the convention introduced in the proof of \eqref{prop:transport-acyclic:0} that $\cF$ is a full subcategory of $\cE$.
Let us see that $\phi(\phi^{-1}(\cB)) = \cB$ for any acyclic class $\cB\subseteq \cF$.
We have always $\phi(\phi^{-1}(\cB))\subseteq \cB$.
Conversely, for any $g\in \cB\subseteq \cF$, we have $\phi(g)=g$.
This proves $\cB \subseteq \phi^{-1}(\cB)\cap \cF = \phi(\phi^{-1}(\cB))$ and the equality.

Let us see now that $\phi^{-1}(\phi(\cA)) = \cA$ for any acyclic class in $\cE$ such that $\cK_\phi\subseteq \cA$.
We have always $\cA \subseteq \phi^{-1}(\phi(\cA))$.
Conversely, let $f$ be a map such that $\phi(f)\in \phi(\cA)=\cA\cap \cF$.
We consider the cartesian gap map $g:A\to B\times_{\phi(B)}\phi(A)$ of the square \eqref{eq:square}.
By construction, the horizontal maps are in $\cK_\phi$.
Thus, the projection $p_2:B\times_{\phi(B)}\phi(A)\to \phi(A)$ is also in $\cK_\phi$ since congruences are closed under base change.
Then the gap map $g:A\to B\times_{\phi(B)}\phi(A)$ is in $\cK_\phi$ by the 3-for-2 property satisfied by congruences.
The projection $p_1:B\times_{\phi(B)}\phi(A)\to B$ is in $\cA$ since $\phi(f)\in \cA$ and $\cA$ is closed by base change.
By hypothesis $\cK_\phi\subseteq\cA$, so both maps $g$ and $p_1$ are in $\cA$.
Then so is $f= p_1\circ g$ by closure under composition.

\smallskip
\noindent\eqref{prop:transport-acyclic:2}
If $\cK\subseteq \cF$ is a congruence, then $\phi^{-1}(\cK)$ is a congruence on $\cE$.
Conversely, for a congruence $\cK\subseteq \cE$, we know that $\phi(\cK)$ is acyclic by \eqref{prop:transport-acyclic:0}. 
To see that it is a congruence we use the fact that $\phi$ preserves diagonals and \cref{lem:caraccong}.
This proves the isomorphism of \eqref{prop:transport-acyclic:1} restricts to congruences.
\end{proof}

\smallskip

We shall say that a colimit is a coproduct if it is indexed by a set (and not by a general small groupoid).

\begin{definition}[Local class {\cite[Proposition 6.2.3.14]{Lurie:HTT}}]
\label{locclass}
\label{localclasscoproduct}  
Let $\cM$ be a class of maps closed under base change in a topos $\cE$. 
We say that $\cM$ is {\it local} if the following two conditions holds:
\begin{enumerate}[label=\roman*)]
\item\label{locclass:1} $\cM$ is closed under coproducts;
\item\label{locclass:2} If the base change of a map $f:A\to B$ along a surjection $v:B'\to B$ belongs to $\cM$, then the map $f$ belongs to $\cM$.
\end{enumerate}
\end{definition}

\begin{examples}
\label{ex:local-class}
Both the left and the right class of a modality are local by \cite[Proposition 3.2.7]{ABFJ:HS}.
In particular, the classes $\Surj$, $\Mono$, $\Conn n$, $\Trunc n$, $\Conn \infty$ are all local.
\end{examples}

\begin{proposition}
\label{acyclic-is-local} 
Let $\cA$ be an acyclic class in a topos $\cE$. Then $\cA$ is a local class.
\end{proposition}
\begin{proof} 
By \cref{ex:local-class}, the result is true in the case where the acyclic class $\cA$ is the left class of a modality.
It follows that the acyclic class $\Sigma\ac\subseteq \cA$ generated by any set of maps $\Sigma\subseteq \cE$ is local, since it is the left class of a modality $(\Sigma\ac,\Sigma^\fperp)$ by \cite[Theorem 3.2.20]{ABFJ:HS}.
Let us now show condition \cref{locclass}.\ref{locclass:1}.
For a family $f_i:A_i\to B_i$ in $\cA$, we want to show that the coproduct is in $\cA$.
If $\Sigma:=\{f_i :i\in I \}$, then $\Sigma\ac \subseteq \cA$ and the class $\Sigma\ac$ is local by the argument above.
Thus, $f\in \Sigma\ac$, since $f_i \in \Sigma\ac$ for every $i\in I$. 
This proves that $f\in \cA$ and hence that the class $\cA$ is local.
The argument is similar for condition \cref{locclass}.\ref{locclass:2}.
\end{proof}

\medskip

Recall from \cref{sec:surjection} that every map $f:A\to B$ in a topos $\cE$ admits a unique factorization $f={\coim f}\circ {\im f}:A\to \Im(f)\to B$
where ${\coim f}:A\to \Im(f)$ is a surjection and ${\im f}:\Im(f)\to B$ a monomorphism.

\begin{lemma}
\label{lem:acyclic-image}
Let $\cA$ be an acyclic class in a topos $\cE$. 
Then a map $f:A\to B$ in $\cE$ belongs to $\cA$ if and only if both maps $\im f$ and $\coim f$ belong to $\cA$.
\end{lemma}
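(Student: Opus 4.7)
The plan is to split the biconditional into its two directions. The ``if'' implication is immediate: since acyclic classes are closed under composition, if both $\coim f$ and $\im f$ lie in $\cA$, then so does $f = \im f \circ \coim f$.

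For the ``only if'' direction, the decisive observation I will exploit is that $\coim f$ is canonically a base change of $f$. Since $\im f:\Im(f)\to B$ is a monomorphism and $f$ factors through it as $f = \im f \circ \coim f$, the square
\[
\begin{tikzcd}
A \ar[r, equal] \ar[d, "\coim f"'] \pbmark & A \ar[d, "f"] \\
\Im(f) \ar[r, "\im f"'] & B
\end{tikzcd}
\]
is cartesian: for any object $X$ equipped with maps $g:X\to A$ and $h:X\to \Im(f)$ satisfying $\im f \circ h = f\circ g = \im f \circ \coim f \circ g$, the mono property of $\im f$ forces $h = \coim f \circ g$, so the universal map $X \to \Im(f)\times_B A$ is determined by $g$ alone, and the gap map $A \to \Im(f)\times_B A$ is an isomorphism. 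Closure of $\cA$ under base change therefore yields $\coim f \in \cA$ from $f\in \cA$.

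Finally, applying right cancellation of acyclic classes (\cref{acyclic-cancell}) to the factorization $f = \im f \circ \coim f$ with $f,\coim f\in \cA$ gives $\im f \in \cA$. The only substantive step is the pullback identification of $\coim f$ as a base change of $f$; everything else is a direct appeal to the definition of an acyclic class and to one previously stated lemma, so I do not expect a serious obstacle.
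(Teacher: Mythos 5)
Your proof is correct and follows essentially the same route as the paper: composition closure for the ``if'' direction, and for the ``only if'' direction the observation that $\coim f$ is a base change of $f$ along $\im f$ (because $\im f$ is a monomorphism), followed by right cancellation (\cref{acyclic-cancell}) to recover $\im f$. The extra detail you supply on why the square is cartesian is a harmless elaboration of the paper's one-line justification.
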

\begin{proof}
The acyclic class is closed under composition by definition.
Thus, if both $\im f$ and $\coim f$ are in $\cA$, so is $f$.
Conversely, the following commutative square is a pullback, since the map  $\im f$ is a monomorphism.
\[
\begin{tikzcd}
A \ar[rr,equal]\ar[d,"\surj f"'] \pbmarkk && A\ar[d,"f"]\\
\Im(f)\ar[rr,"\im f"'] && B
\end{tikzcd}
\]
Thus, $f\in \cA$ implies $\coim f\in \cA$, since an acyclic class is closed under base change.
Moreover, $\im f\in \cA$, since $\im f \coim f =f\in \cA$
and the class $\cA$ has the right cancellation
property by \cref{acyclic-cancell}.
\end{proof}

\medskip

For a class of maps $\Sigma$, we define 
\[
\im\Sigma
\ :=\ 
\big\{\im f \,|\, f\in \Sigma\big\}
\qquad\text{and}\qquad
\coim\Sigma
\ :=\ 
\big\{\coim f \,|\, f\in \Sigma\big\}\,.
\]

\begin{lemma}
\label{lem:imagepretopo}
For any acyclic class $\cA$, we have
\[
\im\cA = \cA\cap \Mono
\qquad\text{and}\qquad
\coim\cA = \cA\cap \Surj\,.
\]
\end{lemma}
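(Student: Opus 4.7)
The plan is to establish both equalities using the image factorization together with \cref{lem:acyclic-image}. The forward inclusions use that acyclic classes are closed under the image factorization, and the reverse inclusions exploit the fact that monomorphisms and surjections have trivial image factorizations.

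First I would show $\im\cA \subseteq \cA\cap \Mono$. Given any $f\in\cA$, the map $\im f$ is a monomorphism by definition of the image factorization, and by \cref{lem:acyclic-image} it also belongs to $\cA$; hence $\im f\in \cA\cap\Mono$. Conversely, for $g\in \cA\cap\Mono$, since $g$ is a monomorphism, its image factorization is the trivial one $g = g\circ \id$, with $\coim g = \id_{\dom g}$ and $\im g = g$. Thus $g = \im g \in \im\cA$, giving $\cA\cap\Mono \subseteq \im\cA$.

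The second equality $\coim\cA = \cA\cap\Surj$ follows by the dual argument. For $f\in\cA$, the map $\coim f$ is a surjection by construction, and $\coim f \in \cA$ again by \cref{lem:acyclic-image}, so $\coim f\in \cA\cap\Surj$. Conversely, if $g\in \cA\cap\Surj$ then the uniqueness of the image factorization forces $\im g = \id_{\cod g}$ and $\coim g = g$, so $g \in \coim\cA$.

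There is no real obstacle here: the whole argument reduces to the observation that the $(\Surj,\Mono)$-factorization of a monomorphism (resp. a surjection) is trivial, combined with the non-trivial input \cref{lem:acyclic-image} which is already established. No further closure property of $\cA$ beyond what has been stated is needed.
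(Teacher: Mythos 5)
Your proof is correct and follows essentially the same route as the paper: the inclusions $\im\cA\subseteq\cA\cap\Mono$ and $\coim\cA\subseteq\cA\cap\Surj$ come from \cref{lem:acyclic-image}, and the reverse inclusions from the observation that the image factorization of a monomorphism (resp.\ surjection) is trivial, so that $\im m=m$ (resp.\ $\coim s=s$). Nothing to add.
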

\begin{proof}
Since $\im m = m$ when $m$ is a monomorphism, we have always $\cA\cap \Mono \subseteq \im \cA$. 
Conversely, if $f$ is a map in $\cA$, then so is its image $\im f$ by \cref{lem:acyclic-image}.
Hence $\im \cA\subseteq \cA\cap \Mono$.
The proof is similar for the $\coim\cA$.
\end{proof}

The class $\coim \cA= \cA\cap \Surj$ is always acyclic since it is an intersection of two acyclic classes.
The class $\im \cA = \cA \cap \Mono$ however is not acyclic in general.
We will see in \cref{fromacyclictoGroth} that is what we call an extended Grothendieck topology.

\begin{lemma}
\label{lem:nogo}
Let $\cA$ be an acyclic class, then
\[
\cA\subseteq \Mono
\quad\iff\quad 
\coim\cA = \Iso
\quad\iff\quad 
\im\cA=\cA
\quad\iff\quad 
\cA=\Iso\,.
\]
\end{lemma}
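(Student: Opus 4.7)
The plan is to split the proof into three easy biconditionals (between the first three conditions) and the main substantive implication $\cA\subset\Mono \Rightarrow \cA=\Iso$. The easy direction $\cA=\Iso \Rightarrow \cA\subset\Mono$ is immediate, so the three remaining easy equivalences reduce to cycling between the first three conditions using the descriptions of $\im\cA$ and $\coim\cA$ from the preceding lemmas.

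Specifically, \cref{lem:imagepretopo} gives $\im\cA = \cA\cap\Mono$, so $\im\cA=\cA$ iff $\cA\subset\Mono$. For $\coim\cA=\Iso \iff \cA\subset\Mono$, I would combine $\coim\cA = \cA\cap\Surj$ (\cref{lem:imagepretopo}) with the fact that $\coim f \in \cA$ whenever $f\in\cA$ (\cref{lem:acyclic-image}). If $\cA\subset\Mono$, any element of $\cA\cap\Surj$ is simultaneously a monomorphism and a surjection, hence iso by the $(\Surj,\Mono)$ modality. Conversely, if $\coim\cA=\Iso$, then each $f\in\cA$ has $\coim f$ invertible, so $f\simeq\im f \in\Mono$.

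For the main implication $\cA\subset\Mono \Rightarrow \cA=\Iso$, the decisive first observation is that $\cA$ is automatically a congruence, not merely an acyclic class. Indeed, every $m\in\cA$ is a monomorphism, so $\Delta m$ is invertible, which gives $\Delta(\cA)\subset\Iso\subset\cA$; \cref{lem:caraccong} then promotes $\cA$ to a congruence. This is the crucial step, because congruences are closed under cobase change and satisfy the 3-for-2 property. For $m\colon A\to B$ in $\cA$, I form the pushout $B\cup_A B$ of $m$ along itself, with coprojections $j_1,j_2\colon B\to B\cup_A B$ and codiagonal $\nabla\colon B\cup_A B\to B$ satisfying $\nabla j_i = 1_B$. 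Cobase change closure gives $j_1\in\cA$, and 3-for-2 applied to $\nabla j_1=1_B$ then yields $\nabla\in\cA\subset\Mono$. But $\nabla$ is also a split epimorphism (with section $j_1$), hence a surjection, and the $(\Surj,\Mono)$ modality forces $\nabla$ to be an isomorphism.

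The codiagonal $\nabla$ being iso combined with $m\in\Mono$ forces $m$ itself to be iso. This is a standard topos-theoretic fact, whose quickest verification uses the Lawvere object $\Omega$: the classifying map $\chi_m\colon B\to\Omega$ and the constant map $\top\colon B\to\Omega$ agree on $A$, so by the universal property of the pushout they induce a single map $h\colon B\cup_A B\to\Omega$ with $hj_1=\top$ and $hj_2=\chi_m$; the equality $j_1=j_2=\nabla^{-1}$ then gives $\chi_m=\top$, which classifies $1_B$, so $m$ is iso. Therefore $\cA\subset\Iso$, and since $\cA$ always contains the isomorphisms, $\cA=\Iso$. The principal obstacle is the preliminary upgrade of $\cA$ from acyclic class to congruence via the diagonal criterion; without the resulting closure under pushouts and the 3-for-2 property, the codiagonal trick cannot get off the ground.
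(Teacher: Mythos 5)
Your proof is correct, and for the substantive implication $\cA\subset\Mono\Rightarrow\cA=\Iso$ it rests on the same device as the paper's: form the codiagonal $\nabla\colon B\cup_A B\to B$ of a map $m\in\cA$, show it lies in $\cA$, observe that it is then a surjective monomorphism and hence invertible, and deduce that $m$ is invertible. Two differences are worth recording. First, your preliminary upgrade of $\cA$ to a congruence via \cref{lem:caraccong} is valid but not needed: an acyclic class is already closed under pushouts in the arrow category (which yields your $j_1\in\cA$, or directly $\nabla\in\cA$ as the pushout of the span $1_B\ot m\to 1_B$, which is how the paper phrases it), and the only instance of 3\=/for\=/2 you invoke is right cancellation, which holds for acyclic classes by \cref{acyclic-cancell}. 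Second, for the step ``$\nabla$ invertible $\Rightarrow m$ invertible'' the paper cites \cite[Proposition 2.2.6]{ABFJ:GBM}, a statement valid for arbitrary maps, whereas you give a self-contained argument through the Lawvere object that uses the hypothesis that $m$ is a monomorphism; since that is the only case needed here, your substitute is adequate (and the uniqueness of the homotopy $\chi_m\circ m\simeq \top\circ m$ needed for the pushout comparison is guaranteed by the discreteness of $\Omega$). Your treatment of the first three equivalences via \cref{lem:imagepretopo,lem:acyclic-image} and the modality $(\Surj,\Mono)$ is exactly what the paper leaves to the reader.
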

\begin{proof}
The equivalence between the first three conditions follow from \cref{lem:acyclic-image}.
Let us prove that $\cA\subseteq \Mono \Leftrightarrow \cA=\Iso$.
Recall that the codiagonal of a map $f:A\to B$, is the map $\nabla f:B\cup_AB\to B$.
This map can be seen as a pushout $1_B\ot f\to 1_B$ in the arrow category of the ambient topos $\cE$.
An acyclic class $\cA$ is by definition stable by such pushouts, hence $f\in \cA$ implies $\nabla f \in \cA$.
The map $\nabla f$ is always surjective, so if $\cA\subseteq \Mono$, $\nabla f$ must be an isomorphism.
But that shows that $f$ itself is an isomorphism (this can be deduced using \cite[Proposition 2.2.6]{ABFJ:GBM} on $1_B\ot f\to 1_B$).
This proves $\cA\subseteq \Mono$ implies $\cA=\Iso$.
The converse is trivial.
\end{proof}

\begin{definition}[Image, coimage, monogenic, epigenic]
\label{def:monogenic}
\label{def:epigenic}
For an acyclic class $\cA$ in a topos $\cE$, we define
its {\it monogenic part}, to be the acyclic class $\im\cA\ac = (\cA\cap \Mono)\ac$ 
and its {\it epigenic part}, to be the acyclic class $\coim \cA = \cA\cap \Surj$.
An acyclic class is called {\it monogenic} if there exists a class $\Sigma$ of monomorphism such that $\cA = \Sigma\ac$.
An acyclic class is called {\it epigenic} if there exists a class $\Sigma$ of surjections such that $\cA = \Sigma\ac$.
\end{definition}

\begin{proposition}[Monogenic--epigenic decomposition]
\label{prop:imcoimdecomp}
The following relation holds in the poset $\Acyclic(\cE)$
\[
\cA
\ =\ 
\im \cA\ac
\,\vee\,
\coim \cA
\]
where $\vee$ is the supremum in $\Acyclic(\cE)$.
\end{proposition}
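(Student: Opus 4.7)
The plan is to prove the two inclusions between $\cA$ and the supremum $\im\cA\ac \vee \coim\cA$ separately, and both will be quite direct given the tools already established.

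For the inclusion $\im\cA\ac \vee \coim\cA \subseteq \cA$, I would first observe that by \cref{lem:imagepretopo} we have $\im\cA = \cA\cap \Mono \subseteq \cA$ and $\coim\cA = \cA \cap \Surj \subseteq \cA$. Since $\cA$ is itself acyclic, the acyclic class generated by $\im\cA$ must be contained in $\cA$, giving $\im\cA\ac \subseteq \cA$. Likewise $\coim\cA \subseteq \cA$. The supremum, being the smallest acyclic class containing both, is therefore contained in $\cA$.

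For the reverse inclusion $\cA \subseteq \im\cA\ac \vee \coim\cA$, I would take an arbitrary $f \in \cA$ and apply the image factorization $f = \im f \circ \coim f$. By \cref{lem:acyclic-image}, since $f \in \cA$, both $\im f$ and $\coim f$ lie in $\cA$. Then $\im f \in \cA \cap \Mono = \im\cA \subseteq \im\cA\ac$ and $\coim f \in \cA \cap \Surj = \coim\cA$. Since the supremum is closed under composition (being acyclic), it contains $f = \im f \circ \coim f$. This yields $\cA \subseteq \im\cA\ac \vee \coim\cA$.

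The two inclusions together give the desired equality. No step here is really an obstacle: the main ingredients are \cref{lem:acyclic-image}, which guarantees that the image factorization preserves membership in an acyclic class, and \cref{lem:imagepretopo}, which identifies $\im\cA$ and $\coim\cA$ with the obvious intersections. If anything, the only subtlety worth emphasizing is that we use $\im\cA\ac$ rather than $\im\cA$ on the right-hand side because $\im\cA$ need not itself be acyclic (as noted in the paragraph preceding \cref{lem:nogo}), but this does not affect the argument since the supremum in $\Acyclic(\cE)$ saturates automatically.
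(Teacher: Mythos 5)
Your proof is correct and follows essentially the same route as the paper: both inclusions are obtained from \cref{lem:imagepretopo} for one direction and \cref{lem:acyclic-image} (via the image factorization and closure under composition) for the other. You merely spell out the second inclusion in slightly more detail than the paper does.
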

\begin{proof}
By \cref{lem:imagepretopo}, we have $\im \cA\subseteq \cA$ and $\coim \cA\subseteq \cA$.
Since $\cA$ is acyclic, we have $\im \cA\ac\subseteq \cA$.
Hence $\im \cA\ac \cup \coim \cA\subseteq \cA$ and 
$\im \cA\ac\ \vee\ \coim \cA = \big(\im \cA\ac \cup \coim \cA\big)\ac\subseteq \cA$.
Conversely, by \cref{lem:acyclic-image}, any acyclic class containing $\im\cA$ and $\coim\cA$ must contain $\cA$.
\end{proof}

By \Cref{sigmaac=sigmacong}, we have $\im\cA\ac=\im\cA\cong$ and therefore $\im\cA\ac$ is always a congruence.

\begin{lemma}
\label{lem:monogenic}
An acyclic class $\cA$ is monogenic if and only if $\cA = \im \cA\ac$.
\end{lemma}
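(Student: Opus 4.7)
The plan is to prove both directions by chasing the generation condition together with the fact that $\im\cA = \cA \cap \Mono$ (from the earlier lemma on images of acyclic classes). The statement is essentially a formal consequence of the definitions, so I expect no serious obstacle; the work is just in stringing the containments in the right order.

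For the ``if'' direction, suppose $\cA = \im\cA\ac$. Then by definition $\im\cA = \cA \cap \Mono$ is a class of monomorphisms which generates $\cA$ as an acyclic class, so taking $\Sigma := \im\cA$ exhibits $\cA$ as monogenic.

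For the ``only if'' direction, suppose $\cA = \Sigma\ac$ for some class $\Sigma$ of monomorphisms. Since $\Sigma \subseteq \Sigma\ac = \cA$ and every element of $\Sigma$ is a monomorphism, we have $\Sigma \subseteq \cA \cap \Mono = \im\cA$, where the equality is the earlier lemma. Taking acyclic closures yields $\cA = \Sigma\ac \subseteq \im\cA\ac$. For the reverse inclusion, $\im\cA \subseteq \cA$ holds trivially, and $\cA$ is already acyclic, so $\im\cA\ac \subseteq \cA$. Combining the two gives $\cA = \im\cA\ac$.

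Both inclusions are completely formal once one has the identification $\im\cA = \cA \cap \Mono$ and the monotonicity of $(-)\ac$; there is no delicate step, and no further results from the paper are needed beyond the image lemma cited above.
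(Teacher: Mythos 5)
Your proof is correct and follows the same argument as the paper's (which is just stated more tersely): both directions reduce to the identification $\im\cA = \cA\cap\Mono$ from \cref{lem:imagepretopo} together with monotonicity and idempotence of $(-)\ac$. Your write-up simply spells out the containment chain $\cA = \Sigma\ac \subseteq (\cA\cap\Mono)\ac \subseteq \cA$ that the paper compresses into one line.
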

\begin{proof}
By definition of a monogenic acyclic class, there exists $\Sigma \subseteq \cA\cap \Mono$ such that $\Sigma\ac = \cA$.
This implies that $\im\cA\ac = (\cA\cap \Mono)\ac = \cA$.
The converse is obvious.    
\end{proof}

\begin{lemma}[Characterization of epigeneration]
\label{lem:epigenic}
The following conditions are equivalent:
\begin{enumerate}
\item\label{lem:epigenic:1}
    $\cA$ is epigenic;
\item\label{lem:epigenic:2}
    $\cA = \coim \cA$;
\item\label{lem:epigenic:3}
    $\cA\subseteq \Surj$;
\item\label{lem:epigenic:4}
    $\im \cA= \Iso$;
\item\label{lem:epigenic:4bis}
    $\im \cA\ac= \Iso$.
\end{enumerate}
Moreover, when $\cA=\cK$ is a congruence, the previous conditions are equivalent to 
\begin{enumerate}
\setcounter{enumi}{5}
\item\label{lem:epigenic:5}
    $\cK\subseteq \Conn\infty$.
\end{enumerate}
\end{lemma}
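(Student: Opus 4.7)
My plan is to establish the cycle
\[
(1) \Rightarrow (3) \Leftrightarrow (2) \Rightarrow (4) \Rightarrow (5) \Rightarrow (2) \Rightarrow (1),
\]
handling the congruence case (6) at the end via closure under diagonals.

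For (1)$\Rightarrow$(3): if $\cA = \Sigma\ac$ with $\Sigma \subseteq \Surj$, then since $\Surj$ itself is acyclic (\cref{ex:acyclic:3}) and contains $\Sigma$, it must contain $\Sigma\ac = \cA$. The equivalence (2)$\Leftrightarrow$(3) is immediate from \cref{lem:imagepretopo}, which gives $\coim\cA = \cA \cap \Surj$: this intersection equals $\cA$ iff $\cA \subseteq \Surj$. For (2)$\Rightarrow$(4), if $\cA \subseteq \Surj$, then $\im\cA = \cA \cap \Mono \subseteq \Surj \cap \Mono = \Iso$, and conversely $\Iso \subseteq \im\cA$ always, giving equality. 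The implication (4)$\Rightarrow$(5) is trivial since $\Iso\ac = \Iso$.

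The key step is (5)$\Rightarrow$(2), which uses the image--coimage decomposition of \cref{prop:imcoimdecomp}: from $\cA = \im\cA\ac \vee \coim\cA$ and the hypothesis $\im\cA\ac = \Iso$, we obtain $\cA = \Iso \vee \coim\cA = \coim\cA$. Finally (2)$\Rightarrow$(1) is immediate by taking $\Sigma := \cA$ as the generating class of surjections (which is legitimate once we know $\cA \subseteq \Surj$, which follows from (2)$\Leftrightarrow$(3)).

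For the congruence case, I would prove (3)$\Leftrightarrow$(6). The implication (6)$\Rightarrow$(3) is just the inclusion $\Conn\infty \subseteq \Surj = \Conn{-1}$ (see \cref{def:oo-connected}). The main content is (3)$\Rightarrow$(6): by \cref{lem:caraccong}, a congruence $\cW$ satisfies $\cW\diag = \cW$, so $\Delta^n f \in \cW$ for every $f \in \cW$ and every $n \geq 0$. If $\cW \subseteq \Surj$, then each $\Delta^n f$ is a surjection, and by \cref{def:connected} this means $f$ is $\infty$-connected. I expect no real obstacle here; the only subtlety is remembering that the characterization of $\infty$-connected maps via surjectivity of all iterated diagonals matches precisely the closure-under-diagonals property that distinguishes congruences among acyclic classes.
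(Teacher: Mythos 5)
Your proof is correct, and its overall shape — a cycle of implications among the five conditions using the acyclicity of $\Surj$, the identities $\im\cA=\cA\cap\Mono$ and $\coim\cA=\cA\cap\Surj$ from \cref{lem:imagepretopo}, and the diagonal characterizations of congruences and \oo connected maps for the last equivalence — matches the paper's. The one place you diverge is the step you call the key one, namely getting from $\im\cA\ac=\Iso$ back into the cycle: you deduce $\cA=\coim\cA$ from the image--coimage decomposition $\cA=\im\cA\ac\vee\coim\cA$ of \cref{prop:imcoimdecomp} (which is legitimate, since $\coim\cA=\cA\cap\Surj$ is itself acyclic, so $\Iso\vee\coim\cA=\coim\cA$). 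The paper instead closes the loop with the implication $\im\cA\ac=\Iso\Rightarrow\im\cA=\Iso$, which follows from the one-line observation $\Iso\subseteq\im\cA\subseteq\im\cA\ac$ and needs no decomposition at all. Both work; the paper's route for that step is more elementary, while yours has the mild virtue of making the decomposition proposition do visible work. Everything else, including the congruence case via $\cW\diag=\cW$, is the argument the paper gives.
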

\begin{proof}
\eqref{lem:epigenic:1}$\Rightarrow$\eqref{lem:epigenic:2} 
By definition, $\cA$ is epigenic if there exists $\Sigma\subseteq \cA\cap \Surj$ such that $\Sigma\ac = \cA$.
Since $\coim\cA = \cA\cap \Surj$ is acyclic, we have $\Sigma\ac\subseteq \cA\cap \Surj \subseteq \cA$ and \eqref{lem:epigenic:2} follows from $\Sigma\ac=\cA$.
\eqref{lem:epigenic:2}$\Rightarrow$\eqref{lem:epigenic:1} by definition.
The equivalences \eqref{lem:epigenic:2}$\Leftrightarrow$\eqref{lem:epigenic:3} and
\eqref{lem:epigenic:3}$\Leftrightarrow$\eqref{lem:epigenic:4} are trivial.
\eqref{lem:epigenic:4}$\Rightarrow$\eqref{lem:epigenic:4bis} because $\Iso$ is an acyclic class.
Conversely, since $\cA$ is acyclic, it contains all isomorphisms, hence $\Iso\subseteq \im\cA\subseteq \im\cA\ac$.
Then it is clear that \eqref{lem:epigenic:4bis}$\Rightarrow$\eqref{lem:epigenic:4}.
Finally, let us see that \eqref{lem:epigenic:3}$\Leftrightarrow$\eqref{lem:epigenic:5}
Recall from \cref{def:oo-connected} that a map is \oo connected if and only if all its diagonals are surjections.
From there, for a class $\Sigma$, we have $\Sigma\subseteq \Conn\infty \Leftrightarrow \Sigma\diag \subseteq \Surj$.
Then, the result follows from the relation $\cK\diag = \cK$ of \cref{lem:caraccong}.
\end{proof}

The following is a direct application of \eqref{lem:epigenic:3}$\Leftrightarrow$\eqref{lem:epigenic:4} of \cref{lem:epigenic}.

\begin{lemma}
\label{lem:imcoim}
For any acyclic class $\cA$, $\im{\coim\cA} = \Iso$.
\end{lemma}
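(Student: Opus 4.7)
The plan is to apply \Cref{lem:epigenic} to the class $\coim\cA$ itself. First, note that $\coim\cA = \cA\cap \Surj$ is an acyclic class, being the intersection of the two acyclic classes $\cA$ and $\Surj$ (cf. the remark following \Cref{lem:imagepretopo}). Hence $\coim\cA$ is an acyclic class that is, by its very definition, contained in $\Surj$.

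Now invoke the equivalence \eqref{lem:epigenic:3}$\Leftrightarrow$\eqref{lem:epigenic:4} of \Cref{lem:epigenic} applied to the acyclic class $\coim\cA$: since $\coim\cA\subseteq \Surj$, we conclude that $\im(\coim\cA) = \Iso$.

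This is really all there is to it: the content of the lemma is already packaged in the characterization of epigenic acyclic classes. (Alternatively, one can argue directly: for any $f\in \coim\cA\subseteq \Surj$, the $(\Surj,\Mono)$-factorization of $f$ must, by uniqueness, be $f = 1\circ f$, so $\im f$ is an isomorphism; conversely $\Iso \subseteq \im(\coim\cA)$ because every isomorphism lies in $\coim\cA$ and is its own image. But going through \Cref{lem:epigenic} makes the result a one-line corollary.) No real obstacle arises.
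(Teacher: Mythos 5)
Your proof is correct and is exactly the paper's argument: the paper also derives this as a direct application of the equivalence \eqref{lem:epigenic:3}$\Leftrightarrow$\eqref{lem:epigenic:4} of \cref{lem:epigenic} to the acyclic class $\coim\cA = \cA\cap\Surj$. Your remark that $\coim\cA$ is acyclic (so that \cref{lem:epigenic} applies) is the right point to check, and it matches the paper's observation following \cref{lem:imagepretopo}.
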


\begin{remark}
The dual relation does not hold in general: from \cref{lem:nogo} we have $\coim{\im\cA\ac} \not= \Iso$ if $\im\cA\ac\not=\Iso$.    
\end{remark}

Let $\MAcyclic(\cE)$ and $\EAcyclic(\cE)$ be the subposets of monogenic and epigenic acyclic classes in $\cE$.

\begin{proposition}$\quad$
\label{prop:coreflection-epi-mono}
\begin{enumerate}
\item\label{prop:coreflection-epi-mono:1}
    The map $\cA\mapsto \im\cA\ac$ is right adjoint to the inclusion $\MAcyclic(\cE)\to \Acyclic(\cE)$.
\item\label{prop:coreflection-epi-mono:2}
    The map $\cA\mapsto \coim\cA$ is right adjoint to the inclusion $\EAcyclic(\cE)\to \Acyclic(\cE)$.
\end{enumerate}
\end{proposition}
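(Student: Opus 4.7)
The plan is to use the poset-level characterization of a right adjoint: the inclusion $\iota:\MAcyclic(\cE)\hookrightarrow\Acyclic(\cE)$ admits a right adjoint $R$ if and only if, for every $\cA\in\Acyclic(\cE)$, $R(\cA)$ is the largest monogenic acyclic class contained in $\cA$, and similarly for the epigenic case. Both statements then reduce to three small verifications: the candidate lies in the correct subposet, it is contained in $\cA$, and it contains every member of the subposet contained in $\cA$.

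For part \eqref{prop:coreflection-epi-mono:1}, I would first note that $\im\cA\ac$ is monogenic essentially by definition: its generating class $\im\cA\subseteq\Mono$ consists of monomorphisms, so $\im\cA\ac$ is generated by monomorphisms. Next, since $\im\cA=\cA\cap\Mono\subseteq\cA$ by \cref{lem:imagepretopo} and $\cA$ is itself acyclic, we get $\im\cA\ac\subseteq\cA$. Finally, for any monogenic $\cB$ with $\cB\subseteq\cA$, \cref{lem:monogenic} yields $\cB=(\cB\cap\Mono)\ac$, and since $\cB\cap\Mono\subseteq\cA\cap\Mono=\im\cA$, taking the acyclic closure gives $\cB\subseteq\im\cA\ac$. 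This exhibits $\im\cA\ac$ as the largest monogenic acyclic class contained in $\cA$, establishing the adjunction.

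For part \eqref{prop:coreflection-epi-mono:2}, I would argue analogously using \cref{lem:epigenic}. The class $\coim\cA=\cA\cap\Surj$ is acyclic as an intersection of two acyclic classes (\cref{ex:acyclic} item \ref{ex:acyclic:5} together with $\Surj\in\Acyclic(\cE)$), and it is contained in $\Surj$, so by the equivalence \eqref{lem:epigenic:3}$\Leftrightarrow$\eqref{lem:epigenic:1} of \cref{lem:epigenic} it is epigenic. Clearly $\coim\cA\subseteq\cA$. For any epigenic $\cB\subseteq\cA$, \cref{lem:epigenic} gives $\cB\subseteq\Surj$, whence $\cB\subseteq\cA\cap\Surj=\coim\cA$. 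Again $\coim\cA$ is the largest epigenic acyclic class contained in $\cA$.

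Since both statements are essentially bookkeeping on top of already-established lemmas, I do not expect any serious obstacle; the only point that requires attention is making sure the maximality property is phrased as a membership statement in the appropriate subposet, which is what the characterizations in \cref{lem:monogenic} and \cref{lem:epigenic} are designed to provide.
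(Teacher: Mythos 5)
Your proposal is correct and follows essentially the same route as the paper: both arguments verify the poset adjunction $\cB\subseteq\cA\Leftrightarrow\cB\subseteq\im\cA\ac$ (resp.\ $\cB\subseteq\coim\cA$) using \cref{lem:monogenic} and \cref{lem:epigenic}, with your explicit check that the candidates lie in $\MAcyclic(\cE)$ and $\EAcyclic(\cE)$ being a harmless (and welcome) addition of detail the paper leaves to earlier remarks.
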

\begin{proof}
\eqref{prop:coreflection-epi-mono:1}
Let $\cA$ be an acyclic class and $\cB$ a monogenic acyclic class.
We need to prove that $\cB\subseteq \cA \Leftrightarrow \cB\subseteq \im\cA\ac$.
If $\cB\subseteq \cA$, then $\im\cB\subseteq\im\cA$ and $\cB=\im\cB\ac\subseteq\im\cA\ac$.
Conversely, if $\cB\subseteq\im\cA\ac$, then $\cB\subseteq\cA$ since $\im\cA\ac\subseteq\cA$.

\smallskip
\noindent\eqref{prop:coreflection-epi-mono:2}
Let $\cA$ be an acyclic class and $\cB$ an epigenic acyclic class.
We need to prove that $\cB\subseteq \cA \Leftrightarrow \cB\subseteq \coim\cA$.
If $\cB\subseteq \cA$, then $\cB\cap \Surj\subseteq\cA\cap \Surj$ and $\cB=\coim\cB\subseteq\coim\cA$.
Conversely, if $\cB\subseteq\coim\cA$, then $\cB\subseteq\cA$ since $\coim\cA\subseteq\cA$.
\end{proof}

\subsubsection{The inclusion of congruences in acyclic classes}
\label{sec:acvcong}

The goal of this section is to prove \cref{thm:adjCongAcyclic}, which states that the inclusion of congruences in acyclic classes has both a left and a right adjoint.
We shall need the right adjoint in the proof of \cref{thm:equivPtopHcong}.

\medskip
For a class of maps $\cA$, we define the {\it decalage} of $\cA$:
\[
\dec \cA
\ :=\ 
\{\, f\in \cE \ |\  f\in \cA,\, \Delta f \in \cA \,\}
\]
and, by induction
\[
\decn 0 \cA
\ :=\ 
\cA 
\qquad\text{and}\qquad
\decn {n+1} \cA
\ :=\ 
\dec {\decn n \cA}
\ =\ 
\{\, f\in \cE \ |\  \forall 0\leq k\leq n+1,\, \Delta^k f \in \cA\,\}\,.
\]
We put also
\[
\decinfty \cA
\ :=\ 
\bigcap_n \decn n \cA
\ =\ 
\{\, f\in \cE\ |\  \forall n\geq 0\,, \Delta^n f \in \cA\,\}\,.
\]

The following justifies the importance of the construction $\dec \cA$.
\begin{lemma}
\label{lem:decalage}
\label{lem:racine-acyclic}
Let $\cA$ be an acyclic class.
\begin{enumerate}
\item\label{lem:decalage:0} The class $\cA$ is a congruence if and only if $\dec \cA = \cA$.
\item\label{lem:decalage:1} The class $\dec \cA$ is acyclic.
\item\label{lem:decalage:2} The class $\decinfty \cA$ is a congruence.
\end{enumerate}
\end{lemma}
\begin{proof}
\noindent\eqref{lem:decalage:0}
This is a reformulation of the recognition criteria of \cref{lem:caraccong}.

\smallskip
\noindent\eqref{lem:decalage:1} 
When $\cA$ is of small generation, it is the left class of a modality by \cite[Theorem 3.2.20]{ABFJ:HS}. 
Then \eqref{lem:decalage:1} is \cite[Remark 3.3.10]{ABFJ:HS}.
For the general case, we can filter $\cA$ by acyclic classes $\Sigma\ac$ for $\Sigma$ a set of maps in $\cA$.
Since the definition of $\dec-$ involves only finitary constructions, we have $\dec\cA = \bigcup_\Sigma\dec{\Sigma\cong}$ and the result follows from the analogue of \cref{lem:filtcong} for acyclic classes.

\smallskip
\noindent\eqref{lem:decalage:2}
The class $\decinfty \cA$ is acyclic as an intersection of acyclic classes by \eqref{lem:decalage:1}.
Then, by construction, it is clear that $\dec {\decinfty \cA} = \decinfty \cA$,
and the result follows from \eqref{lem:decalage:0}.
\end{proof}

\begin{examples} 
\label{ex:lexcore}
We have the following examples of the constructions $\dec-$:
\begin{examplenum}
\item\label{ex:lexcore:1} $\dec \Surj = \Conn 0$;
\item\label{ex:lexcore:2} $\dec {\Conn n} = \Conn {n+1}$ and $\decn {n+1} \Surj = \Conn n$;
\item\label{ex:lexcore:3} $\decinfty \Surj = \decinfty {\Conn n} = \Conn\infty$.
\item\label{ex:lexcore:4} Let $\phi:\cE\to \cF$ be a left-exact localization of topoi. 
If $\cA$ an acyclic class containing the congruence $\cK_\phi$, we have $\cK_\phi =\dec {\cK_\phi} \subseteq \dec \cA$. 
This means that $\dec -$ induces an endomorphism of the poset $\Acyclic(\cE)\upslice {\cK_\phi}$.
Moreover, since $\phi$ preserves diagonals, we have $\phi^{-1}(\dec \cA) = \dec {\phi^{-1}(\cA)}$, for any class of maps $\cA$ in $\cF$.
This shows that the transport bijections of \cref{prop:transport-acyclic} commute with $\dec-$.
In particular, we have 
$\phi^{-1}(\Conn n) = \decn {n+1} {\phi^{-1}(\Surj)} $
and
$\phi^{-1}(\Conn \infty) = \decinfty {\phi^{-1}(\Surj)}$.

\end{examplenum}
\end{examples}

\begin{remark}
\label{rem:decalage-v-suspension}
Let $(\cA,\cB)$ be a modality generated by a set of maps $\Sigma$.
Recall from \cite{CORS} that the separation of a modality $(\cA,\cB)$ is the modality $(\cA',\cB')$ generated by the set $\nabla\Sigma = \{\nabla f\,|\,f\in \Sigma\}$ of codiagonals of maps in $\Sigma$.
The decalage $\dec \cA$ is closely related to $\cA'$, but different.
We shall prove in a subsequent work \cite{ABFJ:box} that 
$\cA' = \dec \cA \cap \Surj$ and $\dec \cA = \cA'\vee \im\cA\ac$.
The two notions coincide if and only if $\cA\subseteq \Surj$.
\end{remark}

\begin{lemma}
\label{lem:congracine}
An acyclic class $\cA$ is a congruence if and only if $\decinfty\cA = \cA$.
\end{lemma}
\begin{proof}
Since $\decinfty\cA\subseteq \dec\cA\subseteq \cA$, we have 
$\decinfty\cA = \cA$ if and only if $\dec\cA = \cA$.
Then the result follows from \cref{lem:decalage}\eqref{lem:decalage:0}.
\end{proof}

\begin{proposition}
\label{prop:largestcong=racine}
For any acyclic class $\cA$, the class $\decinfty\cA$ is the largest congruence contained in $\cA$.
\end{proposition}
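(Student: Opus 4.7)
The plan is to verify three things: that $\suspinfty\cA$ is contained in $\cA$, that it is itself a congruence, and that it absorbs every other congruence contained in $\cA$. The first two are essentially already on the table from the preceding lemmas, so only the third requires new input, and even this is short.

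First I would observe that $\suspinfty\cA \subseteq \cA$ holds by construction, since $\suspn\cA 0 = \cA$ is one of the sets in the intersection $\bigcap_n \suspn\cA n$ defining $\suspinfty\cA$. Next, the fact that $\suspinfty\cA$ is a congruence is precisely the content of \cref{lem:racine-acyclic}\eqref{lem:carrier:2}. So $\suspinfty\cA$ is a congruence contained in $\cA$.

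For the maximality, suppose $\cW$ is any congruence with $\cW \subseteq \cA$. By \cref{lem:caraccong}, being a congruence means $\Delta(\cW) \subseteq \cW$, and by a straightforward induction on $n$ this yields $\Delta^n(\cW) \subseteq \cW$ for every $n \geq 0$. Consequently, for every $f \in \cW$ and every $n \geq 0$, we have $\Delta^n f \in \cW \subseteq \cA$; equivalently, $f \in \suspn \cA n$ for every $n$. Therefore $f \in \suspinfty\cA$, so $\cW \subseteq \suspinfty\cA$, which is exactly the maximality claim.

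There is no real obstacle here: the result is essentially a direct reading of the definitions combined with \cref{lem:congracine} and the characterization of congruences as diagonal-closed acyclic classes from \cref{lem:caraccong}. One could even phrase the argument as: for any congruence $\cW \subseteq \cA$, \cref{lem:congracine} gives $\cW = \suspinfty\cW \subseteq \suspinfty\cA$, where the last inclusion uses the monotonicity of $\suspinfty{(-)}$ in its argument (immediate from the definition).
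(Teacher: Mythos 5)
Your proof is correct and follows essentially the same route as the paper's: the paper likewise gets that $\suspinfty\cA$ is a congruence from the diagonal-closure criterion (the content of \cref{lem:racine-acyclic}), and proves maximality exactly via your closing one-liner, namely $\cW = \suspinfty\cW \subseteq \suspinfty\cA$ using \cref{lem:congracine} and the monotonicity of $\suspinfty{(-)}$. No gaps.
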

\begin{proof}
We prove first that $\decinfty\cA$ is a congruence.
The class $\decinfty\cA$ is acyclic by \cref{lem:racine-acyclic}.
To see that it is a congruence, we use \cite[Proposition 4.2.3]{ABFJ:HS}:
an acyclic class $\cL$ is a congruence if and only if $\Delta(\cL)\subseteq \cL$.
By definition, we have
\[
\decinfty\cA
\ =\ 
\big\{f\in \cE \ |\ \forall n\geq 0,\, \Delta^n f \in \cA\big\}\ .
\]
Hence it is clear that $\Delta\left(\decinfty\cA\right)\subseteq \decinfty\cA$.
Let us see now the maximality property.
Let $\cK$ be a congruence included in $\cA$.
From the inclusion $\cK\subseteq \cA$, we get an inclusion 
$\decinfty\cK \subseteq \decinfty\cA$.
Then the result follows from \cref{lem:congracine}.
\end{proof}

The following theorem compares the notion of acyclic classes and congruences.

\begin{theorem}
\label{thm:adjCongAcyclic}
Let $\cE$ be a topos.
The inclusion of congruences in acyclic classes admits both a left and a right adjoint.
\[
\begin{tikzcd}
\Cong(\cE) \ar[rr, hook]
\ar[from=rr, shift left = 3,"{\decinfty-}"]
\ar[from=rr, shift right = 3,"{(-)\cong}"']
&& \Acyclic(\cE)
\end{tikzcd}
\]
The left adjoint is given by the congruence completion $\cA\mapsto \cA\cong$.
The right adjoint is given by the map $\cA\mapsto \decinfty\cA$.
\end{theorem}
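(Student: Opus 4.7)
The plan is to verify both adjunctions directly using the characterizations already established in earlier results. In a poset, providing a left (resp.\ right) adjoint to a fully faithful inclusion $\Cong(\cE)\hookrightarrow \Acyclic(\cE)$ amounts to exhibiting, for every acyclic class $\cA$, the smallest congruence containing $\cA$ (resp.\ the largest congruence contained in $\cA$). Both of these objects have already been identified and named.

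For the left adjoint, I invoke \cref{congruencegenerated17}: for any class of maps $\Sigma$ in $\cE$, the class $\Sigma\cong$ is by definition the smallest congruence containing $\Sigma$ (its existence guaranteed by \cref{ex:congruence}\eqref{exmpcongruence6}, since an intersection of congruences is a congruence). Applying this in particular to an acyclic class $\cA$, the relation
\[
\cA\cong \ \subseteq\ \cW \quad\Longleftrightarrow\quad \cA\ \subseteq\ \cW
\]
holds for every congruence $\cW$. This is precisely the required adjunction bijection expressing $(-)\cong \dashv {\rm incl}$. Monotonicity of $(-)\cong$ on $\Acyclic(\cE)$ follows from its universal property (or equivalently from the adjunction).

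For the right adjoint, I invoke \cref{prop:largestcong=racine}, which states that $\suspinfty\cA$ is the largest congruence contained in $\cA$. Explicitly, for any congruence $\cW$ and any acyclic class $\cA$, one has
\[
\cW\ \subseteq\ \suspinfty\cA \quad\Longleftrightarrow\quad \cW\ \subseteq\ \cA.
\]
The backward direction uses the inclusion $\suspinfty\cA \subseteq \cA$ obtained from the $n=0$ factor of the intersection defining $\suspinfty\cA$; the forward direction is exactly the maximality statement of \cref{prop:largestcong=racine}. Monotonicity of $\suspinfty(-)$ is again automatic from the universal property.

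There is no real obstacle left to overcome: the substance of the theorem lives in \cref{congruencegenerated17} and \cref{prop:largestcong=racine}, and this final statement is a formal packaging into an adjoint triple. The only point worth checking carefully is that both universal constructions are functorial on the full posets (and not merely defined pointwise), but this is automatic from their characterizations by universal properties.
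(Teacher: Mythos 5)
Your proposal is correct and takes essentially the same route as the paper: the left adjoint is immediate from the universal property of the congruence completion, and the right adjoint is exactly the content of \cref{prop:largestcong=racine}. The only nitpick is that the characterization of $\Sigma\cong$ as the smallest congruence containing $\Sigma$ is its definition (given just after \cref{ex:congruence}), not the content of \cref{congruencegenerated17}, which instead gives the formula $\Sigma\cong=(\Sigma\diag)\ac$; this is a mislabeled citation, not a gap.
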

\begin{proof}
The left adjoint part is essentially by definition of the congruence completion.
The right adjoint part is \cref{prop:largestcong=racine}.
\end{proof}

\begin{remark}
\Cref{thm:adjCongAcyclic} says that every acyclic class $\cA$ sits between two congruences
\[
\decinfty \cA
\ \subseteq\ 
\cA
\ \subseteq\ 
\cA\cong
\,.
\]
This implies a reverse order on the category of local objects
\[
\Loc\cE{\cA\cong}
\ \subseteq\ 
\Loc\cE\cA
\ \subseteq\ 
\Loc\cE{\decinfty\cA}
\,.
\]
If the congruences $\decinfty\cA$ and $\cA\cong$ are of small generation, the categories $\Loc\cE{\cA\cong}$ and $\Loc\cE{\decinfty\cA}$ are categories of sheaves for the corresponding left-exact localizations \cref{Luriethm1}.
Hence, every sheaf for the congruence $\cA\cong$ is $\cA$-local, and 
every $\cA$-local object is a sheaf for the congruence $\decinfty\cA$.
\end{remark}

We shall need the previous theorem and the following proposition in the proof of \cref{thm:equivPtopHcong}.

\begin{proposition}
\label{prop:racinetopo}
We have $\decinfty\cA\cap \Mono = \cA\cap \Mono$.
\end{proposition}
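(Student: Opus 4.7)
The inclusion $\suspinfty\cA\cap \Mono \subseteq \cA\cap \Mono$ is immediate from $\suspinfty\cA \subseteq \cA$ (which holds by the very definition of $\suspinfty\cA$ as an intersection starting at $\suspn\cA 0 = \cA$). So the content is the reverse inclusion: any monomorphism $m$ that happens to lie in $\cA$ lies in all the higher classes $\suspn\cA n$ as well.

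My plan is to exploit the fact that the diagonal of a monomorphism is an isomorphism. More precisely, by the definition of monomorphism recalled in \cref{def:surjection}, if $m : A \to B$ is a monomorphism then the canonical map $\Delta m : A \to A\times_B A$ is invertible, since the square exhibiting $m$ as a monomorphism is a pullback. Then, since the diagonal of any isomorphism is again an isomorphism (both its domain and codomain are computed as the same object up to unique iso), a straightforward induction gives that $\Delta^n m$ is an isomorphism for every $n \geq 1$.

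Putting these observations together: for $m \in \cA \cap \Mono$, we have $\Delta^0 m = m \in \cA$ by hypothesis, and $\Delta^n m \in \Iso \subseteq \cA$ for every $n \geq 1$, using condition \ref{defacyclicclass2:1} of \cref{defacyclicclass2} that every acyclic class contains the isomorphisms. Hence $m \in \suspinfty\cA$, which shows $\cA\cap \Mono \subseteq \suspinfty\cA \cap \Mono$ and completes the proof.

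No step here looks like a real obstacle; the only thing to be a touch careful about is the argument that iterated diagonals of an isomorphism remain isomorphisms, which however is formal and follows from the fact that the pullback of an iso along itself is the identity square.
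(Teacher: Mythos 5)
Your proof is correct and is essentially the paper's own argument: for a monomorphism $m$ the diagonal $\Delta m$ is invertible, hence all higher iterated diagonals are isomorphisms and lie in $\cA$, so the condition $m\in\suspinfty\cA$ reduces to $m\in\cA$. Your extra care about iterating the diagonal of an isomorphism is fine but the paper treats it as immediate.
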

\begin{proof}
A map $f$ is in $\decinfty\cA$ if and only if all its diagonals $\Delta^nf$ are in $\cA$.
When $f=m$ is a monomorphism the collection of diagonals reduces to $m$ itself and some isomorphisms.
Since an acyclic class contains all isomorphisms, the condition $m\in \decinfty\cA$ reduces to $m\in \cA$. 
Hence the result.
\end{proof}

\subsection{Forcing}
\label{sec:forcing}

A localization of categories is forcing universally a class of maps $\Sigma$ to be invertible.
In this section, we introduce a variation of the notion of localization, where the condition to be invertible is replaced by another one, typically, to be surjective or to be \oo connected.
We shall only be interested with forcing conditions that happen to be equivalent to localizations, but it is sometimes more convenient, or more meaningful, to present a localization by saying that it forces some maps to be surjective, than to present it as actually inverting some maps.
For example, from a logical point of view, forcing a map to be surjective is quite natural since this corresponds to imposing an existential axiom.
The theory of forcing presented here is a rudiment of a potential ``higher geometric logic'' for topoi.

\medskip
Formally, we are going to replace the class $\Iso$ of isomorphisms by a chosen class $\Theta$ of maps and consider the problem of forcing the inclusion of $\Sigma$ in $\Theta$.
Our main examples will be forcing the maps in $\Sigma$
\begin{enumerate}[label=\alph*)]
\item to be invertible ($\Theta = \Iso$, the class of isomorphisms),
\item to be surjective ($\Theta = \Surj$, the class of surjective maps),
\item to be $n$-connected ($\Theta = \Conn n$, the class of $n$-connected maps, for $-2\leq n\leq \infty$),
\item to be $n$-truncated ($\Theta = \Trunc n$, the class of $n$-truncated maps, for $-2\leq n<\infty$).
\end{enumerate}
For any topos $\cF$, these examples of $\Theta$ define full subcategories of maps $\Theta(\cF)\subseteq \cF\arr$ which are natural in $\cF$, in the sense that, for any algebraic morphism of topoi $\phi:\cF\to \cF'$, we have 
\[
\phi\left(\Theta(\cF)\right)
\ \subseteq\ 
\Theta(\cF')\ .
\]
We shall call such a class $\Theta$ a {\it uniform class of maps}.

\medskip
Given a topos $\cE$, a class of maps $\Sigma$ in $\cE$ and a uniform class of maps $\Theta$, we shall say that an algebraic morphism $\phi:\cE\to \cF$ 
{\it forces the inclusion of $\Sigma$ in $\Theta$} if $\phi(\Sigma)\subseteq \Theta(\cF)$.
The functor $\phi$ is said to {\it force the inclusion of $\Sigma$ in $\Theta$  universally} if it is initial in the category of functors forcing the inclusion of $\Sigma$ in $\Theta$.
More precisely, for any topos $\cG$, let us denote by
\[
\fun\cE\cG\alg^{\Sigma\forcing\Theta}
\ \subseteq\ 
\fun\cE\cG\alg
\]
the full subcategory spanned by algebraic morphisms $\phi:\cE\to \cF$ forcing the inclusion of $\Sigma$ in $\Theta$.
This defines a functor
\begin{align*}
\Toposalg &\tto \CAT    \\
\cG &\mto \fun\cE\cG\alg^{\Sigma\forcing\Theta}\ .
\end{align*}
which is a subfunctor of the representable functor $\fun \cE-\alg$.
If $\phi:\cE\to \cF$ forces the inclusion of $\Sigma$ in $\Theta$, then the composition 
$(-)\circ \phi: \fun \cF\cG\alg \to \fun\cE\cG\alg$ 
induces a
natural transformation of functors in $\cG$
\[
(-)\circ \phi: \fun\cF\cG\alg \stto \fun\cE\cG\alg^{\Sigma\forcing\Theta}\,.
\]
The functor $\phi$
forces the inclusion of $\Sigma$ in $\Theta$
universally if the induced functor is an equivalence of categories for every topos $\cG$, 
that is if the functor $\fun\cE-\alg^{\Sigma\forcing\Theta}$ is representable.
If such a map $\phi:\cE\to \cF$ exist, it is unique and we denote the codomain $\cF$ generically by $\cE\Forcing\Sigma\Theta$.

\medskip

We shall call the data of $(\cE,\Sigma)$ and $\Theta$ a {\it forcing condition} and denote it 
\[
\Forcing\Sigma\Theta
\]
leaving $\cE$ implicit to lighten the notation.
A forcing condition is said to be {\it representable}, or {\it efficient}, if it can be forced universally.
Two forcing conditions $\Forcing\Sigma\Theta$ and $\Forcing{\Sigma'}{\Theta'}$ are said to be {\it equivalent} 
if the underlying topos $\cE$ is the same and if the corresponding subfunctors of the functor $\fun \cE-\alg$ are identical.
This implies that $\Forcing\Sigma\Theta$ and $\Forcing{\Sigma'}{\Theta'}$ are representable by the same topos (if one of them is representable).
We shall denote equivalent forcing conditions by an equality symbol
\[
\Forcing\Sigma\Theta
\ =\ 
\Forcing{\Sigma'}{\Theta'}\ .
\]

\begin{remark}
\label{rem:localization}
When $\Theta=\Iso$ is the class of isomorphism, this definition gives back the notion of the left-exact cocontinuous localization of $\cE$ generated by $\Sigma$ of \cref{sec:left-exact-localization}. 
In the notation of \cite{ABFJ:HS}, we have
\[
\cE\Forcing\Sigma\Iso
\ =\ 
\LOC \cE \Sigma\cclex\ .
\]
All the examples of forcing we are going to be concerned with will be equivalent to actual localizations.    
\end{remark}

\begin{lemma}
\label{lem:forcing:trivial}
Let $\Theta$ and $\Theta'$ be two uniform classes of maps, and $\Sigma$ and $\Sigma'$ be two classes of maps in a topos $\cE$.
We have the following equivalences of forcing conditions (where the concatenation corresponds to the intersection of the corresponding subfunctors).
\begin{enumerate}
\item\label{lem:forcing:trivial:1} $\Forcing\Sigma{\Theta\cap \Theta'} = \Forcing\Sigma\Theta \Forcing\Sigma{\Theta'}  = \Forcing\Sigma{\Theta'} \Forcing\Sigma\Theta$
\item\label{lem:forcing:trivial:2} $\Forcing{\Sigma\cup\Sigma'}\Theta = \Forcing\Sigma\Theta \Forcing{\Sigma'}\Theta  = \Forcing{\Sigma'}\Theta \Forcing\Sigma\Theta$.
\item\label{lem:forcing:trivial:3} Moreover, if $\Forcing{\Sigma}\Theta$ and $\Forcing{\Sigma'}\Theta$ are representable 
then $\cE\Forcing{\Sigma\cup\Sigma'}\Theta$ is representable, and we have
\[
\cE\Forcing{\Sigma\cup\Sigma'}\Theta
\ =\ 
\big(\cE\Forcing{\Sigma}\Theta\big)\Forcing{\phi(\Sigma')}\Theta
\]
where $\phi:\cE\to \cE\Forcing{\Sigma}\Theta$ is the canonical morphism.
\end{enumerate}

\end{lemma}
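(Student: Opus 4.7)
For parts (1) and (2), the plan is to verify the equality of subfunctors of $\fun\cE{-}\alg$ directly from the definition. For (1), an algebraic morphism $\psi:\cE\to\cG$ forces $\Sigma\subseteq\Theta\cap\Theta'$ precisely when $\psi(\Sigma)\subseteq\Theta(\cG)\cap\Theta'(\cG)$, which unwinds to the simultaneous conditions $\psi(\Sigma)\subseteq\Theta(\cG)$ and $\psi(\Sigma)\subseteq\Theta'(\cG)$; the symmetry in $\Theta$ and $\Theta'$ is then manifest. For (2), the same strategy applies using $\psi(\Sigma\cup\Sigma')=\psi(\Sigma)\cup\psi(\Sigma')$, so that forcing $\Sigma\cup\Sigma'\subseteq\Theta$ amounts to forcing both $\Sigma$ and $\Sigma'$ into $\Theta$. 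These are routine unwindings and should occupy no more than a few lines each.

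For part (3), the plan is to separate the assertion into two steps: first the identification of forcing conditions (which is formal) and then the representability (where the only real work lies). Writing $\cE_1:=\cE\Forcing\Sigma\Theta$ with canonical $\phi_1:\cE\to\cE_1$, I will exploit the universal property of $\cE_1$ to set up, for each topos $\cG$, an equivalence between algebraic morphisms $\bar\psi:\cE_1\to\cG$ and algebraic morphisms $\psi:\cE\to\cG$ that force $\Sigma\subseteq\Theta$. Under this equivalence, $\bar\psi$ forces $\phi_1(\Sigma')\subseteq\Theta$ iff $\bar\psi\circ\phi_1$ forces $\Sigma'\subseteq\Theta$, which by part (2) is exactly the condition that the corresponding $\psi$ forces $\Sigma\cup\Sigma'\subseteq\Theta$. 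This shows that the forcing conditions $(\cE_1)\Forcing{\phi_1(\Sigma')}\Theta$ and $\cE\Forcing{\Sigma\cup\Sigma'}\Theta$ define isomorphic subfunctors, so one is representable iff the other is, and they share the same representing topos.

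The core of the argument is then the representability of $\cE\Forcing{\Sigma\cup\Sigma'}\Theta$. The plan is to construct a representing topos as a pushout in $\Toposalg$. Setting $\cE_2:=\cE\Forcing{\Sigma'}\Theta$ with canonical $\phi_2:\cE\to\cE_2$, I will form $\cE_{12}:=\cE_1\sqcup_\cE\cE_2$, the pushout of $\phi_1$ and $\phi_2$ in $\Toposalg$. This pushout exists because the opposite category $\Topos$ has pullbacks, a standard result from topos theory which can be cited from \cite{Lurie:HTT}. By the pushout universal property combined with the universal properties of $\cE_1$ and $\cE_2$, an algebraic morphism $\cE_{12}\to\cG$ corresponds to an algebraic morphism $\psi:\cE\to\cG$ factoring through both $\phi_1$ and $\phi_2$, i.e.\ forcing both $\Sigma$ and $\Sigma'$ into $\Theta$; by part (2) this is the same as forcing $\Sigma\cup\Sigma'\subseteq\Theta$. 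The main obstacle is precisely the invocation of pushouts in $\Toposalg$; although standard, it is the one non-formal ingredient in the proof and should be flagged carefully.
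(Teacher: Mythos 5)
For parts (1) and (2) your unwinding is exactly what the paper's one\-/line proof (``Direct computation'') amounts to, and it is correct. For part (3) you take a genuinely different --- in fact more explicit --- route than anything the paper records. Your reduction of the displayed identity to an identification of subfunctors, via part (2) and the universal property of $\phi_1:\cE\to\cE\Forcing\Sigma\Theta$, is correct and is presumably the ``direct computation'' intended. The pushout $\cE_1\sqcup_\cE\cE_2$ in $\Toposalg$ is a sensible way to produce the representing object, and for an arbitrary uniform class $\Theta$ some such construction really is needed, since the paper offers none. What your approach buys is a proof valid at the stated level of generality; what it costs is the reliance on limits of topoi, plus one point you should make explicit: the pushout's universal property in the $(\infty,1)$-category $\Toposalg$ is an equivalence of mapping \emph{spaces}, while representability of a forcing condition here means an equivalence of the mapping \emph{categories} $\fun{\cF}{\cG}\alg \simeq \fun{\cE}{\cG}\alg^{(\Sigma\cup\Sigma')\forcing\Theta}$. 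Since all the subfunctors involved are full, the essential image comes out right from the space-level statement, but the full faithfulness of $\fun{\cE_{12}}{\cG}\alg\to\fun{\cE}{\cG}\alg$ deserves a sentence (the paper elides the same issue elsewhere, so this is a caveat rather than a fatal flaw). Note also that in every downstream use of this lemma, $\Theta$ is one of $\Iso$, $\Surj$, $\Conn{n}$, $\Trunc{n}$, for which \cref{thm:forcing} converts the forcing condition into a localization $\Forcing{-}\Iso$; representability of the union then follows because the congruence generated by the union of two small generating sets is again of small generation, so the pushout can be bypassed in all the cases that actually matter.
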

\begin{proof}
Direct computation.    
\end{proof}

\medskip

\begin{lemma}
\label{lem:forcing}
We have the following equivalences of forcing conditions:
\begin{enumerate}
\item\label{lem:forcing:1} If the class $\Theta(\cF)$ is acyclic for every topos $\cF$, then $\Forcing\Sigma\Theta = \Forcing{\Sigma\ac}\Theta$.
\item\label{lem:forcing:2} If the class $\Theta(\cF)$ is a congruence for every topos $\cF$, then $\Forcing\Sigma\Theta = \Forcing{\Sigma\ac}\Theta = \Forcing{\Sigma\cong}\Theta$.
\end{enumerate}
\end{lemma}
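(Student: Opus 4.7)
The plan is to unfold the definition of equivalence of forcing conditions. By definition, $\Forcing\Sigma\Theta = \Forcing{\Sigma'}{\Theta'}$ means that for every topos $\cG$, the two subfunctors of $\fun\cE\cG\alg$ agree, i.e.\ for every algebraic morphism $\phi:\cE\to\cG$ one has $\phi(\Sigma)\subseteq \Theta(\cG)$ if and only if $\phi(\Sigma')\subseteq \Theta'(\cG)$. So after fixing such a $\phi$, part \eqref{lem:forcing:1} asks for the equivalence of $\phi(\Sigma)\subseteq \Theta(\cG)$ with $\phi(\Sigma\ac)\subseteq \Theta(\cG)$, and part \eqref{lem:forcing:2} adds the analogous equivalence with $\phi(\Sigma\cong)\subseteq \Theta(\cG)$.

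For \eqref{lem:forcing:1}, the implication $\phi(\Sigma\ac)\subseteq \Theta(\cG) \Rightarrow \phi(\Sigma)\subseteq \Theta(\cG)$ is immediate from the inclusion $\Sigma\subseteq \Sigma\ac$. For the converse, suppose $\phi(\Sigma)\subseteq \Theta(\cG)$, equivalently $\Sigma\subseteq \phi^{-1}(\Theta(\cG))$. By hypothesis $\Theta(\cG)$ is acyclic in $\cG$, so by \cref{ex:acyclic:4} the preimage $\phi^{-1}(\Theta(\cG))$ is an acyclic class in $\cE$. Since $\Sigma\ac$ is the smallest acyclic class containing $\Sigma$, we conclude $\Sigma\ac\subseteq \phi^{-1}(\Theta(\cG))$, i.e.\ $\phi(\Sigma\ac)\subseteq \Theta(\cG)$.

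Part \eqref{lem:forcing:2} follows the same pattern with acyclic classes replaced by congruences. Since every congruence is acyclic (\cref{ex:acyclic:2}), the hypothesis of \eqref{lem:forcing:2} implies that of \eqref{lem:forcing:1}, yielding $\Forcing\Sigma\Theta = \Forcing{\Sigma\ac}\Theta$. For the equality $\Forcing\Sigma\Theta = \Forcing{\Sigma\cong}\Theta$, the inclusion $\Sigma\subseteq\Sigma\cong$ gives one implication; for the other, $\phi^{-1}(\Theta(\cG))$ is now a congruence in $\cE$ by \cref{exmpcongruence4}, so $\Sigma\subseteq\phi^{-1}(\Theta(\cG))$ forces $\Sigma\cong\subseteq\phi^{-1}(\Theta(\cG))$ by minimality of $\Sigma\cong$.

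No step poses a serious obstacle: the argument is just the universal property of $\Sigma\ac$ (resp.\ $\Sigma\cong$) as the smallest acyclic class (resp.\ congruence) containing $\Sigma$, combined with the stability of acyclic classes and congruences under preimages by algebraic morphisms. The only mild point worth flagging is that the argument never uses uniformity of $\Theta$ directly, only the hypothesis that $\Theta(\cG)$ is acyclic (or a congruence) in each target topos $\cG$.
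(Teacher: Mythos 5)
Your proof is correct and takes essentially the same route as the paper: both arguments reduce the claim to the equivalence of $\phi(\Sigma)\subseteq\Theta(\cG)$ with $\Sigma\subseteq\phi^{-1}(\Theta(\cG))$, and then invoke the minimality of $\Sigma\ac$ (resp.\ $\Sigma\cong$) together with the stability of acyclic classes (resp.\ congruences) under preimage along algebraic morphisms. If anything, your citation of \cref{ex:acyclic:4} and \cref{exmpcongruence4} is the more apposite reference for the preimage step.
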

\begin{proof}
Let $\phi:\cE\to \cF$ be an algebraic morphism of topoi.
Then, if $\Theta(\cF)$ is acyclic, the condition $\phi(\Sigma)\subseteq\Theta(\cF)$ is equivalent to  
$\Sigma\subseteq \phi^{-1}(\Theta(\cF))$ 
and to 
$\Sigma\ac\subseteq \phi^{-1}(\Theta(\cF))$ 
since the class $\phi^{-1}(\Theta(\cF))$ is acyclic by \cref{prop:transport-acyclic}.
This proves $\phi(\Sigma\ac)\subseteq\Theta(\cF)$.
The proof is similar for the congruences.
\end{proof}

To provide the translation between forcing and localizations, we need some notation.
Recall that every map $u:A\to B$ in a topos admits a unique factorization 
$u=\im u\circ \coim u$ as a surjective map $\coim u$ followed by a monomorphism $\im u$. 
For a class of maps $\Sigma\subseteq \cE$, recall from \cref{sec:acyclic} the notation
\[
\im\Sigma
\ :=\ 
\big\{\im u \ | \ u\in \Sigma\big\}\,,
\qquad\qquad
\Delta(\Sigma)
\ :=\ 
\big\{\Delta u \ | \ u\in \Sigma\big\}\ ,
\]
\[
\Delta^{\leq n}(\Sigma)
\ =\ 
\big\{ \Delta^i u \, | \, u \in \Sigma,\, 0\leq k\leq n \big\}
\ ,
\qquad\text{and}\qquad
\Sigma\diag
\ =\ 
\big\{ \Delta^k u \, | \, u \in \Sigma,\, k\geq 0 \big\}
\ .
\]

\begin{theorem}[Forcing equivalences]
\label{thm:forcing}
For any topos $\cE$ and any class of maps $\Sigma$ in $\cE$, we have the following equivalences of forcing conditions.
\begin{enumerate}
\item\label{thm:forcing:1}
    $\Forcing\Sigma\Iso = \Forcing{\Sigma\ac}\Iso = \Forcing{\Sigma\cong}\Iso\,$
\item\label{thm:forcing:2}
    $\Forcing\Sigma\Surj = \Forcing{\Sigma\ac}\Surj = \Forcing{\im\Sigma}\Iso\,$
\item\label{thm:forcing:3}
    $\Forcing\Sigma{\Conn n} = \Forcing{\Sigma\ac}{\Conn n} = \Forcing{\Delta^{\leq n+1}(\Sigma)}\Surj = \Forcing{\im{\Delta^{\leq n+1}(\Sigma)}}\Iso\,$
\item\label{thm:forcing:4}
    $\Forcing\Sigma{\Conn\infty} = \Forcing{\Sigma\ac}{\Conn\infty} = \Forcing{\Sigma\cong}{\Conn\infty} = \Forcing{\Sigma\diag}\Surj = \Forcing{\im{\Sigma\diag}}\Iso\,$
\item\label{thm:forcing:5bis}
    $\Forcing\Sigma{\Mono} = \Forcing{\Delta(\Sigma)}\Iso\,$
\item\label{thm:forcing:5}
    $\Forcing\Sigma{\Trunc n} = \Forcing{\Delta^{n+2}(\Sigma)}\Iso\,$
\end{enumerate}
\end{theorem}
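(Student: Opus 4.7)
The statement is a list of six equivalences, but they all reduce to a small bank of elementary observations. My plan is to organize the proof around three themes: (i) enlarging $\Sigma$ under closure operations (the $\Sigma\mapsto\Sigma\ac$ and $\Sigma\mapsto\Sigma\cong$ passages), (ii) reducing a forcing condition $\Forcing{-}{\Theta}$ to one whose right-hand side is $\Iso$ or $\Surj$, and (iii) turning surjection-forcing into iso-forcing via the image factorization.

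\medskip\noindent\textbf{Step 1 (closures).} I would dispatch the first equalities in each part with \Cref{lem:forcing}. The class $\Iso$ is trivially a congruence, and $\Conn\infty$ is a congruence by \Cref{exmpcongruence3}, so \Cref{lem:forcing}(2) gives the full chains in (1) and the chain $\Forcing\Sigma{\Conn\infty}=\Forcing{\Sigma\ac}{\Conn\infty}=\Forcing{\Sigma\cong}{\Conn\infty}$ in (4). The classes $\Surj$ and $\Conn n$ (for $-1\le n<\infty$) are acyclic by \Cref{ex:acyclic:3}, so \Cref{lem:forcing}(1) yields the first equalities $\Forcing\Sigma\Surj=\Forcing{\Sigma\ac}\Surj$ in (2) and $\Forcing\Sigma{\Conn n}=\Forcing{\Sigma\ac}{\Conn n}$ in (3).

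\medskip\noindent\textbf{Step 2 (preservation properties).} For every algebraic morphism $\phi:\cE\to\cF$, I would record two facts, both already contained in the preliminaries: $\phi$ commutes with all iterated diagonals, so $\phi(\Delta^k u)=\Delta^k\phi(u)$; and $\phi$ preserves the (surjection, monomorphism) factorization, so the monomorphism part of $\phi(u)$ is canonically $\phi(\im u)$. Combined with the pointwise definitions (a map is a monomorphism iff its diagonal is invertible; $n$-truncated iff $\Delta^{n+2}$ is invertible; $n$-connected iff $\Delta^k$ is surjective for $0\le k\le n+1$; surjective iff its image is invertible), these give equivalences of conditions on single maps:
\[
\phi(u)\in\Mono(\cF)\ \Longleftrightarrow\ \phi(\Delta u)\in\Iso(\cF),
\qquad
\phi(u)\in\Trunc n(\cF)\ \Longleftrightarrow\ \phi(\Delta^{n+2}u)\in\Iso(\cF),
\]
\[
\phi(u)\in\Surj(\cF)\ \Longleftrightarrow\ \phi(\im u)\in\Iso(\cF),
\qquad
\phi(u)\in\Conn n(\cF)\ \Longleftrightarrow\ \phi(\Delta^k u)\in\Surj(\cF)\ \text{for}\ 0\le k\le n+1,
\]
and the evident infinite analogue for $\Conn\infty$ with $\Sigma\diag$.

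\medskip\noindent\textbf{Step 3 (assembly).} Quantifying each of these over $u\in\Sigma$ (respectively each class appearing on the right) produces exactly the identification of subfunctors $\fun\cE-\alg^{\Sigma\forcing\Theta}=\fun\cE-\alg^{\Sigma'\forcing{\Theta'}}$ required. Concretely: Step 2 gives (5) and (6); it gives the second equality in (2); and it gives $\Forcing\Sigma{\Conn n}=\Forcing{\Delta^{\le n+1}(\Sigma)}\Surj$ (respectively $\Forcing\Sigma{\Conn\infty}=\Forcing{\Sigma\diag}\Surj$), after which applying the translation for $\Surj$ to the class $\Delta^{\le n+1}(\Sigma)$ (respectively $\Sigma\diag$) yields the final equalities in (3) and (4).

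\medskip\noindent\textbf{Expected obstacle.} There is no deep obstruction: the argument is systematic bookkeeping. The only care required is in Step 2, to distinguish between $\phi$ applied to the mono part of $u$ and the mono part of $\phi(u)$; these coincide precisely because $\phi$ preserves the image factorization, a fact which must be cited cleanly to avoid circularity when (2) is used inside the proofs of (3) and (4).
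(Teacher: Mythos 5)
Your proposal is correct and follows essentially the same route as the paper: the first equalities in each part via \cref{lem:forcing} (using that $\Iso$ and $\Conn\infty$ are congruences and $\Surj$, $\Conn n$ are acyclic), the remaining ones by translating the pointwise characterizations ($\Surj$ via the image, $\Conn n$ via surjectivity of diagonals, $\Mono$ and $\Trunc n$ via invertibility of diagonals) through the fact that algebraic morphisms preserve diagonals and the image factorization. The care you flag about $\phi(\im u)$ versus $\im{\phi(u)}$ is exactly the point the paper also relies on, so nothing is missing.
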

\begin{proof}
To prove an equivalence $\Forcing\Sigma\Theta = \Forcing{\Sigma'}{\Theta'}$, we need to show that, for an algebraic morphism of topoi $\phi:\cE\to \cF$, $\phi$ forces $\Sigma$ to be in $\Theta$ if and only if $\phi$ forces $\Sigma'$ to be in $\Theta'$.

\smallskip
\noindent\eqref{thm:forcing:1}
The class $\Iso(\cF)$ is a congruence for all $\cF$ and the result follows from \cref{lem:forcing}~\eqref{lem:forcing:2}.

\smallskip
\noindent\eqref{thm:forcing:2}
The class $\Surj(\cF)$ is acyclic for all $\cF$ and the first equality follows from \cref{lem:forcing}~\eqref{lem:forcing:1}.
Any algebraic morphism of topoi $\phi:\cE\to \cF$ preserves the image factorization of maps.
Hence $\phi$ forces the maps of $\Sigma$ to be surjections if and only if $\phi$ inverts all their images. 
This proves $\Forcing\Sigma\Surj = \Forcing{\im\Sigma}\Iso$.

\smallskip
\noindent\eqref{thm:forcing:3}
The class $\Conn n(\cF)$ is acyclic for all $\cF$ and the first equality follows from \cref{lem:forcing}~\eqref{lem:forcing:1}.
A morphism $f$ is $n$-connected if and only if its diagonals $\Delta^kf$ ($0\leq k\leq n-1$) are all surjective, and diagonals and surjections are preserved by algebraic morphisms of topoi.
Then $\Forcing\Sigma{\Conn n} = \Forcing{\Delta^{\leq n+1}(\Sigma)}\Surj$
is deduced from \eqref{thm:forcing:2} using \cref{lem:forcing:trivial}.
The last equivalence is an application of \eqref{thm:forcing:2}.

\smallskip
\noindent\eqref{thm:forcing:4}
The class $\Conn\infty(\cF)$ is a congruence for all $\cF$ and the first two equalities follow from \cref{lem:forcing}~\eqref{lem:forcing:2}.
The other ones are just the limit case of \eqref{thm:forcing:3} when $n=\infty$.

\smallskip
\noindent\eqref{thm:forcing:5bis}
A map $f$ is a monomorphism if and only if its diagonal $\Delta f$ is invertible.

\smallskip
\noindent\eqref{thm:forcing:5}
A map $f$ is $n$-truncated if and only if its diagonal $\Delta^{n+2}f$ is invertible.
\end{proof}

\begin{theorem}
\label{cor:forcing}
All forcing conditions of \cref{thm:forcing} are representable if $\Sigma$ is a set of maps.
\end{theorem}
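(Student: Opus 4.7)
The plan is to reduce every forcing condition listed in \cref{thm:forcing} to an $\Iso$-forcing condition $\Forcing{\Sigma'}\Iso$ for some class $\Sigma'$ built from $\Sigma$, and then appeal to \cref{Luriethm1} which guarantees representability whenever $\Sigma'$ is a set.

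First I would observe that \cref{thm:forcing} provides, for each of the six uniform classes $\Theta \in \{\Iso,\Surj,\Conn n,\Conn\infty,\Mono,\Trunc n\}$, an equivalence of forcing conditions of the form
\[
\Forcing\Sigma\Theta \quad=\quad \Forcing{\Sigma'}\Iso
\]
where $\Sigma'$ is obtained by a construction which is a set-valued operation on $\Sigma$: namely $\Sigma' = \Sigma$, $\im\Sigma$, $\im{\Delta^{\leq n+1}(\Sigma)}$, $\im{\Sigma\diag}$, $\Delta(\Sigma)$, or $\Delta^{n+2}(\Sigma)$ respectively. Each of these operations takes a set of maps to a set of maps: taking an image of a map produces a single map, taking the $k$-th diagonal produces a single map, a finite union of sets is a set, and $\Sigma\diag = \bigcup_{k\geq 0}\Delta^k(\Sigma)$ is a countable union of sets, hence a set. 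Thus $\Sigma'$ is a set whenever $\Sigma$ is.

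Next I would invoke \cref{Luriethm1}, which states that for any set of maps $\Sigma'$ in a topos $\cE$, the reflector $\rho:\cE\to \Loc\cE{{\Sigma'}\cong}$ is an accessible left-exact localization universal for inverting $\Sigma'$ among algebraic morphisms; in particular, the forcing condition $\Forcing{\Sigma'}\Iso$ is representable by $\cE\Forcing{\Sigma'}\Iso = \Loc\cE{{\Sigma'}\cong}$. Composing with the equivalence of forcing conditions given above yields representability of $\Forcing\Sigma\Theta$ in each case, with representing topos
\[
\cE\Forcing\Sigma\Theta \quad=\quad \cE\Forcing{\Sigma'}\Iso\,.
\]

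There is no real obstacle, since the hard work has already been done in \cref{thm:forcing} (rewriting each forcing condition as an $\Iso$-forcing on an explicit class) and in \cref{Luriethm1} (existence of left-exact localizations generated by a set). The only point requiring a brief verification is the size bookkeeping for $\Sigma\diag$ in item~\eqref{thm:forcing:4}, which is a countable union of sets indexed by $k\geq 0$ and hence remains small.
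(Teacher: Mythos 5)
Your proposal is correct and is essentially identical to the paper's own proof: both reduce each forcing condition to an $\Iso$-forcing condition $\Forcing{\Sigma'}\Iso$ via the equivalences of \cref{thm:forcing}, note that the classes $\im\Sigma$, $\im{\Delta^{\leq n+1}(\Sigma)}$, $\im{\Sigma\diag}$, and $\Delta^{n+2}(\Sigma)$ remain sets, and invoke \cref{Luriethm1}. The only difference is that you spell out the size bookkeeping for $\Sigma\diag$ slightly more explicitly, which the paper leaves implicit.
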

\begin{proof}
Recall that a localization $\Forcing\Sigma\Iso$ is representable when $\Sigma$ is a set by \cref{Luriethm1}.
Then, the result follows from the fact that the classes 
$\im\Sigma$, $\im{\Delta^{\leq n+1}(\Sigma)}$, $\im{\Sigma\diag}$, and $\Delta^{n+2}(\Sigma)$ are all sets if $\Sigma$ is a set.
\end{proof}

We shall see in \cref{cor:topcongexists} that the forcing conditions \eqref{thm:forcing:2}, \eqref{thm:forcing:3}, and \eqref{thm:forcing:4} of \cref{thm:forcing} are representable for any class $\Sigma$.

\section{Topologies}
\label{sec:Grothendieck}

In this section, we introduce the trilogy of extended Grothendieck topology, covering topologies, and Lawvere--Tierney topologies on an arbitrary topos.

The classical theory of Grothendieck topologies limit their definition to presheaf topoi $\PSh C$ as a structure on the category $ C$.
Our definition does not use the choice of a generating category: we define extended Grothendieck topologies on a topos $\cE$ as a class of monomorphisms satisfying some stability conditions (\cref{definitionGrothtop}).
When $\cE=\PSh C$, we prove in \cref{Gtop=Gtop} that this notion is equivalent to the notion introduced in \cite[Definition 6.2.2.1]{Lurie:HTT}.

In the second section, we prove that every extended Grothendieck topology is of small generation (\cref{Gtoparesmall})
and that the poset of extended Grothendieck topologies is small (\cref{cor:GTop-is-small}).
The proof rely on the existence of a subobject classifier in any Grothendieck topos (\cref{mono-classifier}) and on the notion of univalent family of monomorphisms (\cref{def:univ-mono}).

We then introduce the notion of covering topology, which is an acyclic class containing the class of surjections.
Covering topologies can be thought of as a version of pretopologies which do not depend on the choice of a generating category. 
We shall see in \cref{cor:cov=invsurj} that they are exactly the classes of maps that become surjective in some localization.
In \cref{thm:covering-top}, we prove that covering topologies are in bijection with extended Grothendieck topologies.
We will see across the paper that it is sometimes more convenient to present a localization in terms of the maps that become surjective instead of the maps that become invertible.
\Cref{thm:tripleadj-acyclic} shows that covering topologies are a natural notion in the articulation of acyclic classes and extended Grothendieck topologies.

We define also the notion of a Lawvere--Tierney topology in \cref{LawvereTierney}, simply importing the definition from the theory of 1-topoi.
Any Lawvere--Tierney topology naturally defines a factorization system on monomorphisms (see  \cref{LTtop}) which is the restriction to monomorphism of a modality on the topos (\cref{cor:ptopmodality}).
The main result of this section is \cref{LT-topoGroth}, establishing a bijective correspondence between extended Grothendieck topologies and Lawvere--Tierney topologies.

Finally, the section explains how a topology on a topos $\cE$ is entirely determined by its restriction to the associated 1-topos $\cE\truncated 0$ of 0-truncated objects (\Cref{prop:bijLTLT,cor:bijGTGT}).

\bigskip
For purpose of reference, we have assembled below all the equivalences results proved in this section.
Given that all the notions of topology on a topos are equivalent, we can simply talk of a topology without ambiguity.
Topologies come also with a notion of sheaf.
This will be the matter of \cref{sec:GTsheaf}.

\begin{thm}
\label{thm:equivalences}
There are canonical isomorphisms between
\begin{enumerate}
\item\label{thm:equivalences:1} the poset $\GTop(\cE)$ of extended Grothendieck topologies on an \oo topos $\cE$,
\item\label{thm:equivalences:2} the poset of covering topologies on the \oo topos $\cE$,
\item\label{thm:equivalences:3} the poset of Lawvere--Tierney topologies on $\cE$, and
\item\label{thm:equivalences:4} the poset of extended Grothendieck topologies on the 1-topos $\cE\truncated 0$ of discrete objects of $\cE$.
\end{enumerate}
The posets are small and have the structure of a frame.
\end{thm}
The equivalence $\eqref{thm:equivalences:1}\Leftrightarrow\eqref{thm:equivalences:2}$ is the content of \cref{thm:covering-top}.
The equivalence $\eqref{thm:equivalences:1}\Leftrightarrow\eqref{thm:equivalences:3}$ is the content of \cref{LT-topoGroth}.
The equivalence $\eqref{thm:equivalences:3}\Leftrightarrow\eqref{thm:equivalences:4}$ is the content of \cref{cor:bijGTGT}.

\subsection{Extended Grothendieck topologies}

\begin{lemma}
\label{monoarelocal}
The class of monomorphisms in a topos $\cE$ is local.
\end{lemma}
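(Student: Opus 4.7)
The plan is to reduce the statement to the fact that the class $\Iso$ is a local class, using the characterization of monomorphisms via their diagonals. Recall that a map $f:A\to B$ is a monomorphism if and only if the diagonal $\Delta f: A\to A\times_B A$ is an isomorphism (the case $n=-1$ of \cref{def:truncated}). Note also that $\Mono$ is closed under base change (being defined by a pullback condition), so the hypothesis in \cref{locclass} is satisfied.

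Suppose $f:A\to B$ belongs locally to $\Mono$: there is a surjective family $\{v_i:B_i\to B\}_{i\in I}$ such that each base change $p_i:B_i\times_B A\to B_i$ is a monomorphism. The key calculation, which I would spell out carefully, is that taking diagonals commutes with base change. Under the canonical identification
\[
(B_i\times_B A)\times_{B_i}(B_i\times_B A)\;\simeq\;B_i\times_B(A\times_B A),
\]
the diagonal $\Delta p_i$ corresponds to the map $B_i\times_B A\to B_i\times_B(A\times_B A)$ induced by $p_i$ and $\Delta f\circ q_i$, which is precisely the base change of $\Delta f:A\to A\times_B A$ along $v_i:B_i\to B$. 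Since each $p_i$ is a monomorphism, each $\Delta p_i$ is an isomorphism; so $\Delta f$ belongs locally to $\Iso$.

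To finish, one invokes that $\Iso$ is an acyclic class (\cref{ex:acyclic}~\ref{ex:acyclic:1}), and hence local by \cref{omnibus-acyclic}. Therefore $\Delta f$ is an isomorphism, and $f$ is a monomorphism. The main (mild) obstacle is the bookkeeping in the pullback identification $\Delta p_i\simeq v_i^*(\Delta f)$; everything else is essentially a direct application of results already proved in the preliminaries.
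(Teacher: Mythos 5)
Your argument is correct, but it takes a different route from the paper. The paper's proof is a two-line deduction: $\Mono$ is the right class of the modality $(\Surj,\Mono)$ (\cref{examplemodality}), and both classes of a modality are local by the cited result \cite[Proposition 3.2.7]{ABFJ:HS}. You instead reduce to the locality of $\Iso$ via the characterization $f\in\Mono \iff \Delta f\in\Iso$ and the compatibility of diagonals with base change; the locality of $\Iso$ then comes from \cref{omnibus-acyclic} (or, more directly, from $\Iso$ being the left class of the modality $(\Iso,\All)$). Both proofs ultimately rest on the same external fact about modalities, so yours is somewhat more roundabout, but the pullback identification $\Delta p_i\simeq v_i^*(\Delta f)$ is carried out correctly and the strategy is sound. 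Two small points you should make explicit: (i) to conclude that $\Delta f$ \emph{belongs locally} to $\Iso$ you need the base-changed family $\{B_i\times_B(A\times_B A)\to A\times_B A\}_{i\in I}$ to be surjective, which holds because surjections are stable under base change and coproducts are universal in a topos; (ii) there is no circularity in invoking \cref{omnibus-acyclic}, since its proof does not use \cref{monoarelocal}. Your method has the mild advantage of generalizing immediately to show that $\Trunc n$ is local for every $n$ (iterate the diagonal), whereas the paper's one-liner applies verbatim to any class arising as a class of a modality.
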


\begin{proof}
The result can be proved directly. 
We prefer to deduce it from a general result about modalities (\cref{defmodality}). 
Both the left and the right classes of a modality are local by \cite[Proposition 3.2.7]{ABFJ:HS}.
The class of monomorphisms is the right class of a modality by
\cref{examplemodality}. 
\end{proof}

We shall see in \cref{fromacyclictoGroth} that if $\cK$ is a congruence on a topos, then the intersection $\cK\cap \Mono$ is an extended Grothendieck topology in the following sense:

\begin{definition}[Extended Grothendieck topology]
\label{definitionGrothtop}
We shall say that a class of monomorphisms $\cG$ in a topos $\cE$ is an {\it extended Grothendieck topology on $\cE$} if the following three conditions hold
\begin{enumerate}[label=\roman*)]
\item \label{definitionGrothtop:1}
    $\cG$ contains the isomorphisms and is closed under composition; 
\item \label{definitionGrothtop:2}
    $\cG$ is closed under base change and is a local class (\cref{locclass}); 
\item \label{definitionGrothtop:3}
    if the composite of two monomorphisms $u:A\to B$ and $v:B\to C$ belongs to $\cG$, then $v\in \cG$. 
\end{enumerate}
We denote by $\GTop(\cE)$ the poset of extended Grothendieck topologies on $\cE$ (ordered by inclusion).
\end{definition}

\begin{examples}
\label{ex:GTop}
We give some examples of extended Grothendieck topologies.
\begin{examplenum}
\item\label{ex:GTop:0}
    The class $\Iso$ and $\Mono$ are respectively the smallest and the largest extended Grothendieck topologies.

\item\label{ex:GTop:1}
    If $\phi:\cE\to \cF$ is an algebraic morphism of topoi, the class $\cG_\phi$ of monomorphisms inverted by $\phi$ is an extended Grothendieck topology.
    We shall see in \cref{cor:GTop=invmono} that all extended Grothendieck topologies can be produced in this way.

\item\label{ex:GTop:2}
    As a particular case, let $\S X = \fun \Fin \cS$ be the algebraically free topos on one generator of \cref{def:freetopos}. 
    The evaluation at $1\in \Fin$ provides an algebraic morphism $\S X \to \cS$.
    The class of all monomorphisms $F\to G$ such that $F(1)\simeq G(1)$ is an extended Grothendieck topology.
    The reader can look forward to \cref{ex:toploc:ooconn} for an interpretation.

\item\label{ex:GTop:3}
    More generally, if $\phi:\cE\to \cF$ is an algebraic morphism of topoi and $\cG$ is an extended Grothendieck topology on $\cF$, the class $\phi^{-1}(\cG)\cap \Mono$ is an extended Grothendieck topology on $\cE$.

\item\label{ex:GTop:4}
    Any intersection of extended Grothendieck topologies is an extended Grothendieck topology.

\end{examplenum}
\end{examples}

\begin{remark}[Terminology]
\label{rem:terminology}
Classically, Grothendieck topologies are defined on a {\it category} $ C$, as a means to define a left-exact localization of the {\it topos} $\PSh C$.
We shall see in \cref{Gtop=Gtop} that, when $\cE=\PSh C$ any Grothendieck topology on $ C$ can be {\it extended} over the whole of $\PSh C$, and that, conversely, any extended Grothendieck topology on $\PSh C$ can be {\it restricted} into a Grothendieck topology on $ C$.
\end{remark}

\medskip

Any class of monomorphisms $\Sigma\subseteq \cE$ is contained in a smallest extended Grothendieck topology $\Sigma\gtop$.
Let $\ClassMono(\cE)$ be the poset of classes of monomorphisms in $\cE$. We have an adjunction
\[
\begin{tikzcd}
\ClassMono(\cE)
\ar[rr,shift left = 1.6, "(-)\gtop"]
\ar[from=rr,shift left = 1.6, hook']
&&
\GTop(\cE)\,.
\end{tikzcd}
\]   
We shall see in \cref{cor:freeGtop} that $\Sigma\gtop = \Sigma\ac\cap \Mono = \im {\Sigma\ac}$ where $\Sigma\ac$ is the acyclic closure of $\Sigma$ (see \cref{sec:acyclic}).

\smallskip
Let $\cJ$ be a Grothendieck topology on a category $ C$ in the sense of \cite[Definition 6.2.2.1]{Lurie:HTT}.
Let $\cJ\loc$ be the smallest local class of maps in $\PSh C$ containing $\cJ$.
It is the class of monomorphisms $Y\to X$ in $\PSh C$ such that for any $R\in  C$ and any map $R \to X$ in $\PSh C$ , the pullback $R\times_YX\to R$ is in $\cJ$ (we have left implicit the notation of the Yoneda embedding).

\begin{lemma}
\label{lem:explicitGtop}
Let $\cJ$ be a Grothendieck topology on the category $ C$, then $\cJ\loc$ is the smallest extended Grothendieck topology on the topos $\PSh C$ containing $\cJ$, that is $\cJ\loc=\cJ\gtop$.
\end{lemma}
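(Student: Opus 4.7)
My plan is to verify the four axioms of \cref{definitionGrothtop} for $\cJ\loc$, then check $\cJ\loc \supseteq \cJ$ via Yoneda, and finally establish minimality by a covering argument using representables.

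First I would verify the basic closure properties \ref{definitionGrothtop:1} for $\cJ\loc$. Isomorphisms and the composition/base-change closure all reduce immediately to the corresponding closure properties of $\cJ$ on $\cK$, since pullback of a pullback is a pullback and identities pull back to identities. The 3-for-2 property \ref{definitionGrothtop:3} on monomorphisms is less immediate: given monos $u:A\to B$ and $v:B\to X$ with $vu\in \cJ\loc$, for any $C\in\cK$ and $C\to X$ we obtain a chain of monomorphisms $C\times_X A\hookrightarrow C\times_X B\hookrightarrow C$ whose composite is a covering sieve on $C$ in $\cJ$; the classical ``saturation'' axiom for a Grothendieck topology on a 1-category (any sieve containing a covering sieve is covering) then gives $C\times_X B\to C\in\cJ$, hence $v\in \cJ\loc$. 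Note that we use the Grothendieck topology on $\cK$ in the sense of \cite[Definition 6.2.2.1]{Lurie:HTT}, which is one on $\ho\cK$; so this works even though $C\times_X B$ need not be representable, because the condition $\in \cJ$ is formulated on the monomorphism into the representable $C$.

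Next, locality \ref{definitionGrothtop:2} for $\cJ\loc$: suppose $f:Y\to X$ is a monomorphism in $\PSh\cK$ and $\{v_i:X_i\to X\}$ is a surjective family with each $X_i\times_X Y\to X_i\in \cJ\loc$. Pick $C\in\cK$ and $g:C\to X$; I want $C\times_X Y\to C\in\cJ$. Pulling back along $g$ we get a surjective family $\{C\times_X X_i\to C\}$. Covering each $C\times_X X_i$ by representables $C_{i,j}\to C\times_X X_i$ gives a surjective family of representables $\{C_{i,j}\to C\}$, which generates a sieve $R$ that lies in $\cJ(C)$ (this is exactly the locality of $\cJ$ on $\cK$, reinterpreting surjective families of representables over a representable as covering sieves). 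By hypothesis each $C_{i,j}\times_X Y\to C_{i,j}$ is in $\cJ$, so $R$ refines the pullback sieve $C\times_X Y\hookrightarrow C$. The 3-for-2 axiom for $\cJ$ applied to the inclusion of $R$ into this pullback sieve then gives $C\times_X Y\to C\in\cJ$.

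For $\cJ\loc\supseteq\cJ$, any sieve in $\cJ$ is a monomorphism $S\hookrightarrow YC$ in $\PSh\cK$; by the Yoneda lemma, morphisms $YD\to YC$ are morphisms $D\to C$ in $\cK$, and the pullback $YD\times_{YC} S$ is the pulled-back sieve, which belongs to $\cJ$ by the pullback axiom for Grothendieck topologies on $\cK$. For the minimality statement, let $\cG$ be any Grothendieck topology on $\PSh\cK$ with $\cJ\subseteq\cG$ and let $f:Y\to X\in\cJ\loc$. Any object of $\PSh\cK$ is a colimit of representables, so there is a surjective family $\{C_i\to X\}$ with $C_i\in\cK$. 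The hypothesis $f\in\cJ\loc$ gives $C_i\times_X Y\to C_i\in\cJ\subseteq\cG$ for every $i$. Thus $f$ belongs locally to $\cG$, and by locality of $\cG$ (axiom \ref{definitionGrothtop:2}) we conclude $f\in\cG$.

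The main obstacle I expect is the locality argument for $\cJ\loc$, because it requires simultaneously passing between three distinct notions: effective epimorphisms in the presheaf $\infty$-topos $\PSh\cK$, surjective families of representables on an object of $\cK$, and covering sieves of $\cJ$. The bridge is that a surjective family of representables over a representable generates a covering sieve for $\cJ$, which is essentially built into the definition of a Grothendieck topology on a 1-category; once this translation is made carefully, the remaining verifications are mechanical.
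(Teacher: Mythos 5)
Your overall architecture is exactly the paper's: show that $\cJ\loc$ is a Grothendieck topology on $\PSh\cK$ containing $\cJ$, then deduce $\cJ\loc\subseteq\cJ\gtop$ from the fact that any Grothendieck topology containing $\cJ$ is local and any presheaf admits a surjective family from representables. (The paper leaves the axiom verifications to the reader, so your proposal is in fact more detailed.) The containment $\cJ\subseteq\cJ\loc$, the minimality argument, and the treatment of axiom \ref{definitionGrothtop:3} via saturation are all correct. One small caveat on closure under composition: the intermediate object $C\times_X Z$ is not representable, so this is not literally a composite of covering sieves on $C$; it requires the local-character axiom (restrict the smaller sieve along the members of the larger one), not just ``pullback of a pullback is a pullback.''

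The one step that fails as written is the end of the locality check for $\cJ\loc$. Having produced the surjective family of representables $\{C_{i,j}\to C\}$ generating a sieve $R$, you claim that ``$R$ refines the pullback sieve $C\times_X Y\hookrightarrow C$'' and then apply saturation. But a map $C_{i,j}\to C$ need not factor through $C\times_X Y$: the hypothesis only gives that $C_{i,j}\times_X Y\to C_{i,j}$ is a covering sieve, not the maximal one, so $R$ is not contained in $T:=C\times_X Y\hookrightarrow C$ and saturation does not apply. The correct finish is either (a) the local-character axiom: $R$ is covering and $g^*T$ is covering for every $g\colon D\to C$ in $R$, since such a $g$ factors through some $C_{i,j}$ and $g^*T$ is then a base change of $C_{i,j}\times_X Y\to C_{i,j}$; or, more simply, (b) the device the paper itself uses in the proof of \cref{Gtop=Gtop}: since $C$ is representable, the surjection $\coprod_{i,j} C_{i,j}\to C$ splits, so $T$ is literally a base change of one of the $C_{i,j}\times_X Y\to C_{i,j}$ and is covering by pullback-stability. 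Relatedly, your justification that $R\in\cJ(C)$ ``by locality of $\cJ$'' is not the right reason: the sieve generated by a surjective family of representables over a representable is the maximal sieve (again by the splitting argument), hence covering for trivial reasons. These are local repairs; the approach itself is sound and matches the paper's.
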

\begin{proof}
By definition, $\cJ\gtop$ is the smallest extended Grothendieck topology on $\PSh C$ containing $\cJ$.
Since $\cJ\gtop$ is local, we have always $\cJ\loc\subseteq \cJ\gtop$.
The converse will be true if we show that $\cJ\loc$ is an extended Grothendieck topology on $\PSh C$.
We leave the details of the proof to the reader:
Axioms \ref{definitionGrothtop:1} and \ref{definitionGrothtop:2} are straightforward from the  definition of $\cJ\loc$,
and Axiom \ref{definitionGrothtop:3} follows from Axiom (3) in \cite[Definition 6.2.2.1]{Lurie:HTT}.
\end{proof}

We shall say that $\cJ\loc$ is the {\it extension} of $\cJ$ to $\PSh C$.
Conversely, of $\cG$ is an extended Grothendieck topology on $\PSh C$, we define its {\it restriction} to $ C$ as the class $\cG_\mathsf{rep}\subseteq\cG$ spanned by the maps whose codomain are representable presheaves.

\begin{proposition}
\label{Gtop=Gtop}
The functions $\cJ\mapsto \cJ\gtop$ and $\cG\mapsto \cG_\mathsf{rep}$ defines inverse bijections
between the Grothendieck topologies on the category $ C$ and the extended Grothendieck topologies on $\PSh C$.
\end{proposition}
\begin{proof}
It is easy to see that $\cG_\mathsf{rep}$ is a Grothendieck topology on $ C$:
Axioms (1) and (2) of \cite[Definition 6.2.2.1]{Lurie:HTT} are easily implied by \ref{definitionGrothtop:1} and \ref{definitionGrothtop:2} of \cref{definitionGrothtop},
and Axioms (3), once formulated in terms of monomorphisms in $\PSh C$ instead of sieves, is exactly \cref{definitionGrothtop}~\ref{definitionGrothtop:3}.
Let us see that $\cG = (\cG_\mathsf{rep})\gtop$.
Certainly, we have $(\cG_\mathsf{rep})\gtop \subseteq \cG$.
Conversely, if $A\to B$ is a map in $\cG$, we consider a cover of $B$ by a family of representables functors $C_i\to B$.
All maps $C_i\times_BA\to C_i$ are in $\cG_\mathsf{rep}$.
Because $(\cG_\mathsf{rep})\gtop$ is a local class, this implies that $A\to B$ is in $(\cG_\mathsf{rep})\gtop$.
This proves $\cG \subseteq (\cG_\mathsf{rep})\gtop$ and therefore $\cG = (\cG_\mathsf{rep})\gtop$.
Let us see now that $\cJ = (\cJ\gtop)_\mathsf{rep}$.
We have always $\cJ \subseteq (\cJ\gtop)_\mathsf{rep}$.
Let $C$ be an object of $ C$ (viewed as an object of $\PSh C$) and let 
$A\to C$ be a monomorphism in $(\cJ\gtop)_\mathsf{rep}$.
The explicit description of $\cJ\gtop$ of \cref{lem:explicitGtop} says that $C$ can be covered by maps $C_i\to C$ in $ C$ such that $C_i\times_CA\to C_i$ are in $\cJ$.
In a presheaf category, the map $\coprod_i C_i\to C$ has a section since $C$ is representable.
This says that the map $A\to C$ is a base change of one of the maps $C_i\times_CA\to C_i$, hence in $\cJ$.
This finishes the proof that $(\cJ\gtop)_\mathsf{rep}=\cJ$ and of the proposition.
\end{proof}

\begin{lemma} \label{remGroth}
If a class of monomorphisms $\cM$ in a topos is
closed under base change, then 
the implication $vu\in \cM\Rightarrow u\in \cM$
holds
for any pair of monomorphisms 
$u:A\to B$ and $v:B\to C$.
\end{lemma}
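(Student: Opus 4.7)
The plan is to exhibit $u$ itself as a base change of $vu$, and then invoke the hypothesis that $\cM$ is closed under base change.

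More precisely, I will use the fact that a morphism $v:B\to C$ is a monomorphism exactly when its diagonal $\Delta(v):B\to B\times_CB$ is an isomorphism. A standard consequence is that for any map $u:A\to B$, the commutative square
\[
\begin{tikzcd}
A \ar[r,"u"] \ar[d,equal] & B \ar[d,"v"] \\
A \ar[r,"vu"'] & C
\end{tikzcd}
\]
is cartesian. Indeed, the pullback $A\times_CB$ sits in a diagram whose universal property, combined with the invertibility of $\Delta(v)$, forces the canonical map $A\to A\times_CB$ induced by $(1_A,u)$ to be an isomorphism. (Equivalently, one can compute the pullback directly: a map $T\to A\times_CB$ is a pair $(a:T\to A,b:T\to B)$ with $vua=vb$, and since $v$ is mono this forces $b=ua$, so the pullback is just $A$ with projection $u$ to $B$.)

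Once this square is identified as a pullback, the left vertical map --- which is $u$ --- is a base change of the right vertical map $vu$. By hypothesis $vu\in\cM$, and $\cM$ is closed under base change, so $u\in\cM$.

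The argument is essentially a one-line observation; the only substantive point is the well-known fact that pulling back along a monomorphism $v$ is the identity on morphisms into the domain of $v$. No obstacles are expected; no appeal to the further axioms of a Grothendieck topology (locality, or the topology axiom \ref{definitionGrothtop}\ref{definitionGrothtop:3}) is needed, which is consistent with the fact that the lemma is stated for any class of monomorphisms closed under base change.
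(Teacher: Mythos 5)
Your proof is correct and is essentially the paper's own argument: the commutative square expressing $vu=v\circ u$ with an identity edge is cartesian because $v$ is a monomorphism, so $u$ is exhibited as a base change of $vu$ along $v$, and closure of $\cM$ under base change finishes the proof. The only blemish is a labelling slip: in the diagram as you drew it, $u$ is the \emph{top horizontal} map and the left vertical map is $1_A$, so the phrase ``the left vertical map --- which is $u$'' really refers to the transposed square (which is precisely the one appearing in the paper's proof); the mathematical content is unaffected.
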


\begin{proof}
The following square is cartesian, since $v$ is a monomorphism.
\[
\begin{tikzcd}
A \ar[d,"u"']   \ar[r,"{1_A}", equals] & A  \ar[d, "{vu}"]    \\
B  \ar[r, "v"] & C
\end{tikzcd}
\]
Thus, $vu\in \cM\Rightarrow u\in \cM$, since
$\cM$ is closed under base change.
   \end{proof}

\begin{lemma}[Extended Grothendieck topology associated to an acyclic class]
\label{fromacyclictoGroth}
If $\cA$ is an acyclic class (for example a congruence) in a topos $\cE$, then the class $\im \cA = \Mono \cap \cA$ of monomorphisms in $\cA$ (see \cref{lem:imagepretopo}) is an extended Grothendieck topology.
\end{lemma}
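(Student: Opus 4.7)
The plan is to verify the three axioms of \cref{definitionGrothtop} for the class $\cG := \Mono\cap\cA$, and then observe that maximality is automatic from the definition of a Grothendieck topology.

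\smallskip
For axiom \ref{definitionGrothtop:1}, both $\Mono$ and $\cA$ contain isomorphisms and are closed under composition (by \cref{defacyclicclass2}\ref{defacyclicclass2:1} for $\cA$). Both are closed under base change as well: for $\cA$ this is \cref{defacyclicclass2}\ref{defacyclicclass2:3}, and for $\Mono$ it is standard. Hence the intersection satisfies \ref{definitionGrothtop:1}.

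\smallskip
For axiom \ref{definitionGrothtop:2}, the class $\cA$ is local by \cref{omnibus-acyclic}, and the class $\Mono$ is local by \cref{monoarelocal}. Suppose a map $f:A\to B$ belongs locally to $\cA\cap\Mono$, witnessed by a surjective family $\{v_i:B_i\to B\}$ with each pullback $p_i:B_i\times_B A\to B_i$ in $\cA\cap\Mono$. Then the same family witnesses that $f$ belongs locally to $\cA$ and locally to $\Mono$, so by locality of each $f\in\cA$ and $f\in\Mono$, giving $f\in \cA\cap\Mono$.

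\smallskip
For axiom \ref{definitionGrothtop:3}, let $u:A\to B$ and $v:B\to C$ be monomorphisms with $vu\in \cA\cap\Mono$. Then $vu\in\cA$, and since acyclic classes have the right cancellation property by \cref{acyclic-cancell}, we obtain $v\in\cA$. Combined with the hypothesis that $v$ is a monomorphism, this gives $v\in\cA\cap\Mono$.

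\smallskip
For the maximality statement, suppose $\cG'$ is a Grothendieck topology with $\cG'\subseteq\cA$. By \cref{definitionGrothtop}, $\cG'$ is by definition a class of monomorphisms, so $\cG'\subseteq\Mono$, whence $\cG'\subseteq\Mono\cap\cA=\cG$. I expect no real obstacles here: the entire argument is a bookkeeping exercise once the right cancellation property of acyclic classes (\cref{acyclic-cancell}) and the locality of $\cA$ (\cref{omnibus-acyclic}) are in hand. The mildest subtlety is verifying that the intersection of two local classes closed under base change is local, but this follows directly by testing the defining condition on both classes simultaneously using the same covering family.
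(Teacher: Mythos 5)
Your overall structure matches the paper's proof, but there is a genuine gap in your verification of axiom \ref{definitionGrothtop:3}. The right cancellation property of \cref{acyclic-cancell} is the implication $vu\in\cA \text{ and } u\in\cA \Rightarrow v\in\cA$; it does \emph{not} let you conclude $v\in\cA$ from $vu\in\cA$ alone. (It cannot: take $\cA=\Iso$ and $vu$ an isomorphism with $v$ a non-invertible retraction.) You apply the lemma having only established $vu\in\cA$, so the step "we obtain $v\in\cA$" is not justified as written.

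The missing ingredient is exactly \cref{remGroth}: since $v$ is a monomorphism, the square with $1_A$ on top, $u$ on the left and $vu$ on the right is cartesian, so $u$ is a base change of $vu$. As $vu\in\cA\cap\Mono$ and this class is closed under base change (which you already verified), you get $u\in\cA\cap\Mono$, and in particular $u\in\cA$. Only now does \cref{acyclic-cancell} apply to give $v\in\cA$, and hence $v\in\cA\cap\Mono$. With that one line inserted, your argument coincides with the paper's; the remaining parts (axioms \ref{definitionGrothtop:1} and \ref{definitionGrothtop:2} via \cref{omnibus-acyclic} and \cref{monoarelocal}, and the maximality statement) are correct and identical in approach.
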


\begin{proof}
Let $\cA$ be an acyclic class, we put $\cG:=\cA\cap\Mono$.
The class $\cG$ contains the isomorphisms and it is closed under composition and base changes, since this is true of the classes $\cA$ and $\Mono$.
The class $\cA$ is local, since an acyclic class is local by \cref{acyclic-is-local}.
The class of monomorphisms $\Mono$ is also local by \cref{monoarelocal}. 
Hence their intersection $\cG$ is a local class.
If $u:A\to B$ and $v:B\to C$ are monomorphisms and $vu\in \cG$, let us show that $v\in \cG$.
Since it is a local class, $\cG$ is closed under base changes and $u\in \cG$ by \cref{remGroth}.
Thus, $v\in \cA$, since an acyclic class has the right cancellation property by \cref{acyclic-cancell}.
But then, $v\in \cG = \cA\cap\Mono$ and this completes the proof that $\cG$ is an extended Grothendieck topology.
\end{proof}

\begin{remark}
\label{rem:diagram}
Let $\LClass(\cE)$ be the poset of local classes of maps, 
$\LClass_\Delta(\cE)$ be the subposet of local classes of maps closed by diagonals, 
and $\LClassMono(\cE)$, the poset of local classes of monomorphisms.
Any local class of monomorphisms is closed under diagonals, since it contains all isomorphisms.
So we have inclusions $\LClassMono(\cE)\subseteq \LClass_\Delta(\cE)\subseteq \LClass(\cE)$.
The following diagram, in which all the squares are cartesian, summarizes the inclusion relations between various classes of maps.
\[
\begin{tikzcd}
\GTop(\cE) \ar[r, hook]\ar[d,hook'] \pbmark
&\Cong(\cE)  \ar[d,hook'] \ar[r, hook] \pbmark
& \Acyclic(\cE) \ar[d,hook']\\
\LClassMono(\cE) \ar[r]
&\LClass_\Delta(\cE) \ar[r,hook]
& \Class(\cE)
\end{tikzcd}
\]
The right hand square is cartesian because of \cref{lem:caraccong} 
and the outer square is cartesian because of \cref{fromacyclictoGroth}.
\end{remark}

\begin{proposition}
\label{prop:adjGtopAc}
The morphism of posets $\im - : \Acyclic(\cE)\to \GTop(\cE)$ of \cref{fromacyclictoGroth} has a fully faithful left adjoint $\cG \mapsto \cG\ac$.

\[
\begin{tikzcd}
\GTop(\cE)
\ar[from=rr,shift left = 1.6, "\im-"]
\ar[rr,shift left = 1.6, "(-)\ac", hook]
&&\Acyclic(\cE)
\end{tikzcd}
\]
The image of the left adjoint is the poset $\MAcyclic(\cE)$ of monogenic acyclic classes and the adjunction restricts to an equivalence 
\[
\begin{tikzcd}
\GTop(\cE)
\ar[from=rr,shift left = 1.6, "\im-", "\simeq"']
\ar[rr,shift left = 1.6, "(-)\ac", hook]
&&\MAcyclic(\cE)
\end{tikzcd}
\]
\end{proposition}
\begin{proof}
Let $\cG$ be an extended Grothendieck topology and $\cA$ an acyclic class.
We need to prove that $\cG\ac \subseteq \cA \iff \cG\subseteq \cA\cap \Mono$.
Because $\cA$ is acyclic, we have always $(\cA\cap \Mono)\ac \subseteq\cA$.
Then, if $\cG\subseteq \cA\cap \Mono$, we have $\cG\ac\subseteq (\cA\cap \Mono)\ac \subseteq \cA$.
Conversely, if $\cG\ac \subseteq \cA$, then $\cG\subseteq \cA$ and $\cG\subseteq \cA\cap \Mono$ since $\cG$ is a class of monomorphisms.

Let us show that the image of the left adjoint is the poset of monogenic acyclic classes.
For any extended Grothendieck topology $\cG$, $\cG\ac$ is a monogenic acyclic class.
Conversely, if $\cA$ is monogenic, we have $\cA = (\cA\cap \Mono)\ac$ by definition.
Since $\cA\cap \Mono$ is an extended Grothendieck topology by \cref{fromacyclictoGroth} any monogenic $\cA$ is in the image of $\cG\mapsto \cG\ac$.

The proof that the right adjoint is fully faithful is \cref{cor:GacMono=G} which will be proved independently below.
The last statement about the equivalence $\MAcyclic(\cE)\simeq \GTop(\cE)$ follows.
\end{proof}

\begin{lemma}
\label{lem:sg-gtop-to-sg-ac}
For $\Sigma$ a class of monomorphisms, we have
\[
(\Sigma \gtop)\ac
\ =\ 
\Sigma \ac\,.
\]
In particular, for an extended Grothendieck topology $\cG$, the acyclic class $\cG\ac$ is of small generation if $\cG$ is of small generation as an extended Grothendieck topology.
\end{lemma}
\begin{proof}
We have always $\Sigma \subseteq \Sigma \gtop$ and thus $\Sigma \ac \subseteq (\Sigma \gtop)\ac$.
Conversely, we have $\Sigma \subseteq \Sigma \ac \cap \Mono$ and we know from \cref{fromacyclictoGroth} that $\Sigma\ac \cap \Mono $ is an extended Grothendieck topology.
Thus, $\Sigma \gtop \subseteq \Sigma \ac\cap \Mono \subseteq \Sigma \ac$, and $(\Sigma \gtop)\ac \subseteq  \Sigma \ac$.
\end{proof}

\subsection{Univalent monomorphisms}

The purpose of this section is to prove that any extended Grothendieck topology is of small generation (\cref{Gtoparesmall}) and that the poset of extended Grothendieck topologies is small (\cref{cor:GTop-is-small}).
We do this by introducing the notion of a univalent monomorphism (\cref{def:univ-mono}).

\medskip

Let $\cE$ be a topos and $A$ an object of $\cE$.
The class of subobjects of $A$ is the class $\powerset A$ of isomorphism classes of monomorphisms $A'\to A$ in the slice category $\cE\slice A$.
We shall make the classical abuse to identify subobjects $S\subseteq A$ and monomorphisms $S\to A$.
This is fine because the space of monomorphisms representing a given subobject is contractible.

\begin{lemma}[Well-poweredness of topoi]
\label{lem:well-power}
The class of subobjects of $A$ is a set.
\end{lemma}
\begin{proof}
We prove this first in the topos $\cS$.
There, the bijection $\powerset A = \powerset {\pi_0(A)}$ shows that $\powerset A$ is a set.
The argument is similar in any topos $\cS^I$, where $I$ is a set.
Let $C$ be a small category, equipped with a set of objects $C_0$, that is a surjective map $i:C_0\to C$.
The functor $i^*:\PSh C \to \PSh {C_0}$ reflects subobjects.
Thus, for any object $A\in \PSh C$, $\powerset A$ is a subset of the set $\powerset {i^*A}$.
Finally if $\cE$ is an arbitrary topos, we use a presentation as a left-exact localization of some $\PSh C$.
The canonical inclusion $\iota:\cE\to \PSh C$ also reflects subobjects.
Thus, for any object $A\in \cE$, $\powerset A$ is a subset of the set $\powerset {\iota A}$.
\end{proof}

If $f:A\to B$ is a map in $\cE$ then the inverse image by $f$ of a subobject $S\subseteq B$ is a subobject $f^{-1}(S)\subseteq A$.
This defines the {\it inverse image map} 
$f^{-1}: \powerset B\to \powerset A$.
The resulting functor has values in sets by \cref{lem:well-power}
\[
\calP:\cE\op\stto \Set\,.
\]
This functor is called the {\it contravariant subobject functor}.

\begin{theorem}[Lawvere object/subobject classifier]
\label{mono-classifier}
The functor $\calP:\cE\op\to \Set$ is representable by an object $\Omega\in \cE$ equipped with a monomorphism $t:1\to \Omega$.
\end{theorem}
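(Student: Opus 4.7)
The plan is to verify that $\calP$ is a small, small-limit-preserving presheaf of sets on $\cE$, deduce representability from the adjoint functor theorem for presentable categories, and then extract the monomorphism $t$ from the universal element.

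First I would show that $\calP(A)$ is a small set for every $A\in\cE$ and that $\calP$ is an accessible functor. Since $\cE$ is presentable, there is a regular cardinal $\kappa$ such that $\cE$ is generated under $\kappa$-filtered colimits by a small full subcategory $\cC\subseteq\cE$ of $\kappa$-compact objects; for $C\in\cC$ the poset $\calP(C)$ is small by standard presentability arguments applied to $(-1)$-truncated morphisms into $C$. For an arbitrary $A=\colim_i A_i$ with $A_i\in\cC$, I would bound the size of $\calP(A)$ via descent: by \cref{monoarelocal} the class $\Mono$ is local, so a monomorphism into $A$ amounts to a compatible family of monomorphisms into the $A_i$. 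The same descent statement immediately shows that $\calP:\cE\op\to\Set$ sends colimits in $\cE$ to limits in $\Set$, i.e.\ that $\calP$ is a sheaf for the canonical topology on $\cE$.

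With $\calP$ an accessible, small-limit-preserving presheaf of sets on the presentable category $\cE$, the adjoint functor theorem (the presheaf version of the special adjoint functor theorem in the $\infty$-categorical setting) produces an object $\Omega\in\cE$ together with a natural isomorphism $\calP(A)\simeq\cE(A,\Omega)$. Under this isomorphism $\id_\Omega$ corresponds to a universal subobject $U\hookrightarrow\Omega$, and a morphism $\chi:A\to\Omega$ corresponds to the pullback $A\times_\Omega U\hookrightarrow A$.

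To exhibit the monomorphism $t$, I would define $t:1\to\Omega$ to be the classifier of the identity subobject $1\hookrightarrow 1$. By naturality the pullback of $U\hookrightarrow\Omega$ along $t$ is $1\hookrightarrow 1$, so $t$ is itself a pullback of the monomorphism $U\hookrightarrow\Omega$ and hence a monomorphism, and we obtain a section $1\to U$ over $\Omega$. Conversely, computing the pullback of $U\hookrightarrow\Omega$ along itself yields $U$, which corresponds under $\calP(U)\simeq\cE(U,\Omega)$ to the top element of $\calP(U)$; a short diagram chase then forces $U$ to be subterminal, and combined with the section $1\to U$ this gives $U\simeq 1$. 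Hence $t:1\hookrightarrow\Omega$ realizes the universal subobject, as required.

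The main obstacle is the smallness/accessibility step: one must rule out a proper class of subobjects and check that $\calP$ is determined by its values on a small generating subcategory. This is exactly where presentability of the $\infty$-topos is essential; lex-ness, cocompleteness, and descent alone would not suffice.
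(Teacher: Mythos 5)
Your argument is essentially correct, but it is worth noting that the paper does not prove \cref{mono-classifier} at all: it simply cites \cite[Proposition 6.1.6.3]{Lurie:HTT}, of which the statement is a special case (take the local class to be $\Mono$). What you have written is, in effect, an unfolding of Lurie's proof: descent for the class of monomorphisms (which is where \cref{monoarelocal} enters) shows that $\calP:\cE\op\to\Set$ carries small colimits to small limits; well\-/poweredness of presentable $\infty$\-/categories gives smallness of each $\calP(A)$ (correctly identified by you as the genuinely delicate point, and correctly reduced to a small generating subcategory via the descent isomorphism $\calP(\colim_i A_i)\simeq\lim_i\calP(A_i)$); and the representability criterion for limit-preserving, small-valued presheaves on a presentable $\infty$\-/category then produces $\Omega$. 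Your extraction of $t$ is also sound in substance, with one local slip: the pullback of the universal subobject $U\hookrightarrow\Omega$ along $t$ is $\mathrm{id}_1$, not $t$ itself, so the sentence claiming that $t$ is a monomorphism \emph{because} it is a pullback of $U\hookrightarrow\Omega$ is not right as stated. This does not damage the proof, since your subsequent argument (the space of maps $A\to U$ is the fiber of $\cE(A,\Omega)\simeq\calP(A)$ over the maximal subobject, hence $U$ is subterminal, and the section $1\to U$ coming from the classifier of $1\subseteq 1$ then forces $U\simeq 1$) shows that $t$ factors as an isomorphism followed by the monomorphism $U\hookrightarrow\Omega$; alternatively, one can observe, as the paper does later, that $\Omega$ is discrete because $\calP$ is set-valued, so \emph{every} map $1\to\Omega$ is automatically a monomorphism.
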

\begin{proof}
We saw in \cref{monoarelocal} that the class of monomorphisms is local, and \cref{lem:well-power} shows that the category of monomorphism over a given object is small.
Then, the result follows from \cite[Proposition 6.1.6.3]{Lurie:HTT}.
\end{proof}

The object $\Omega\in \cE$ is the {\it Lawvere object}, or the {\it subobject classifier} of the topos $\cE$.
The monomorphism $t:1\to \Omega$ is the {\it universal subobject}.
By definition, for every object $A\in \cE$ and every subobject $S\subseteq A$ there exists a unique map $\chi:A\to \Omega$, such that the following square is cartesian
\[
\begin{tikzcd}
S \ar[d]   \ar[r] & 1  \ar[d, "t"]    \\
A  \ar[r, "\chi"] &\Omega\,.
\end{tikzcd}
\]
The map $\chi$ is said to be the {\it characteristic map}, or the {\it classifying map}, of the subobject $S\subseteq A$ and we shall also denote it by $\chi_S$.
    
\medskip  

We shall denote the full subcategory of discrete objects in a topos $\cE$ by $\cE\truncated 0$. 
The subcategory $\cE\truncated 0$ is reflective and it is a 1-topos. 
Recall that an object $X$ in $\cE$ is discrete if and and only if the space $\Map A X= C(A,X)$ is discrete for every object $A\in \cE$.
Hence the object $\Omega\in \cE$ is discrete, since the space $\Map A \Omega =\powerset A$ is a set for every object $A\in \cE$.
 
\medskip
 
The following definition is inspired by Homotopy Type Theory.
 
\begin{definition}[Univalent monomorphism]
\label{def:univ-mono}
We say that a monomorphism $v:T\to  V$ in a topos $\cE$ is {\it univalent} 
if its classifying map $\chi_T: V\to \Omega$ is a monomorphism.
\end{definition}

The codomain $V$ of a univalent monomorphism $v:T\to V$ defines a subobject of $\Omega$, since the map $\chi_T: V\to \Omega$ is monic.
Conversely, every subobject $i:V\subseteq \Omega$ is the codomain of the univalent monomorphism $v:T\to  V$ defined by the following pullback square:
\[
\begin{tikzcd}
T \ar[d,"v"']   \ar[r] & 1  \ar[d, "t"]    \\
V  \ar[r, "i"] &\Omega
\end{tikzcd}
\]

\medskip

\begin{remark}
Every map $1\to \Omega$ is a monomorphism, since $\Omega$ is discrete. 
More generally, if $V\subseteq 1$ is a subterminal object, then every map $V\to \Omega$ is a monomorphism.
It follows that any inclusion of subterminal objects $U\subseteq V$ is univalent.
Geometrically, the subtopos corresponding to the localization by such a map is the union of $\sf U\cup \complement \sf V$, 
where $\sf U$ is the open subtopos corresponding to $U$, and 
$\complement \sf V$ is the closed complement of the open subtopos corresponding to $V$.
\end{remark}

If $\Sigma$ is a class of maps in a topos $\cE$, we shall denote by $\Sigma\bc$ the smallest class of maps which contains $\Sigma$ and is closed under base change.

\begin{lemma}
If $v:T\to V$ is a univalent monomorphism in a topos $\cE$, then the class $\{v\}\bc$ is local.
\end{lemma}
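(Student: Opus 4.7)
The plan is to give an explicit characterization of $\{v\}\bc$ in terms of classifying maps, and then reduce locality of $\{v\}\bc$ to the locality of monomorphisms and the locality of isomorphisms, both of which are already available. Since $v$ is univalent, $\chi_T:V\to\Omega$ is a monomorphism, so $V$ is identified with a subobject of $\Omega$, and any map $B\to\Omega$ factors through $V$ in at most one way.

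The first key step will be to establish the following: a map $f:A\to B$ lies in $\{v\}\bc$ if and only if $f$ is a monomorphism and its classifying map $\chi_f:B\to\Omega$ factors through $V\hookrightarrow\Omega$. For the forward direction, if $f$ is realized as a base change of $v$ along some $h:B\to V$, then pasting the cartesian square witnessing this with the cartesian square defining $v$ yields the equality $\chi_f=\chi_T\circ h$. For the converse, starting from a factorization $\chi_f=\chi_T\circ h$, I would form the pullback of $v$ along $h$; this pullback is a monomorphism into $B$ whose classifying map equals $\chi_f$, so by the universal property of $\Omega$ (\cref{mono-classifier}) it coincides with $f$ up to unique isomorphism, exhibiting $f$ as a base change of $v$.

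Given this characterization, suppose $f:A\to B$ belongs locally to $\{v\}\bc$ via a surjective family $\{g_i:B_i\to B\}_{i\in I}$ whose pullbacks $p_i:B_i\times_B A\to B_i$ all lie in $\{v\}\bc$. Each $p_i$ is a monomorphism, so by locality of monomorphisms (\cref{monoarelocal}) $f$ is a monomorphism. By the pasting lemma for pullbacks the classifying map of $p_i$ equals $\chi_f\circ g_i$, and by the characterization above each such composite factors through $V\hookrightarrow\Omega$. To lift this to a global factorization of $\chi_f$, form the pullback $j:V':=\chi_f^{-1}(V)\to B$ of $V\hookrightarrow\Omega$ along $\chi_f$; this $j$ is a monomorphism. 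Pulling $j$ back along $g_i$ produces the pullback of $V\hookrightarrow\Omega$ along $\chi_f\circ g_i$, which is an isomorphism precisely because $\chi_f\circ g_i$ factors through the monomorphism $V\hookrightarrow\Omega$. Thus $j$ is locally an isomorphism along the surjective family $\{g_i\}$, and since the class of isomorphisms is acyclic and therefore local by \cref{omnibus-acyclic}, $j$ itself is an isomorphism. Consequently $\chi_f$ factors through $V$, so $f\in\{v\}\bc$.

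The only delicate point is the classifying-map characterization of $\{v\}\bc$, which rests on univalence of $v$ and the uniqueness part of the universal property of $\Omega$; once it is in place, the locality of $\{v\}\bc$ reduces cleanly to locality of $\Mono$ applied to $f$ and locality of $\Iso$ applied to $\chi_f^{-1}(V)\to B$.
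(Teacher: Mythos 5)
Your proof is correct. It follows the same overall strategy as the paper's — use univalence to reduce membership in $\{v\}\bc$ to the factorization of the classifying map through the monomorphism $\chi_T:V\to\Omega$, and then glue the local factorizations — but the gluing step is implemented differently. The paper assembles the local data into a single commutative square with the surjection $\bigsqcup_i g_i:\bigsqcup_i B_i\to B$ on the left and the monomorphism $\chi_T$ on the right, and extracts the global factorization as a diagonal filler via the orthogonality $\Surj\perp\Mono$; you instead form the subobject $\chi_f^{-1}(V)\subseteq B$, observe that it is locally all of $B$, and invoke locality of $\Iso$ (via \cref{omnibus-acyclic}) to conclude it is all of $B$. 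The two mechanisms are essentially equivalent descent facts, so neither buys much over the other, though your version has the small merit of making explicit the preliminary reduction — that a map locally in $\{v\}\bc$ is a monomorphism, by \cref{monoarelocal} — which the paper's proof leaves implicit by starting directly from a monomorphism $u:S\subseteq B$.
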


\begin{proof}
If a monomorphism $u:S\subseteq B$ 
is locally in $\{v\}\bc$, let us show that $u\in \{v\}\bc$.
There exists a surjective family of maps 
$\{g_i : A_i \to B \}_{i\in I}$ such that 
 the inclusion $g_i^{-1}(S)\subseteq A_i$
belongs to $\{v\}\bc$ for every $i\in I$,
since $u:S\subseteq B$ is locally 
in $\{v\}\bc$. 
Thus, for every $i\in I$ there exists a map $f_i:A_i\to V$ such that $g_i^{-1}(S)=f_i^{-1}(T)$. 
Let $\chi_T:V\to \Omega$ be the characteristic map
of the inclusion $v:T\subseteq V$
and $\chi_S:B\to \Omega$ be the characteristic map
of an inclusion $S\subseteq B$. 
We have $\chi_S g_i=\chi_T f_i$,
since $g_i^{-1}(S)=f_i^{-1}(T)$.
Hence the following square commutes for every $i\in I$.
\[
\begin{tikzcd}
A_i \ar[d, "{g_i}"']   \ar[r,"{f_i}"] & V  \ar[d, "{\chi_T}"]    \\
B  \ar[r, "{\chi_S}"] &\Omega
\end{tikzcd}
\]
It follows that the following square commutes,
where $g=(g_i\ | \ i\in I)$ and $f=(f_i\ | \ i\in I)$.
\begin{equation}
\label{descmonosq2}
\begin{tikzcd}
\bigsqcup_{i} A_i \ar[d, "g"']   \ar[r,"f"] & V  \ar[d, "{\chi_T}"]    \\
B  \ar[r, "{\chi_S}"] &\Omega
\end{tikzcd}
\end{equation}
But the map $g$ is surjective
since the family of maps $\{g_i : A_i \to B \}_{i\in I}$ 
is surjective. Moreover, the map $\chi_T$
is a monomorphism, since 
the map $v:T\to V$ is univalent. Hence the square
 (\ref{descmonosq2}) has a diagonal
filler $d:B\to V$.
We then have $d^{-1}(T)=S$, since $\chi_T d=\chi_S$.
This shows that the inclusion $S\subseteq B$
belongs to $\{v\}\bc$. 
Hence the class $\{v\}\bc$
is local. 
\end{proof}

We shall say that a univalent morphism $v:T\to V$ is a {\it univalent generator} of the local class $\{v\}\bc$.

\medskip

Let $\cM$ be a local class of monomorphisms in a topos $\cE$. 
For every object $B\in \cE$, let us denote by $\calP_\cM(B)\subseteq \powerset A$ the subset of subobjects $S\in \powerset B$ such that the inclusion $S\subseteq B$ belongs to $\cM$.
Since the set $\powerset A$ is small by \cref{lem:well-power}, so is $\calP_\cM(A)$.
The inverse image of a subobject $S\in \calP_\cM(B)$ by a map $f:A\to B$ is a subobject $f^{-1}(S)\in \calP_\cM(A)$, since the class $\cM$ is closed under base change.
This defines a functor
\[
\calP_\cM:\cE\op \stto \Set\ .
\]

\begin{lemma}
\label{replocalclass}
Let $\cM$ be a local class of monomorphisms in a topos $\cE$.
Then the functor $\calP_\cM:\cE\op \to \Set$ is represented by an element $T\in \calP_\cM(V)$  if and only if the inclusion $v:T\to V$ is univalent and $\cM=\{v\}\bc$.
\end{lemma}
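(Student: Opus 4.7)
The plan is to unpack what representability of $\calP_\cM$ means and match its existence/uniqueness parts with the two conditions on $v$. Concretely, a representation by $T\in\calP_\cM(V)$ is the assertion that, for every $B\in\cE$, the map
\[
\tau_B \colon \cE(B,V) \tto \calP_\cM(B),\qquad f\mto f^{-1}(T),
\]
is a bijection. Existence of preimages under $\tau_B$ will correspond to the equality $\cM=\{v\}\bc$, and injectivity of $\tau_B$ will correspond to univalence of $v$. Note that existence of such a representation forces $V$ to be discrete, because $\calP_\cM(B)$ is a set and the composite $\cE(B,V)\tto\calP_\cM(B)\hookrightarrow\calP(B)=\cE(B,\Omega)$ is the map induced by $\chi_T$.

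For the $(\Leftarrow)$ direction, assume that $v$ is univalent and $\cM=\{v\}\bc$. Given $S\in\calP_\cM(B)$, the inclusion $S\subseteq B$ lies in $\{v\}\bc$, so there exists $f\colon B\to V$ with $S=f^{-1}(T)$; this gives surjectivity of $\tau_B$. For injectivity, suppose $f_1,f_2\colon B\to V$ satisfy $f_1^{-1}(T)=f_2^{-1}(T)$ as subobjects of $B$. Both $\chi_T\circ f_1$ and $\chi_T\circ f_2$ classify this common subobject, hence are equal by the universal property of $\Omega$ (\cref{mono-classifier}). Since $\chi_T$ is a monomorphism by univalence, $f_1=f_2$. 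Thus $\tau_B$ is a bijection.

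For the $(\Rightarrow)$ direction, assume $T\in\calP_\cM(V)$ represents $\calP_\cM$. The inclusion $\{v\}\bc\subseteq\cM$ is automatic since $v\in\cM$ and $\cM$ is closed under base change by hypothesis. Conversely, if $(S\subseteq B)\in\cM$, surjectivity of $\tau_B$ yields $f\colon B\to V$ with $S=f^{-1}(T)$, so $(S\subseteq B)\in\{v\}\bc$. This proves $\cM=\{v\}\bc$. For univalence, consider the composite
\[
\cE(B,V)\xto{\tau_B}\calP_\cM(B)\hookrightarrow\calP(B)=\cE(B,\Omega),
\]
which by construction sends $f$ to $\chi_T\circ f$, i.e.\ is postcomposition with $\chi_T$. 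As $\tau_B$ is bijective and the second map is injective, this composite is injective for every $B$, so $\chi_T$ is a monomorphism in $\cE$ (using that both $V$ and $\Omega$ are discrete, so the mapping spaces are sets and injectivity on $\pi_0$ suffices).

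I do not expect any serious obstacle; the only small point to be careful about is the discreteness of $V$, which is needed to interpret $\cE(B,V)$ as a set and to reduce the $\infty$-categorical definition of monomorphism to the 1-categorical one for $\chi_T$. This follows immediately from the fact that $\chi_T$ exhibits $V$ as a subobject of the discrete object $\Omega$, and is in any case implicit in the very assumption that $\calP_\cM$ is representable in $\Set$.
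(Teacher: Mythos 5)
Your proof is correct and follows essentially the same route as the paper's: representability is unpacked so that surjectivity of $f\mapsto f^{-1}(T)$ accounts for $\cM=\{v\}\bc$ and injectivity (equivalently, $\chi_T$ being monic) accounts for univalence, which is exactly the paper's Yoneda-style argument. You additionally write out the $(\Leftarrow)$ direction, which the paper leaves to the reader, and your remark on the discreteness of $V$ is a worthwhile clarification rather than a deviation.
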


\begin{proof}
($\Rightarrow$) Recall that the functor $\calP$ is represented by the universal subobject $t:1\to \Omega$ viewed as an element  $t\in \powerset \Omega$.
If the functor $\calP_\cM$ is represented by an element $T\in \calP_\cM(V)$ then, by Yoneda Lemma, the natural transformation $\calP_\cM \to \calP$ is represented by a map $h:V\to \Omega$ such that $h^{-1}(t)=T$.
In which case, we have $h=\chi_T$ by uniqueness of the classifying map of $v:T\subseteq V$.
The map $\chi_T:V\to \Omega$ is a monomorphism, since the natural transformation $\calP_\cM \subseteq \calP$ is an inclusion. 
Hence the map $v:T\subseteq V$ is univalent. 
Moreover, for every object $A\in \cE$ and every $S\in \calP_\cM(A)$ there exists a unique map $f:A\to V$ such that $f^{-1}(T)=S$, since the functor $\calP_\cM$ is represented by $T\in \calP_\cM(V)$.
Thus, $\cM=\{v\}\bc$.
The converse ($\Leftarrow$) is left to the reader. 
\end{proof}

The following result is again a special case of \cite[Proposition 6.1.6.3]{Lurie:HTT}

\begin{lemma}
\label{genlocalclass}
Every local class of monomorphisms $\cM$ in a topos $\cE$ is of the form $\{v\}\bc$ for a unique univalent monomorphism $v:T\to V$.
\end{lemma}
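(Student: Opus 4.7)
My plan is to invoke Lemma~\ref{replocalclass}, which reduces the statement to showing that the functor $\calP_\cM : \cE\op \to \Set$ is representable by an element of $\calP_\cM(V)$ for some $V \in \cE$; the lemma then automatically produces a univalent monomorphism $v: T \to V$ with $\cM = \{v\}\bc$, and uniqueness of $v$ (up to unique isomorphism) will follow from uniqueness of representing objects.

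For representability, I would construct $V$ explicitly as a subobject of the Lawvere object $\Omega$. Define $V \subseteq \Omega$ to be the union of all subobjects $W \subseteq \Omega$ such that the pullback of the universal subobject $t: 1 \to \Omega$ along the inclusion $W \hookrightarrow \Omega$ lies in $\cM$. This union ranges over a subset of the set $\calP(\Omega)$ of subobjects of $\Omega$, so it exists in $\cE$. Let $v: T \to V$ be the pullback of $t$ along $V \hookrightarrow \Omega$. The family of inclusions $\{W_i \hookrightarrow V\}$ is surjective by construction, and pasting pullbacks shows that the base change of $v$ along each $W_i \hookrightarrow V$ agrees with the pullback of $t$ along $W_i \hookrightarrow \Omega$, which lies in $\cM$ by hypothesis. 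Locality of $\cM$ then gives $v \in \cM$.

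It remains to show that for every $B \in \cE$ and every $\chi: B \to \Omega$, one has $\chi \in \calP_\cM(B)$ if and only if $\chi$ factors through $V$. The ``if'' direction is immediate from closure of $\cM$ under base change. The main point---and the only nontrivial step---is the converse, where I would invoke the image factorization $\chi = i_W \circ \pi$ with $\pi: B \to W$ surjective and $i_W: W \hookrightarrow \Omega$ monic. The subobject $\chi^{-1}(t) \subseteq B$ is the base change of $i_W^{-1}(t) \subseteq W$ along the cover $\pi$, so locality of $\cM$ forces $i_W^{-1}(t) \in \cM$; hence $W$ belongs to the family defining $V$, so $W \subseteq V$ and $\chi$ factors through $V$. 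This identifies $\calP_\cM$ with the subfunctor $\Map{-}{V} \hookrightarrow \Map{-}{\Omega}$ represented by $T \in \calP_\cM(V)$, and Lemma~\ref{replocalclass} concludes the proof. The main obstacle is essentially the locality argument producing $v \in \cM$ for the ``universal'' $V$: without the locality axiom the union construction would give only a subfunctor with good base-change properties, not one with an element representing it.
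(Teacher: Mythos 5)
Your proof is correct and is essentially the paper's argument in a different packaging: your $V$, the union in $\Omega$ of all subobjects whose pullback of $t$ lands in $\cM$, is exactly the image of the classifying map of the paper's coproduct $\alpha$ of all univalent monomorphisms in $\cM$, and the two key steps (image factorization of a classifying map followed by locality along the surjective part, and smallness via $\calP(\Omega)$ being a set) are the same. The only difference is that you conclude via representability of $\calP_\cM$ and Lemma~\ref{replocalclass}, whereas the paper argues directly that every map in $\cM$ is a base change of $v$; both are fine.
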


\begin{proof} 
Let us first show that every monomorphism $u:A\subseteq B$ in $\cM$ is the base change of a univalent morphism $k:C\subseteq D$ in $\cM$.
By \cref{mono-classifier}, there exists a map $\chi:B\to \Omega$ such that the following square is cartesian.
\begin{equation}
\label{reflectionsquare10}
\begin{tikzcd}
A \ar[d, "u"']   \ar[r] & 1  \ar[d, "t"]    \\
B  \ar[r, "\chi"] &\Omega
\end{tikzcd}
\end{equation}
The map $\chi:B\to \Omega$ admits a factorization $\chi=mp:B\to D\to \Omega$, with $p:B\to D$ a surjection and $m:D\to \Omega$ a monomorphism.
The square \eqref{reflectionsquare10} is then  the composite of two cartesian squares
\begin{equation}
\label{reflectionsquare11}
\begin{tikzcd}
A \ar[d, "u"']   \ar[r] & C  \ar[d, "k"] \ar[r]  & 1  \ar[d, "t"]    \\
B  \ar[r, "p"] &D  \ar[r, "m"] &\Omega
\end{tikzcd}
\end{equation}
where $C=m^{-1}(t)$.
The map $k:C\to D$ is a monomorphism, since the map $t:1\to \Omega$ is a monomorphism and the right hand square in (\ref{reflectionsquare11}) is cartesian.
Moreover, $k:C\subseteq D$ is univalent since the map $m$ is a monomorphism.
And we have $u\in \{k\}\bc$ since the left hand square in (\ref{reflectionsquare11}) is cartesian.  
Moreover, the map $k:C\subseteq D$ belongs to $\cM$, since $u:A\subseteq B$ belongs to $\cM$, since  the class $\cM$ is local and the map $p$ is surjective.
Let us now construct a univalent generator of $\cM$.
For this, observe that the collection of univalent morphisms in $\cM$ is a set, since the collection of subobjects of $\Omega$ is a set.
Let us denote by $\alpha:E\to F$ the coproduct of all univalent morphisms in $\cM$.
Observe that every univalent morphism in $\cM$ is a base change of $\alpha$.
The morphism $\alpha$ belongs to $\cM$, since the class $\cM$ is local.
Thus, $\alpha:E\to F$ is the base change of a univalent morphism $v:T\to V$ in $\cM$ by the first part of the proof.
Every univalent morphism in $\cM$ is a base change of $v:T\to V$, since every univalent morphism in $\cM$ is a base change of $\alpha$.
Thus, every morphism in $\cM$ is a base change of $v:T\to V$, since every morphism in $\cM$
is a base change of a univalent morphism in $\cM$ by the first part of the proof.
The proof of the uniqueness of $v$ is left to the reader.
\end{proof}

\begin{proposition}
\label{Gtoparesmall} 
Every extended Grothendieck topology $\cG$ on a topos $\cE$ is generated by a unique univalent monomorphism.
\end{proposition}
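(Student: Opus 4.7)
The plan is to apply Lemma~\ref{genlocalclass} directly. A Grothendieck topology $\cG$ on $\cE$ is, by Definition~\ref{definitionGrothtop}, a class of monomorphisms which (i) contains the isomorphisms and is closed under composition and base change, and (ii) is local. In particular, $\cG$ satisfies the two hypotheses of Lemma~\ref{genlocalclass}: it is a local class of monomorphisms closed under base change. The lemma then produces a (unique) univalent monomorphism $v \colon T \to V$ such that $\cG = \{v\}\bc$.

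To conclude that $\cG$ is \emph{generated} by $v$ as a Grothendieck topology, i.e.\ that $\cG = \{v\}\gtop$, I would argue by a two-sided inclusion. On the one hand, any Grothendieck topology containing $v$ must contain every base change of $v$ (by axiom~\ref{definitionGrothtop:1}), so $\{v\}\bc \subseteq \{v\}\gtop$. On the other hand, $\cG$ is itself a Grothendieck topology containing $v$ (since $v \in \{v\}\bc = \cG$), so by minimality $\{v\}\gtop \subseteq \cG$. Combining with the lemma gives
\[
\cG \;=\; \{v\}\bc \;\subseteq\; \{v\}\gtop \;\subseteq\; \cG,
\]
so all three classes coincide.

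There is no real obstacle here: the nontrivial content has already been absorbed into Lemma~\ref{genlocalclass}, whose proof handles the construction of the univalent generator by classifying each monomorphism in $\cG$ through the Lawvere object $\Omega$, imaging-factoring its classifying map, and taking a coproduct of all univalent morphisms in $\cG$. Axiom~\ref{definitionGrothtop:3} of Definition~\ref{definitionGrothtop} is not even needed for this proposition; it will be used elsewhere, e.g.\ when comparing Grothendieck topologies with the factorization systems arising from Lawvere--Tierney topologies.
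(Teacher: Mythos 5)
Your proof is correct and follows the same route as the paper: the paper's proof is exactly the one-line observation that a Grothendieck topology is a local class of monomorphisms, so Lemma~\ref{genlocalclass} applies to give $\cG=\{v\}\bc$ for a unique univalent $v$. Your extra paragraph verifying that $\{v\}\bc=\{v\}\gtop$ (and your remark that axiom~\ref{definitionGrothtop:3} is not used here) is a correct clarification of what ``generated'' means, but adds nothing beyond the paper's argument.
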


\begin{proof}
This follows from \cref{genlocalclass}, since an extended Grothendieck topology is a local class of monomorphisms by \cref{definitionGrothtop}.
\end{proof} 

\begin{corollary}
\label{cor:GTop-is-small} 
The poset of extended Grothendieck topologies on a topos $\cE$ is small.
\end{corollary}
\begin{proof}
By \cref{Gtoparesmall}, the size of this poset is bounded by the size of the poset of subobjects of $\Omega$, which is small.
\end{proof}

\subsection{Covering topologies}

In practice, Grothendieck topologies are often defined in terms of covering families instead of covering sieves.
The covering families are the families that are meant to become surjective in the localization.
This localization is then generated by the collection of images of the families, which are the covering sieves of a Grothendieck topology.

This suggests an axiomatization of the classes of maps which are the inverse image of the class $\Surj$ of surjections by some algebraic morphism of topoi.
The class $\Surj$ is an example of an acyclic class and if $\phi:\cE\to \cF$ is an algebraic morphism of topoi $\phi^{-1}(\Surj(\cF))$ is always an acyclic class in $\cE$ containing the class $\Surj(\cE)$.
This is our definition of a covering topology.
We prove in \cref{thm:covering-top} that covering topologies are in bijection with extended Grothendieck topologies.
The proof relies on \cref{prop:FromGrothtoCov} which provides an explicit description of the covering topology associated to an extended Grothendieck topology.
The main result of the section is \cref{thm:tripleadj-acyclic} which explains why covering topologies are natural objects to consider in the relation between extended Grothendieck topologies and acyclic classes.

\begin{definition}[Covering topology]
\label{def:covering-top}
A {\it covering topology} on a topos $\cE$ is an acyclic class $\cC$ containing the class of surjections $\Surj(\cE)$.
The extended Grothendieck topology associated to a covering topology is $\cC\cap \Mono$ (which we know is an extended Grothendieck topology by \cref{fromacyclictoGroth}).
\end{definition}

\begin{examples}
\label{ex:CTop}
We give some examples of covering topologies.
\begin{examplenum}
\item\label{ex:CTop:0}
    The class $\Surj$ is the smallest covering topology, and the class $\All$ of all maps is the largest.

\item\label{ex:CTop:1}
    If $\phi:\cE\to \cF$ is an algebraic morphism of topoi, the class $\cC_\phi = \phi^{-1}(\Surj)$ is a covering topology on $\cE$.
    We shall see in \cref{cor:cov=invsurj} that all covering topologies can be produced in this way.

\item\label{ex:CTop:2}
    As a particular case, let $\S X = \fun \Fin \cS$ be the algebraically free topos on one generator of \cref{def:freetopos}. 
    The evaluation at $1\in \Fin$ provides an algebraic morphism $\S X \to \cS$.
    The class $\cC$ of all maps $F\to G$ such that $F(1)\to G(1)$ is surjective is a covering topology.

    The extended Grothendieck topology $\cC\cap \Mono$ associated to $\cC$ by \cref{fromacyclictoGroth} is the one of \cref{ex:GTop:2}.

\item\label{ex:CTop:3}

Recall from \cite[II.1.3]{SGA41} 
that a pretopology on a small 1-category $ C$ is the data, for each object of $X\in  C$, of a set $Cov(X)$ of families $X_i\to X$ called covering families and satisfying some axioms that we shall not recall.
To any covering family $X_i\to X$ we associate the map $\coprod X_i \to X$ in $\PSh C$.
Let $\Sigma$ be the class of all the maps $\coprod_i X_i \to X$ associated to the pretopology.
Then, the axioms of a pretopology are such that $\im \Sigma$ is a Grothendieck topology on $ C$.
Let $\Sigma\cov$ be the smallest acyclic class containing $\Sigma$ and all surjections.
Then $\Sigma\cov$ is a covering topology such that the associated extended Grothendieck topology $\Sigma\cov\cap \Mono$ of \cref{fromacyclictoGroth} is the extended Grothendieck topology $\im \Sigma\loc$ of \cref{lem:explicitGtop}.
Moreover, we have the equivalence of forcing conditions
\[
\Forcing\Sigma\Surj
\ =\ 
\Forcing{\im\Sigma}\Iso
\ =\ 
\Forcing{\Sigma\cov}\Surj\,.
\]

\item\label{ex:CTop:5}
    Any intersection of covering topologies is a covering topology.

\end{examplenum}

\end{examples}

\begin{definition}[Covering map]
\label{def:cover}
Let $\cG$ be an extended Grothendieck topology.
A map $f$ is called a {\it $\cG$-covering} if $\im f$ is in $\cG$.
We denote by $\Cover \cG$ the class of $\cG$-coverings.
\end{definition}

The following lemma is a very useful property of covering topologies.

\begin{lemma}
\label{lem:image-acyclic}
Let $\cC$ be a covering topology, then
\[
f \in \cC \quad\iff\quad \im f \in \cC\ .
\]
In other words, we have $\cC = \Cover{(\cC\cap \Mono)} = \Cover{\im\cC}$.
\end{lemma}
\begin{proof}
We have $\coim f\in \cC$, since $\coim f\in \Surj \subseteq \cC$ by definition of covering topologies.
By \cref{lem:acyclic-image}, if $f \in \cC$, then $\im f \in \cC$. Conversely,  if $\im f \in \cC$ then $f \in \cC$
since $\coim f\in \cC$ and $\cC$ is closed under composition.
\end{proof}

\begin{lemma}
\label{lem:covGmono=G}
For any extended Grothendieck topology we have $\Cover\cG\cap \Mono = \cG$.
\end{lemma}
\begin{proof}
By definition of $\Cover\cG $, a monomorphism is in $\Cover\cG$ if and only if it is in $\cG$.
Hence $\Cover\cG \cap \Mono = \cG$.    
\end{proof}

\begin{proposition}
\label{prop:FromGrothtoCov}
Let $\cG$ be an extended Grothendieck topology on a topos $\cE$.
\begin{enumerate}
\item\label{prop:FromGrothtoCov:1} The class of $\cG$-coverings $\cG^{\mathsf{cov}}$ is a covering topology, and it is the smallest covering topology containing $\cG$.
\item\label{prop:FromGrothtoCov:2} We have $\cG^{\mathsf{cov}}=\cG\ac \vee \Surj$ where the supremum $\vee$ is taken in the poset $\mathsf{Acy}(\cE)$. 
	In particular, we have $\cG\ac \subseteq \Cover\cG$.
\item\label{prop:FromGrothtoCov:3} The class $\cG^{\mathsf{cov}}$ is of small generation as an acyclic class.
\end{enumerate}
\end{proposition}

\begin{proof}
\eqref{prop:FromGrothtoCov:1}
We need to show that that $\Cover \cG$ is acyclic and contains all surjections.
Since $\cG$ contains all isomorphisms, it is clear that $\Cover \cG$ contains all surjections (and therefore all isomorphisms).
To see that $\cG^{\mathsf{cov}}$ is acyclic, we are left to show that it is closed under base change, composition and colimits.

Let us show that the class $\cG^{\mathsf{cov}}$ is closed under base change.
Suppose that a map $f':A'\to B'$ is a base change of a map $f:A\to B$ in $\cG^{\mathsf{cov}}$.
We have $\im f\in \cG$, since $f\in \cG^{\mathsf{cov}}$.
The map  $\im {f'}$ is a base change of the map $\im f$, since the factorisation $f=\coim f \im f$ is stable under base change. 
Since the class $\cG$ is closed under base change, we have $\im {f'}\in \cG$, and thus $f'\in\cG^{\mathsf{cov}}$. 

We prove now that $\cG^{\mathsf{cov}}$ is closed under composition.
For this, we shall first prove the composite $f=vu:A\to B\to C$ of a morphism $u\in \cG$ with a surjection $v$ belongs to $\cG^{\mathsf{cov}}$.
Consider the factorisation $f=\coim f \im f:A\to I\to B$ and the following commutative diagram with a pullback square
\[
\begin{tikzcd}
A \ar[dr, "w"] \ar[drr, "u", bend left] \ar[ddr, "\coim f "', bend right, two heads] && \\
& J \ar[r, "p_2", tail] \ar[d, "p_1"'] \pbmark & A\ar[d, "v"] \\
& I \ar[r, "\im f"', tail]  & C\,.
\end{tikzcd}
\]
The map $p_2$ is a monomorphism since it is a base change of the map $\im f$.
The map $w$ is also a monomorphism, since $p_2w=u$ is a monomorphism.
Moreover, $p_2\in \cG$, since $p_2w=u\in \cG$ by the condition \ref{definitionGrothtop:3} defining an extended Grothendieck topology.
Thus,  $\im f\in \cG$ since the map $v$ is surjective and the class $\cG$ is local. 
This proves that $f\in \cG^{\mathsf{cov}}$. 
More generally, let us show that if two maps $u:A\to B$ and $v:B\to C$ belongs to $\cG^{\mathsf{cov}}$.
then so is the map $f=vu$.
We have $f=\coim v\circ  \im v \circ \coim u\circ\im u$ and the map $w:=\im v\circ \coim u$ belongs to $\cG^{\mathsf{cov}}$ by what we  have proved before.
The decomposition $f=\coim v\circ  w\circ \im u=\coim v\circ \coim w\circ \im w\circ \im u$ shows that $\im f=\im w\circ \im u$.
Thus  $\im f \in \cG$, since $\cG$ is closed under composition.
Thus, $f\in  \cG^{\mathsf{cov}}$ and this proves that $\cG^{\mathsf{cov}}$ is closed under composition.

Let us see now that the class $\cG^{\mathsf{cov}}$ is closed under colimits.
We prove first that the class is closed under coproducts (indexed by sets).
The class $\cG$ is local, hence closed under coproducts by \cref{localclasscoproduct}.
Let $I$ be a set and $(f_i,i\in I)$ a family of maps in $\Cover\cG$.
If $f=\bigsqcup f_i$, then $\im f =\bigsqcup \im {f_i}$ since the class of monomorphisms and the class of surjections are closed under coproducts.
Thus, if the map $ \im {f_i}$ belongs to $\cG$ for every $i\in I$, then the map $\im f$ belongs to $\cG$.
This shows that the class $\cG^{\mathsf{cov}}$ is closed under coproducts.
We can now prove that the class $\cG^{\mathsf{cov}}$ is closed under colimits.
Let $f':A'\to B'$ be the colimit of a diagram of maps
$f_i:A_i\to B_i$ in $\cG^{\mathsf{cov}}$, indexed by a small category $I$.
We shall prove that $f'$ is in $\cG^{\mathsf{cov}}$.
We have a commutative square of canonical maps 
\[
\begin{tikzcd}
A_i \ar[d,"f_i"']   \ar[r,"{a_i}"] & A'  \ar[d, "{f'}"]    \\
B_i  \ar[r, "b_i"] & B'
\end{tikzcd}
\]
for every $i\in I$.
If $f:A\to B$ is the coproduct of the maps $f_i:A_i\to B_i$ over $i\in I$, then we have a commutative square of canonical maps 
\[
\begin{tikzcd}
A \ar[d,"f"']   \ar[r,"{a}"] & A'  \ar[d, "{f'}"]    \\
B  \ar[r, "b"] & B'
\end{tikzcd}
\]
in which the maps $a$ and $b$ are surjective.
The map $f$ belongs to $\cG^{\mathsf{cov}}$, since the class $\cG^{\mathsf{cov}}$ is closed under coproducts.
Moreover, $b\in \cG^{\mathsf{cov}}$ since $b$ is a surjection. 
Thus, $f'a = bf\in \cG^{\mathsf{cov}}$, since the class $\cG^{\mathsf{cov}}$ is closed under composition. 
Thus, $\im {f'a} \in \cG^{\mathsf{cov}}$. 
But we have $\im {f'a} =\im {f'}  $, since the map $a$ is surjective.
Thus $\im {f'} \in \cG$ and hence $f'\in \cG^{\mathsf{cov}}$.
This finishes to show that the class $\cG^{\mathsf{cov}}$ is closed under colimits.

We have proved that $\Cover \cG$ is a covering topology.
Let us see that it is the smallest one containing $\cG$.
Let $\cC$ be a covering topology.
If $\cG\subseteq \cC$, then we have $\cG \subseteq \cC\cap \Mono$ and $\Cover \cG \subseteq (\cC\cap \Mono)\cov$.
By \cref{lem:image-acyclic}, we have $\cC = (\cC\cap \Mono)\cov$.
This shows that $\cG\subseteq \cC$ implies $\Cover \cG \subseteq \cC$.
Conversely, if $\Cover \cG \subseteq \cC$, we get $\cG = \Cover \cG\cap \Mono \subseteq \cC$ using \cref{lem:covGmono=G}.
This proves that $\cG\subseteq \cC$ if and only if $\Cover \cG \subseteq \cC$.

\smallskip
\noindent\eqref{prop:FromGrothtoCov:2}
We have always, $\Surj \subseteq \Cover\cG$.
Moreover, since the class $\Cover \cG$ is in particular acyclic, we have also $\cG\ac\subseteq \Cover \cG$.
This proves that $\cG\ac \vee \Surj \subseteq \Cover \cG$.
Conversely, $\cG\ac \vee \Surj$ is a covering topology containing $\cG$ and the minimality property proved above gives the reverse inclusion.

\smallskip
\noindent\eqref{prop:FromGrothtoCov:3}
We have seen in \Cref{Gtoparesmall} that any extended Grothendieck topology $\cG$ is of small generation.
Using \cref{lem:sg-gtop-to-sg-ac}, we deduce that the acyclic class $\cG\ac$ is of small generation.
The acyclic class $\Surj$ is generated by the single map $s^0:S^0\to 1$ by \cref{lem:gen-conn-trunc}.
Thus, using that $\Cover \cG = \cG\ac \vee \Surj$, the acyclic class $\Cover \cG$ is of small generation.
\end{proof}

\begin{remark}
\cref{prop:FromGrothtoCov}~\eqref{prop:FromGrothtoCov:1} shows in particular that $\Cover \cG$ is an acyclic class.
Moreover \cref{prop:FromGrothtoCov}~\eqref{prop:FromGrothtoCov:3} and \cref{rem:acyclic2modality} shows that it is the left class of a modality.
We shall identify the corresponding right class in \cref{cor:ptopmodality}.
\end{remark}

\begin{corollary}
\label{cor:GacMono=G}
For any extended Grothendieck topology $\cG$, we have $\cG = \cG\ac \cap \Mono$.
\end{corollary}
\begin{proof}
We have always $\cG\subseteq \cG\ac \cap \Mono$.
Conversely, using that $\cG\ac \subseteq \Cover\cG$ from \cref{prop:FromGrothtoCov}~\eqref{prop:FromGrothtoCov:2}, and using that $\Cover\cG\cap \Mono = \cG$ from \cref{lem:covGmono=G}, we get that $\cG\ac \cap \Mono \subseteq \cG$.
\end{proof}

\begin{corollary}
\label{cor:freeGtop}
If $\Sigma$ is a set of maps in a topos $\cE$,
then ${\im\Sigma}\gtop = \im {\Sigma\ac} = \Sigma\ac\cap\Mono$.
\end{corollary}

\begin{proof}
The class $\im {\Sigma\ac}={\Sigma\ac}\cap \Mono$ is an extended Grothendieck topology by \cref{fromacyclictoGroth}.
Thus, ${\im\Sigma}\gtop \subseteq \im {\Sigma\ac}$ since ${\im\Sigma} \subseteq \im {\Sigma\ac}$.
Conversely, the class $\mathsf{im}^{-1} (\im {\Sigma}\gtop) = \Cover{\big(\im {\Sigma}\gtop\big)}$ is acyclic by \cref{prop:FromGrothtoCov}~\eqref{prop:FromGrothtoCov:2}.
Hence we have 
$\Sigma\ac \subseteq \mathsf{im}^{-1} (\im \Sigma \gtop)$,
since $\Sigma \subseteq \mathsf{im}^{-1}(\im \Sigma)\subseteq  
\mathsf{im}^{-1} (\im \Sigma \gtop)$.
Thus, $\im {\Sigma\ac} \subseteq \im \Sigma \gtop$
and this proves that $\im {\Sigma\ac}= \im \Sigma \gtop$.
\end{proof}

\begin{theorem}[Equivalence Grothendieck/covering topologies]
\label{thm:covering-top}
Let $\cE$ be a topos.
The maps $\cG\mapsto \Cover \cG$ and $\cC\mapsto \im \cC = \cC\cap \Mono$ define inverse isomorphisms between the poset $\CTop(\cE)$ of covering topologies and the poset $\GTop(\cE)$ of extended Grothendieck topologies.
\[
\begin{tikzcd}
\GTop(\cE) \ar[rr,shift left = 1.6, "\Cover{(-)}"] \ar[from=rr,shift left = 1.6, "\im-", "\simeq"']
&&\CTop(\cE)
\end{tikzcd}
\]
\end{theorem}
\begin{proof}
The map $\cC\mapsto \cC\cap \Mono$ defines a morphism of posets $\CTop(\cE) \to \GTop(\cE)$ by \cref{fromacyclictoGroth}.
And the map $\cG\mapsto \Cover \cG$ defines a morphism of posets $\GTop(\cE)\to \CTop(\cE)$ by \cref{prop:FromGrothtoCov}~\eqref{prop:FromGrothtoCov:1}.
The equality $\Cover{(\cC\cap \Mono)} = \cC$ is the statement of \cref{lem:image-acyclic}.
The relation $\Cover\cG\cap \Mono = \cG$ is \cref{lem:covGmono=G}.
\end{proof}

The notion of covering topology provides the following enhancement of \cref{prop:adjGtopAc}.

\begin{theorem}[Adjunctions between acyclic classes and extended Grothendieck topologies]
\label{thm:tripleadj-acyclic}
The morphism of posets $\im - =\Mono\cap -:\Acyclic(\cE) \to \GTop(\cE)$ admits 
\begin{enumerate}
\item a fully faithful left adjoint $(-)\ac$ whose image is the subposet $\MAcyclic(\cE)$ of monogenic congruences, and
\item a fully faithful right adjoint $\Cover{(-)} = (-)\ac\vee \Surj$ whose image is the subposet $\CTop(\cE)$ of covering topologies.
\[
\begin{tikzcd}
\Acyclic(\cE)
\ar[from=rr,shift left = 3, "\Cover{(-)}", hook']
\ar[rr,"\im-"' description ]
\ar[from=rr,shift right = 3, "(-)\ac"', hook']
&&\GTop(\cE)
\end{tikzcd}
\]
\end{enumerate}
\end{theorem}

\begin{proof}
The first statement is \cref{prop:adjGtopAc}.
We prove the second one.
Let $\cG$ be an extended Grothendieck topology and $\cA$ an acyclic class.
By definition of $\Cover \cG$, we have 
$\cA \subseteq \Cover \cG \iff \im \cA \subseteq \cG$.
This proves that $\cG\mapsto \Cover\cG$ is right adjoint to $\im-$.
The fact that it is fully faithful can be seen from \cref{cor:GacMono=G}, or using the triple adjunction and the fact that the morphism $(-)\ac$ is fully faithful.
\end{proof}

\begin{remark}
\label{rem:tripleadjacyclic}
We have seen in \cref{prop:adjGtopAc} that the image of the morphism $(-)\ac:\GTop(\cE)\to \Acyclic(\cE)$ is the poset of monogenic acyclic classes.
We can deduce from \cref{thm:covering-top,thm:tripleadj-acyclic} that the image of the morphism $\Cover{(-)}:\GTop(\cE)\to \Acyclic(\cE)$ is the poset of covering topologies.
Altogether, this provide the following isomorphisms
\[
\begin{tikzcd}
\MAcyclic(\cE) \ar[rr,hook]&& \Acyclic(\cE) \ar[dd] && \CTop(\cE) \ar[ll,hook']
\\
\\
&&\GTop(\cE)
\ar[rruu,equal,"\mathrm{\cref{thm:covering-top}}"',"\Cover{(-)}"]
\ar[lluu,equal,"\mathrm{\cref{prop:adjGtopAc}}","(-)\ac"']
\end{tikzcd}\,.
\]

Using these equivalences, the triple adjunction of \cref{thm:tripleadj-acyclic} can be presented in other ways, more suited for some applications.
\[
\begin{tikzcd}
\Acyclic(\cE)
\ar[rrr,"{(-\cap \Mono)\ac}" description]
\ar[from=rrr, shift left = 4,"{(-)\vee \Surj}",hook']
\ar[from=rrr, shift right = 4,"can."', hook']
&&& \MAcyclic(\cE)
\end{tikzcd}
\qquad
\begin{tikzcd}
\Acyclic(\cE)
\ar[rr,"{(-)\vee \Surj}" description]
\ar[from=rr, shift left = 4,"can.",hook']
\ar[from=rr, shift right = 4,"{(-\cap \Mono)\ac}"', hook']
&& \CTop(\cE)
\end{tikzcd}
\]
\end{remark}

\begin{remark}
\label{rem:all-phi-classes}
Let $\phi:\cE\to \cF$ be a left-exact localization of topoi.
The transport bijections of \cref{prop:transport-acyclic} provide the following correspondence between acyclic classes in $\cE$ and $\cF$.
(The last two formulas use implicitly \cref{ex:lexcore:4}.)
\begin{align*}
\cK_\phi & =\phi^{-1}(\Iso) &&\textrm{is a congruence by \cref{exmpcongruence4}}\\
\cG_\phi & =\phi^{-1}(\Iso)\cap \Mono &&\textrm{is an extended Grothendieck topology by \cref{ex:GTop:1}}\\
\cC_\phi & =\phi^{-1}(\Surj) &&\textrm{is a covering topology by \cref{ex:CTop:1}}\\
\decn {n+1} {\cC_\phi} & =\phi^{-1}(\Conn n) && \textrm{(where $\dec-$ is  defined in \cref{sec:acvcong})}\\
\decinfty {\cC_\phi} & =\phi^{-1}(\Conn \infty) &&\textrm{is a hypercomplete congruence (see \cref{sec:hypercoverings})}
\end{align*}
We shall see in \cref{lem:hyper}~\eqref{lem:hyper:2} that $\cC_\phi = \Cover{(\cG_\phi)}$.
\end{remark}

\subsection{Lawvere--Tierney topologies}

In this section, we define the notion of a Lawvere--Tierney topology on a topos $\cE$ (\cref{LawvereTierney}).
The definition is merely a transposition of the classical notion for 1-topoi, and we shall see in \cref{prop:bijLTLT} that the notion depends only on the underlying 1-topos of the topos $\cE$.
The main results of the section are \cref{LTtop}, where the factorization system on monomorphisms encoded by a Lawvere--Tierney topology is constructed, 
and \cref{LT-topoGroth}, where the bijection with extended Grothendieck topologies is proved.

\medskip

Let $\cE$ be a topos. 
For every object $A\in \cE$, the set $\powerset A$ of subobjects of $A$ is partially ordered by the inclusion relation.
The presheaf $\calP:\cE\op\to \Set$ can be enhanced into a presheaf with values in the category of posets.
It follows that the Lawvere object $\Omega$, which is representing $\calP$ by \cref{mono-classifier}, is naturally partially ordered by a binary relation $\lsem\leq\rsem\subseteq \Omega\times \Omega$.

\begin{definition}[Lawvere--Tierney topology]
\label{LawvereTierney}
We shall say that an endomorphism
$j:\Omega\to \Omega$ of the Lawvere object
$\Omega$ of a topos $\cE$
is a {\it Lawvere--Tierney topology on $\cE$}
if it is a closure operator, that is,
if the following three conditions hold: 
\begin{enumerate}[label=\roman*)]
\item\label{LawvereTierney:1} $j$ is monotonic: $\ x\leq y \Rightarrow jx \leq jy$, for any $A\in \cE$ and any maps $x,y:A\to \Omega$;
\item\label{LawvereTierney:2} $j$ is inflating: $\ x\leq jx$, for any $A\in \cE$ and any map $x:A\to \Omega$;
\item\label{LawvereTierney:3} $j$ is idempotent: $jj=j$.
\end{enumerate}
\end{definition}

A closure operator $j:\Omega\to \Omega$
is the same thing as a closure operator 
$j:\calP\to \calP$ on the presheaf represented by $\Omega$.
The map $j_A:\powerset A\to
\powerset A$ is a closure operator on the poset $\powerset A$
for every object $A\in \cE$.
We have $u^{-1}j_B(S)=j_A(u^{-1}(S))$
for every map $u:A\to B$ in $\cE$
and every $S\in \powerset B$ 
since the operator $j:\calP\to \calP$
is a natural transformation.
In particular, 
\begin{equation}
\label{closureop}
j_B(S)\cap T \ =\ j_T(S\cap T)
\end{equation}
for every $S,T\subseteq B$.

\begin{definition} \label{j-closed-dense} 
If $j:\Omega\to \Omega$ is a closure operator, we shall say that a monomorphism $S\to A$, or a subobject $S\subseteq A$, is $j$-{\it dense} (resp. $j$-closed) if $j_A(S)=A$ (resp. $j_A(S)=S$).
We shall denote class of $j$-dense (resp. $j$-closed) monomorphisms by $\Dense j$ (resp. $\Close j$). 
\end{definition}

For example, if $S\in \powerset A$ then the 
subobject $ j_A(S)\subseteq A$ is $j$-closed, since
we have 
$ j_A j_A(S)=j_A(S)$ by the idempotence of $j_A$.
We have $S\subseteq j_A(S)$ by inflation.
Let us show that $S$ is $j$-dense in  $C:=j_A(S)$.
For this, we have to show that $j_C(S)=C$.
But $j_C(S)=j_C(S\cap C)=j_A(S)\cap C=C$
by \eqref{closureop}. Hence the inclusion $S\subseteq j_A(S)$ is $j$-dense.
It follows from these observations 
that the inclusion
$S\subseteq A$ is the composite of a $j$-dense
inclusion  $S\subseteq j_A(S)$ followed 
by a $j$-closed inclusion $j_A(S)\subseteq A$.
We shall see below that the closure operator $j$
is defining a factorization system
in the (non-full) subcategory of monomorphisms 
$\Mono=\Mono(\cE)$.

\begin{proposition}[Factorization system of a topology]
\label{LTtop}
Let $j:\Omega\to \Omega$ be a Lawvere--Tierney topology
on a topos $\cE$. Then,
\begin{enumerate}
\item \label{LTtop:1} $\Close j=\rorth{\Dense j} \cap\Mono$
and $\Dense j= \lorth {\Close j} \cap\Mono$;
\item \label{LTtop:2} Every monomorphism $w:A\to C$ in $\cE$ is the composite
of a monomorphism $u:A\to B$ in $\Dense j$ 
followed by a monomorphism $v:B\to C$ in  $\Close j$, and this decomposition is unique;
\item \label{LTtop:3} The classes $\Dense j$ and
$\Close j$ contain the isomorphisms
and they are closed under composition;
\item \label{LTtop:4} The classes $\Dense j$ and
$\Close j$ are local;
\item \label{LTtop:5}  the class $\Dense j$ is an extended Grothendieck topology.
\end{enumerate}
\end{proposition}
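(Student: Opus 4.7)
The proof has five parts, but they are heavily interlinked, so I would organize the argument around the fundamental identity
\[
j_B(S)\cap T = j_T(S\cap T)
\]
recorded in \eqref{closureop} and the observation already established in the excerpt that every monomorphism $u\colon S\subseteq A$ factors as $S\subseteq j_A(S)\subseteq A$ with the first map in $\Dense j$ and the second in $\Close j$. Existence in (2) is thus free; only uniqueness remains, and that I would defer until after orthogonality is proved, as it is then the standard consequence of $\Dense j\perp \Close j$.

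For (1) I would first prove the inclusions $\Dense j\subseteq\lorth{\Close j}$ and $\Close j\subseteq\rorth{\Dense j}$. Given a commutative square with $d\colon S\to A$ dense and $c\colon T\to B$ closed, the horizontal maps produce an inclusion $S\subseteq f^{-1}(T)$ inside $A$. Taking $j_A$ and using naturality $j_A(f^{-1}(T))=f^{-1}(j_B(T))=f^{-1}(T)$ together with $j_A(S)=A$ forces $f^{-1}(T)=A$, which gives the required (unique, since $c$ is mono) diagonal filler. The reverse inclusions in (1) come by lifting $u$ against its own dense--closed factorization: if $u\in\rorth{\Dense j}\cap\Mono$ then the filler for the square obtained from $u=(\text{closure})\circ(\text{dense})$ shows the dense part is an isomorphism, hence $u$ is $j$-closed, and dually for the other side. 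Once (1) is in place, uniqueness in (2) is the standard argument for factorization systems.

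For (3), isomorphisms are trivially in both classes by inflation and idempotence. The closure of $\Dense j$ under composition of $A\subseteq B\subseteq C$ follows from \eqref{closureop}: $j_C(A)\cap B = j_B(A\cap B)=j_B(A)=B$ forces $B\subseteq j_C(A)$, and then $j_C(B)\subseteq j_C(j_C(A))=j_C(A)$ by idempotence and monotonicity, yielding $j_C(A)=C$. For $\Close j$ under composition, monotonicity gives $j_C(A)\subseteq j_C(B)=B$, whence $j_C(A)=j_C(A)\cap B=j_B(A)=A$ using \eqref{closureop} again. For (4), locality of both classes is pure naturality: for any surjective family $\{v_i\colon B_i\to B\}$ and monomorphism $S\subseteq B$, one has $v_i^{-1}(j_B(S))=j_{B_i}(v_i^{-1}(S))$; if every $v_i^{-1}(S)$ is dense, each $v_i$ factors through $j_B(S)\subseteq B$, so $j_B(S)=B$; if every $v_i^{-1}(S)$ is closed, then $j_B(S)$ and $S$ have the same pullback along the cover, so $j_B(S)=S$ by \cref{monoarelocal}.

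Finally (5) is immediate from what precedes. Axiom \ref{definitionGrothtop:1} of \cref{definitionGrothtop} combines the isomorphism/composition statement of (3) with base-change stability (which is a direct consequence of naturality $j_A(f^{-1}S)=f^{-1}(j_B S)$), axiom \ref{definitionGrothtop:2} is (4), and axiom \ref{definitionGrothtop:3} reduces to monotonicity: if $A\subseteq B\subseteq C$ and $j_C(A)=C$, then $C=j_C(A)\subseteq j_C(B)\subseteq C$. I expect the only genuinely delicate step to be the orthogonality argument in (1), and specifically the use of the naturality identity to push the density of $S$ in $A$ across the square; everything else is a transparent manipulation of the closure operator on subobject posets.
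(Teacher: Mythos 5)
Your proof is correct and follows essentially the same route as the paper: the same naturality-based diagonal-filler argument for $\Dense j\perp\Close j$, the same retract-against-its-own-factorization argument for the reverse inclusions in (1), the same $S\subseteq j_A(S)\subseteq A$ factorization for (2), and the same locality arguments for (4). The only cosmetic difference is that you verify closure under composition in (3) by direct computation with \eqref{closureop}, whereas the paper deduces it from the orthogonality characterization established in (1); both are fine.
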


\begin{remark}[Orthogonality on monomorphisms]
\label{rem:orthomono}
The orthogonality $\perp$ of maps and classes of maps restricts to an orthogonality relation $\perp_m$ on monomorphisms and classes of monomorphisms.
If $\cM$ is a class of monomorphisms its left orthogonal is ${}^{\perp_m}\cM = \lorth {\cM} \cap\Mono$ and its right orthogonal is $\cM^{\perp_m}=\rorth{\cM} \cap\Mono$.
This is the meaning of the formula in \cref{LTtop}~\eqref{LTtop:1}.
The pair $(\Dense j,\Close j)$ is a factorization system on monomorphisms relative to the orthogonality relation $\perp_m$.
\end{remark}

\begin{proof}
\eqref{LTtop:1}
Let us first show that $\Dense j\perp \Close j$.
If a monomorphism $u:A\subseteq B$ is $j$-dense
and a monomorphism $z: Y\subseteq Z$ is $j$-closed
let us show 
that every
commutative square has a unique
diagonal filler
\begin{equation}
  \label{diagfillerunique}
    \begin{tikzcd}
      A \ar[d,"u"']   \ar[r,"f"] & Y  \ar[d, "z"]    \\
    B \ar[r, "g"] & Z
    \end{tikzcd}
    \end{equation}
By hypothesis, we have $B=j_B(A)$ and $Y=j_Z(Y)$.
Moreover, we have $A\subseteq g^{-1}(Y)$,
since the square (\ref{diagfillerunique}) commutes.
Thus, 
\[
B=j_B(A)\subseteq j_B(g^{-1}(Y))=g^{-1}(j_Z(Y))= g^{-1}(Y)
\]
It follows that $g$ induces a diagonal filler $d:B\to Y$ for the square
 (\ref{diagfillerunique}).
The uniqueness of the diagonal filler is clear, since $z$ is a monomorphism.
This proves the relation $\Dense j\perp \Close j$.
  Hence we have $\Close j\subseteq \rorth{\Dense j}$
and $\Dense j\subseteq \lorth{\Close j}$.

Let us now show that $\rorth{\Dense j}  \cap\Mono\subseteq \Close j$.
If the inclusion $z: Y\subseteq Z$ belongs to
$\rorth{\Dense j}$, consider 
the following square of inclusions
 \begin{equation}
  \label{diagfillerunique2}
    \begin{tikzcd}
      Y \ar[d,"u"']   \ar[r,"{1_Y}"] & Y  \ar[d, "z"]    \\
      j_Z(Y)\ar[r, "v"] & Z
    \end{tikzcd}
    \end{equation}
We saw above that the inclusion $u:Y\subseteq j_Z(Y)$
is $j$-dense. Hence the square \eqref{diagfillerunique2}
has a diagonal filler $ j_Z(Y)\to Y$.
It follows that $j_Z(Y)=Y$ and this shows that the inclusion $z:Y\subseteq Z$ is $j$-closed. 
This proves the equality $\rorth{\Dense j} \cap\Mono=\Close j$.

Dually, let us show that $\lorth{\Close j}\cap\Mono \subseteq \Dense j$.
If the inclusion $z: Y\subseteq Z$ belongs to $\lorth{\Close j}$, consider 
the following square of inclusions
 \begin{equation}
  \label{diagfillerunique3}
    \begin{tikzcd}
      Y \ar[d,"z"']   \ar[r,"{u}"] & j_Z(Y)  \ar[d, "v"]    \\
     Z \ar[r, "{1_Z}"] & Z
    \end{tikzcd}
    \end{equation}
We saw above that the inclusion $v: j_Z(Y)\to Z$
is $j$-closed. Hence the square \eqref{diagfillerunique3}
has a diagonal filler.   
It follows that  $Z= j_Z(Y)$
and this shows that the inclusion $z:Y\subseteq Z$
is $j$-dense.
This proves the equality $\lorth{\Close j}\cap\Mono =\Dense j$.
 
\smallskip
\noindent 
\eqref{LTtop:2} We saw above 
that every inclusion
$S\subseteq A$ is the composite of a $j$-dense
inclusion  $S\subseteq j_A(S)$ followed 
by a $j$-closed inclusion $j_A(S)\subseteq A$.
The unicity of this decomposition follows from the 
orthogonality $\Dense j\perp \Close j$ 
proved in \eqref{LTtop:1}.

  \smallskip
\noindent  
\eqref{LTtop:3} The identity map $1_A:A\to A$ is both $j$-closed and $j$-dense for every object $A\in \cE$, 
since $j_A(A)=A$. It follows that every isomorphism belongs to $\Dense j$ and $ \Close j$. 
The closure under composition of the classes $\Dense j$ 
and $ \Close j$ 
follows from \eqref{LTtop:1}.

\smallskip
\noindent \eqref{LTtop:4}
We have $g^{-1}j_B(S)=j_A(g^{-1}(S))$ for every map $g:A\to B$ in $\cE$ and every $S\in \powerset B$. 
If $j_B(S)=B$, then $j_A(g^{-1}(S))=A$ and this shows that the class $\Dense j$ is closed under base change.
Moreover,  if $j_B(S)=S$, then $j_A(g^{-1}(S))=g^{-1}(S)$. Thus the class $\Close j$ is also closed under base change.
Let us show that the class $\Dense j$ is local.
If an inclusion $u:S\subseteq B$ is locally $j$-dense, let us show that $u$ is $j$-dense.
By the hypothesis, there exists a surjective family of maps  $\{g_i:A_i\to B\}_{i\in I}$ such the inclusion $g_i^{-1}(S)\subseteq A_i$ is $j$-dense for every $i\in I$.
We then have $g_i^{-1}(j_{B}S)=j_{A_i}g_i^{-1}(S)=A_i$ for every $i\in I$. 
Thus, $j_{B}S=B$, since the family of maps $\{g_i:A_i\to B\}_{i\in I}$ is surjective.
Hence the inclusion $u:S\subseteq B$ is $j$-dense and this shows that the class $\Dense j$ is local.
It remains to show that the class $\Close j$ is local.
If an inclusion $u:S\subseteq B$ is locally $j$-closed, let us show that $u$ is $j$-closed. 
By the hypothesis, there exists a surjective family of maps  $\{g_i:A_i\to B\}_{i\in I}$ such the inclusion $g_i^{-1}(S)\subseteq A_i$ is $j$-closed for every $i\in I$.
We then have $g_i^{-1}(j_{B}S)=j_{A_i}g_i^{-1}(S)= g_i^{-1}(S)$ for every $i\in I$. 
Thus, $j_{B}S=S$, since the family of maps $\{g_i:A_i\to B\}_{i\in I}$ is surjective.
Hence the inclusion $u:S\subseteq B$ is $j$-closed and this shows that the class $\Close j$ is local.

\smallskip
\noindent \eqref{LTtop:5}
It remains to show that the third condition of \cref{definitionGrothtop} holds for the class $\cG:=\Dense j$.
For this,  we have to show that 
if the composite of two
inclusion $u:A\subseteq B$ and $v:B\subseteq C$
is $j$-dense, then the inclusion $v:B\subseteq C$ is $j$-dense. We have $j_C(A)=C$, since $vu$ is $j$-dense.
But $j_C(A)\subseteq j_C(B)$, since $A\subseteq B$.
It follows that $j_C(B)=C$ and hence that $v$ is $j$-dense.
\end{proof}

We shall prove in \cref{LT-topoGroth}  that 
for any extended Grothendieck topology $\cG$ 
in a topos $\cE$,
there
exists a unique Lawvere--Tierney topology
$j:\Omega\to \Omega$ such that $\cG=\Dense j$.

\medskip

Let $\cM:=\{v\}\bc$ be the local class of monomorphisms generated
by a univalent monomorphism $v:T\to V$.
The classifying map $\chi_T:V\to \Omega$
has itself a classifying map $j:\Omega\to \Omega$:
\begin{equation}
  \label{reflectionsquare23}
      \begin{tikzcd}
      T \ar[d, "v"']   \ar[r] &1  \ar[d, "t"]    \\
      V  \ar[r, "{\chi_T}"] &\Omega
    \end{tikzcd} \quad \quad \quad 
    \begin{tikzcd}
      V \ar[d, "\chi_T"']   \ar[r] &1  \ar[d, "t"]    \\
      \Omega  \ar[r, "{j}"] &\Omega
    \end{tikzcd}
    \end{equation}
    Let us denote by $j:\calP\to \calP$
    the natural transformation defined
    by the operator $j:\Omega \to \Omega$.

\begin{lemma} \label{keylemmaclosure}
A monomorphism $u:S\subseteq A$ 
belongs to $\cM$ if and
only if $j_A(S)=A$.
In general, $j_A(S)\subseteq A$
is the largest subobject $S'\subseteq A$
such that the inclusion $S'\cap S\subseteq S'$
belongs to $\cM$.
\end{lemma}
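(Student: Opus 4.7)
The plan is to exploit the two pullback squares in \eqref{reflectionsquare23}: the first says $T=\chi_T^{-1}(1)$, so pulling back $v$ along $f:A\to V$ is the same as pulling back $t$ along $\chi_T\circ f$; the second says $V=j^{-1}(1)$ as a subobject of $\Omega$, so a map $\chi:A\to \Omega$ factors through the mono $\chi_T:V\hookrightarrow \Omega$ if and only if $j\circ\chi=t\circ !_A$.

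First I would prove the equivalence $u\in\cM \Leftrightarrow j_A(S)=A$. A monomorphism $u:S\hookrightarrow A$ belongs to $\cM=\{v\}\bc$ iff there exists a map $f:A\to V$ such that $S=f^{-1}(T)$; by the universal property of $\Omega$ and the left square in \eqref{reflectionsquare23}, this happens iff the classifying map $\chi_S:A\to \Omega$ factors as $\chi_T\circ f$ for some $f$, i.e.\ iff $\chi_S$ factors through the monomorphism $\chi_T:V\hookrightarrow \Omega$. By the right square in \eqref{reflectionsquare23}, $\chi_S$ factors through $\chi_T$ iff $j\circ\chi_S=t\circ !_A$. Since $j_A(S)$ is by definition the subobject of $A$ classified by $j\circ\chi_S$, this last condition is exactly $j_A(S)=A$. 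This establishes the first claim.

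For the second claim, I would use naturality of $j$ applied to the inclusion $i:S'\hookrightarrow A$. For any subobject $S\subseteq A$, naturality gives $i^{-1}j_A(S)=j_{S'}(i^{-1}S)$, and since $i^{-1}S=S'\cap S$ and $i^{-1}j_A(S)=S'\cap j_A(S)$, we obtain
\[
S'\cap j_A(S)\;=\;j_{S'}(S'\cap S).
\]
By the first part applied inside $S'$, the monomorphism $S'\cap S\hookrightarrow S'$ belongs to $\cM$ iff $j_{S'}(S'\cap S)=S'$, and by the displayed identity this is equivalent to $S'\cap j_A(S)=S'$, i.e.\ to $S'\subseteq j_A(S)$. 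Hence $j_A(S)$ is the largest subobject $S'\subseteq A$ for which $S'\cap S\hookrightarrow S'$ lies in $\cM$.

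The only subtle step is the factorisation argument in the first part: one has to remember that $\chi_T$ is a monomorphism (this is exactly what univalence of $v$ buys us), so that the factorisation of $\chi_S$ through $\chi_T$, when it exists, is \emph{unique}, matching the uniqueness of the classifying map. Everything else is routine manipulation with the closure operator and pullback naturality.
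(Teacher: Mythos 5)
Your proof is correct and follows essentially the same route as the paper: the first claim is obtained by pasting the two pullback squares of \eqref{reflectionsquare23} to translate ``$u$ is a base change of $v$'' into ``$\chi_S$ factors through $\chi_T$'' and then into ``$j\circ\chi_S$ classifies the full subobject'', and the second claim is the naturality of $j$ applied to the inclusion $S'\hookrightarrow A$ (the paper states this naturality for an arbitrary map $f:B\to A$ before specializing, but the content is identical). No gaps.
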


\begin{proof} 
For every object $A\in \cE$,
let us denote by $ \calP_\cM(A)$
the set of subobjects $S\in  \powerset A$
whose inclusion $S\subseteq A$ belongs to the class $\cM$.
An inclusion $S\subseteq A$ belongs
to $\cM:=\{v\}\bc$ if and only if the map $\chi_S:A\to \Omega$ factors through the map $\chi_T:V\to \Omega$,
since the left hand square in \eqref{reflectionsquare23} is a pullback.    
Hence the following square is a pullback, since the 
right hand square in \eqref{reflectionsquare23} is a pullback.    
\[
\begin{tikzcd}
\calP_\cM(A) \ar[d]   \ar[r] & 1  \ar[d, "1_A"]    \\
\powerset A  \ar[r, "j_A"] & \powerset A
\end{tikzcd}
\]
Thus, an inclusion $S\subseteq A$ belongs to the class $\cM$
if and only if $j_A(S)=A $.
If $f:B\to A$, then $f^{-1}( j_A(S))=j_B(f^{-1}(S))$
by naturality of $j:  \calP\to  \calP$. Thus, $f^{-1}( j_A(S))=B$
if and only if the inclusion $f^{-1}(S)\subseteq B$
belongs to $\cM$. But $f^{-1}( j_A(S))=B$ if and only if $\Im f\subseteq  j_A(S)$.
Thus, $\Im f\subseteq  j_A(S)$ if 
and only if the inclusion $f^{-1}(S)\subseteq B$
belongs to $\cM$. In particular, if the map $f:B\to A$
is the inclusion $S'\subseteq A$ of a subobject
$S'\in \powerset A$, then $S'\subseteq  j_A(S)$ if 
and only if the inclusion $S'\cap S\subseteq S'$
belongs to $\cM$. 
\end{proof}

We saw in \cref{LTtop} that if $j:\Omega\to \Omega$ is a Lawvere--Tierney topology
in a topos $\cE$, then the class $\Dense j$ of $j$-dense monomorphisms is an extended Grothendieck topology.
Let $\LTTop(\cE)$ be the poset of Lawvere--Tierney topologies ordered by inclusion of the classes $\Dense j$.

\begin{theorem}[Equivalence Grothendieck/Lawvere--Tierney topologies]
\label{LT-topoGroth}
The map $j\mapsto \Dense j$ provides an isomorphism between the poset of Lawvere--Tierney topologies and that of extended Grothendieck topologies.
\[
\begin{tikzcd}
\LTTop(\cE) \ar[rr, "\Dense-","\simeq"'] &&\GTop(\cE)
\end{tikzcd}
\]    
\end{theorem}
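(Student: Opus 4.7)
The plan is to prove the theorem by exhibiting an explicit inverse to $j \mapsto \Dense j$. Given a Grothendieck topology $\cG$ on $\cE$, \cref{Gtoparesmall} yields a univalent monomorphism $v\colon T \to V$ with $\cG = \{v\}\bc$. Following the construction in \eqref{reflectionsquare23}, define $j\colon \Omega \to \Omega$ to be the classifying map of the monomorphism $\chi_T\colon V \to \Omega$. By \cref{keylemmaclosure}, the operator $j_A\colon \powerset A \to \powerset A$ is then described as
\[
j_A(S) \;=\; \bigvee\,\bigl\{\,S' \subseteq A \;\bigm|\; (S'\cap S \subseteq S') \in \cG\,\bigr\},
\]
and the strategy is to check that this operator is a Lawvere--Tierney topology using the axioms of \cref{definitionGrothtop}, and that $\Dense j = \cG$.

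I would verify the three closure axioms in turn. \emph{Inflation} $S \subseteq j_A(S)$ follows by taking $S' = S$: then $S' \cap S = S'$ is the identity, which lies in $\cG$. For \emph{monotonicity}, suppose $S \subseteq T$ and set $S' = j_A(S)$, so $S' \cap S \subseteq S'$ is in $\cG$. Factor this composite of monomorphisms as $S' \cap S \subseteq S' \cap T \subseteq S'$; axiom \ref{definitionGrothtop:3} of \cref{definitionGrothtop} then forces $S' \cap T \subseteq S'$ to be in $\cG$, so $S' \subseteq j_A(T)$. For \emph{idempotence}, only the nontrivial inclusion $j_A(j_A(S)) \subseteq j_A(S)$ is needed. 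Given $S'$ with $S' \cap j_A(S) \subseteq S'$ in $\cG$, factor $S' \cap S \subseteq S' \cap j_A(S) \subseteq S'$; the second map is in $\cG$ by hypothesis, and the first is a base change of $S \subseteq j_A(S)$, which itself lies in $\cG$ because $j_A(S) \cap S = S$ by inflation and $j_A(S)$ realizes the defining supremum. Closure of $\cG$ under base change and composition concludes $S' \subseteq j_A(S)$. I expect this idempotence step to be the main obstacle, since it is the only point where all three Grothendieck axioms are chained together; the bookkeeping of intersections inside $S'$ must be handled carefully.

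The equality $\Dense j = \cG$ is then immediate from \cref{keylemmaclosure}: an inclusion $S \subseteq A$ is $j$-dense iff $j_A(S) = A$, and by the characterization above this is exactly the condition $(A \cap S \subseteq A) = (S \subseteq A) \in \cG$. For injectivity and order-reflection, I would show that the formula for $j_A(S)$ above holds for \emph{any} Lawvere--Tierney topology $j$ in terms of $\Dense j$. Indeed, identity \eqref{closureop} gives $S' \cap j_A(S) = j_{S'}(S' \cap S)$, so $S' \subseteq j_A(S)$ is equivalent to $(S' \cap S \subseteq S') \in \Dense j$; this both recovers $j$ from $\Dense j$ and shows that $\Dense{j_1} \subseteq \Dense{j_2}$ implies $j_1 \leq j_2$ pointwise. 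Together with the (trivial) order-preservation of $j \mapsto \Dense j$, this yields the claimed isomorphism of posets.
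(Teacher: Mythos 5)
Your proposal is correct and follows essentially the same route as the paper: both construct the inverse by taking the univalent generator of $\cG$ from \cref{Gtoparesmall}, classifying $\chi_T$ to get $j$, and verifying monotonicity, inflation and idempotence via the characterization of $j_A(S)$ in \cref{keylemmaclosure}, with order-reflection obtained from the identity \eqref{closureop} exactly as in the paper's first paragraph. Your idempotence argument (factoring $S'\cap S\subseteq S'\cap j_A(S)\subseteq S'$ for an arbitrary $S'$) is a minor reorganization of the paper's (which applies the density of $S\subseteq j_A(S)$ directly to $S'=j_A^2(S)$), but the content is the same.
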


\begin{proof}
If $j_1$ and $j_2$ are closure operators $\Omega\to \Omega$, let us show that $j_1\leq j_2\Leftrightarrow \Dense {j_1}\subseteq \Dense {j_2}$.
If $j_1\leq j_2$, then $(j_1)_A(S)\leq (j_2)_A(S)$ for every object  $A\in \cE$ and every subobject $S\subseteq A$.
Thus, $(j_1)_A(S)=A\Rightarrow (j_2)_A(S)=A $ and this shows that $\Dense {j_1}\subseteq \Dense {j_2}$.
Conversely, if $\Dense {j_1}\subseteq \Dense {j_2}$ let us show that $(j_1)_A(S)\leq (j_2)_A(S)$ for every object  $A\in \cE$ and every subobject $S\subseteq A$.
If $C:=(j_1)_A(S)$, then the inclusion $S\subseteq C$ is $j_1$-dense, since $j_1)_C(S)=(j_1)_C(S\cap C)=(j_1)_A(S)\cap C=C$ by formula \eqref{closureop}.
It follows that the inclusion $S\subseteq C$ is $j_2$-dense,  since $\Dense {j_1}\subseteq \Dense {j_2}$.
Thus, $(j_2)_C(S)=C$ and hence $(j_2)_A(S)\cap C=(j_2)_C(S\cap C)=(j_2)_C(S)=C$ by formula \eqref{closureop}.
It follows that $(j_1)_A(S)\subseteq (j_2)_A(S)$ for every object $A\in \cE$ and every subobject $S\subseteq A$.

It remains to show that for any extended Grothendieck topology $\cG$ there exists a closure operator $j:\Omega \to \Omega$ such that $\cG=\Dense j$.
The local class $\cG$ is generated by a univalent monomorphism $v:T\to V$ by \cref{Gtoparesmall}.
Let us first verify that the map $j:\Omega \to \Omega$ which classifies the monomorphism $\chi_T:V\to \Omega$.
is a closure operator.
For this it suffices to show that the resulting map $j_A:\powerset A\to \powerset A$ is a closure operator for every object $A\in \cE$.
We shall use \cref{keylemmaclosure}.

Let us see first that the map $j_A$ is monotonic.
For this, we must show that $S\subseteq T \Rightarrow j_A(S)\subseteq j_A(T)$ for every $S,T \in \powerset A$.
The inclusion $ j_A(S)\cap S\subseteq j_A(S)$ belongs to $\cG$ by \cref{keylemmaclosure} with $S'= j_A(S)$.
Hence the inclusion $ j_A(S)\cap T\subseteq j_A(S)$ belongs to $\cG$ by the condition \cref{definitionGrothtop}.\ref{definitionGrothtop:3}, since $j_A(S)\cap S\subseteq j_A(S)\cap T \subseteq j_A(S)$.
Thus, $j_A(S)\subseteq j_A(T)$ by \cref{keylemmaclosure}.
Let us now show that the map $j_A$ is inflating.
The inclusion $1_S:S=S$ belongs to $\cG$ for every $S\subseteq A\in \cE$, since $\cG$ contains the isomorphisms.
Thus, $S\subseteq  j_A(S)$ by \cref{keylemmaclosure}.

Let us show now that $j_A$ is idempotent.
Obviously, we have $j_A(S)\subseteq j^2_A(S)$ for every $S\subseteq A$, since we have $S\subseteq j_A(S)$ and $j_A$ is monotonic.
It remains to show that $j^2_A(S)\subseteq j_A(S)$.
The inclusion $j_A(S)\cap S\subseteq j_A(S)$ belongs to $\cG$ for every $S\subseteq A$ by \cref{keylemmaclosure}.
But we have $S\subseteq j_A(S)$ since $j_A$ is inflating.
Hence the inclusion $S\subseteq j_A(S)$ belongs to $\cG$.
It we apply this result to the subobject $j_A(S)\subseteq A$, instead of the subobject $S\subseteq A$, we obtain that the inclusion $j_A(S)\subseteq j_A^2(S)$ belongs to $\cG$.
It follows by composing $S\subseteq j_A(S)\subseteq j_A^2(S)$ that the inclusion $S\subseteq j^2_A(S)$ belongs to $\cG$, since $\cG$ is closed under composition.
Thus, $j^2_A(S)\subseteq j_A(S)$ by \cref{keylemmaclosure}, since the inclusion $S\cap j^2_A(S)\subseteq  j^2_A(S)$ belongs to $\cG$.
We have proved that $j_A$ is idempotent.

This completes the proof that $j$ is a closure operator.
By definition, a monomorphism $S\subseteq A$ is $j$-dense if and only if $j_A(S)=A$ if and only if the inclusion $S\subseteq A$ belongs to $\cG$ by \cref{keylemmaclosure}.
Thus, $\Dense j=\cG$. 
The existence of the closure operator $j:\Omega\to \Omega$ is proved. 
\end{proof}

\begin{remark}
\label{rem:orthomono2}
Recall the orthogonality relation $\perp_m$ of \cref{rem:orthomono}.
Then a consequence of \cref{LT-topoGroth} is that any extended Grothendieck topology $\cG$ is saturated as a class of monomorphisms in the sense that $\cG = {}^{\perp_m}(\cG^{\perp_m})$.
Another way to say this is that any extended Grothendieck topology is always the left class of a factorization system on monomorphisms.
\end{remark}

\medskip

We finish this section with the description of the modality associated to the acyclic class $\Cover \cG$.
For an extended Grothendieck topology $\cG$, we define $\Close \cG := \cG^\perp \cap \Mono$.
If $j$ is the Lawvere--Tierney topology associated to $\cG$ by \cref{LT-topoGroth}, then $\Close \cG = \Close j$.
We also put $\Dense\cG := \Dense j$.
The following result identifies the modality associated to the acyclic class $\Cover \cG$ (\cref{def:cover}).

\begin{corollary}
\label{cor:ptopmodality}
For any extended Grothendieck topology $\cG$ on a topos $\cE$, 
the pair $(\Cover \cG, \Close \cG)$ is a modality on $\cE$.
\end{corollary}
\begin{proof}
Using the factorization system on monomorphisms of \cref{LTtop,rem:orthomono},  any map $f:A\to B$ can be factored uniquely as
\[
\begin{tikzcd}
A \ar[rr,"\coim f"] \ar[rrrr, "{\in \Cover\cG}"', bend right=15]
&& A' \ar[rr, "\Dense {\im f}"]
&& B' \ar[rr, "\Close  {\im f}"]
&& B
\end{tikzcd}
\]
where $\Dense {\im f}$ and $\Close  {\im f}$ are respectively the dense part and the close part of the monomorphism $\im f$.
The map $\Dense {\im f}\circ \coim f: A\to B'$ is in $\Cover\cG$ since $\Dense {\im f} \in \Dense \cG = \cG$.
This proves the existence of the $(\Cover \cG, \Close \cG)$--factorization.
The orthogonality $\Cover \cG \perp \Close \cG$ can be deduced from the orthogonality $\Surj\perp \Mono$ and the orthogonality $\Dense \cG \perp \Close \cG$ of \cref{LTtop}.
We leave the details to the reader.
The class $\Surj$ is stable by base change, the classes $\Dense \cG$ and $\Close \cG$ also by \cref{LTtop}.
This proves that the $(\Cover \cG, \Close \cG)$--factorization is stable by base change, hence a modality.
\end{proof}

\subsection{Topologies and 1-topoi}

The definitions of extended Grothendieck topologies, covering topologies, and Lawvere--Tierney topologies do not use the full strength of the topos axioms and make sense in a 1-topos.
We claim that the equivalence results of \cref{Gtop=Gtop,thm:covering-top,LT-topoGroth} are still true for 1-topoi and we shall use this fact implicitly in what follows.

Recall that if $\cE$ is a topos, then the category $\cE\truncated 0$ of 0-truncated objects in $\cE$ is a 1-topos \cite[Theorem 6.4.1.5]{Lurie:HTT}.
The main result of this short section is to show that a topology on a topos $\cE$ is equivalent to a topology on the associated 1-topos $\cE\truncated 0$.

\begin{lemma}
\label{lem:omega-is-omega}
The subobject classifier of the topos $\cE$ belongs to $\cE\truncated 0$, 
where it is the subobject classifier of the 1-topos $\cE\truncated 0$.
\end{lemma}
\begin{proof}
By definition, the functor of points of $\Omega$ takes values in sets.
This proves the first statement.
To prove the second statement we need to verify that any subobject of a 0-truncated object is 0-truncated.
Let $X$ be a 0-truncated object and $Y\to X$ a subobject.
The monomorphisms are the $(-1)$-truncated maps, and therefore they are 0-truncated.
The composite map $Y\to X\to 1$ is therefore 0-truncated.
\end{proof}

\begin{proposition}
\label{prop:bijLTLT} 
The poset of Lawvere--Tierney topologies on a topos $\cE$ is isomorphic to the poset of Lawvere--Tierney topologies of the 1-topos $\cE\truncated 0$.
Consequently, it has the structure of a frame.
\end{proposition}
\begin{proof}
We saw in \cref{lem:omega-is-omega} that the same object $\Omega$ is the subobject classifier  in both contexts.
Thus, the poset of closure operators on the Lawvere object $\Omega$ of $\cE$ 
is the same as the poset of closure operators on the Lawvere object 
$\Omega$ of the 1-topos $\cE\truncated 0$.
The statement about the frame structure is a classical result about 1-topoi \cite[Example 4.5.14 (f)]{Johnstone:Elephant}.
\end{proof} 

The following statement is a direct consequence of the bijections between Lawvere--Tierney topologies and extended Grothendieck topologies, both for topoi and 1-topoi.

\begin{corollary}
\label{cor:bijGTGT} 
The poset of extended Grothendieck topologies on a topos $\cE$ is isomorphic to the poset of extended Grothendieck topologies of the 1-topos $\cE\truncated 0$.
\end{corollary}

\begin{remark}
This bijection can be described explicitly as follows.
Given an extended Grothendieck topology $\cG$ on $\cE$, the intersection $\cG\cap \cE\truncated 0$ is an extended Grothendieck topology on $\cE\truncated 0$.
Conversely, given an extended Grothendieck topology $\cG_0$ on $\cE\truncated 0$, it generates an extended Grothendieck topology $\cG_0\gtop$ on $\cE$.
One can show that $\cG_0\gtop$ is simply the closure under base change $\cG_0\bc$ of $\cG_0$ in $\cE$.

A similar bijection exists for covering topologies.
Given a covering topology $\cC$ on $\cE$, the intersection $\cC\cap \cE\truncated 0$ is a covering topology on $\cE\truncated 0$.
Conversely, given any covering topology $\cC_0$ on $\cE\truncated 0$ generates a covering topology $\Cover{\cC_0}$ on $\cE$.
\end{remark}

If $ C$ is a category, we shall denote by $\ho C$ its homotopy 1-category.
The canonical functor $h: C\to \ho C$ reflects the category $ C$ into the category of 1-categories.
Hence the functor
$h^\star:\fun {\ho C\op} \Set \to \fun { C\op} \Set$
is an equivalence of categories.
A presheaf $F: C\op\to \cS$ is 0-truncated in the topos $\mathrm{PSh}( C)$ if and only the functor $F$ takes its values on the category of sets $Set=\cS\truncated 0$. 
Thus, 
\[
{\mathrm{PSh}( C)}\truncated 0
\ =\ 
\fun { C\op} \Set
\ =\ 
\fun {\ho C\op} \Set\,.
\]

\Cref{cor:bijGTGT} can be seen as generalization of the fact that an extended Grothendieck topology on a category $ C$ is equivalent to an extended Grothendieck topology on the 1-category $\ho C$ \cite[Remark 6.2.2.3]{Lurie:HTT}.
The connection is made more precise by the following result.

\begin{corollary}
\label{cor:TVGtop}
If $ C$ is a small category, then there exists bijections between
the poset of extended Grothendieck topologies on the topos ${\mathrm{PSh}( C)}$,
the poset of extended Grothendieck topologies on the 1-topos $[\ho C\op,\Set]$, 
and the poset of Grothendieck topologies on the 1-category $\ho C$.
\end{corollary}

\begin{proof}
\Cref{cor:bijGTGT} provide the first bijection, since $\PSh C\truncated 0=\fun {\ho C\op} \Set$.
The second one is given by the analogue of \cref{Gtop=Gtop} for 1-topoi.
\end{proof}

\section{Topological and cotopological congruences}
\label{sec:top-cotop-loc}

\subsection{Topological congruences}

This section is devoted to the study of topological localizations and topological congruences.
Our first result is to establish an equivalence between extended Grothendieck topologies, topological congruences, and topological localizations on any topos $\cE$ (\cref{thm:equivTcongGtop}).
We use this equivalence to define the topological part $\cK\topo$ of a congruence $\cK$ in \cref{def:topopart}.
We explain how to characterize it in terms of generators of $\cK$ in \cref{prop:topological-part}.
And we provide a universal property for the corresponding topological localization in \cref{thm:topopart,thm:meaningtopcotopfacto}.

\medskip

\begin{proposition}
\label{thm:adjGtopCong}
The morphism $(-)\ac:\GTop(\cE)\to \Acyclic(\cE)$ takes values in congruences $\Cong(\cE)\subseteq \Acyclic(\cE)$ and the adjunction of \cref{prop:adjGtopAc} restricts to an adjunction
\begin{equation}
\label{adj:cong:gtop}
\begin{tikzcd}
\Cong(\cE)
\ar[rr,shift right = 1.6, "\im-"']
\ar[from=rr,shift right = 1.6, "(-)\ac"', hook']
&&\GTop(\cE)\ .
\end{tikzcd}
\end{equation}
\end{proposition}
\begin{proof}
By \cref{congruencegenerated17}, we have $\cG\ac=\cG\cong$.
The rest follows from \cref{prop:adjGtopAc}.
\end{proof}

\begin{definition}
\label{deftopcong}
We say that a congruence $\cK$ in a topos $\cE$ is {\it topological} if it is generated by a class of monomorphisms $\Sigma\subseteq \cE$ ($\cK=\Sigma\cong$).
By \cref{congruencegenerated17}, a congruence is topological if and only if it is monogenic as an acyclic class (\cref{def:monogenic}).
We denote $\TCong(\cE)$ the poset of topological congruences.

We shall say that a left-exact localization $\phi:\cE\to \cE'$  is a {\it topological localization} if the congruence $\cK_\phi$ is topological.
We denote by $\TLoclex(\cE)$ the poset of topological localizations.
\end{definition}

\begin{examples}
We give some examples of topological congruences.
Examples of topological localizations will be given below.
\label{ex:topcong}
\begin{examplenum}
\item\label{ex:topcong:1}
    When $\cG$ is an extended Grothendieck topology, the congruence $\cG\ac=\cG\cong$ is topological.

\item\label{ex:topcong:2}
    The class of isomorphisms $\Iso$ and the class of all maps $\All$ in a topos $\cE$ are topological congruences. 
    The former is generated by the empty class of maps, while the later by the map $\emptyset \to 1$.
    These examples are associated to the minimal and maximal extended Grothendieck topologies of \cref{ex:GTop:0}, by the construction of \cref{ex:topcong:1}.

\end{examplenum}

\end{examples}

The following proposition is \cite[Proposition 6.2.1.5]{Lurie:HTT} but we provide a proof taking advantage of univalent monomorphisms.
Recall the subposet $\Cong_\textsf{sg}(\cE)\subseteq \Cong(\cE)$ of congruences of small generation from \cref{thm:bij-congruence-lexloc}.

\begin{proposition}
\label{topcongaresmall} 
Every topological congruence is generated by a univalent monomorphism $v:T\to V$.
In particular it is of small generation and $\TCong(\cE)\subseteq \Cong_\mathsf{sg}(\cE)$.
\end{proposition}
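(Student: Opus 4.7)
The plan is to reduce the statement to \cref{Gtoparesmall} (every Grothendieck topology is generated, as a local class of monomorphisms, by a univalent monomorphism) via the adjunction of \cref{thm:adjGtopCong} between $\Cong(\cE)$ and $\GTop(\cE)$.

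First, let $\cW$ be a topological congruence. By \cref{deftopcong} and \cref{rem:ac=cong}, there exists a class $\Sigma$ of monomorphisms with $\cW = \Sigma\cong = \Sigma\ac$. Set $\cG := \cW \cap \Mono$, which is a Grothendieck topology by \cref{fromacyclictoGroth}. Since $\Sigma \subseteq \cG$, we have $\cW = \Sigma\ac \subseteq \cG\ac$. The reverse inclusion $\cG\ac \subseteq \cW$ is automatic because $\cG \subseteq \cW$ and $\cW$ is acyclic. Thus $\cW = \cG\ac$. (Alternatively, this step is the content of the counit of the adjunction \eqref{adj:cong:gtop}, which is the identity on congruences of the form $\cG\ac$, together with its unit being the identity on Grothendieck topologies by \cref{cor:GacMono=G}.)

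Next, apply \cref{Gtoparesmall} to the Grothendieck topology $\cG$: there exists a univalent monomorphism $v : T \to V$ such that $\cG = \{v\}\bc$, i.e.\ every map in $\cG$ is a base change of $v$. Since acyclic classes are stable under base change (\cref{defacyclicclass2}), we have $\{v\}\bc \subseteq \{v\}\ac$, and hence $\{v\}\bc\ac = \{v\}\ac$. Therefore
\[
\cW \;=\; \cG\ac \;=\; \{v\}\bc\ac \;=\; \{v\}\ac \;=\; \{v\}\cong,
\]
where the last equality uses \cref{congruencegenerated17} together with the fact that $v$ is a monomorphism. This exhibits $\cW$ as the congruence generated by the single monomorphism $v$.

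In particular, $\cW$ is generated by a set (a singleton), so $\cW \in \Cong_{\mathsf{sg}}(\cE)$, establishing the inclusion $\TCong(\cE) \subseteq \Cong_{\mathsf{sg}}(\cE)$. The substantive input of the argument is \cref{Gtoparesmall}, which itself rests on the existence of a classifier $\Omega$ for monomorphisms and the construction of the universal univalent monomorphism representing the local subfunctor $\calP_{\cG}$ of $\calP$; no further obstacle arises in the present proposition.
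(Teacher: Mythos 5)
Your proof is correct and follows essentially the same route as the paper: pass to the Grothendieck topology $\cW\cap\Mono$ via \cref{fromacyclictoGroth}, invoke \cref{Gtoparesmall} to obtain a univalent generator $v$, and conclude $\cW=\{v\}\cong$. You are merely more explicit than the paper about the intermediate identifications $\cW=(\cW\cap\Mono)\ac$ and $(\{v\}\bc)\ac=\{v\}\ac$, which the paper compresses into the single line $\cW=(\{v\}\bc)\cong=\{v\}\cong$.
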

\begin{proof} 
A topological congruence $\cK$ is generated by its intersection $\cK\cap \Mono$.
But the intersection $\cK\cap \Mono$ is an extended Grothendieck topology by  \cref{fromacyclictoGroth}.
Hence we have $\cK\cap \Mono=\{v\}\bc$ for a univalent monomorphism $v:T\to V$ by \cref{Gtoparesmall}.
It follows that $\cK=(\{v\}\bc)\cong=\{v\}\cong$.
\end{proof}

\begin{corollary}$\quad$
\label{cor:topcongexists}
\begin{enumerate}
\item\label{cor:topcongexists:1} The localization of a topos $\cE$ associated to a topological congruence $\cK$ always exists and
\[
\cE\Forcing\cK\Iso
\ =\ 
\Loc\cE \cK\,.
\]
\item\label{cor:topcongexists:2} For any class $\Sigma$, the forcing condition $\Forcing\Sigma\Surj=\Forcing{\Sigma\ac}\Surj$ is representable. 
\item\label{cor:topcongexists:3} For any class $\Sigma$ and any $-2\leq n\leq \infty$, the forcing condition $\Forcing\Sigma{\Conn n}=\Forcing{\Sigma\ac}{\Conn n}$ is representable.
\end{enumerate}
\end{corollary}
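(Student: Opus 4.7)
The plan is to deduce all three claims from Proposition~\ref{topcongaresmall} together with the forcing equivalences of Theorem~\ref{thm:forcing}, using~(1) as the bedrock on which~(2) and~(3) rest.

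First I would prove~(1) directly from Proposition~\ref{topcongaresmall}: any topological congruence $\cW$ is of the form $\{v\}\cong$ for a single univalent monomorphism $v:T\to V$, hence automatically of small generation. Theorem~\ref{Luriethm1} then yields that $\Loc\cE\cW$ is a topos, that the reflector $\cE\to \Loc\cE\cW$ is accessible, left-exact and universal for inverting $\cW$ among algebraic morphisms. This is precisely the statement $\cE\Forcing\cW\Iso = \Loc\cE\cW$, and in particular the representing object of the forcing condition exists.

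For~(2), I would invoke the equivalence $\Forcing\Sigma\Surj = \Forcing{\im\Sigma}\Iso$ from Theorem~\ref{thm:forcing}. Since $\im\Sigma$ is a class of monomorphisms, the congruence $\im\Sigma\cong$ it generates is topological by Definition~\ref{deftopcong}, so~(1) applies and produces a representing topos $\Loc\cE{\im\Sigma\cong}$. The equality $\Forcing\Sigma\Surj = \Forcing{\Sigma\ac}\Surj$ is already part of Theorem~\ref{thm:forcing}. Similarly for~(3), Theorem~\ref{thm:forcing} gives $\Forcing\Sigma{\Conn n} = \Forcing{\im{\Delta^{\leq n+1}(\Sigma)}}\Iso$ for $-1 \leq n < \infty$ and $\Forcing\Sigma{\Conn\infty} = \Forcing{\im{\Sigma\diag}}\Iso$ in the limiting case; both right-hand sides are forcings against $\Iso$ of classes of monomorphisms, so representability follows from~(1) applied to the corresponding topological congruences. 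The case $n = -2$ is trivial since every map is $(-2)$-connected under the convention of Definition~\ref{def:connected}, so the identity on $\cE$ represents the forcing condition.

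The conceptual point, and the reason there is no real obstacle once Proposition~\ref{topcongaresmall} is in hand, is that although $\Sigma$ may be a proper class, every forcing condition in the statement collapses to the problem of inverting a class of monomorphisms; such a class always generates a topological congruence, which is automatically of small generation via a single univalent monomorphism. In this way, all representability questions of the corollary reduce uniformly to the classical existence of the topos of sheaves for a Grothendieck topology.
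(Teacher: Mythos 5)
Your proposal is correct and follows essentially the same route as the paper: part~(1) via \cref{topcongaresmall} and \cref{Luriethm1}, and parts~(2) and~(3) by using the forcing equivalences of \cref{thm:forcing} to reduce to inverting a class of monomorphisms, whose generated congruence is topological so that~(1) applies. Your explicit treatment of the degenerate case $n=-2$ is a small addition the paper leaves implicit, but it does not change the argument.
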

\begin{proof}
\eqref{cor:topcongexists:1}
Every topological congruence is of small generation by \cref{topcongaresmall}.
Then the result follows from \cref{Luriethm1}.

\smallskip
\noindent\eqref{cor:topcongexists:2}
By \cref{thm:forcing} we have the equivalence of forcing conditions 
\[
\Forcing{\Sigma\ac}\Surj
\ =\ 
\Forcing\Sigma\Surj
\ =\ 
\Forcing{\im\Sigma}\Iso
\ =\ 
\Forcing{\im\Sigma\ac}\Iso\ .
\]
Then the result follows from \eqref{cor:topcongexists:1} applied to the topological congruence $(\im\Sigma)\ac$.

\smallskip
\noindent\eqref{cor:topcongexists:3}
Proof similar to \eqref{cor:topcongexists:2}.
\end{proof}

\begin{examples}
\label{ex:toploc}
We give some examples of topological localizations.
More will be given below.

Recall the algebraically free topos $\S X = \fun \Fin \cS$ from \cref{def:freetopos}.
\begin{examplenum}
\item\label{ex:toploc:trivial}
The trivial algebraic morphisms $\cE\xto {id} \cE$ and $\cE\to 1$ are the topological localizations corresponding the topological congruences $\Iso$ and $\All$ of \cref{ex:topcong:2}.

\item\label{ex:toploc:surj}
    An object $X$ is a topos is called {\it inhabited} if $X\to 1$ is a surjection (i.e $(-1)$-connected).
    The topos $\S {X\connected {-1}}$ (denoted $\S {X^\circ}$ in \cite{Anel-Joyal:topo-logie}) freely generated by an inhabited object $X\connected {-1}$ is a topological localization of the algebraically free topos $\S X$, since 
    \[
    \S {X\connected {-1}}\ :=\ \S X\Forcing{X\to 1}\Surj\,.
    \]
    We proved in \cite[Section 5.4]{ABFJ:HS} that $\S {X\connected {-1}} = \fun {\Fin\connected {-1}} \cS$ where $\Fin\connected {-1}$ is the category of nonempty finite spaces.

\item\label{ex:toploc:nconn}
    The topos $\S {X\connected n}$ freely generated by a $n$-connected object $X\connected n$ is a topological localization of the algebraically free topos $\S X$, since 
    \[
    \S{X\connected n}\ :=\ \S X\Forcing{\Delta^{\leq n+1}(X\to 1)}\Surj\,.
    \]
    We proved in \cite[Section 5.4]{ABFJ:HS} that $\S {X\connected n} = \fun {\Fin\connected n} \cS$ where $\Fin\connected n$ is the category of $n$-connected finite spaces.

\item\label{ex:toploc:ooconn}
    The topos $\S {X\connected \infty}$ freely generated by a \oo connected object $X\connected \infty$ is a topological localization of the algebraically free topos $\S X$, since 
    \[
    \S{X\connected \infty}\ :=\ \S X\Forcing{(X\to 1)\diag}\Surj
    \]
    where $(X\to 1)\diag=\{\Delta^nX\,|\, n\geq 0\}$.
    Contrary to the case where $n<\infty$, the topos $\S{X\connected \infty}$ is not a presheaf topos.

    We shall see in \cref{ex:toppart:ooconn} that $\S X \to \S{X\connected \infty}$ is the topological part of the left-exact localization $\S X \to \cS$ given by the evaluation at $1\in \Fin$ and that the corresponding extended Grothendieck topology is that of \cref{ex:GTop:2}.

\end{examplenum}
\end{examples}

\medskip

We can now prove the result that justifies the name of covering topologies.
\begin{corollary}
\label{cor:cov=invsurj}
A class of maps $\cA$ in a topos $\cE$ is a covering topology if and only if it is the inverse image of the class $\Surj$ by some algebraic morphism of topoi.
\end{corollary}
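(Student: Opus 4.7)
The plan is to prove both directions using the equivalences already established between covering topologies, Grothendieck topologies, and topological congruences.

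For the easy direction ($\Leftarrow$), suppose $\phi:\cE\to \cF$ is an algebraic morphism. By \cref{ex:acyclic:4}, the class $\phi^{-1}(\Surj(\cF))$ is acyclic. Since algebraic morphisms preserve surjections, $\Surj(\cE)\subseteq \phi^{-1}(\Surj(\cF))$, so this class is a covering topology by \cref{def:covering-top}. This is essentially the observation already made in \cref{ex:CTop:1}.

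For the hard direction ($\Rightarrow$), let $\cC$ be an arbitrary covering topology on $\cE$. The strategy is to produce the required algebraic morphism as the localization of $\cE$ associated to the topological congruence corresponding to the Grothendieck topology $\cG := \cC\cap \Mono$. Concretely, I would set $\cW := \cG\cong = \cG\ac$ (the equality is \cref{congruencegenerated17}, since $\cG$ consists of monomorphisms). By \cref{cor:topcongexists}\eqref{cor:topcongexists:1}, the localization $\phi:\cE\to \cE\Forcing\cW\Iso$ exists and is an algebraic morphism.

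It then remains to verify the equality $\phi^{-1}(\Surj) = \cC$. For a map $f$ in $\cE$, the algebraic morphism $\phi$ preserves the image factorization of $f$, so $\phi(f)$ is surjective if and only if $\phi(\im f)$ is invertible, if and only if $\im f \in \cW$. Since $\im f$ is a monomorphism, this is equivalent to $\im f \in \cW\cap \Mono = \cG\ac\cap \Mono$, which equals $\cG$ by \cref{cor:GacMono=G}. Hence $\phi^{-1}(\Surj) = \{f \mid \im f\in \cG\} = \Cover\cG$, and this coincides with $\cC$ by \cref{thm:covering-top}.

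The only subtle step is the identification $\cW\cap \Mono = \cG$, which rests on the nontrivial saturation property of Grothendieck topologies (\cref{cor:GacMono=G}); everything else is formal manipulation with results already established. There is no real obstacle provided one recognizes that the topological localization generated by the Grothendieck topology $\cC\cap \Mono$ is precisely the algebraic morphism that witnesses $\cC$ as an inverse image of $\Surj$.
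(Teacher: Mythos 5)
Your proof is correct and follows essentially the same route as the paper: both directions are handled identically, with the witnessing morphism taken to be the localization at the topological congruence $(\cC\cap\Mono)\ac$, the saturation $\cG\ac\cap\Mono=\cG$ from \cref{cor:GacMono=G} doing the key work, and the final identification $\Cover{(\cC\cap\Mono)}=\cC$ (which you cite via \cref{thm:covering-top} and the paper cites via \cref{lem:image-acyclic}) being the same fact.
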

\begin{proof}
If $\cA = \phi^{-1}(\Surj)$ for some algebraic morphism of topoi $\cE\to \cF$, then 
$\cA$ is acyclic by \cref{prop:transport-acyclic}, and contains surjections since, we have always $\phi(\Surj)\subseteq \Surj$. 
Hence it is always a covering topology.

Reciprocally, let $\cC$ be a covering topology of a topos $\cE$.
By \cref{cor:topcongexists}~\eqref{cor:topcongexists:2}, the forcing condition
$\Forcing\cC\Surj$ is representable by $\phi:\cE\to \cE\Forcing{\im\cC}\Iso$
where $\im \cC = \cC\cap \Mono$ is the extended Grothendieck topology associated to $\cC$ by \cref{fromacyclictoGroth}.
The congruence associated to $\phi$ is $\im \cC\ac$, and by \cref{cor:GacMono=G}, we know that $\im \cC\ac\cap \Mono = \im \cC$.
Let $f$ be a map in $\cE$. 
The maps $\phi(f)$ is surjective if and only if $\phi(\im f)$ is invertible, if and only if $\im f\in \im \cC$, if and only if $f\in \cC$ (by \cref{lem:image-acyclic}).
\end{proof}

\begin{corollary}
\label{cor:TLoc=TCong}
The map $\phi\mapsto \cK_\phi$ induces an isomorphism of posets
\[
\begin{tikzcd}
\TLoclex(\cE)
\ar[rrr,shift left = 1.6, "\phi\mapsto \cK_\phi"]
\ar[from=rrr,shift left = 1.6, "{\cK\mapsto \cE\Forcing\cK\Iso}", "\simeq"']
&&&\TCong(\cE)\ .
\end{tikzcd}
\]
\end{corollary}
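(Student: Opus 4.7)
The plan is to show that \Cref{cor:TLoc=TCong} is a direct restriction of \Cref{thm:bij-congruence-lexloc}. First I would note that by \Cref{topcongaresmall} every topological congruence is of small generation, so $\TCong(\cE)$ is a subposet of $\Cong_{\mathsf{sg}}(\cE)$. In parallel, every topological localization is accessible (it is presented by inverting the single univalent generator of its topological congruence via \Cref{topcongaresmall,Luriethmloc}), so $\TLoclex(\cE)$ is a subposet of $\mathsf{LexLoc_{acc}}(\cE)$.

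Now I would invoke \Cref{thm:bij-congruence-lexloc}, which provides inverse isomorphisms
\[
\begin{tikzcd}
\mathsf{LexLoc_{acc}}(\cE) \ar[rr,shift left=1.6,"\phi\mapsto\cW_\phi"] \ar[from=rr,shift left=1.6,"\cW\mapsto\phi_\cW"]&&\Cong_{\mathsf{sg}}(\cE)\ .
\end{tikzcd}
\]
It remains to check that this equivalence restricts to an equivalence of the two subposets identified above. In the direction $\phi\mapsto\cW_\phi$, this is tautological: the definition of a topological localization (\Cref{deftopcong}) is exactly that $\cW_\phi$ be topological. In the direction $\cW\mapsto\phi_\cW$, if $\cW$ is topological, then the associated localization $\phi_\cW\colon\cE\to\cE\Forcing{\cW}{\Iso}$ has congruence $\cW_{\phi_\cW}=\cW$ by the preceding isomorphism, which is topological by hypothesis; hence $\phi_\cW$ is a topological localization by definition.

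The two restrictions are mutually inverse because they are restrictions of mutually inverse morphisms, and each is evidently a morphism of posets since $\phi\mapsto\cW_\phi$ is order-preserving (a bigger localization inverts more maps) and $\cW\mapsto\phi_\cW$ is order-preserving (a bigger congruence yields a further localization).

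I do not anticipate any technical obstacle: the only non-trivial inputs are \Cref{topcongaresmall} (which guarantees small generation, so that \Cref{thm:bij-congruence-lexloc} applies) and \Cref{thm:bij-congruence-lexloc} itself. The proof is essentially a bookkeeping argument that matches the two subposets across the general bijection.
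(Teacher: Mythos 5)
Your proposal is correct and follows essentially the same route as the paper: both restrict the general isomorphism of \cref{thm:bij-congruence-lexloc}, noting that the forward direction restricts tautologically by the definition of topological localization and that the inverse direction restricts because \cref{topcongaresmall} guarantees topological congruences are of small generation. You merely spell out a few details (accessibility of topological localizations, order-preservation) that the paper leaves implicit.
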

\begin{proof}
This is a restriction of the isomorphisms of \cref{thm:bij-congruence-lexloc}.
The map $\phi\mapsto \cK_\phi$ sends topological localizations to topological congruences by definition of topological localizations.
Using \cref{topcongaresmall}, we know that $\TCong(\cE)\subseteq \mathsf{Cong_{sg}}(\cE)$.
Then the inverse map $\cK\mapsto \cE\Forcing\cK\Iso$ also restricts to $\TCong(\cE)$ and $\TLoclex(\cE)$.
\end{proof}

The following result is our version of \cite[Proposition 6.2.2.17]{Lurie:HTT}.

\begin{theorem}[Generalized Lurie bijection]
\label{generalL}
\label{thm:equivTcongGtop}
\label{thmmonogenicmod17} 
The adjunction \eqref{adj:cong:gtop} restricts into an isomorphism of posets
\[
\begin{tikzcd}
\TCong(\cE)
\ar[rr,shift right = 1.6, "-\cap \Mono"']
\ar[from=rr,shift right = 1.6, "(-)\ac"', "\simeq"]
&&\GTop(\cE)\ .
\end{tikzcd}
\]
In particular, the poset $\TCong(\cE)$ is small.
\end{theorem}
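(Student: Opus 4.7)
The plan is to restrict the adjunction of \cref{thm:adjGtopCong} and show that on topological congruences it becomes an equivalence, with all the heavy lifting already done by \cref{cor:GacMono=G}. Since $(-)\ac : \GTop(\cE) \to \Cong(\cE)$ is fully faithful by that corollary, what remains is to identify its essential image with $\TCong(\cE)$.

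First I would check that $(-)\ac$ lands in $\TCong(\cE)$. For any Grothendieck topology $\cG$, the congruence $\cG\ac = \cG\cong$ (using \cref{congruencegenerated17}) is by construction generated by a class of monomorphisms, namely $\cG$ itself, so $\cG\ac$ is topological by \cref{deftopcong}. Next I would check the converse: if $\cW$ is topological, then $\cW = \Sigma\cong = \Sigma\ac$ for some class of monomorphisms $\Sigma$. Since $\Sigma \subseteq \cW\cap\Mono$, we get $\cW = \Sigma\ac \subseteq (\cW\cap\Mono)\ac \subseteq \cW\ac = \cW$, whence $\cW = (\cW\cap\Mono)\ac = (\im\cW)\ac$. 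Thus every topological congruence lies in the image of $(-)\ac$.

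The two composites are then identities. The counit $\im{\cG\ac} = \cG\ac\cap\Mono = \cG$ is exactly \cref{cor:GacMono=G}. The unit $\cW = (\im\cW)\ac$ on topological $\cW$ is the computation in the previous paragraph. Both morphisms of posets $\im- : \TCong(\cE)\to \GTop(\cE)$ and $(-)\ac : \GTop(\cE)\to \TCong(\cE)$ are monotone (this is immediate from the adjunction of \cref{thm:adjGtopCong}), so they give mutually inverse isomorphisms of posets. Smallness of $\TCong(\cE)$ then follows from smallness of $\GTop(\cE)$ established in \cref{Grothtopsmall}.

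I do not anticipate any genuine obstacle: after \cref{cor:GacMono=G} (fully faithfulness of $(-)\ac$) and \cref{congruencegenerated17} (the coincidence $\Sigma\ac = \Sigma\cong$ for $\Sigma$ a class of monomorphisms), the theorem is essentially a matter of matching definitions.
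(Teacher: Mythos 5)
Your proposal is correct and follows essentially the same route as the paper: both identify the essential image of the fully faithful map $(-)\ac$ (via \cref{cor:GacMono=G} and the identity $\Sigma\ac=\Sigma\cong$ for monomorphisms) with the topological congruences, and your chain $\cW=\Sigma\ac\subseteq(\cW\cap\Mono)\ac\subseteq\cW$ is the same computation the paper performs using $\Sigma\gtop=\Sigma\ac\cap\Mono$. The smallness deduction from $\GTop(\cE)$ also matches.
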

\begin{proof}
It is enough to prove that the topological congruences coincide with the image of the morphism $(-)\ac$.
By definition $\cG\ac$ is a topological congruence.
Conversely, let $\Sigma$ be a class of monomorphisms, we want to show that $\Sigma\ac$ is generated by an extended Grothendieck topology.
We consider $\Sigma\gtop$, the extended Grothendieck topology generated by $\Sigma$.
Recall from \cref{cor:freeGtop} that $\Sigma\gtop = \Sigma\ac\cap \Mono$.
Thus, we have inclusions $\Sigma\subseteq \Sigma\gtop \subseteq \Sigma\ac$ and therefore $(\Sigma\gtop)\ac = \Sigma\ac$.
Finally, the smallness assertion is a consequence of that of $\GTop(\cE)$ (see \cref{cor:GTop-is-small}).
\end{proof}

We can now prove the analogue of \cref{cor:cov=invsurj} for extended Grothendieck topologies mentioned in \cref{ex:GTop:1}.

\begin{corollary}
\label{cor:GTop=invmono}
A class of maps $\cG$ in a topos $\cE$ is an extended Grothendieck topology if and only if $\cG = \phi^{-1}(\Iso)\cap \Mono$ for some algebraic morphism of topoi $\phi:\cE\to \cF$.
\end{corollary}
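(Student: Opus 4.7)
The forward direction is essentially already recorded in \cref{ex:GTop:1}: if $\phi:\cE\to \cF$ is any algebraic morphism, then $\cW_\phi=\phi^{-1}(\Iso)$ is a congruence by \cref{exmpcongruence4}, and hence an acyclic class, so the intersection $\cW_\phi\cap\Mono$ is a Grothendieck topology by \cref{fromacyclictoGroth}.

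For the converse, given a Grothendieck topology $\cG$ on $\cE$, the plan is to take $\phi$ to be the topological localization generated by $\cG$. More precisely, consider the topological congruence $\cG\ac=\cG\cong$ (which coincide by the comment at \cref{rem:ac=cong}, since $\cG$ is a class of monomorphisms). By \cref{topcongaresmall} this congruence is of small generation, so by \cref{cor:topcongexists}~\eqref{cor:topcongexists:1} the associated left-exact localization
\[
\phi:\cE \tto \cE\Forcing{\cG\ac}\Iso
\]
exists as an algebraic morphism of topoi.

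By construction (and by \cref{cor:TLoc=TCong}) the congruence of maps inverted by $\phi$ is exactly $\cW_\phi=\cG\ac$. Intersecting with $\Mono$ and applying the saturation identity $\cG\ac\cap\Mono=\cG$ of \cref{cor:GacMono=G}, we obtain
\[
\phi^{-1}(\Iso)\cap\Mono \;=\; \cW_\phi\cap\Mono \;=\; \cG\ac\cap\Mono \;=\; \cG,
\]
which is the desired presentation. There is no serious obstacle here: the whole statement is a direct corollary of \cref{thm:equivTcongGtop} together with \cref{cor:GacMono=G}, and the only thing to check is that the candidate $\phi$ actually exists, which is handled by the small generation of topological congruences.
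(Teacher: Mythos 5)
Your proof is correct and takes essentially the same route as the paper, which simply cites the composition of the isomorphisms $\TLoclex(\cE)\simeq\TCong(\cE)$ (\cref{cor:TLoc=TCong}) and $\TCong(\cE)\simeq\GTop(\cE)$ (\cref{thm:equivTcongGtop}); your argument is just that composition written out explicitly, with the key saturation identity $\cG\ac\cap\Mono=\cG$ of \cref{cor:GacMono=G} doing the same work in both versions.
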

\begin{proof}
Direct from the composition of the isomorphisms of \cref{cor:TLoc=TCong} and \cref{thm:equivTcongGtop}.
\end{proof}

The following result is a convenient reformulation of \cref{thm:adjGtopCong} using \cref{thm:equivTcongGtop}.
\begin{corollary}
\label{topcore}
\label{criterionfortopcong}
The map $\cK\mapsto(\cK\cap \Mono)\ac$ defines a right adjoint to the inclusion of posets $\TCong(\cE) \to \Cong(\cE)$.
\[
\begin{tikzcd}
\Cong(\cE)
\ar[rr,shift right = 1.6, "(-\cap \Mono)\ac"']
\ar[from=rr,"j"',shift right = 1.6, hook']
&&\TCong(\cE)
\end{tikzcd}
\]
\end{corollary}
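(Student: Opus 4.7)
The plan is to deduce the adjunction from \cref{thm:equivTcongGtop} combined with \cref{thm:adjGtopCong}. Conceptually, the inclusion $j: \TCong(\cE) \hookrightarrow \Cong(\cE)$ factors as
\[
\TCong(\cE) \xrightarrow{\,-\cap\Mono\,} \GTop(\cE) \xrightarrow{(-)\ac} \Cong(\cE),
\]
where the first arrow is the equivalence of \cref{thm:equivTcongGtop} and the second is the fully faithful left adjoint of \cref{thm:adjGtopCong}, whose right adjoint is $-\cap\Mono$. Composing the corresponding right adjoints yields $\cW \mapsto (\cW\cap\Mono)\ac$ as the right adjoint of $j$, which is exactly the desired conclusion.

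For a direct verification, I would first check that $(\cW\cap\Mono)\ac$ is indeed topological: by \cref{fromacyclictoGroth} the class $\cW\cap\Mono$ is a Grothendieck topology, so (using \cref{congruencegenerated17}) $(\cW\cap\Mono)\ac$ is a congruence generated by monomorphisms, i.e.\ topological. The adjunction then reduces to verifying the equivalence
\[
\cV\subseteq\cW \iff \cV\subseteq(\cW\cap\Mono)\ac
\]
for all $\cV\in\TCong(\cE)$ and $\cW\in\Cong(\cE)$. The backward implication is immediate since $(\cW\cap\Mono)\ac\subseteq\cW$, because $\cW$ is acyclic and already contains $\cW\cap\Mono$. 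For the forward implication, I would invoke \cref{thm:equivTcongGtop}: since $\cV$ is topological, $\cV=(\cV\cap\Mono)\ac$; the hypothesis $\cV\subseteq\cW$ implies $\cV\cap\Mono\subseteq\cW\cap\Mono$, and monotonicity of $(-)\ac$ yields $\cV=(\cV\cap\Mono)\ac\subseteq(\cW\cap\Mono)\ac$.

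The argument presents no real obstacle: the substantive content lies in the already-established \cref{thm:equivTcongGtop} and \cref{thm:adjGtopCong}, both of which rely on the saturation result \cref{cor:GacMono=G}. The present statement is essentially formal bookkeeping on top of those results.
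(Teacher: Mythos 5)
Your proposal is correct and matches the paper's own argument: the paper likewise obtains this corollary by combining the adjunction of \cref{thm:adjGtopCong} with the isomorphism $\TCong(\cE)\simeq\GTop(\cE)$ of \cref{thm:equivTcongGtop}. Your supplementary direct verification is also sound and just unwinds the same ingredients.
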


Recall that for an acyclic class $\cA$ we called $\im\cA\ac = (\cA\cap \Mono)\ac$ the monogenic part of $\cA$.
When $\cK$ is a congruence, we shall keep the usual terminology (see also \cref{def:cotopcong}).

\begin{definition}[Topological part]
\label{def:topopart}
For a congruence $\cK$, we shall say that the topological congruence $\cK\topo:=(\cK\cap \Mono)\ac = \im\cK \ac$ is the {\it topological part} of $\cK$.
When the localization $\phi:\cE\to \cE\Forcing\cK\Iso$ exists, we shall say that the topological localization $\phi\topo:\cE\to \cE\Forcing{\cK\topo}\Iso$ is the topological part of $\phi$.

\end{definition}

By \cref{topcore}, we have always $\cK\topo \subseteq\cK$ and $\cK\topo$ is the largest topological congruence within $\cK$.
We have also $(\cK\topo)\topo = \cK\topo$.

\begin{lemma}
\label{lem:topofix}
A congruence $\cK$ is topological if and only if it $\cK\topo = \cK$.
\end{lemma}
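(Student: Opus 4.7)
The plan is to use directly the explicit formula $\cW\topo := (\cW \cap \Mono)\ac$ from \cref{def:topopart} together with the identification $\Sigma\cong = \Sigma\ac$ for classes of monomorphisms (\cref{congruencegenerated17}), so that everything reduces to a manipulation of generators. The one nontrivial inclusion has already been isolated in the preceding adjunction arguments, so no new work is really required; the lemma is essentially a translation of \cref{topcore} into the statement about fixed points of the coreflection.

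For the implication ($\Leftarrow$), if $\cW\topo = \cW$ then $\cW = (\cW \cap \Mono)\ac$ is exhibited as the acyclic (equivalently, by \cref{congruencegenerated17}, the congruence) closure of the class of monomorphisms $\cW \cap \Mono$, so $\cW$ is topological by \cref{deftopcong}.

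For the implication ($\Rightarrow$), suppose $\cW$ is topological, so that $\cW = \Sigma\cong$ for some class $\Sigma \subseteq \Mono$. By \cref{congruencegenerated17} we have $\Sigma\cong = \Sigma\ac$, and clearly $\Sigma \subseteq \cW \cap \Mono$, whence
\[
\cW \;=\; \Sigma\ac \;\subseteq\; (\cW \cap \Mono)\ac \;=\; \cW\topo.
\]
Since the reverse inclusion $\cW\topo \subseteq \cW$ is the counit of the adjunction in \cref{topcore} (or is immediate from $\cW \cap \Mono \subseteq \cW$ together with the fact that $\cW$ is acyclic), equality holds.

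The only step that could conceivably cause trouble is making sure we use the right form of the free-generation statement: one must invoke \cref{congruencegenerated17} to justify $\Sigma\cong = \Sigma\ac$ for a class of monomorphisms, since \emph{a priori} $\cW\topo$ is defined as an acyclic closure while ``topological'' is defined via congruence closure. Once this identification is in hand, the argument is purely formal, and in fact could be stated even more abstractly as: $\cW \mapsto \cW\topo$ is right adjoint to a fully faithful inclusion (\cref{topcore}), so its fixed points are exactly the objects of the subposet $\TCong(\cE)$.
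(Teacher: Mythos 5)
Your proof is correct and follows essentially the same route as the paper's: both directions reduce to the chain $\cW=\Sigma\ac\subseteq(\cW\cap\Mono)\ac=\cW\topo\subseteq\cW$, using $\Sigma\cong=\Sigma\ac$ for classes of monomorphisms. The extra remarks about \cref{congruencegenerated17} and the coreflection viewpoint are consistent with how the paper sets things up; no issues.
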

\begin{proof}
If $\cK$ is topological, there exists a class $\Sigma\subseteq \cK\cap \Mono$ such that $\Sigma\ac=\cK$.
Using that $\Sigma\ac\subseteq (\cK\cap \Mono)\ac \subseteq \cK$, this proves $\cK=\cK\topo$.
Conversely, if $\cK=\cK\topo$ it is generated by $\cK\cap \Mono$ hence topological.
\end{proof}

The following result provides generators for the topological part of a congruence $\cK$ in terms of generators of $\cK$.
This is quite useful in practice.

\begin{proposition}[Computation of monogenic/topological parts]
\label{prop:topological-part}
Let $\Sigma$ be a class of maps in a topos $\cE$.
The following formulas hold:
\begin{enumerate}
\item\label{prop:topological-part:1} $\im{\Sigma\ac}\ac = {\im \Sigma}\ac\,$;
\item\label{prop:topological-part:2} $(\Sigma\cong)\topo = {\im{\Sigma\diag}}\ac\,$.
\end{enumerate}
\end{proposition}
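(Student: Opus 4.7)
My plan is to prove (1) directly and then deduce (2) by applying (1) to $\Sigma\diag$.

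For (1), I rewrite $\im{\Sigma\ac} = \Sigma\ac \cap \Mono$ via \cref{lem:imagepretopo}, so the claim becomes $(\Sigma\ac \cap \Mono)\ac = \im\Sigma\ac$. The inclusion $\im\Sigma\ac \subseteq \im{\Sigma\ac}\ac$ is immediate: \cref{lem:acyclic-image} applied to $\Sigma\ac$ gives $\im\Sigma \subseteq \Sigma\ac \cap \Mono$, and taking acyclic closures yields the containment. For the reverse inclusion I exploit the topological localization $\phi\colon \cE \to \cE\Forcing{\im\Sigma}\Iso$, which exists by \cref{cor:topcongexists} because $(\im\Sigma)\ac$ is a topological congruence, and whose associated congruence is $\cW_\phi = \im\Sigma\ac$. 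By the equivalence of forcing conditions $\Forcing{\im\Sigma}\Iso = \Forcing\Sigma\Surj$ from \cref{thm:forcing}, the morphism $\phi$ sends every element of $\Sigma$ to a surjection, so $\Sigma \subseteq \phi^{-1}(\Surj)$; since $\phi^{-1}(\Surj)$ is an acyclic class by \cref{ex:acyclic:4}, this upgrades to $\Sigma\ac \subseteq \phi^{-1}(\Surj)$. Given $m \in \Sigma\ac \cap \Mono$, the image $\phi(m)$ is simultaneously a monomorphism (algebraic morphisms preserve monomorphisms) and a surjection, hence an isomorphism, so $m \in \cW_\phi = \im\Sigma\ac$; this gives the reverse inclusion and finishes the proof of (1).

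Statement (2) then follows by a short calculation. By \cref{def:topopart}, $(\Sigma\cong)\topo = \im{\Sigma\cong}\ac$, and \cref{congruencegenerated17} identifies $\Sigma\cong = (\Sigma\diag)\ac$. Applying (1) with $\Sigma\diag$ in place of $\Sigma$ gives $\im{(\Sigma\diag)\ac}\ac = \im{\Sigma\diag}\ac$, and the two computations combine to yield $(\Sigma\cong)\topo = \im{\Sigma\diag}\ac$.

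The only nontrivial step is the reverse inclusion in (1); the key insight is to recognize that forcing the maps of $\Sigma$ to become surjective (rather than invertible) is realized by a topological localization, after which the argument reduces to the elementary fact that a map which is both a monomorphism and a surjection is invertible.
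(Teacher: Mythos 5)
Your proof is correct and follows essentially the same route as the paper: both arguments hinge on the forcing equivalences $\Forcing{\Sigma}\Surj = \Forcing{\Sigma\ac}\Surj = \Forcing{\im\Sigma}\Iso$ from \cref{thm:forcing} together with the representability of the associated topological localization from \cref{cor:topcongexists}, and part (2) is deduced identically via $\Sigma\cong = (\Sigma\diag)\ac$. The only cosmetic difference is that the paper concludes by observing that both classes are congruences inverted by the same localization, whereas you unwind the reverse inclusion by hand using the fact that a monomorphism sent to a surjection is inverted --- the same mathematics in a slightly more explicit form.
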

\begin{proof}
\eqref{prop:topological-part:1}
By \cref{thm:forcing} we have the following equivalences of forcing conditions
\[
\Forcing{\im{\Sigma\ac}\ac}\Iso
\ =\ 
\Forcing{\im{\Sigma\ac}}\Iso
\ =\ 
\Forcing{\Sigma\ac}\Surj
\ =\ 
\Forcing{\Sigma}\Surj
\ =\ 
\Forcing{\im\Sigma}\Iso
\ =\ 
\Forcing{\im\Sigma\ac}\Iso\ .
\]
By \cref{cor:topcongexists}, the localization
$\rho:\cE \to \cE\Forcing{\im\Sigma\ac}\Iso = \cE\Forcing{\im{\Sigma\ac}\ac}\Iso$
exists.
Since $\im{\Sigma\ac}\ac$ and $\im\Sigma\ac$ are congruences by \cref{congruencegenerated17}, they are exactly the class of maps inverted by $\rho$ (\cref{Luriethmloc}), and therefore equal.

\smallskip
\noindent\eqref{prop:topological-part:2}
We have $\Sigma\cong = (\Sigma\diag)\ac$ by \cref{congruencegenerated17}.
Thus, $(\Sigma\cong)\topo = \im{\Sigma\cong}\ac =  {\im{\Sigma\diag}}\ac$ by \eqref{prop:topological-part:1}.
\end{proof}

\begin{remark}
There is no analogue of the formula $\im{\Sigma\ac}\ac = {\im \Sigma}\ac$ for the epigenic part of an acyclic class $\Sigma\ac$.
We have always an inclusion $\coim \Sigma \ac \subseteq \coim{\Sigma\ac}$ but it can be strict.
For example, if $\Sigma$ is a class of monomorphisms, then we have $\coim\Sigma\ac = \Iso$ .
And if not all maps in $\Sigma$ are isomorphisms we know that $\Sigma\ac\cap \Surj \not= \Iso$ from \cref{lem:nogo}.
However, it is still true that $\Sigma\ac = \im\Sigma\ac \vee \coim\Sigma\ac$ in the poset of acyclic classes.
\end{remark}

If $\cK$ is an arbitrary congruence (not necessarily of small generation) its topological part $\cK\topo$ is always a congruence of small generation.
Therefore the localization $\cE\Forcing{\cK\topo}\Iso$ always exists.
The following theorem gives a meaning to this localization.

\begin{proposition}
\label{thm:topopart}
For any congruence, we have an equivalence of forcing conditions
\[
\Forcing{\cK\topo}\Iso
\ =\ 
\Forcing{\cK}{\Conn\infty}\ .
\]
\end{proposition}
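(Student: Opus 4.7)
The plan is to chain together the forcing equivalences of \cref{thm:forcing}, the characterization of congruences by diagonals from \cref{lem:caraccong}, and the explicit description of the topological part supplied by \cref{def:topopart} and \cref{lem:imagepretopo}. Nothing new needs to be proved; the whole statement should fall out of these tools by formal manipulation.

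Concretely, I would argue as follows. Since $\cW$ is a congruence, \cref{lem:caraccong} gives $\cW\diag=\cW$. Applying item~\eqref{thm:forcing:4} of \cref{thm:forcing} to $\Sigma=\cW$, we get
\[
\Forcing{\cW}{\Conn\infty}\;=\;\Forcing{\cW\diag}\Surj\;=\;\Forcing{\cW}\Surj.
\]
Next, item~\eqref{thm:forcing:2} of \cref{thm:forcing} yields $\Forcing{\cW}\Surj=\Forcing{\im\cW}\Iso$. On the other hand, \cref{def:topopart} and \cref{lem:imagepretopo} give $\cW\topo=(\cW\cap\Mono)\ac=(\im\cW)\ac$, so item~\eqref{thm:forcing:1} of \cref{thm:forcing} produces
\[
\Forcing{\cW\topo}\Iso\;=\;\Forcing{(\im\cW)\ac}\Iso\;=\;\Forcing{\im\cW}\Iso.
\]
Splicing these chains together delivers $\Forcing{\cW\topo}\Iso=\Forcing{\cW}{\Conn\infty}$.

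There is no genuine obstacle: the only substantive ingredient is \cref{thm:forcing}, whose proof already uses that $\Conn\infty$ is a congruence, that image factorizations are preserved by algebraic morphisms, and that a map is $\infty$\=/connected iff all of its diagonals are surjective. The mild subtlety worth flagging in the write-up is that, because $\cW$ need not be of small generation, the forcing conditions involved may not be representable as topoi; nevertheless, the equivalence asserted in the statement is an equality of subfunctors of $\fun\cE-\alg$, which is exactly the notion of equivalence of forcing conditions introduced in \cref{sec:forcing}, so representability plays no role.
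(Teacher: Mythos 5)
Your proof is correct and follows essentially the same route as the paper's: both reduce each side to $\Forcing{\im\cW}\Iso$ using the same ingredients (\cref{thm:forcing} items~\eqref{thm:forcing:1}, \eqref{thm:forcing:2}, \eqref{thm:forcing:4}, together with $\cW\cap\Mono=\im\cW$ from \cref{lem:imagepretopo} and $\cW\diag=\cW$ from \cref{lem:caraccong}), differing only in the order in which the chain of equivalences is assembled. Your closing remark that the statement is an equality of subfunctors and hence independent of representability is accurate and consistent with the paper's conventions.
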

\begin{proof}
\begin{align*}
\Forcing{\cK\topo}\Iso
& = \Forcing{(\cK\cap \Mono)\ac}\Iso    \\
& = \Forcing{\cK\cap \Mono}\Iso    &&\text{because $\Iso$ is acyclic}\\
& = \Forcing{\im\cK}\Iso    &&\text{$\cK\cap \Mono = \im \cK$ by \cref{lem:imagepretopo}}\\
& = \Forcing{\im{\cK\diag}}\Iso    &&\text{$\cK\diag = \cK$ by \cref{lem:caraccong}}\\
& = \Forcing{\cK\diag}{\Surj}    &&\text{by \cref{thm:forcing}}\\
& = \Forcing{\cK}{\Conn\infty}    &&\text{by \cref{thm:forcing}\,.}
\qedhere
\end{align*}
\end{proof}

\begin{theorem}[Interpretation of topological localizations]
\label{topopartloc}
\label{thm:meaningtopcotopfacto}
Let $\Sigma$ be a set of maps in a topos $\cE$.
Then the topological part of the localization 
$\cE\to \cE\Forcing\Sigma\Iso$
is the localization $\cE\to \cE\Forcing\Sigma{\Conn\infty}$
universally forcing the maps in $\Sigma$ to be \oo connected.
\end{theorem}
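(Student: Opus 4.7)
The plan is to deduce this theorem as a direct corollary of Proposition \ref{thm:topopart} combined with Lemma \ref{lem:forcing}. Let $\cW := \Sigma\cong$ be the congruence generated by $\Sigma$, so that by construction $\cE\Forcing{\Sigma}{\Iso} = \cE\Forcing{\cW}{\Iso}$. By Definition \ref{def:topopart}, the topological part of this localization is $\cE\Forcing{\cW\topo}{\Iso}$, which is representable since $\cW\topo$ is a topological congruence, hence of small generation by Proposition \ref{topcongaresmall}. On the other hand, $\cE\Forcing{\Sigma}{\Conn\infty}$ is representable by Corollary \ref{cor:forcing}. Thus both sides of the desired equality are bona fide topoi, and it suffices to exhibit an equivalence of the corresponding forcing conditions.

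First, apply Proposition \ref{thm:topopart} to the congruence $\cW = \Sigma\cong$ to obtain the equivalence of forcing conditions
\[
\Forcing{\cW\topo}{\Iso} \;=\; \Forcing{\cW}{\Conn\infty}\,.
\]
Second, since $\Conn\infty(\cF)$ is a congruence in every topos $\cF$ (Example \ref{exmpcongruence3}), Lemma \ref{lem:forcing}(\ref{lem:forcing:2}) gives
\[
\Forcing{\Sigma}{\Conn\infty} \;=\; \Forcing{\Sigma\cong}{\Conn\infty} \;=\; \Forcing{\cW}{\Conn\infty}\,.
\]
Chaining these equalities yields $\Forcing{\cW\topo}{\Iso} = \Forcing{\Sigma}{\Conn\infty}$, and passing to representing topoi gives $\cE\Forcing{\cW\topo}{\Iso} = \cE\Forcing{\Sigma}{\Conn\infty}$, which is precisely the claim.

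There is essentially no obstacle: the entire content has been extracted into Proposition \ref{thm:topopart}, which itself is a formal computation using the identifications $\cW\topo = \im\cW\ac$, $\im\cW = \cW \cap \Mono$, $\cW\diag = \cW$ (from Proposition \ref{lem:caraccong}), and the standard forcing rewriting $\Forcing{\Sigma}{\Conn\infty} = \Forcing{\Sigma\diag}{\Surj} = \Forcing{\im{\Sigma\diag}}{\Iso}$ of Theorem \ref{thm:forcing}. The only point to be mindful of is that we need $\Sigma$ to be a set for the localization $\cE\Forcing{\Sigma}{\Iso}$ to exist in the first place; the topological part and the $\Conn\infty$-forcing, however, are representable under weaker hypotheses (Corollary \ref{cor:topcongexists}).
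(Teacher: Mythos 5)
Your proof is correct and follows essentially the same route as the paper: both reduce the statement to Proposition \ref{thm:topopart} applied to $\cW=\Sigma\cong$ and then identify $\Forcing{\Sigma\cong}{\Conn\infty}$ with $\Forcing{\Sigma}{\Conn\infty}$ (the paper cites Theorem \ref{thm:forcing}(4), you invoke the underlying Lemma \ref{lem:forcing}(2) directly, which is the same argument). The remarks on representability match the paper's appeal to Corollary \ref{cor:topcongexists}.
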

\begin{proof}
We use \cref{thm:topopart} with $\cK = \Sigma\cong$.
The topological part $\cE\Forcing{\cK\topo}\Iso$ of the localization $\cE\to \cE\Forcing\cK\Iso$ exists by \cref{cor:topcongexists}~\eqref{cor:topcongexists:1}.
Then by \cref{thm:topopart}, we get
\[
\Forcing{\cK\topo}\Iso
\ =\ 
\Forcing\cK{\Conn\infty}
\ =\ 
\Forcing{\Sigma\cong}{\Conn\infty}
\ =\ 
\Forcing\Sigma{\Conn\infty}
\]
where the last equality is from \cref{thm:forcing}.
\end{proof}

The following result provides a nice characterization of topological localizations.
\begin{corollary}
\label{cor:carctoploc}
A localization $\phi:\cE\to \cF$ is topological if and only it can be presented as forcing a class of maps to be \oo connected.
\end{corollary}
\begin{proof}
If $\phi:\cE\to \cF$ is a topological localization, then $\cK_\phi = \cK_\phi\topo$ by \cref{lem:topofix}.
Then, by \cref{thm:topopart}, we have $\cF = \cE\Forcing{\cK_\phi\topo}\Iso = \cE\Forcing{\cK_\phi}{\Conn\infty}$.
This proves the condition is necessary.
Conversely, any forcing $\phi:\cE\to \cE\Forcing\cK{\Conn\infty}$ is a topological localization by \cref{thm:forcing}~\eqref{thm:forcing:4}.
\end{proof}

\begin{examples}
\label{ex:toppart}
We give examples of computations of topological parts.
Recall the algebraically free topos $\S X$ from \cref{def:freetopos}.
\begin{examplenum}
\item\label{ex:toppart:ooconn}
The evaluation at $1\in \Fin$ defines a left-exact localization of topoi $ev_1:\S X= \fun \Fin \cS \to \cS$.
Since the functor represented by $1\in \Fin$ is the canonical inclusion $X:\Fin \to \Set$, the congruence $\cK$ associated to the localization is generated by the map $X\to 1$ ($\cK = \{X\to 1\}\cong$)  \cite[Section~5.2]{ABFJ:HS}.
By \cref{topopartloc}, the topological part of $\S X \to \cS$ is the localization $\S X\to \S{X\connected \infty}$ forcing the universal object $X$ to be \oo connected.

Let us see this a bit more explicity using \cref{prop:topological-part}~\eqref{prop:topological-part:2}. 
We get some concrete generators for the topological part of $\cK$:
\[
(\{X\to 1\}\cong)\topo
\ =\ 
\big\{\im{\Delta^nX}\,|\, n\geq 0\big\}\ac \,.
\]
This recovers the presentation of \cref{ex:toploc:ooconn}. 
This congruence is forcing all diagonals of $X$ to be surjective, and thus $X$ to be \oo connected.

Using \cref{thm:equivTcongGtop} the extended Grothendieck topology corresponding to this topological localization is the one of \cref{ex:GTop:2}.

\item\label{ex:toppart:ooconn2}
    We provide another presentation of the topological part of $ev_1:\S X \to \cS$ of \cref{ex:toppart:ooconn}.
    Let $X^K:\Fin\to \cS$ be the functor represented by $K\in \Fin$, and let $\Sigma$ be the set of all maps $X^K\to X^J$ between representable functors in $\S X = \fun \Fin \cS$.
    All representable functors are sent to 1 by $ev_1$.
    This shows that $\Sigma$ is another generating set for the left-exact localization $ev_1:\S X \to \cS$.
    Then using \cref{prop:topological-part}~\eqref{prop:topological-part:2}, we get that the topological part is generated by the class $\im \Sigma$ of all images of the maps $X^K\to X^J$ between representable functors.
    Equivalently, this shows that the topological part forces universally all maps $X^K\to X^J$ to be surjective.

\item\label{ex:toppart:ntrunc}
We consider now the localization $\S X\to \S{X\truncated n}$ forcing the universal $X$ to be $n$-truncated.
We proved in \cite[Section~5.3]{ABFJ:HS} that this localization is generated 
either by the map $\Delta^{n+2}:X\to X^{S^{n+1}}$, or by the map $X\to P_n(X)$ where $P_n(X)$ is the $n$-truncation of the object $X$ in $\S X$.
By \cref{thm:topopart}, the topological part of $\S X\to \S{X\truncated n}$ is the localization of $\S X$ forcing the map $\Delta^{n+2}:X\to X^{S^{n+1}}$ or $X\to P_n(X)$ to be \oo connected.
In this localization, the image $X'$ of $X$ is not $n$-truncated but its Postnikov tower is nonetheless constant after the stage $n$ ($P_{n+k}(X') = P_n(X')$).

\end{examplenum}
\end{examples}

\medskip

We now turn to the characterization of other generators for topological congruences than monomorphisms.

\begin{definition}
We shall say that a map $f$ in a topos $\cE$ is {\it topological} if the congruence $\{f\}\cong$ generated by $f$ is topological.
\end{definition}

\begin{proposition}
\label{conggentopmap} 
If a congruence $\cK$ is generated by topological maps, then $\cK$ is topological.
\end{proposition}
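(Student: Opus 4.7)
The plan is to use the explicit formula for the topological part of a congruence from \cref{prop:topological-part}~\eqref{prop:topological-part:2}, namely $(\Sigma\cong)\topo = \im{\Sigma\diag}\ac$, combined with the characterization from \cref{lem:topofix} that a congruence $\cW$ is topological if and only if $\cW\topo = \cW$. Since the inclusion $\cW\topo \subseteq \cW$ is automatic (from \cref{topcore}), the whole task reduces to showing the reverse inclusion $\cW \subseteq \cW\topo$ under the hypothesis that $\cW$ admits a generating set $\Sigma$ of topological maps.

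Write $\cW = \Sigma\cong$ with every $f \in \Sigma$ topological. By hypothesis, for each $f \in \Sigma$, the congruence $\{f\}\cong$ is topological, so \cref{lem:topofix} together with \cref{prop:topological-part}~\eqref{prop:topological-part:2} gives
\[
\{f\}\cong \;=\; (\{f\}\cong)\topo \;=\; \im{\{f\}\diag}\ac.
\]
In particular, $f \in \im{\{f\}\diag}\ac$. Since $\{f\}\diag \subseteq \Sigma\diag$, we have the inclusion of acyclic classes $\im{\{f\}\diag}\ac \subseteq \im{\Sigma\diag}\ac = \cW\topo$ (again by \cref{prop:topological-part}~\eqref{prop:topological-part:2}). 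Hence $f \in \cW\topo$ for every $f \in \Sigma$, i.e., $\Sigma \subseteq \cW\topo$.

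Because $\cW\topo$ is itself a congruence, taking the congruence generated by $\Sigma$ gives $\cW = \Sigma\cong \subseteq \cW\topo$. Combined with $\cW\topo \subseteq \cW$, this yields $\cW\topo = \cW$, so $\cW$ is topological by \cref{lem:topofix}. No real obstacle here: the proof is essentially a bookkeeping argument that the ``topological part'' operation is monotone and pointwise-absorbing, once one has the explicit description $(-)\topo = \im{(-)\diag}\ac$.
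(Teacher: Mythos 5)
Your proof is correct. It takes a genuinely different (though closely related) route from the paper's. The paper argues directly at the level of generators: for each topological $f\in\Sigma$ it picks \emph{some} class of monomorphisms $\cM(f)$ with $\{f\}\cong=\cM(f)\cong$, sets $\cM=\bigcup_f\cM(f)$, and checks $\Sigma\cong=\cM\cong$ — a two-line computation with generating classes that uses nothing beyond the definition of ``topological map''. You instead route everything through the topological-part operator: you invoke \cref{lem:topofix} to reduce to $\cW\subseteq\cW\topo$, use the explicit formula $(\Sigma\cong)\topo=\im{\Sigma\diag}\ac$ from \cref{prop:topological-part} to identify a \emph{canonical} generating class of monomorphisms, and then use monotonicity of $\Sigma\mapsto\im{\Sigma\diag}\ac$ to absorb each generator. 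Your argument is airtight (in particular, $\cW\topo$ is indeed a congruence, so $\Sigma\subseteq\cW\topo$ does give $\Sigma\cong\subseteq\cW\topo$), but it leans on heavier machinery — \cref{prop:topological-part} itself rests on \cref{cor:topcongexists} and the forcing formalism — where the paper's union-of-generators argument is elementary. What your approach buys in exchange is an explicit description of the monomorphism generators (images of iterated diagonals of the $f\in\Sigma$) rather than a mere existence statement.
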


\begin{proof}
Let $\cK$ be a congruence generated by topological maps in a topos $\cE$.
By hypothesis, we have $\cK=\Sigma\cong$, for a class of topological maps $\Sigma\subseteq \cE$.
For every $f\in \Sigma$ there exists a class of monomorphisms $\cM\subseteq \cE$ such that $\{f\}\cong =\cM(f)\cong$, since the map $f$ is topological.
Let us put $\cM:=\bigcup_{f\in \Sigma} \cM(f)$.
Then we have $\Sigma\cong =\cM\cong$, since we have $\{f\}\cong =\cM(f)\cong$ for every $f\in \Sigma$.
This shows $\cK=\cM\cong$ and hence that $\cK$ is a topological congruence.
\end{proof}

Recall that a map $u:A\to B$ in a topos is said to be {\it truncated} if it is $n$-truncated for some $n\geq -1$.

\begin{lemma}
\label{truncatedaretop}
Any truncated map in a topos is a topological map.
\end{lemma}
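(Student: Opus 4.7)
The plan is to show that for an $n$-truncated map $u$, the congruence $\{u\}\cong$ coincides with its topological part $(\{u\}\cong)\topo$; by \cref{lem:topofix} this would establish the claim. Rather than manipulating generators directly, I would exploit the characterization of the topological part via forcing.

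First I would recall from \cref{examplemodality}\eqref{examplemodality:2} that $(\Conn n, \Trunc n)$ is a modality, so in particular $\Conn n \cap \Trunc n = \Iso$. Since $\Conn\infty \subseteq \Conn n$, this gives $\Conn\infty \cap \Trunc n = \Iso$: any map that is simultaneously $n$-truncated and \oo connected is an isomorphism.

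Next, I would compare the two forcing conditions $\Forcing{u}\Iso$ and $\Forcing{u}{\Conn\infty}$ on $\cE$. Let $\phi:\cE\to \cF$ be any algebraic morphism. Because algebraic morphisms preserve diagonals and isomorphisms, they preserve $n$-truncated maps (\cref{def:truncated}), so $\phi(u)$ is always $n$-truncated. If $\phi$ forces $u$ to be \oo connected, then $\phi(u)\in \Conn\infty\cap \Trunc n = \Iso$, so $\phi$ actually inverts $u$. The converse direction is trivial since $\Iso\subseteq \Conn\infty$. Hence $\Forcing{u}\Iso = \Forcing{u}{\Conn\infty}$ as forcing conditions.

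Finally I would translate this equivalence of forcing conditions into an equality of congruences. By \cref{thm:forcing}\eqref{thm:forcing:1}, $\Forcing{u}\Iso = \Forcing{\{u\}\cong}\Iso$, and by \cref{thm:topopart}, $\Forcing{u}{\Conn\infty} = \Forcing{\{u\}\cong}{\Conn\infty} = \Forcing{(\{u\}\cong)\topo}\Iso$. Both $\{u\}\cong$ and $(\{u\}\cong)\topo$ are of small generation (the former by construction, the latter as a topological congruence, via \cref{topcongaresmall}), so the bijection of \cref{thm:bij-congruence-lexloc} forces $\{u\}\cong = (\{u\}\cong)\topo$. By \cref{lem:topofix} the congruence $\{u\}\cong$ is topological, so $u$ is a topological map. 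There is no real obstacle here; the only subtle point is recognizing that the content of the lemma is automatic once one passes from generators to the universal property of the topological part via forcing.
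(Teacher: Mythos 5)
Your proof is correct and rests on the same key observation as the paper's: a truncated map is invertible as soon as it is forced to be suitably connected, because $(\Conn n,\Trunc n)$ is a modality and algebraic morphisms preserve truncatedness. The only (cosmetic) difference is in how you extract topologicality at the end — the paper stays at level $n$ and uses \cref{thm:forcing} to exhibit the explicit monomorphism generators $\im{\Delta^k u}$, $0\leq k\leq n+1$, whereas you pass to $\Conn\infty$ and invoke the universal property of the topological part via \cref{thm:topopart} together with \cref{lem:topofix}; both routes are sound.
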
 
\begin{proof} 
A $n$-truncated map $u:A\to B$ in a topos $\cE$ is invertible if and only it is $n$-connected
if and only if the diagonal $\Delta^k(u)$ is surjective for every $0\leq k\leq n+1$. 
This provides an equivalence of forcing conditions $\Forcing u\Iso =\Forcing u{\Conn n}$.
By \cref{thm:forcing}, we have also an equivalence $\Forcing u{\Conn n} = \Forcing\Sigma\Iso$
where $\Sigma=\{\im{\Delta^k(u)}\ | \ 0\leq k\leq n+1\}$.
Since $\Sigma$ is a class of monomorphisms, this proves that the congruence $\{u\}\cong$ is topological.
\end{proof}

\begin{proposition}
\label{congtruncated}
A congruence is topological if and only if it is generated by a class of truncated maps.    
\end{proposition}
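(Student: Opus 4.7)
The proof falls out by combining two pieces already in hand. The plan is to treat each direction as a one-line appeal to earlier material.

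For the ``if'' direction, suppose $\cW$ is generated by a class $\Sigma$ of truncated maps. By \cref{truncatedaretop}, every truncated map is a topological map, so $\Sigma$ consists of topological maps. Then \cref{conggentopmap} applies directly to conclude that the congruence $\cW = \Sigma\cong$ is topological.

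For the ``only if'' direction, suppose $\cW$ is a topological congruence. By the definition of topological (\cref{deftopcong}), there exists a class $\Sigma$ of monomorphisms with $\cW = \Sigma\cong$. Since every monomorphism is $(-1)$-truncated, hence truncated, the class $\Sigma$ is in particular a class of truncated maps generating $\cW$.

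There is no real obstacle here: both implications are immediate given the results already established. The only mildly substantive point is invoking \cref{truncatedaretop}, whose proof uses the rewriting $\Forcing{u}\Iso = \Forcing{u}{\Conn n}$ available for $n$-truncated $u$ to produce monomorphic generators for $\{u\}\cong$. The current proposition simply packages that fact together with the definition of a topological congruence.
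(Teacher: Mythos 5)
Your proof is correct and matches the paper's argument: the ``only if'' direction is just the observation that monomorphisms are $(-1)$-truncated, and the ``if'' direction combines \cref{truncatedaretop} with \cref{conggentopmap}, exactly as the paper does (the paper's one-line proof leaves the appeal to \cref{conggentopmap} implicit, but it is the same reasoning). Nothing further is needed.
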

\begin{proof}
By definition, a topological congruence is generated by monomorphism, that is by $(-1)$-truncated maps.
The converse is given by \cref{truncatedaretop}.
\end{proof}

\begin{remark}
Not all topological maps are truncated since any coproduct of topological maps is topological and not all coproducts of truncated maps are truncated.
We do not know if a colimit of topological maps is a topological map, nor if every map in a topological congruence is a topological map.
\end{remark}

\subsection{Cotopological congruences}
\label{sec:cotopcong}

In this section, we introduce the notion of a cotopological congruence (\cref{def:cotopcong}) and provide a number of characterizations (\cref{prop:caractopcong}).

\bigskip
Recall from \cref{def:epigenic} that an acyclic class is epigenic if it is generated by a class of surjections.

\begin{definition}
\label{def:cotopcong}
We shall say that a congruence is {\it cotopological} if it is epigenic as an acyclic class.
\end{definition}

Recall also from \cref{def:monogenic} that any acyclic class $\cA$ has a monogenic part $\im\cA\ac = (\cA\cap \Mono)\ac$ and an epigenic part $\coim\cA=\cA\cap \Surj$.
And recall from \cref{def:topopart} that we defined the topological part of a congruence $\cK$ to be $\cK\topo = \im\cK\ac$.
The following result is \cref{lem:epigenic}.

\begin{proposition}[Characterization of cotopological congruences]
\label{prop:caractopcong}
The following conditions on a congruence $\cK$ are equivalent:
\begin{enumerate}
\item\label{prop:caractopcong:1} $\cK$ is cotopological;
\item\label{prop:caractopcong:2} $\cK = \coim\cK = \cK\cap\Surj$;
\item\label{prop:caractopcong:3} $\cK\cap \Mono = \Iso$;
\item\label{prop:caractopcong:4} $\cK\topo = \Iso$;
\item\label{prop:caractopcong:5} $\cK \subseteq \Surj$;
\item\label{prop:caractopcong:6} $\cK \subseteq \Conn\infty$.
\end{enumerate}
\end{proposition}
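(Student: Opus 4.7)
The plan is to recognize that this proposition is essentially a restatement of \cref{lem:epigenic}, transported through the identifications recorded earlier in the paper. The work amounts to matching each of the six conditions with the corresponding clause in that lemma, using only a small dictionary of definitions.

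First I would set up the dictionary. By \cref{def:cotopcong}, a congruence $\cW$ is cotopological precisely when it is epigenic as an acyclic class, so \eqref{prop:caractopcong:1} coincides verbatim with \cref{lem:epigenic}~\eqref{lem:epigenic:1}. By \cref{lem:imagepretopo}, we have $\coim\cW = \cW\cap \Surj$ and $\im\cW = \cW\cap \Mono$; this turns \eqref{prop:caractopcong:2} into \cref{lem:epigenic}~\eqref{lem:epigenic:2} and \eqref{prop:caractopcong:3} into \cref{lem:epigenic}~\eqref{lem:epigenic:4}. By \cref{def:topopart}, $\cW\topo = \im\cW\ac$, so \eqref{prop:caractopcong:4} matches \cref{lem:epigenic}~\eqref{lem:epigenic:4bis}. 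Condition \eqref{prop:caractopcong:5} is literally \cref{lem:epigenic}~\eqref{lem:epigenic:3}. Finally, since $\cW$ is a congruence, condition \eqref{prop:caractopcong:6} is the added equivalence \cref{lem:epigenic}~\eqref{lem:epigenic:5}, available precisely under the congruence hypothesis.

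Once this translation is in place, the proof reduces to invoking \cref{lem:epigenic} with $\cA = \cW$. No real obstacle remains: every acyclic class is at our disposal for clauses \eqref{prop:caractopcong:1}--\eqref{prop:caractopcong:5}, and the strengthening to \eqref{prop:caractopcong:6} uses only the congruence part of \cref{lem:epigenic}, which in turn relies on \cref{lem:caraccong} (via $\cW\diag = \cW$) to compare $\Sigma\subseteq\Conn\infty$ with $\Sigma\diag\subseteq\Surj$. The only point that deserves a one-line remark is the double equality in \eqref{prop:caractopcong:2}: the equality $\coim\cW=\cW\cap\Surj$ is the content of \cref{lem:imagepretopo}, so \eqref{prop:caractopcong:2} unambiguously corresponds to clause \eqref{lem:epigenic:2} of \cref{lem:epigenic}. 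Thus the entire statement follows, and the proof is a single sentence of citation.
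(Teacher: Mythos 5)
Your proposal is correct and matches the paper exactly: the paper's entire proof is the single sentence ``The following result is \cref{lem:epigenic}{}'', relying on the same dictionary ($\coim\cW=\cW\cap\Surj$ and $\im\cW=\cW\cap\Mono$ from \cref{lem:imagepretopo}, $\cW\topo=\im\cW\ac$ from \cref{def:topopart}, and cotopological $=$ epigenic from \cref{def:cotopcong}) that you spell out. Your version simply makes the translation explicit, which the paper leaves to the reader.
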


\begin{remark}
\label{rem:maxcotopcong}
\Cref{prop:caractopcong} shows that a congruence $\cK$ is cotopological if and only if it is contained in \oo connected maps.
We have seen in \cref{exmpcongruence3} that the class $\Conn\infty$ is a congruence, hence it is the maximal cotopological congruence.
The poset of cotopological congruences is then the slice poset $\Cong(\cE)/\Conn\infty$.
\end{remark}

\begin{remark}
The reader interested in Homotopy Type Theory can compare \cref{prop:caractopcong} with 
\cite[Theorem~6.5]{Christensen-Rijke} and \cite[Theorem 3.22]{RSS}.
\end{remark}

\smallskip

\begin{proposition}[Topological--cotopological decomposition]
\label{prop:gen-cotop}
Let $\phi:\cE\to \cE\Forcing{\cK\topo}\Iso$ be the topological localization associated to $\cK$.
Then the class $\phi(\cK)$ is a cotopological congruence on $\cE\Forcing{\cK\topo}\Iso$ and we have
\[
\phi(\cK\cap\Surj)
\ =\ 
\phi(\cK)
\ =\ 
\phi(\cK)\cap \Surj\,.
\]
\end{proposition}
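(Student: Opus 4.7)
The plan is to exploit the image--coimage decomposition of $\cW$ as an acyclic class (\cref{prop:imcoimdecomp}), together with the fact that the topological part $\cW\topo$ captures exactly the monomorphic content of $\cW$, so that after applying $\phi$ only the surjective content survives.

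First I would prove the middle equality $\phi(\cW) = \phi(\cW\cap \Surj)$. Given $f\in \cW$, consider its image factorization $f = \im f \circ \coim f$. By \cref{lem:acyclic-image}, both factors belong to $\cW$, so $\im f \in \cW\cap \Mono$ and $\coim f \in \cW\cap \Surj$. Since $\cW\cap\Mono \subseteq (\cW\cap\Mono)\ac = \cW\topo = \cW_\phi$, the map $\phi(\im f)$ is invertible. Consequently $\phi(f) = \phi(\im f)\circ \phi(\coim f)$ is isomorphic (in the arrow category of $\cE\Forcing{\cW\topo}\Iso$) to $\phi(\coim f) \in \phi(\cW\cap \Surj)$. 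As $\phi(\cW\cap\Surj)$ is replete, this gives $\phi(\cW)\subseteq \phi(\cW\cap\Surj)$; the reverse inclusion is trivial.

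Next I would establish $\phi(\cW) = \phi(\cW)\cap \Surj$. Algebraic morphisms preserve the image factorization, and in particular they send surjections to surjections (see \cref{sec:surjection}). Therefore $\phi(\cW\cap \Surj)\subseteq \Surj(\cE\Forcing{\cW\topo}\Iso)$, and combined with the previous step, $\phi(\cW)\subseteq \Surj$, which yields $\phi(\cW) = \phi(\cW)\cap \Surj$.

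Finally, $\phi(\cW)$ is a congruence on $\cE\Forcing{\cW\topo}\Iso$ by \cref{prop:transport-acyclic}~\eqref{prop:transport-acyclic:2}, since $\cW\topo \subseteq \cW$. Being a congruence contained in $\Surj$, it is cotopological by the equivalence \eqref{prop:caractopcong:1}$\Leftrightarrow$\eqref{prop:caractopcong:5} of \cref{prop:caractopcong}. No step should present a genuine obstacle; the only thing requiring care is to make sure we invoke \cref{lem:acyclic-image} at the right moment so that $\im f$ lies in $\cW\cap\Mono$ and not merely in some larger class, and to use that $\phi$ really does preserve image factorizations (this is the key fact that makes the argument work algebraically rather than requiring us to re-factor $\phi(f)$ directly in the target topos).
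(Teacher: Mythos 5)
Your proof is correct, and its core coincides with the paper's: the key step --- showing $\phi(\cW)\subseteq\phi(\cW\cap\Surj)$ by factoring $f=\im f\circ\coim f$, noting via \cref{lem:acyclic-image} that both factors lie in $\cW$, and observing that $\phi$ inverts $\im f\in\cW\cap\Mono\subseteq\cW\topo=\cW_\phi$ --- is exactly the argument in the paper. The only divergence is in how cotopologicality is obtained. The paper invokes the forcing equivalence $\Forcing{\cW\topo}\Iso=\Forcing{\cW}{\Conn\infty}$ of \cref{thm:topopart} to conclude $\phi(\cW)\subseteq\Conn\infty$, and then applies the $\Conn\infty$-characterization of \cref{prop:caractopcong}. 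You instead deduce $\phi(\cW)\subseteq\Surj$ directly from the equality $\phi(\cW)=\phi(\cW\cap\Surj)$ you have just established, together with the fact that algebraic morphisms preserve surjections, and then apply the $\Surj$-characterization of \cref{prop:caractopcong}. Your route is marginally more self-contained, since it bypasses \cref{thm:topopart}; the paper's route has the advantage of making explicit the link with the interpretation of the topological part as the localization forcing $\cW$ to be \oo connected. Both are valid, and your attention to repleteness and to the fact that $\phi$ preserves image factorizations is exactly the right care to take.
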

\begin{proof}
The class $\phi(\cK)$ is a congruence by \cref{prop:transport-cong}.
By the equivalence of forcing conditions $\Forcing{\cK\topo}\Iso = \Forcing{\cK}{\Conn\infty}$ of \cref{thm:topopart} we know that $\phi(\cK)\subseteq\Conn\infty$.
This proves that $\phi(\cK)$ is cotopological by \cref{prop:caractopcong}.
The equality $\phi(\cK) = \phi(\cK)\cap\Surj$ follows from \cref{prop:caractopcong}.
We are left to show that $\phi(\cK\cap\Surj) = \phi(\cK)$.
We have always $\phi(\cK\cap\Surj) \subseteq \phi(\cK)$.
Conversely, if $f$ is a map in $\cK$ then we have 
$\phi(f) = \phi(\im f) \circ \phi(\coim f) = \phi(\coim f)$ since $\phi(\im f)$ is invertible by definition of $\phi$.
This proves that the inclusion $\phi(\cK\cap\Surj) \subseteq \phi(\cK)$ is surjective, hence $\phi(\cK\cap\Surj) = \phi(\cK)$.
\end{proof}

\subsection{Cotopological morphisms}
\label{sec:cotopmor}

In this section, we introduce the notion of a cotopological morphism of topoi (\cref{cotopmorphism}), generalizing the notion of cotopological localization introduced in \cite[Definition 6.5.2.17]{Lurie:HTT}.
We characterize them as the morphisms reflecting \oo connected objets in \cref{morphismvslemmacotop}.
Then, we prove that together with the class of topological localization, they define a factorization system on the category $\Toposalg$ (\cref{facttopcotop}).

\begin{definition}[Cotopological morphism]
\label{cotopmorphism} 
We shall say that an algebraic morphism of topoi $\phi:\cE\to \cF$ is {\it cotopological}
if its congruence $\cK_\phi$ is cotopological.
When $\phi$ is a left-exact localization which is cotopological, we shall say that $\phi$ is a {\it cotopological localization}.
\end{definition}

For any $-1\leq n\leq \infty$, we shall say that an algebraic morphism of topoi $\phi:\cE\to \cF$ {\it reflects $n$-connected maps} if, for a map $f$ in $\cE$, 
$f$ is $n$-connected if and only if $\phi(f)$ is $n$-connected in $\cF$.
Since $f\in \Conn n\Rightarrow \phi(f)\in \Conn n$ is always true, $\phi$ reflects $n$-connected maps if and only if $\phi^{-1}(\Conn n(\cF))\subseteq \Conn n(\cE)$.

\begin{proposition}[Characterization of cotopological morphisms]
\label{morphismvslemmacotop}
Let $\phi:\cE\to \cF$ be an algebraic morphism of topoi.
The following condition are equivalent:
\begin{enumerate}
\item\label{morphismvslemmacotop:0} $\phi$ is cotopological;
\item\label{morphismvslemmacotop:1} $\phi$ inverts no monomorphism (i.e. $\cK_\phi\cap \Mono=\Iso$);
\item\label{morphismvslemmacotop:2} $\phi$ reflects surjective maps;
\item\label{morphismvslemmacotop:3} $\phi$ reflects $n$-connected maps for all $-1\leq n\leq \infty$;
\item\label{morphismvslemmacotop:4} $\phi$ reflects \oo connected maps.
\end{enumerate}
\end{proposition}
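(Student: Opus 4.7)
The plan is to establish the equivalences along the cycle
\[
(1) \Leftrightarrow (2) \Leftrightarrow (3) \Rightarrow (5) \Rightarrow (4) \Rightarrow (3),
\]
relying throughout on the fact that algebraic morphisms preserve finite limits, colimits, monomorphisms, surjections, diagonals, and the image factorization.

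The equivalence $(1) \Leftrightarrow (2)$ is essentially a rewriting of the definition: by \cref{def:cotopcong}, $\phi$ is cotopological iff $\cW_\phi$ is a cotopological congruence, which by \cref{prop:caractopcong}\eqref{prop:caractopcong:3} is equivalent to $\cW_\phi \cap \Mono = \Iso$. For $(2) \Leftrightarrow (3)$, I would argue via image factorizations: for any $f \in \cE$, the decomposition $\phi(f) = \phi(\im f) \circ \phi(\coim f)$ is the image factorization of $\phi(f)$ in $\cF$, since $\phi$ preserves both monomorphisms and surjections. By uniqueness of such factorizations, $\phi(f)$ is surjective iff $\phi(\im f)$ is invertible. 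Hence $\phi$ reflects surjections iff no non-iso monomorphism of $\cE$ is inverted by $\phi$, i.e.\ iff $\cW_\phi \cap \Mono = \Iso$.

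For $(3) \Rightarrow (5)$, I would use the characterization of \oo connected maps from \cref{def:oo-connected}: $f$ is \oo connected iff $\Delta^k f$ is surjective for every $k \geq 0$. Since $\phi(\Delta^k f) = \Delta^k \phi(f)$ (diagonals being finite limits), reflecting surjections at each level immediately reflects \oo connectedness. The implication $(5) \Rightarrow (4)$ is trivial by taking $n = \infty$. Finally, for $(4) \Rightarrow (3)$, suppose $\phi$ reflects $n$-connected maps for some $-1 \leq n \leq \infty$, and let $f \in \cE$ be such that $\phi(f)$ is surjective. By image factorization, $\phi(\im f)$ is an isomorphism in $\cF$, and hence trivially $n$-connected. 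By the reflection hypothesis, $\im f$ is $n$-connected in $\cE$, so in particular surjective (as $n \geq -1$). But $\im f$ is a monomorphism, so it is an isomorphism, proving that $f$ itself is surjective.

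No step poses a real obstacle; the only subtlety is the final argument $(4) \Rightarrow (3)$, where one must notice that the hypothesis applies even to isomorphisms in $\cF$ and yields a monomorphism in $\cE$ that is forced to be an iso. All the ingredients — the stability of the image factorization under algebraic morphisms, the sphere-diagonal description of connectedness, and the characterization of cotopological congruences of \cref{prop:caractopcong} — are already available from the preceding sections, so the proof is essentially a bookkeeping exercise packaging these facts into a closed cycle.
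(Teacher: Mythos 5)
Your proposal is correct and follows essentially the same route as the paper: the identification of cotopologicality with $\cW_\phi\cap\Mono=\Iso$ via \cref{prop:caractopcong}, the use of the preserved image factorization to pass between inverted monomorphisms and reflected surjections, and the diagonal description of $n$- and $\infty$-connectedness. The only (harmless) difference is how the cycle is closed: the paper returns from ``reflects $n$-connected maps'' to condition $\cW_\phi\subseteq\Surj$ of \cref{prop:caractopcong}, whereas you return to ``reflects surjections'' by applying the reflection hypothesis to the isomorphism $\phi(\im f)$ — both arguments are valid and use the same ingredients.
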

\begin{proof}
\eqref{morphismvslemmacotop:0}$\Leftrightarrow$\eqref{morphismvslemmacotop:1}
This is \cref{prop:caractopcong}~\eqref{prop:caractopcong:3} applied to $\cK_\phi$.

\smallskip
\noindent\eqref{morphismvslemmacotop:1}$\Rightarrow$\eqref{morphismvslemmacotop:2}
For $f\in \cE$ such that $\phi(f)$ is surjective, let us show that $f$ is surjective.
The functor $\phi$ preserves the factorization $f={\im f}{\coim f}$ and hence $\phi({\im f})=\im {\phi(f)}$.
But the map $\im {\phi(f)}$ is invertible, since $\phi(f)$ is surjective. 
Thus, $\im f\in \cK_\phi\cap\Mono$ and hence $\im f$ is invertible, since $\cK_\phi $ is cotopological.
This shows that $f$ is surjective.

\smallskip
\noindent\eqref{morphismvslemmacotop:2}$\Rightarrow$\eqref{morphismvslemmacotop:3}
For $f\in \cE$ such that $\phi(f)$ is $n$-connected, let us show that $f$ is $n$-connected.
The map $\phi(\Delta^k f)=\Delta^k\phi(f)$ is surjective for every  $-1\leq k\leq n+1$, since $\phi(f)$ is $n$-connected.
Hence the map $\Delta^k f$  is surjective for every $-1\leq k\leq n+1$, since the functor $\phi$ reflects surjective maps.
This shows that $f$ is $n$-connected.

\smallskip
\noindent\eqref{morphismvslemmacotop:3}$\Rightarrow$\eqref{morphismvslemmacotop:4} is obvious.

\smallskip
\noindent\eqref{morphismvslemmacotop:4}$\Rightarrow$\eqref{morphismvslemmacotop:1}
Let us show that $\cK_\phi\subseteq \Surj$.
If $f\in \cK_\phi$ then $\phi(f)$ is \oo connected, since any isomorphism is \oo connected.
Thus, $f$ is \oo connected, since the functor $\phi$ reflects \oo connected maps.
It follows that $f$ is surjective, since every \oo connected map is surjective.
This shows that $\cK_\phi\subseteq \Surj$ and hence that the congruence $\cK_\phi$ is cotopological by 
\cref{prop:caractopcong}~\eqref{prop:caractopcong:5}.
\end{proof}

\begin{remark}
\label{rem:Lurie-surjection}
A geometric morphism dual to a cotopological morphism is called a {\it surjective morphism} of topoi in \cite[Definition A.4.2.2]{Lurie:SAG} and the previous characterization is essentially \cite[Proposition A.4.2.1]{Lurie:SAG}.
Cotopological morphisms are a weakening of conservative algebraic morphisms since they reflect \oo connected maps rather than isomorphisms.
In keeping with Lurie's terminology, the geometric morphisms dual to conservative algebraic morphisms define a subclass of ``strong'' surjections of topoi.
\end{remark}

\begin{proposition}
\label{prop:caraccotoploc}
A left-exact localization $\phi:\cE\to \cF$ is cotopological if and only if the morphism $\phi^{-1}$ induces an isomorphism of posets of covering topologies
\[
\CTop(\cF)
\ =\ 
\CTop(\cE)\,.
\]
\end{proposition}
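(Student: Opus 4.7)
The plan is to reduce the question to \cref{prop:transport-acyclic} and \cref{prop:caractopcong}~\eqref{prop:caractopcong:5}. By \cref{prop:transport-acyclic}, the map $\phi^{-1}$ gives an isomorphism between $\Acyclic(\cF)$ and the subposet $\cW_\phi\backslash\Acyclic(\cE)$ of acyclic classes in $\cE$ containing $\cW_\phi$. The strategy is to check that this isomorphism restricts to covering topologies, then determine when the image covers all of $\CTop(\cE)$.

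First I would verify that $\phi^{-1}$ sends $\CTop(\cF)$ into the subposet of $\CTop(\cE)$ consisting of covering topologies that contain $\cW_\phi$. Since $\phi$ is algebraic, it preserves surjections, so $\Surj(\cE)\subseteq\phi^{-1}(\Surj(\cF))$, and therefore $\phi^{-1}(\cC)\supseteq\Surj(\cE)$ for any $\cC\in\CTop(\cF)$; the inclusion $\cW_\phi\subseteq\phi^{-1}(\cC)$ is automatic. Conversely, take $\cC'\in\CTop(\cE)$ with $\cW_\phi\subseteq\cC'$; by \cref{prop:transport-acyclic} it is of the form $\phi^{-1}(\cC)$ for a unique acyclic class $\cC=\phi(\cC')$ in $\cF$, and one must show $\Surj(\cF)\subseteq\cC$. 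Given $s\in\Surj(\cF)$ (viewed in $\cE$ via the right adjoint), factor $s$ in $\cE$ as $s=\im_\cE(s)\circ\coim_\cE(s)$. Since $\phi$ preserves image factorizations and $s$ is already surjective in $\cF$, the monomorphism $\im_\cE(s)$ is inverted by $\phi$, hence lies in $\cW_\phi\subseteq\cC'$, while $\coim_\cE(s)\in\Surj(\cE)\subseteq\cC'$; closure under composition gives $s\in\cC'$, and then $s=\phi(s)\in\phi(\cC')=\cC$.

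This establishes that $\phi^{-1}$ induces an isomorphism between $\CTop(\cF)$ and the subposet of $\CTop(\cE)$ consisting of those covering topologies that contain $\cW_\phi$. It remains to see when this subposet is all of $\CTop(\cE)$. Since $\Surj(\cE)$ is the smallest covering topology on $\cE$ (\cref{ex:CTop:0}), every covering topology contains $\cW_\phi$ if and only if $\cW_\phi\subseteq\Surj(\cE)$. By \cref{prop:caractopcong}~\eqref{prop:caractopcong:5}, this is equivalent to $\cW_\phi$ being cotopological, that is, to $\phi$ being cotopological.

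The one step requiring actual work is the verification that any covering topology $\cC'$ on $\cE$ containing $\cW_\phi$ pushes forward to a covering topology on $\cF$; the rest is a direct translation via \cref{prop:transport-acyclic}. The key idea there is the compatibility of $\phi$ with the surjection--monomorphism factorization, which lets one absorb the ``loss of surjectivity'' when passing an $\cF$-surjection into $\cE$ into the congruence $\cW_\phi$.
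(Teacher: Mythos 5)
Your proof is correct and follows essentially the same route as the paper's: both transport covering topologies along $\phi$ via \cref{prop:transport-acyclic}, identify the image of $\phi^{-1}$, and then test surjectivity of the induced map on the minimal element $\Surj(\cE)$. The only difference is cosmetic: you describe the image as the covering topologies containing $\cW_\phi$ (which costs you the extra verification that $\Surj(\cF)\subseteq\phi(\cC')$) and invoke \cref{prop:caractopcong}, whereas the paper describes the image as the acyclic classes containing $\phi^{-1}(\Surj(\cF))$ and invokes the reflection of surjections from \cref{morphismvslemmacotop}; the two descriptions agree by \cref{lem:image-acyclic}.
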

\begin{proof}
We use the notations of \cref{prop:transport-acyclic}.
For any algebraic morphism $\phi:\cE\to \cF$, we get a map $\phi^{-1}: \Acyclic(\cF)\upslice{\Surj} \to \Acyclic(\cE)\upslice{\phi^{-1}(\Surj)}$.
This map takes values in $\Acyclic(\cE)\upslice{\Surj} $ since $\Surj\subseteq \phi^{-1}(\Surj)$.
This defines a map $\phi^{-1}_{\CTop}:\CTop(\cF)\to \CTop(\cE)$.
When $\phi$ is a localization, \cref{prop:transport-acyclic} says that the map $\phi^{-1}_{\CTop}$ is injective with image $\Acyclic(\cE)\upslice{\phi^{-1}(\Surj)}\subseteq \CTop(\cE)$.
If $\phi$ is cotopological, we know by \cref{morphismvslemmacotop} that it reflects surjections, that is $\phi^{-1}(\Surj(\cF))=\Surj(\cE)$.
This proves that the image is the whole $\CTop(\cE)$ and that $\CTop(\cF) = \CTop(\cE)$.
Conversely, if the map $\phi^{-1}_{\CTop}$ is an isomorphism, then the minimal element is sent to the minimal element, hence $\phi^{-1}(\Surj(\cF))=\Surj(\cE)$ and $\phi$ is cotopological.
\end{proof}

\begin{remark}
Using the equivalence of covering topologies with extended Grothendieck topologies, topological congruences, and hypercomplete congruences (to be defined in \cref{sec:hypercoverings}), the isomorphism of \cref{prop:caraccotoploc} can be formulated in terms of these other objects.
\end{remark}

\begin{corollary}
\label{cor:cons}
Any conservative algebraic morphism of topoi is cotopological.
\end{corollary}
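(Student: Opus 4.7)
The plan is to invoke the characterization of cotopological morphisms given in \cref{morphismvslemmacotop}, specifically condition \eqref{morphismvslemmacotop:1}: an algebraic morphism $\phi:\cE\to \cF$ is cotopological if and only if $\cW_\phi\cap \Mono = \Iso$, i.e.\ $\phi$ inverts no non-isomorphism monomorphisms.

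So suppose $\phi:\cE\to \cF$ is a conservative algebraic morphism of topoi. Let $m:A\to B$ be a monomorphism in $\cE$ such that $\phi(m)\in \Iso(\cF)$. By conservativity, $\phi$ reflects isomorphisms, and therefore $m\in \Iso(\cE)$. This shows $\cW_\phi\cap\Mono \subseteq \Iso$, and since the reverse inclusion is automatic, we conclude $\cW_\phi\cap\Mono = \Iso$. By \cref{morphismvslemmacotop}, $\phi$ is cotopological.

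There is no real obstacle here; this is simply the observation that conservativity (reflecting all isomorphisms) is a stronger condition than reflecting monomorphism-isomorphisms, which is exactly the cotopological condition. One could equivalently argue via condition \eqref{morphismvslemmacotop:4}: a conservative functor reflects \oo connected maps, since an $\infty$-connected map whose image is an isomorphism must be both $\infty$-connected and $\infty$-truncated in its reflection, but the direct argument via monomorphisms is shortest.
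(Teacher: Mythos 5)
Your proof is correct and follows exactly the same route as the paper: both verify condition \eqref{morphismvslemmacotop:1} of \cref{morphismvslemmacotop} by noting that a conservative functor in particular reflects invertibility of monomorphisms, so $\cW_\phi\cap\Mono=\Iso$. No issues.
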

\begin{proof}
Let $\phi:\cE\to \cF$ be a conservative algebraic morphism.
Then, for a monomorphism $m$ in $\cE$, $\phi(m)$ is invertible if and only if $m$ is invertible.
This proves condition \eqref{morphismvslemmacotop:1} of \cref{morphismvslemmacotop}.
\end{proof}

\begin{lemma}
\label{compositecotop}
Cotopological morphisms are stable by composition.
\end{lemma}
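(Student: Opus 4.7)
The plan is to use the characterizations of cotopological morphisms provided by \cref{morphismvslemmacotop}, specifically the equivalence between being cotopological and reflecting surjections (or, equivalently, reflecting \oo connected maps). The reflection characterizations are manifestly closed under composition, which makes them much easier to manipulate than the defining condition $\cW_\phi \subseteq \Surj$ directly.

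Concretely, let $\phi : \cE \to \cF$ and $\psi : \cF \to \cG$ be two composable cotopological morphisms. By \cref{morphismvslemmacotop}~\eqref{morphismvslemmacotop:2}, both $\phi$ and $\psi$ reflect surjections. Given a map $f$ in $\cE$ such that $\psi(\phi(f))$ is surjective in $\cG$, the fact that $\psi$ reflects surjections gives that $\phi(f)$ is surjective in $\cF$, and then the fact that $\phi$ reflects surjections gives that $f$ is surjective in $\cE$. Hence $\psi \circ \phi$ reflects surjections, and the implication \eqref{morphismvslemmacotop:2}$\Rightarrow$\eqref{morphismvslemmacotop:0} of \cref{morphismvslemmacotop} yields that $\psi \circ \phi$ is cotopological.

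Alternatively, one can argue directly at the level of congruences: $\cW_{\psi\phi} = \phi^{-1}(\cW_\psi)$, and if $f \in \cW_{\psi\phi}$, then $\phi(f) \in \cW_\psi \subseteq \Surj(\cF)$ (since $\psi$ is cotopological, by \cref{prop:caractopcong}~\eqref{prop:caractopcong:5}), so $\phi(f)$ is surjective, and reflection of surjections by $\phi$ gives $f \in \Surj(\cE)$. Thus $\cW_{\psi\phi} \subseteq \Surj(\cE)$, and another application of \cref{prop:caractopcong} concludes. No step here is delicate; the whole argument is a one-line composition of reflections, and there is no real obstacle once \cref{morphismvslemmacotop} is in hand.
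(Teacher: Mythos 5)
Your proof is correct and is exactly the argument the paper intends: its proof of this lemma reads simply ``Immediate from the conditions of \cref{morphismvslemmacotop},'' and your spelling-out via the reflection-of-surjections characterization (a composite of functors each reflecting surjections reflects surjections) is precisely that observation made explicit.
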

\begin{proof}
Immediate from the conditions of \cref{morphismvslemmacotop}.
\end{proof}

\medskip

We are now going to prove that any algebraic morphism can be factored into a topological localization followed by a cotopological morphism.
For purposes of comparison, we recall first the factorization of any algebraic morphism into a localization followed by a conservative morphism.

Given any algebraic morphism of topoi $\phi:\cE\to \cF$, the congruence $\cK_\phi$ is always of small generation by \cite[Lemma 4.2.7]{ABFJ:HS}.
Therefore the localization $\phi\loc:\cE\to \cE\Forcing{\cK_\phi}\Iso$ exists by \cref{Luriethmloc} and we have a factorization
\[
\begin{tikzcd}
\cE \ar[rr,"\phi"] \ar[rd,"\phi\loc"']
&&\cF \\
& \cE\Forcing{\cK_\phi}\Iso \ar[ru,"\phi\cons"']
\end{tikzcd}
\]

\begin{lemma}
\label{lem:cons}
The functor $\phi\cons$ is conservative.
\end{lemma}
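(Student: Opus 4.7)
The plan is to verify that $\phi\cons$ reflects isomorphisms by lifting morphisms of the localization back to $\cE$ and exploiting the defining property $\cW_\phi = \phi^{-1}(\Iso)$. The argument has two ingredients: the fact that $\phi\loc$ is a reflection (so that every morphism of $\cE\Forcing{\cW_\phi}\Iso$ comes, up to isomorphism, from a morphism of $\cE$), and the equality $\phi = \phi\cons \circ \phi\loc$ used to transport invertibility back and forth.

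First I would record that $\phi\loc : \cE \to \cE\Forcing{\cW_\phi}\Iso$ is a left-exact reflection, which follows from \cref{Luriethmloc} applied to the congruence $\cW_\phi$ (of small generation by \cref{Luriethm2}). In particular it admits a fully faithful right adjoint $\iota : \cE\Forcing{\cW_\phi}\Iso \hookrightarrow \cE$ whose counit $\phi\loc \circ \iota \Rightarrow \id$ is invertible, and the class of maps inverted by $\phi\loc$ is exactly $\cW_\phi$.

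Now let $g : X' \to Y'$ be a morphism of $\cE\Forcing{\cW_\phi}\Iso$ such that $\phi\cons(g)$ is invertible. Consider the lift $\iota(g) : \iota(X') \to \iota(Y')$ in $\cE$. The factorization $\phi = \phi\cons \circ \phi\loc$ gives
\[
\phi(\iota(g)) \;=\; \phi\cons(\phi\loc(\iota(g))) \;\simeq\; \phi\cons(g),
\]
where the last isomorphism uses the invertibility of the counit $\phi\loc \circ \iota \Rightarrow \id$. Hence $\phi(\iota(g))$ is invertible, so $\iota(g) \in \cW_\phi$, and therefore $\phi\loc(\iota(g))$ is invertible in $\cE\Forcing{\cW_\phi}\Iso$. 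Using the counit isomorphism once more, $\phi\loc(\iota(g)) \simeq g$, so $g$ itself is invertible. This shows $\phi\cons$ is conservative.

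There is no real obstacle here: the statement is essentially a formal consequence of the universal property defining the factorization $\phi = \phi\cons \circ \phi\loc$. The only minor care needed is in handling the counit isomorphism $\phi\loc \circ \iota \simeq \id$ when lifting morphisms, so that the chain of isomorphisms above is genuinely well-typed.
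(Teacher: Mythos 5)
Your proof is correct and is essentially the paper's own argument: the paper likewise uses that $\phi\loc$ is a reflection to write $f=\phi\loc(g)$ for some $g\in\cE$ (i.e.\ $g=\iota(f)$ up to the counit isomorphism), observes that $\phi(g)=\phi\cons(f)$ is invertible so $g\in\cW_\phi$, and concludes that $f$ is invertible. You merely make the right adjoint $\iota$ and the counit isomorphism explicit, which is a harmless elaboration of the same reasoning.
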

\begin{proof}
For $f\in \cE\Forcing{\cK_\phi}\Iso$ such that $\phi\cons(f)$ is invertible, let us see that $f$ is invertible.
By \cref{Luriethmloc}, the localization $\phi\loc:\cE\to \cE\Forcing{\cK_\phi}\Iso$ is reflective and therefore we can assume that $f = \phi\loc(g)$ for some $g\in \cE$.
The map $\phi(g) = \phi\cons(\phi\loc(g)) = \phi\cons(f)$ is invertible by assumption on $f$, hence $g\in \cK_\phi$.
Therefore $f=\phi(g)$ is invertible in $\cE\Forcing{\cK_\phi}\Iso$.
\end{proof}

The following proposition is folkloric but we have not been able to find a reference.

\begin{proposition}[Localization--conservative factorization]
\label{factloccons} 
Every algebraic morphism of topoi $\phi:\cE\to \cF$ admits a unique factorization in the category of $\Toposalg$
\begin{equation}
\begin{tikzcd}
\cE \ar[rr,"\phi"] \ar[rd,"\phi\loc"']&& \cF \\
& \cE' \ar[ru,"\phi\cons"']
\end{tikzcd}
\end{equation}
where $\phi\loc$ is a left-exact localization and $\phi\cons$ is a conservative morphism.
By construction, $\cK_{\phi\loc}=\cK_\phi$.
\end{proposition}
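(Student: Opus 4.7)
The plan is to extract the existence from what is already sketched in the paragraph preceding the statement and then devote the main effort to uniqueness. For existence, \cref{Luriethm2} ensures that $\cW_\phi$ is of small generation; \cref{Luriethmloc} then produces the accessible left-exact localization $\phi\loc:\cE\to \cE\Forcing{\cW_\phi}\Iso$ with $\cW_{\phi\loc}=\cW_\phi$, and its universal property among algebraic morphisms inverting $\cW_\phi$ yields a unique $\phi\cons$ with $\phi\simeq\phi\cons\circ\phi\loc$. Conservativity of $\phi\cons$ is then precisely \cref{lem:cons}, completing the existence half.

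For uniqueness, I would suppose $\phi\simeq \psi_2\circ\psi_1$ is a second factorization, with $\psi_1:\cE\to\cE'$ a left-exact localization and $\psi_2:\cE'\to\cF$ conservative. The crux is the identity $\cW_{\psi_1}=\cW_\phi$: the inclusion $\cW_{\psi_1}\subseteq\cW_\phi$ is immediate since $\psi_2$ preserves isomorphisms, and for $f\in\cW_\phi$ the map $\psi_2(\psi_1(f))\simeq\phi(f)$ is invertible, so conservativity of $\psi_2$ forces $\psi_1(f)$ to be invertible, yielding the reverse inclusion.

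With $\cW_{\psi_1}=\cW_\phi$ in hand, both $\phi\loc$ and $\psi_1$ are initial among algebraic morphisms inverting $\cW_\phi$, so two applications of the universal property of \cref{Luriethmloc} produce mutually quasi-inverse algebraic morphisms between $\cE\Forcing{\cW_\phi}\Iso$ and $\cE'$, compatible with $\phi\loc$ and $\psi_1$ up to unique isomorphism. A further application to the outer triangle identifies $\psi_2$ with the transported $\phi\cons$ up to unique isomorphism. The main delicate point to verify carefully is that the ``uniqueness'' asserted takes place at the level of equivalences of functor categories rather than at the level of bijections on objects; this is precisely the form in which \cref{Luriethmloc} states the universal property (as an equivalence $(-)\circ\phi\loc:\fun {\cE\Forcing{\cW_\phi}\Iso}\cG\alg\to \fun\cE\cG\alg^{\cW_\phi\forcing\Iso}$ for every topos $\cG$), so the homotopy uniqueness of the mediating equivalence follows directly by instantiating $\cG=\cE'$ and $\cG=\cE\Forcing{\cW_\phi}\Iso$.
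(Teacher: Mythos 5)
Your proposal is correct and follows essentially the same route as the paper: existence via \cref{Luriethm2}, \cref{Luriethmloc} and \cref{lem:cons}, and uniqueness by showing that conservativity of the second factor forces the congruence of the localization part to equal $\cW_\phi$, after which the universal property of the localization pins everything down. The paper's proof is just a terser version of the same argument.
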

\begin{proof}
The construction above and \cref{lem:cons} prove that the factorization exists.
Let us see that it is unique.
Using the universal property of localizations, it is sufficient to prove that 
$\cK_{\phi\loc} = \cK_\phi$ for any such factorization $\phi = \phi \cons \circ \phi \loc$.
But, using that $\phi\cons$ is conservative we have
\[
f\in \cK_{\phi\loc} 
\quad\iff\quad
\phi\loc(f)\in\Iso
\quad\iff\quad
\phi(f)=\phi\cons(\phi\loc(f))\in\Iso
\quad\iff\quad
f\in\cK_\phi\,.
\qedhere
\]
\end{proof}

For any algebraic morphism of topoi $\phi:\cE\to \cF$, we proved in \cref{topcongaresmall} that the congruence $\cK_\phi\topo$ is of small generation.
Therefore the localization $\phi\loc:\cE\to \cE\Forcing{\cK_\phi\topo}\Iso$ exists by \cref{Luriethmloc} and using the universal property of the localization $\phi\loc$,
we have a factorization
\[
\begin{tikzcd}
\cE \ar[rr,"\phi"] \ar[rd,"\phi\topo"']
&&\cF \\
& \cE\Forcing{\cK_\phi\topo}\Iso \ar[ru,"\phi\cotop"']
\end{tikzcd}
\]

\begin{lemma}
\label{lem:cotop}
The functor $\phi\cotop$ is cotopological.
\end{lemma}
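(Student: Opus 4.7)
The plan is to identify $\cW_{\phi\cotop}$ with $\phi\topo(\cW_\phi)$ and then invoke \cref{prop:gen-cotop}, which exhibits precisely this image as a cotopological congruence.

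First I would establish that $\cW_{\phi\cotop} = \phi\topo(\cW_\phi)$. The inclusion $\phi\topo(\cW_\phi) \subseteq \cW_{\phi\cotop}$ is immediate from the factorization $\phi = \phi\cotop \circ \phi\topo$: for $f \in \cW_\phi$, the map $\phi\cotop(\phi\topo(f)) = \phi(f)$ is invertible, so $\phi\topo(f) \in \cW_{\phi\cotop}$. For the reverse inclusion, I would appeal to \cref{prop:transport-cong}\eqref{prop:transport-acyclic:2} applied to the left-exact localization $\phi\topo:\cE\to\cE\Forcing{\cW_\phi\topo}\Iso$: the assignment $(\phi\topo)^{-1}$ is an isomorphism between $\Cong(\cE\Forcing{\cW_\phi\topo}\Iso)$ and the subposet of congruences on $\cE$ containing $\cW_\phi\topo$, with inverse $\phi\topo(-)$. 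Since $(\phi\topo)^{-1}(\cW_{\phi\cotop}) = \{f \in \cE : \phi\cotop(\phi\topo(f)) \in \Iso\} = \cW_\phi$, applying the inverse gives $\phi\topo(\cW_\phi) = \cW_{\phi\cotop}$.

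Second, I would apply \cref{prop:gen-cotop} with $\cW := \cW_\phi$. That proposition states exactly that $\phi\topo(\cW_\phi)$ is a cotopological congruence on $\cE\Forcing{\cW_\phi\topo}\Iso$. Combined with the identification above, this yields that $\cW_{\phi\cotop}$ is cotopological, which by \cref{cotopmorphism} means $\phi\cotop$ is a cotopological morphism.

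There is no real obstacle here; the whole content of the lemma is packaged in \cref{prop:gen-cotop}. The only point that requires care is the identification $\cW_{\phi\cotop} = \phi\topo(\cW_\phi)$, which relies on $\phi\topo$ being a left-exact localization (so that \cref{prop:transport-cong} applies) and on $\cW_\phi$ containing $\cW_\phi\topo$ (so that it lies in the domain of the correspondence).
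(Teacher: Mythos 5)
Your proof is correct, but it takes a different route from the paper's. The paper verifies criterion \eqref{morphismvslemmacotop:1} of \cref{morphismvslemmacotop} directly: given a monomorphism $m$ in $\cE\Forcing{\cW_\phi\topo}\Iso$ inverted by $\phi\cotop$, it uses the fact that $\phi\topo$ is a reflective localization to lift $m$ to a monomorphism $m'$ in $\cE$ with $\phi\topo(m')=m$, observes that $\phi(m')$ is invertible so $m'\in \cW_\phi\cap\Mono\subseteq \cW_\phi\topo$, and concludes that $m$ is already invertible. You instead compute the congruence of $\phi\cotop$ explicitly as $\phi\topo(\cW_\phi)$ via the transport isomorphism of \cref{prop:transport-cong}, and then invoke \cref{prop:gen-cotop}, which shows this image lands in $\Conn\infty$ and is therefore cotopological by \cref{prop:caractopcong}. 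Both arguments ultimately rest on the same fact, namely $\cW_\phi\cap\Mono\subseteq\cW_\phi\topo$ (in the paper it appears directly; in your route it is hidden inside \cref{thm:topopart}, on which \cref{prop:gen-cotop} depends). Your version buys a cleaner high-level statement --- the identification $\cW_{\phi\cotop}=\phi\topo(\cW_\phi)$ is of independent interest and makes the lemma an immediate corollary of \cref{prop:gen-cotop} --- at the cost of routing through the transport machinery; the paper's version is more elementary and self-contained. Your attention to the hypotheses of \cref{prop:transport-cong} (that $\phi\topo$ is a left-exact localization and that $\cW_\phi$ contains $\cW_\phi\topo$) is exactly the right point to check, and both hold here.
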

\begin{proof}
We use condition \eqref{morphismvslemmacotop:1} of \cref{morphismvslemmacotop}.
Let $m$ be a monomorphism in $\cE\Forcing{\cK_\phi\topo}\Iso$ such that $\phi\cotop(m)$ is invertible.
We prove that $m$ is invertible.
By \cref{Luriethmloc}, the localization $\phi\loc:\cE\to \cE\Forcing{\cK_\phi\topo}\Iso$ is reflective.
Let $\phi_*$ be the right adjoint to $\phi$, then the map $m'=\phi_*(m)$ is a monomorphism in $\cE$ such that $\phi(m') = m$.
The map $\phi(m') = \phi\cotop(\phi\topo(m')) = \phi\cotop(m)$ is invertible by assumption on $m$, hence $m'\in \cK_\phi\cap \Mono \subseteq \cK_\phi\topo$.
Therefore $m=\phi(m')$ is invertible in $\cE\Forcing{\cK_\phi\topo}\Iso$.
\end{proof}

The following result is a mild generalization of \cite[Proposition 6.5.2.19]{Lurie:HTT}.

\begin{proposition}[Topological--cotopological factorization of a morphism]
\label{facttopcotop} 
Every algebraic morphism of topoi $\phi:\cE\to \cF$ admits a factorization in the category $\Toposalg$
\begin{equation}
\label{factorizationtopcotopeq}
\begin{tikzcd}
\cE \ar[rr,"\phi"] \ar[rd,"\phi\topo"']&& \cF \\
& \cE' \ar[ru,"\phi\cotop"']
\end{tikzcd}
\end{equation}
where $\phi\topo$ is a topological localization and $\phi\cotop$ is a cotopological morphism.
By construction, $\cK_{\phi\topo}=(\cK_\phi)\topo$.
The factorization is {\it unique} (up to unique isomorphism).
\end{proposition}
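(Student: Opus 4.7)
The existence part is essentially contained in the discussion immediately preceding the statement, and I would simply assemble it. By \cref{topcongaresmall}, the topological part $(\cW_\phi)\topo$ is of small generation, so by \cref{Luriethmloc} the localization $\phi\topo : \cE \to \cE\Forcing{(\cW_\phi)\topo}\Iso$ exists, is accessible, and $\cW_{\phi\topo} = (\cW_\phi)\topo$. Since $(\cW_\phi)\topo \subseteq \cW_\phi$, the universal property of $\phi\topo$ among algebraic morphisms inverting $(\cW_\phi)\topo$ produces a unique algebraic morphism $\phi\cotop$ with $\phi \simeq \phi\cotop \circ \phi\topo$, and \cref{lem:cotop} shows $\phi\cotop$ is cotopological. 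The identity $\cW_{\phi\topo} = (\cW_\phi)\topo$ is built into the construction.

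For uniqueness, suppose $\phi \simeq \psi\cotop \circ \psi\topo$ is a second factorization with $\psi\topo : \cE \to \cE''$ a topological localization and $\psi\cotop : \cE'' \to \cF$ cotopological. By \cref{cor:TLoc=TCong}, it suffices to show $\cW_{\psi\topo} = (\cW_\phi)\topo$ as congruences in $\cE$; the equivalence of codomains together with the forced comparison of second factors then follows from the bijection between topological localizations and topological congruences. I would then establish the two inclusions as follows.

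For $(\supseteq)$: the congruence $\cW_{\psi\topo}$ is topological by hypothesis, and since $\phi$ factors through $\psi\topo$ we have $\cW_{\psi\topo} \subseteq \cW_\phi$. The coreflection of \cref{topcore} identifies $(\cW_\phi)\topo$ as the largest topological congruence contained in $\cW_\phi$, giving $\cW_{\psi\topo} \subseteq (\cW_\phi)\topo$. For $(\subseteq)$: given a monomorphism $m \in \cW_\phi \cap \Mono$, the map $\psi\topo(m)$ is a monomorphism in $\cE''$ because $\psi\topo$ is left-exact, and $\psi\cotop(\psi\topo(m)) \simeq \phi(m)$ is invertible by hypothesis. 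Thus $\psi\topo(m) \in \cW_{\psi\cotop} \cap \Mono$, which equals $\Iso$ by the cotopological characterization \cref{morphismvslemmacotop}\eqref{morphismvslemmacotop:1}. Hence $m \in \cW_{\psi\topo}$, so $\cW_\phi \cap \Mono \subseteq \cW_{\psi\topo}$, and taking the generated congruence gives $(\cW_\phi)\topo = (\cW_\phi \cap \Mono)\cong \subseteq \cW_{\psi\topo}$ since $\cW_{\psi\topo}$ is itself a congruence.

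The only delicate step is the second inclusion in uniqueness: one must transport a monomorphism of $\cE$ through the unspecified topological localization $\psi\topo$ and then invoke the cotopological property of $\psi\cotop$ at the right moment. Every other piece is a routine application of the bijection between left-exact localizations and their congruences (\cref{thm:bij-congruence-lexloc}, \cref{cor:TLoc=TCong}) together with the universal property of $\phi\topo$ already used in the construction.
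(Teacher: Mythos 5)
Your proposal is correct and follows essentially the same route as the paper's proof: existence by assembling the preceding construction with \cref{lem:cotop}, and uniqueness by reducing to the identity $\cW_{\psi\topo}=(\cW_\phi)\topo$, proved via the coreflection of \cref{topcore} in one direction and, in the other, by pushing a monomorphism of $\cW_\phi$ through the topological localization and applying \cref{morphismvslemmacotop}~\eqref{morphismvslemmacotop:1} to the cotopological factor. (Only a cosmetic slip: your labels $(\supseteq)$ and $(\subseteq)$ are swapped relative to the inclusions you actually prove.)
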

\begin{proof}
The construction above and \cref{lem:cotop} prove that the factorization exists.
We need to show that it is unique.
Using the universal property of localizations, it is sufficient to prove that 
$\cK_{\phi\topo}=(\cK_\phi)\topo$ for any such factorization $\phi = \phi \cotop \circ \phi \topo$.
By commutation of the triangle \cref{factorizationtopcotopeq}, we have $\cK_{\phi\topo}\subseteq \cK_\phi$.
Since $\cK_{\phi\topo}$ is topological by assumption, we have
$\cK_{\phi\topo}\subseteq \cK_\phi\topo$ by \cref{topcore}.
Conversely, we need to prove $\cK_\phi\topo\subseteq\cK_{\phi\topo}$.
Since $\cK_\phi\topo=(\cK\cap\Mono)\ac$ and $\cK_{\phi\topo}$ is a congruence thus acyclic, 
it is enough to show $\cK_\phi\cap\Mono\subseteq\cK_{\phi\topo}$.
Let $m$ be a monomorphism in $\cE$ such that $\phi(m)$ is invertible in $\cF$.
Then $\phi\topo(m)$ is a monomorphism inverted by $\phi\cotop:\cE'\to \cF$.
Since $\phi\cotop$ is assumed to be cotopological, we have that $\phi\topo(m)$ is invertible in $\cE'$ by \cref{morphismvslemmacotop}~\eqref{morphismvslemmacotop:1}.
Hence $\cK_\phi\cap\Mono\subseteq\cK_{\phi\topo}$ and the unicity of the factorization.
\end{proof}

\begin{remark}[Topological--cotopological factorization of a localization]
\label{rem:meaningtopcotopfacto}

When $\phi$ is a localization, the factorization of \cref{facttopcotop} recovers the factorization of \cite[Proposition 6.5.2.19]{Lurie:HTT}.
\[
\begin{tikzcd}
\cE \ar[rr,"\phi"] \ar[rd,"\phi\topo"']&&\cE\Forcing\cK\Iso \\
& \cE\Forcing{\cK\topo}\Iso \ar[ru,"\phi\cotop"']
\end{tikzcd}
\]
We can now provide an interpretation of this factorization.
We saw in \cref{thm:meaningtopcotopfacto} that the topological part of the localization, which is the localization along $\cK\topo$, forces the maps in $\cK$ to be \oo connected, which is weaker than forcing them to be invertible.
Then, the cotopological part, which is the localization with respect to $\phi\topo(\cK)=\phi(\cK)$ by \cref{prop:gen-cotop}, inverts the resulting \oo connected maps, which fully inverts the maps in $\cK$.
\end{remark}

\begin{examples}
\label{ex:topcotop}
The computations of topological parts of \cref{ex:toppart} provide examples of topological--cotopological factorizations.
\begin{examplenum}
\item\label{ex:topcotop:ooconn}
\[
\begin{tikzcd}
\S X \ar[rr,"ev_1"] \ar[rd,"ev_1\topo"']&&\cS = \S X\Forcing{X\to 1}\Iso \\
& \S X\Forcing{X\to 1}{\Conn\infty} \ar[ru,"ev_1\cotop"']
\end{tikzcd}
\]

\item\label{ex:topcotop:ntrunc}
\[
\begin{tikzcd}
\S X \ar[rr,"\phi_n"] \ar[rd,"\phi_n\topo"']&&\S {X\truncated n} = \S X\Forcing{X\to P_n}\Iso \\
& \S X\Forcing{X\to P_nX}{\Conn\infty} \ar[ru,"\phi_n\cotop"']
\end{tikzcd}
\]

\end{examplenum}
\end{examples}

\medskip

More generally, the factorization of \cref{facttopcotop} applied to the localization part $\phi\loc:\cE\to \cE\Forcing\cK\Iso$ of an arbitrary algebraic morphism $\phi:\cE\to\cF$ from \cref{factloccons} provides a triple factorization
\begin{equation}
\label{eq:triplefact}
\begin{tikzcd}
\cE
\ar[rr,"\phi"] \ar[dd,"\phi\topo"']
\ar[rrdd,"\phi\loc"', near start]
&&\cF
\\
\\
\cE\Forcing{\cK\topo}\Iso
\ar[rr,"(\phi\loc)\cotop"']
\ar[rruu,"\phi\cotop"', crossing over, near end]
&& \cE\Forcing\cK\Iso \ar[uu,"{\phi\cons}"']\,.
\end{tikzcd}
\end{equation}

\begin{proposition}
We have $\phi\cons \circ (\phi\loc)\cotop = \phi\cotop$ and the square \eqref{eq:triplefact} commutes.
\end{proposition}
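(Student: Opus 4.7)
The plan is to reduce the identity $\phi\cotop = \phi\cons \circ (\phi\loc)\cotop$ to the uniqueness of the topological--cotopological factorization of \cref{facttopcotop}, and then deduce the commutation of the square from it together with the two defining factorizations.

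First, I would verify that the topological localizations $\phi\topo$ and $(\phi\loc)\topo$ agree (up to the canonical equivalence of codomains). By construction in \cref{factloccons} we have $\cW_{\phi\loc} = \cW_\phi = \cW$, hence $(\cW_{\phi\loc})\topo = \cW\topo = \cW_{\phi\topo}$. Since topological localizations are determined by their associated topological congruence via the isomorphism of \cref{cor:TLoc=TCong}, both $\phi\topo$ and $(\phi\loc)\topo$ represent the same localization $\cE \to \cE\Forcing{\cW\topo}\Iso$. In particular, the left triangle of the square commutes: $\phi\loc = (\phi\loc)\cotop \circ \phi\topo$.

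Next, I would observe that the composite $\psi := \phi\cons \circ (\phi\loc)\cotop : \cE\Forcing{\cW\topo}\Iso \to \cF$ is cotopological. Indeed, $(\phi\loc)\cotop$ is cotopological by \cref{facttopcotop} applied to $\phi\loc$, and $\phi\cons$ is conservative by \cref{factloccons}, hence cotopological by \cref{cor:cons}; the composite is then cotopological by \cref{compositecotop}. Using the first step together with the defining equality $\phi = \phi\cons \circ \phi\loc$, we compute
\[
\psi \circ \phi\topo = \phi\cons \circ (\phi\loc)\cotop \circ \phi\topo = \phi\cons \circ \phi\loc = \phi\,.
\]
Thus $(\phi\topo,\psi)$ is a factorization of $\phi$ into a topological localization followed by a cotopological morphism. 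By the uniqueness clause of \cref{facttopcotop}, this factorization must coincide with $(\phi\topo,\phi\cotop)$, giving $\phi\cotop = \psi = \phi\cons \circ (\phi\loc)\cotop$, which is the first assertion.

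Finally, the commutation of the square \eqref{eq:triplefact} assembles from: the upper triangle $\phi = \phi\cons \circ \phi\loc$ (from \cref{factloccons}), the left triangle $\phi\loc = (\phi\loc)\cotop \circ \phi\topo$ (established in the first paragraph), the right triangle $\phi = \phi\cotop \circ \phi\topo$ (from \cref{facttopcotop}), and the identity $\phi\cotop = \phi\cons \circ (\phi\loc)\cotop$ just proved. I expect no serious obstacle here; the only subtle point is being precise that $(\phi\loc)\topo$ and $\phi\topo$ are the \emph{same} topological localization, for which \cref{cor:TLoc=TCong} is exactly what is needed.
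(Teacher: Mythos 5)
Your proof is correct and follows essentially the same route as the paper: show that $\phi\cons\circ(\phi\loc)\cotop$ is cotopological using \cref{cor:cons} and \cref{compositecotop}, then invoke the uniqueness clause of \cref{facttopcotop}. The only difference is that you explicitly verify that $(\phi\loc)\topo$ and $\phi\topo$ are the same topological localization (via $\cW_{\phi\loc}=\cW_\phi$), a detail the paper leaves implicit; this is a worthwhile clarification but not a different argument.
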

\begin{proof}
By \cref{cor:cons}, the morphism $\phi\cons$ is cotopological.
Hence $\phi\cons \circ (\phi\loc)\cotop$ is cotopological by \cref{compositecotop}.
Then it must coincide with $\phi\cotop$ by unicity of the factorization of \cref{facttopcotop}.
The same argument of unicity proves that the whole diagram commutes.
\end{proof}

\begin{remark}
\label{rem:image}
The triple factorization of the square \eqref{eq:triplefact} is an instance of a ternary factorization system.
Geometrically, this factorization provides a refinement of the image factorization.
The factorization $\cE \to \cE\Forcing{\cK}\Iso \to \cF$, into a localization followed by a conservative morphism, is the naive analogue of the image factorization of 1-topoi. 
The factorization $\cE \to \cE\Forcing{\cK\topo}\Iso \to \cF$, into a topological localization followed by a cotopological morphism, correspond to a factorization into a stricter notion of image and a looser notion of surjective map.
\end{remark}

\section{Hypercomplete congruences}
\label{sec:hypercongruence}

\subsection{Hypercoverings and hypercomplete congruences} 
\label{sec:hypercoverings}

This section studies hypercomplete localizations.
We introduce the notion of a hypercomplete congruence \cref{def:hypercongruence}, which is a congruence closed under the construction of hypercoverings \cref{def:hypercovering}.
We prove in \cref{prop:recog-hyper} that a localization is hypercomplete if and only if the corresponding congruence is hypercomplete.
Any congruence $\cK$ can be completed into a hypercomplete one $\HCover\cK$ (\cref{thm:hypercompletion}) and the localization $\cE\Forcing{\HCover\cK}\Iso$ is the hypercompletion of the localization $\cE\Forcing\cK\Iso$ (\cref{prop:hypercompletion}).
Another important result of the section is \cref{thm:equivPtopHcong} in which a bijection between hypercomplete congruences and covering topologies is constructed.

\bigskip
Recall the definition of surjective maps (\cref{def:surjection}), 
$n$-connected maps (\cref{def:connected}),
and \oo connected maps (\cref{def:oo-connected}).

\begin{definition}[Coverings and hypercoverings]
\label{def:hypercovering}
\noindent Let $\cK$ be a congruence on a topos $\cE$.
\begin{enumerate}

\item\label{def:hypercovering:covsieve}
    A monomorphism $m:X\to Y$ is called a {\it $\cK$-covering sieve} 
    (or a {\it $\cK$-dense monomorphism})
    if $m \in \cK$.
The class of $\cK$-covering sieves is then $\cK\cap \Mono$.
By \cref{thm:adjGtopCong}, this is the largest extended Grothendieck topology contained in $\cK$.

\item\label{def:hypercovering:cov}
    A map $f:X\to Y$ is a {\it $\cK$-covering} if its image $\im f$ is a $\cK$-covering sieve (or equivalently if $\im f \in \cK$).
We denote by $\Cover \cK$ the class of all $\cK$-coverings.

\item\label{def:hypercovering:hypercov}
    A map $f:X\to Y$ is a {\it $\cK$-hypercovering} if all diagonal $\Delta^n f$ ($n\geq 0$) are $\cK$-covering (or, equivalently, if $\im {\Delta^nf} \in \cK$ for all $n\geq 0$).
This notion is the natural generalization of the notion of {\it morphisme bicouvrant} from \cite[II.5.2]{SGA41}.
Using the notation from \cref{sec:acvcong}, the class of $\cK$-hypercoverings is $\decinfty {\Cover\cK}$.
We shall denote it simply by $\HCover\cK$.
When the congruence is associated to an extended Grothendieck topology $\cG$, we shall write $\HCover\cG$ for $\HCover{(\cG\ac)}$.
Since a congruence is closed under diagonals, we have always $\cK\subseteq \HCover\cK$.

\end{enumerate}

When $\cK_\phi$ is the congruence associated to a left-exact localization $\phi:\cE\to \cF$,
we shall simply say $\phi$-covering sieves, $\phi$-coverings, and $\phi$-hypercoverings, 
instead of $\cK_\phi$-covering sieves, $\cK_\phi$-coverings, and $\cK_\phi$-hypercoverings.
\end{definition}

\begin{examples}
\label{ex:HCov}
In a topos $\cE$, 
the class of $\Iso$-coverings is the class $\Surj$ of surjections, 
and the class of $\Iso$-hypercoverings is the class $\Conn \infty$ of \oo connected maps.
Since $\Iso$ is the smallest congruence, for any congruence $\cK$, we have always
\[
\Surj\ \subseteq\ \Cover\cK
\qquad\textrm{and}\qquad
\Conn \infty\ \subseteq\ \HCover\cK\,.
\]
\end{examples}

\begin{remark}
Summarizing our various constructions, we can associate four objects to a congruence $\cK$:
\[
\cK\cap \Mono
\quad\subseteq\quad
\cK\topo
\quad\subseteq\quad
\cK
\quad\subseteq\quad
\HCover\cK
\quad\subseteq\quad
\Cover\cK\,.
\]
The class $\cK\cap \Mono$ is an extended Grothendieck topology (\cref{definitionGrothtop}),
$\cK\topo = (\cK\cap \Mono)\ac$ is a topological congruence (\cref{deftopcong}),
we will see below that $\HCover\cK$ is a hypercomplete congruence (see \cref{def:hypercongruence} below)
and that $\Cover\cK$ is a covering topology (\cref{def:covering-top}).
The four objects associated to $\cK$ determine each other (but they do not determine $\cK$).    
\end{remark}

\begin{lemma}
\label{lem:hyper}
Let $\phi:\cE\to \cF$ be a left-exact localization.
\begin{enumerate}
\item\label{lem:hyper:1} A monomorphism in $\cE$ is a $\phi$-covering sieve if and only if $\phi(f)$ is invertible.
\item\label{lem:hyper:2} A map $f$ in $\cE$ is a $\phi$-covering if and only if $\phi(f)$ is a surjection in $\cF$.
\item\label{lem:hyper:3} A map $f$ in $\cE$ is $\phi$-hypercovering if and only if $\phi(f)$ is \oo connected in $\cF$.
\end{enumerate}    
\end{lemma}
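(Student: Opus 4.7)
The plan is to prove each of the three statements as a direct unwinding of definitions, using two key preservation properties of algebraic morphisms: preservation of the image factorization and preservation of iterated diagonals.

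First, for \eqref{lem:hyper:1}, a monomorphism $f$ is a $\phi$-covering sieve by definition \ref{def:hypercovering}.\ref{def:hypercovering:covsieve} if and only if $f \in \cW_\phi \cap \Mono$. Since $f$ is already a monomorphism, this reduces to $f \in \cW_\phi$, which is by definition the condition that $\phi(f)$ is invertible.

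For \eqref{lem:hyper:2}, recall from Section~\ref{sec:surjection} that any algebraic morphism preserves the image factorization; in particular $\phi(\im f) = \im{\phi(f)}$. By definition \ref{def:hypercovering}.\ref{def:hypercovering:cov}, $f$ is a $\phi$-covering if and only if $\im f \in \cW_\phi$, i.e.\ $\phi(\im f)$ is invertible, i.e.\ $\im{\phi(f)}$ is invertible. But a map is surjective precisely when its image (in the image factorization) is an isomorphism, hence this holds if and only if $\phi(f)$ is surjective in $\cF$.

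For \eqref{lem:hyper:3}, algebraic morphisms preserve finite limits, and therefore preserve the construction of diagonals: $\phi(\Delta^n f) = \Delta^n \phi(f)$ for every $n \geq 0$. By definition \ref{def:hypercovering}.\ref{def:hypercovering:hypercov}, $f$ is a $\phi$-hypercovering if and only if $\Delta^n f$ is a $\phi$-covering for every $n \geq 0$, which by \eqref{lem:hyper:2} is equivalent to $\phi(\Delta^n f) = \Delta^n \phi(f)$ being surjective for every $n \geq 0$. By the characterization of \oo connected maps in \cref{def:oo-connected}, this is precisely the condition that $\phi(f) \in \Conn\infty(\cF)$.

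There is no real obstacle here; each assertion is essentially a translation across the preservation properties of algebraic morphisms. The only subtle point worth noting is that in \eqref{lem:hyper:3} the equivalence does not require $\phi$ to reflect any class of maps, because the condition is placed on the image $\phi(f)$ in $\cF$ rather than on $f$ itself.
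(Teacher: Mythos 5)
Your proof is correct and follows essentially the same route as the paper's: part (1) is a direct unwinding of the definition, part (2) uses that algebraic morphisms preserve the image factorization so that $\phi(\im f)=\im{\phi(f)}$, and part (3) reduces to part (2) via preservation of iterated diagonals and the characterization of \oo connected maps. No gaps.
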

\begin{proof}
\eqref{lem:hyper:1} Direct.

\smallskip
\noindent \eqref{lem:hyper:2}
Let $\im f\circ \coim f$ be the image factorization of a map $f:X\to Y$ into a surjection followed by a monomorphism.
Then, $f$ is a surjection if and only if $\im f\in \Iso$.
Any left-exact cocontinuous functor $\cE\to \cF$ between topoi preserves monomorphisms and surjections and therefore image factorizations (see \cref{sec:connected}).
Now, from $\phi(\im f) = \im{\phi(f)}$, we get that $f$ is a $\phi$-covering if and only if $\phi(\im f)\in \Iso$ if and only if $\im f\in \cK_\phi$.

\smallskip
\noindent \eqref{lem:hyper:3}
A map $f$ is \oo connected if and only if all its iterated diagonals $\Delta^nf$ are surjective (see \cref{sec:connected}).
Then, \eqref{lem:hyper:3} is a consequence of \eqref{lem:hyper:2}.
\end{proof}

\begin{proposition}
\label{lem:fromCongtoTop}
Let $\cK$ be a congruence on a topos $\cE$.
\begin{enumerate}
\item\label{lem:fromCongtoTop:1} The class of $\cK$-covering sieves is the extended Grothendieck topology $\cK\cap\Mono = \im \cK$ associated to $\cK$ in \cref{fromacyclictoGroth}.
\item\label{lem:fromCongtoTop:2} The class $\Cover\cK$ of $\cK$-coverings is the covering topology $\Cover{(\cK\cap \Mono)}$ associated to $\cK$ in \cref{prop:FromGrothtoCov}~\eqref{prop:FromGrothtoCov:1}.
\item\label{lem:fromCongtoTop:3} The class $\HCover\cK$ of $\cK$-hypercoverings is a congruence.
\end{enumerate}
\end{proposition}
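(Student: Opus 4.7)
The plan is to dispatch parts \eqref{lem:fromCongtoTop:1} and \eqref{lem:fromCongtoTop:2} by unfolding definitions and invoking prior results, then reduce part \eqref{lem:fromCongtoTop:3} --- the main content --- to the congruence-from-acyclic-class machinery of \cref{sec:acvcong}.

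For \eqref{lem:fromCongtoTop:1}, I would simply observe that a $\cW$-covering sieve is by definition a monomorphism lying in $\cW$, so the class of $\cW$-covering sieves is $\cW \cap \Mono$. Applying \cref{lem:imagepretopo} to the acyclic class $\cW$ gives the equality $\cW \cap \Mono = \im \cW$, and \cref{fromacyclictoGroth} already guarantees this intersection is a Grothendieck topology.

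For \eqref{lem:fromCongtoTop:2}, I would note that since $\im f$ is always a monomorphism, the condition $\im f \in \cW$ defining $\Cover \cW$ is equivalent to $\im f \in \cW \cap \Mono$, i.e.\ to membership in $\Cover{(\cW \cap \Mono)}$ in the sense of \cref{def:cover} applied to the Grothendieck topology $\cW \cap \Mono$ from part \eqref{lem:fromCongtoTop:1}. That this is a covering topology is then \cref{cor:freeptop} (equivalently \cref{prop:ptopmodality}, which already provides acyclicity).

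The substance lies in \eqref{lem:fromCongtoTop:3}. The key move is to recognize that $\HCover \cW$ coincides with $\suspinfty{\Cover \cW}$ in the notation of \cref{sec:acvcong}: by definition $f \in \HCover \cW$ iff $\im{\Delta^n f} \in \cW$ for every $n \geq 0$, and by part \eqref{lem:fromCongtoTop:2} this is equivalent to $\Delta^n f \in \Cover \cW$ for every $n$, which is precisely the defining condition for $\suspinfty{\Cover \cW}$. Since $\Cover \cW$ is acyclic by \eqref{lem:fromCongtoTop:2}, \cref{lem:racine-acyclic}\eqref{lem:carrier:2} (or equivalently \cref{prop:largestcong=racine}) shows that $\suspinfty{\Cover \cW}$ is a congruence, and we are done. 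The only pitfall to watch is the bookkeeping in this identification --- in particular making sure that the covering-topology interpretation of the condition $\im{\Delta^n f} \in \cW$ is applied uniformly in $n$; once that is in place, everything reduces to a result already established in \cref{sec:acvcong}.
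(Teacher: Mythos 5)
Your proof is correct. Parts \eqref{lem:fromCongtoTop:1} and \eqref{lem:fromCongtoTop:2} match the paper's argument (the paper dismisses \eqref{lem:fromCongtoTop:1} as "clear by definition" and proves \eqref{lem:fromCongtoTop:2} exactly as you do). For part \eqref{lem:fromCongtoTop:3}, however, you take a genuinely different route. The paper first treats the case where $\cW$ is of small generation: there the localization $\phi:\cE\to\cE\Forcing\cW\Iso$ exists, \cref{lem:hyper}~\eqref{lem:hyper:3} identifies $\HCover\cW$ with $\phi^{-1}(\Conn\infty)$, which is a congruence as the preimage of one; the general case is then handled by filtering $\cW$ by the congruences $\Sigma\cong$ for $\Sigma$ a set of maps in $\cW$ and invoking \cref{lem:filtcong}. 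You instead observe the identity $\HCover\cW=\suspinfty{\Cover\cW}$ and apply \cref{lem:racine-acyclic}~\eqref{lem:carrier:2} to the acyclic class $\Cover\cW$. This is legitimate and non-circular (both ingredients are established earlier, in \cref{sec:acvcong} and \cref{sec:Grothendieck} respectively), and in fact the paper itself records your argument later: \cref{lem:equivPtopHcong}~\eqref{lem:equivPtopHcong:2} proves exactly the identity $\HCover\cW=\suspinfty{\Cover\cW}$, and the remark following it notes that this yields a direct proof of \eqref{lem:fromCongtoTop:3} "not using congruences of small generation." What each approach buys: the paper's route makes visible the geometric meaning of $\HCover\cW$ as the maps becoming $\infty$-connected in the localization, which is reused in \cref{prop:hypercompletion}; yours keeps the argument internal to $\cE$ and avoids representability and the filtration at this point (the filtration is only hidden once, inside the proof of \cref{lem:carrier}).
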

\begin{proof}
\eqref{lem:fromCongtoTop:1} Clear by definition.

\smallskip
\noindent\eqref{lem:fromCongtoTop:2} 
For a map $f$, we have $\im f\in \cK \Leftrightarrow \im f \in \cK\cap \Mono$.
This proves that the classes $\Cover \cK$ and $\Cover{(\cK\cap \Mono)}$ (see \cref{def:cover}) are the same.

\smallskip
\noindent\eqref{lem:fromCongtoTop:3}
Using $\HCover\cK = \decinfty {\Cover\cK}$, the result follows from \cref{lem:decalage}~\eqref{lem:decalage:2}.
\end{proof}

\begin{definition}[Hypercomplete congruences]
\label{def:hypercongruence}
A congruence $\cK$ is called {\it hypercomplete} 
if it contains all its hypercoverings, that is if $\cK = \HCover\cK$.
For a topos $\cE$, we denote by $\HCong(\cE)\subseteq \Cong(\cE)$ the subposet spanned by hypercomplete congruences of $\cE$.
\end{definition}

\begin{examples}
\label{ex:HCong}
We give some examples of hypercomplete congruences.
\begin{examplenum}
\item\label{ex:HCong:0}
    In a topos $\cE$, the class $\Conn \infty$ is the smallest hypercomplete congruence, and the class $\All$ of all maps is the largest.

\item\label{ex:HCong:1}
If $\phi:\cE\to \cF$ is an algebraic morphism of topoi, and $\cK$ is a hypercomplete congruence on $\cF$, then $\phi^{-1}(\cK)$ is a hypercomplete congruence on $\cF$.
In particular, the class $\phi^{-1}(\Conn \infty)$ is a hypercomplete congruence on $\cE$.

\item\label{ex:HCong:2}
We shall see in \cref{prop:recog-hyper} that a congruence of small generation $\cK$ is hypercomplete if and only if the corresponding topos $\cE\Forcing\cK\Iso$ is hypercomplete  (\cref{def:oo-connected}).

\item\label{ex:HCong:3}
    Any intersection of hypercomplete congruence is a hypercomplete congruence.

\end{examplenum}
\end{examples}

\begin{lemma}
\label{lem:thcov=iso}
Let $f$ be a truncated map in $\cE$, and $\cK$ be a congruence on $\cE$.
Then $f$ is a $\cK$-hypercovering if and only if $f$ is in $\cK$.
In other words, for any $n$, we have
\[
\cK \cap \Trunc n
\  =\ 
\HCover\cK \cap\Trunc n
\]
In particular, we have $\cK\cap \Mono = \HCover\cK \cap \Mono$.
\end{lemma}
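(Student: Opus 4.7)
The plan is to reduce to a finite forcing condition and then use the truncation/connectivity factorization. The inclusion $\cW \cap \Trunc n \subseteq \HCover\cW \cap \Trunc n$ is trivial, since we observed in \cref{def:hypercovering} that $\cW \subseteq \HCover\cW$. All the content is in the reverse inclusion: proving that any $n$-truncated $\cW$-hypercovering $f$ must already belong to $\cW$.

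The key observation I would exploit is that when $f$ is $n$-truncated, $\Delta^{n+2}f$ is invertible, so the infinite sequence of conditions ``$\im{\Delta^k f} \in \cW$ for all $k \geq 0$'' collapses to the finitely many conditions ``$\im{\Delta^k f} \in \cW$ for $0 \leq k \leq n+1$''. I would therefore form the (finite) set $\Sigma := \im{\Delta^{\leq n+1}(f)}$, which by hypothesis is contained in $\cW$, and pass to the congruence $\Sigma\cong \subseteq \cW$. Because $\Sigma$ is a set, \cref{Luriethmloc} guarantees that the localization $\phi : \cE \to \cE\Forcing{\Sigma\cong}\Iso$ exists, with associated congruence $\cW_\phi = \Sigma\cong$.

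I would then apply \cref{thm:forcing}~\eqref{thm:forcing:3}, which gives the equivalence of forcing conditions $\Forcing{\Sigma}\Iso = \Forcing{\im{\Delta^{\leq n+1}(f)}}\Iso = \Forcing{f}{\Conn n}$. Thus $\phi(f)$ is $n$-connected in the target. On the other hand, any algebraic morphism preserves finite limits and isomorphisms, hence preserves the property of being $n$-truncated (defined via $\Delta^{n+2}$ being invertible), so $\phi(f)$ is also $n$-truncated. Since the modality $(\Conn n, \Trunc n)$ (\cref{examplemodality:2}) forces any map which is simultaneously $n$-connected and $n$-truncated to be invertible, we conclude $\phi(f) \in \Iso$, i.e.\ $f \in \cW_\phi = \Sigma\cong \subseteq \cW$.

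The only real obstacle I anticipate is bookkeeping: making sure that the truncation hypothesis is genuinely used to cut off the hypercovering condition at level $n+1$, and that the resulting generating set $\Sigma$ is indeed contained in $\cW$ so that the small-generated congruence $\Sigma\cong$ lies within the possibly non-accessible congruence $\cW$. The final sentence $\cW\cap \Mono = \HCover\cW \cap \Mono$ is then just the case $n = -1$ of the displayed equality.
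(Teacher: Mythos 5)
Your proof is correct and follows essentially the same route as the paper: both pass to the small-generated congruence generated by $\im{\Delta^{\le n+1}f}\subseteq\cW$, form the corresponding left-exact localization, and conclude that the image of $f$ there is simultaneously $n$-connected and $n$-truncated, hence invertible, so $f$ lies in that congruence and a fortiori in $\cW$.
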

\begin{proof}
Recall from \cref{sec:truncated} that a map $f$ is $n$-truncated if and only if the iterated diagonal $\Delta^{n+2}f$ is invertible, and that it is $n$-connected if and only if the diagonal maps $\Delta^kf$ are all surjective for $0\leq k\leq n+1$.
Recall also that a map which is $n$-truncated and $n$-connected is invertible.
Let $f$ be an $n$-truncated $\cK$-hypercovering and let $\cK_f$ be the smallest congruence containing all the maps $\im{\Delta^k f}$, for $0\leq k\leq n+1$.
By hypothesis on $f$, we have $\cK_f \subseteq \cK$.
The congruence $\cK_f$ is of small generation and the forcing condition $\Forcing{\cK_f}\Iso$ is representable.
The image of $f$ is $n$-truncated and \oo connected in $\cE\Forcing{\cK_f}\Iso$, thus invertible.
Therefore $f \in \cK_f$ and this proves that $f\in \cK$.
\end{proof}

\begin{lemma}
\label{lem:hypercompletion}
For any congruence $\cK$, the congruence $\HCover\cK$ is hypercomplete, that is
\[
\HCover{(\HCover \cK)}
\ =\ 
\HCover\cK\ .
\]
\end{lemma}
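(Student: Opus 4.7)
The plan is to prove the two inclusions separately, with the forward inclusion being essentially free and the reverse inclusion reducing to an application of \cref{lem:thcov=iso}.

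For the inclusion $\HCover{\cW} \subseteq \HCover{(\HCover\cW)}$, I would just observe that the operator $\HCover{(-)}$ is monotone on classes of maps (this is immediate from the definition, since enlarging the reference congruence can only enlarge the class of hypercoverings). Since $\cW \subseteq \HCover\cW$ always holds (a congruence is closed under diagonals, so $f \in \cW$ implies $\Delta^n f \in \cW$, and a map in $\cW \cap \Mono$ equals its own image), monotonicity gives the desired inclusion.

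For the reverse inclusion $\HCover{(\HCover\cW)} \subseteq \HCover\cW$, which is the substantive direction, I would argue as follows. Take any $f \in \HCover{(\HCover\cW)}$; by definition this means that for every $n \geq 0$, the image $\im{\Delta^n f}$ belongs to $\HCover\cW$. Now $\im{\Delta^n f}$ is by construction a monomorphism, hence in particular $(-1)$-truncated. Applying \cref{lem:thcov=iso} (with truncation level $n = -1$, which gives $\cW \cap \Mono = \HCover\cW \cap \Mono$), we conclude that $\im{\Delta^n f} \in \cW$ for every $n \geq 0$. By the very definition of $\HCover\cW$, this says exactly that $f \in \HCover\cW$.

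The main obstacle — if one can call it that — has already been dispatched by \cref{lem:thcov=iso}, which asserts that truncated hypercoverings of $\cW$ coincide with truncated elements of $\cW$; the rest is a direct unpacking of the definition of $\HCover{(-)}$. Indeed, the key conceptual point is that the operator $\HCover{(-)}$ only tests monomorphisms (namely the images of iterated diagonals), and on monomorphisms it already saturates $\cW$ to $\cW \cap \Mono$, so iterating the operator adds nothing new.
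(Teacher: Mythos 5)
Your proof is correct and follows essentially the same route as the paper: both arguments reduce the claim to the identity $\HCover\cW\cap\Mono=\cW\cap\Mono$ obtained from \cref{lem:thcov=iso} at truncation level $-1$, and then unpack the definition of $\HCover{(-)}$, which only ever tests the monomorphisms $\im{\Delta^n f}$. Splitting the equality into two inclusions (with the easy one handled by monotonicity) is a harmless repackaging of the paper's one-line conclusion.
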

\begin{proof}
$\HCover\cK$ is a congruence by \cref{lem:fromCongtoTop}~\eqref{lem:fromCongtoTop:3}.
Let us see that it is hypercomplete.
We have
\[
\HCover \cK 
\  =\  
\big\{ f \,|\, \forall n\geq 0,\ \im{\Delta^n f}\in \cK \big\}
\qquad\text{and}
\]
\[
\HCover{(\HCover \cK)}
\  =\  
\big\{ f \,|\, \forall n\geq 0,\ \im{\Delta^n f}\in \HCover\cK\big\}\,.
\]
Using \cref{lem:thcov=iso} for $n=-1$, we get $\HCover\cK\cap \Mono = \cK\cap \Mono$, and the equality.
\end{proof}

\begin{proposition}
\label{thm:hypercompletion}
The map $\cK\mapsto\HCover\cK$ provides a left adjoint for the inclusion $\HCong(\cE)\subseteq \Cong(\cE)$.
\[
\begin{tikzcd}
\HCong(\cE)
\ar[rr, "i"',shift right = 1.6, hook]
\ar[from =rr,"{{\HCover{(-)}}}"', shift right = 1.6]
&&\Cong(\cE)
\end{tikzcd}
\]
\end{proposition}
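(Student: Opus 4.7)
The plan is to verify the claim by assembling the auxiliary lemmas already at hand and checking the adjunction condition in a direct manner. The assignment $\cW \mapsto \HCover\cW$ is already essentially set up: by \cref{lem:fromCongtoTop}~\eqref{lem:fromCongtoTop:3} the class $\HCover\cW$ is a congruence, and by \cref{lem:hypercompletion} it is hypercomplete. So the functor is well-defined on objects, and I expect the remainder of the argument to reduce to three short observations: monotonicity of $\HCover{(-)}$, the unit $\cW \subseteq \HCover\cW$, and the defining equality $\HCover{\cW'}=\cW'$ for $\cW' \in \HCong(\cE)$.

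First, I would note that $\HCover{(-)}$ is manifestly monotone: if $\cW \subseteq \cW'$, then any map $f$ with $\im{\Delta^n f}\in \cW$ for all $n\geq 0$ automatically satisfies $\im{\Delta^n f}\in \cW'$, so $\HCover\cW \subseteq \HCover{\cW'}$. Next, the unit inclusion $\cW \subseteq \HCover\cW$ holds because $\cW$ is a congruence: by \cref{lem:caraccong}, $\Delta^n f \in \cW$ for every $f\in \cW$ and every $n\geq 0$, and then \cref{lem:acyclic-image} applied to the acyclic class $\cW$ gives $\im{\Delta^n f}\in \cW$, whence $f\in \HCover\cW$.

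With these pieces in place, the adjunction amounts to the logical equivalence
\[
\HCover\cW \subseteq \cW' \quad\iff\quad \cW \subseteq \cW'
\]
for any congruence $\cW$ and any hypercomplete congruence $\cW'$. The implication $(\Rightarrow)$ follows from the unit $\cW\subseteq \HCover\cW$ established above. For $(\Leftarrow)$, the hypothesis $\cW\subseteq \cW'$ combined with monotonicity yields $\HCover\cW \subseteq \HCover{\cW'}$; since $\cW'\in \HCong(\cE)$ we have by definition $\HCover{\cW'}=\cW'$, so $\HCover\cW\subseteq \cW'$ as required. This bijection between inclusions is exactly the adjunction between posets.

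There is no substantial obstacle here: all the genuine work—that $\HCover\cW$ is a congruence, and that it is hypercomplete (i.e.\ fixed by $\HCover{(-)}$)—has been carried out in \cref{lem:fromCongtoTop,lem:hypercompletion}, the latter of which in turn rested on the key equality $\HCover\cW\cap\Mono=\cW\cap\Mono$ of \cref{lem:thcov=iso}. The present statement is therefore a formal consequence of these previous results together with the closure of a congruence under diagonals and image factorization.
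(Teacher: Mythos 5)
Your proposal is correct and follows essentially the same route as the paper: well-definedness via \cref{lem:fromCongtoTop} and \cref{lem:hypercompletion}, then the poset adjunction checked directly from monotonicity of $\HCover{(-)}$, the unit inclusion $\cW\subseteq\HCover\cW$, and idempotence on hypercomplete congruences. The only difference is that you spell out the unit inclusion (via closure of congruences under diagonals and images), which the paper records already in \cref{def:hypercovering}.
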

\begin{proof}
The morphism $\HCover{(-)}$ is well defined by \cref{lem:hypercompletion}.
Let $\cV$ be a hypercomplete congruence and $\cK$ be an arbitrary congruence.
We have
\[
\cK\ \subseteq\ \cV 
\quad\Rightarrow\quad
\HCover\cK\ \subseteq\ \HCover\cV = \cV
\]
Conversely, if $\HCover\cK \subseteq \cV$ then we get $\cK \subseteq \cV$ using that $\cK\subseteq \HCover\cK$.
\end{proof}

Recall from \cite[Section~6.2.5]{Lurie:HTT} the following facts.
A topos $\cE$ is {\it hypercomplete} if it has no nontrivial \oo connected map, that is if $\Conn\infty(\cE)=\Iso$.
The category of hypercomplete topoi is reflective in the category of all topoi, and the reflection of a topos $\cE$ is given by the localization $\cE\to \cE\Forcing{\Conn\infty}\Iso$.
The following result justifies the name for the notion of hypercomplete congruence.

\begin{proposition}
\label{prop:hypercompletion}
For any congruence of small generation $\cK$ in a topos $\cE$, the localization $\cE\Forcing{\HCover\cK}\Iso$ is the hypercompletion of $\cE\Forcing\cK\Iso$.
\end{proposition}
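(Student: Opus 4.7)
The plan is to identify the hypercompletion of $\cE\Forcing\cW\Iso$ with the left-exact localization $\cE\Forcing{\HCover\cW}\Iso$ by showing that the two localizations of $\cE$ have the same associated congruence, and then invoking the bijection of \cref{thm:bij-congruence-lexloc} between accessible left-exact localizations and congruences of small generation.

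First, since $\cW$ is of small generation by hypothesis, the localization $\phi \colon \cE \to \cE\Forcing\cW\Iso$ exists by \cref{Luriethmloc}. By \cref{prop:luriehypercompletion}, the hypercompletion $\psi \colon \cE\Forcing\cW\Iso \to (\cE\Forcing\cW\Iso)\hyper$ is a left-exact reflection whose class of inverted maps is exactly $\Conn\infty(\cE\Forcing\cW\Iso)$. The composite $\rho := \psi \circ \phi \colon \cE \to (\cE\Forcing\cW\Iso)\hyper$ is therefore a left-exact reflection (reflections compose to reflections, through nested full subcategories), and it is in particular an algebraic morphism of topoi.

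Next, I would compute $\cW_\rho$. For any map $f$ in $\cE$, the map $\rho(f) = \psi(\phi(f))$ is invertible if and only if $\phi(f)$ is inverted by $\psi$, which by \cref{prop:luriehypercompletion} holds if and only if $\phi(f) \in \Conn\infty(\cE\Forcing\cW\Iso)$. By \cref{lem:hyper}~\eqref{lem:hyper:3} this is precisely the condition $f \in \HCover\cW$. Hence $\cW_\rho = \HCover\cW$. Since $\rho$ is an algebraic morphism, \cref{Luriethm2} shows $\HCover\cW$ is of small generation, so the localization $\cE \to \cE\Forcing{\HCover\cW}\Iso$ exists and is accessible with congruence $\HCover\cW$ by \cref{Luriethmloc}. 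By the bijection of \cref{thm:bij-congruence-lexloc}, the two accessible left-exact localizations $\rho$ and $\cE \to \cE\Forcing{\HCover\cW}\Iso$ must be equivalent, identifying $(\cE\Forcing\cW\Iso)\hyper$ with $\cE\Forcing{\HCover\cW}\Iso$.

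I do not anticipate a serious obstacle: the key technical work has already been done in \cref{lem:hyper}, which translates hypercoverings into \oo-connected maps downstream, and in \cref{prop:luriehypercompletion}, which gives the existence and universal property of the hypercompletion. The only points requiring a bit of care are (i) that a composite of left-exact reflections is again a left-exact reflection, so that $\rho$ is a bona fide accessible left-exact localization, and (ii) that the identification of the targets proceeds through the bijection of \cref{thm:bij-congruence-lexloc}, which forces the two localizations to coincide once their congruences are seen to agree.
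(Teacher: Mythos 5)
Your proposal is correct and follows essentially the same route as the paper: identify the hypercompletion of $\cE\Forcing\cW\Iso$ with the composite localization $\cE\to\cE\Forcing\cW\Iso\to(\cE\Forcing\cW\Iso)\hyper$, compute its congruence as $\phi^{-1}(\Conn\infty)=\HCover\cW$ via \cref{lem:hyper}~\eqref{lem:hyper:3}, and conclude that the composite is $\cE\Forcing{\HCover\cW}\Iso$. You are only slightly more explicit than the paper about why the composite of two left-exact reflections is again one and why agreement of congruences identifies the targets.
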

\begin{proof}
Let $\cK$ be a congruence of small generation in a topos $\cE$.
Then $\cE\Forcing\cK\Iso$ exists and its hypercompletion is 
$\big(\cE\Forcing\cK\Iso\big)\Forcing{\Conn\infty}\Iso$.
By \cref{lem:hyper}~\eqref{lem:hyper:3}, the congruence associated to the composition of localizations 
\[
\cE
\nstto \phi
\cE\Forcing\cK\Iso
\nstto \psi
\big(\cE\Forcing\cK\Iso\big)\Forcing{\Conn\infty}\Iso
\]
is 
\[
\phi^{-1}\big(\psi^{-1}(\Iso)\big)
\ =\ 
\phi^{-1}(\Conn\infty) = \HCover\cK\,.
\]
This proves that 
\[
\big(\cE\Forcing\cK\Iso\big)\Forcing{\Conn\infty}\Iso
\ =\ 
\cE\Forcing{\HCover\cK}\Iso\,.
\]
and that
\[
\cE\Forcing\cK\Iso
\nstto \psi
\cE\Forcing{\HCover\cK}\Iso\,.
\]
is the hypercompletion of $\cE\Forcing\cK\Iso$.
\end{proof}

\begin{lemma}
\label{lem:closurerelation1}
For any congruence $\cK$, we have the relation
\[
\HCover{(\cK\topo)}
\ =\ 
\HCover\cK\,.
\]
\end{lemma}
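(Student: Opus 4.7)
The plan is to observe that the class $\HCover\cW$ depends on $\cW$ only through the monomorphisms it contains. Indeed, by definition
\[
\HCover\cW \;=\; \{\, f \in \cE \mid \forall n \geq 0,\ \im{\Delta^n f} \in \cW \,\}
\;=\; \{\, f \in \cE \mid \forall n \geq 0,\ \im{\Delta^n f} \in \cW \cap \Mono \,\},
\]
since each $\im{\Delta^n f}$ is a monomorphism. So the statement reduces to the identity $\cW \cap \Mono = \cW\topo \cap \Mono$.

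The latter is an immediate application of \cref{cor:GacMono=G} (saturation of Grothendieck topologies). Indeed, by \cref{fromacyclictoGroth}, $\cG := \cW \cap \Mono$ is a Grothendieck topology, and by \cref{def:topopart}, $\cW\topo = \cG\ac$. Then \cref{cor:GacMono=G} gives
\[
\cW\topo \cap \Mono \;=\; \cG\ac \cap \Mono \;=\; \cG \;=\; \cW \cap \Mono.
\]

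Combining the two displayed computations, and noting that the same description of $\HCover{(-)}$ applies to $\cW\topo$ in place of $\cW$, yields
\[
\HCover{(\cW\topo)} \;=\; \{\, f \mid \forall n\geq 0,\ \im{\Delta^n f} \in \cW\topo \cap \Mono \,\} \;=\; \{\, f \mid \forall n\geq 0,\ \im{\Delta^n f} \in \cW \cap \Mono \,\} \;=\; \HCover\cW,
\]
which is the desired equality. There is no real obstacle here; the only substantive input is the already-established saturation \cref{cor:GacMono=G}, and the rest is tracking definitions.
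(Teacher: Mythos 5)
Your proof is correct and is essentially identical to the paper's: both reduce the claim to the identity $\cW\topo\cap\Mono=\cW\cap\Mono$, obtained from \cref{cor:GacMono=G} applied to the Grothendieck topology $\cW\cap\Mono$, and then note that $\HCover{(-)}$ only sees the monomorphisms of the congruence since each $\im{\Delta^n f}$ is a monomorphism.
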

\begin{proof}
By \cref{cor:GacMono=G}, $\cK\topo\cap\Mono = \cK\cap \Mono$, then
\begin{align*}
\HCover{(\cK\topo)}
&\ =\  \{f\,|\, \forall n\geq 0,\im{\Delta^nf} \in \cK\topo\}\\
&\ =\  \{f\,|\, \forall n\geq 0,\im{\Delta^nf} \in \cK\cap \Mono\}\\
&\ =\  \{f\,|\, \forall n\geq 0,\im{\Delta^nf} \in \cK\}\\
&\ =\  \HCover\cK\,.
\qedhere
\end{align*}
\end{proof}

\begin{corollary}
\label{lem:hyperloctopopart}
For any congruence $\cK$ (not necessarily of small generation) the hypercompletion of $\cE\Forcing{\cK\topo}\Iso$ is $\cE\Forcing{\HCover\cK}\Iso$.
In particular, when $\cK$ is of small generation, the canonical morphism 
$\cE\Forcing{\cK\topo}\Iso\to \cE\Forcing\cK\Iso$ induces an equivalence between the hypercompletions.
\end{corollary}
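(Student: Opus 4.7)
The plan is to reduce the first statement to Proposition \ref{prop:hypercompletion} applied to the topological part $\cW\topo$, combined with the key identity $\HCover{(\cW\topo)} = \HCover\cW$ of Lemma \ref{lem:closurerelation1}, and then deduce the second statement from the universal property of hypercompletion.

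For the first statement, the congruence $\cW\topo$ is always of small generation by Proposition \ref{topcongaresmall} (topological congruences are generated by a univalent monomorphism), so the localization $\phi\topo:\cE\to \cE\Forcing{\cW\topo}\Iso$ is well-defined. I would then apply Proposition \ref{prop:hypercompletion} with the congruence $\cW\topo$ in place of $\cW$; this yields that the hypercompletion of $\cE\Forcing{\cW\topo}\Iso$ is the localization $\cE\Forcing{\HCover{(\cW\topo)}}\Iso$. Invoking Lemma \ref{lem:closurerelation1} (whose proof uses only that $\cW\topo\cap\Mono=\cW\cap\Mono$ via \cref{cor:GacMono=G}) replaces $\HCover{(\cW\topo)}$ by $\HCover\cW$, giving the desired identification
\[
\big(\cE\Forcing{\cW\topo}\Iso\big)\hyper = \cE\Forcing{\HCover\cW}\Iso\,.
\]
The existence of the right-hand side is guaranteed because $\HCover\cW = \HCover{(\cW\topo)}$ is the congruence of the composite algebraic morphism $\cE\to\cE\Forcing{\cW\topo}\Iso\to \big(\cE\Forcing{\cW\topo}\Iso\big)\hyper$, hence of small generation by Theorem \ref{Luriethm2}.

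For the second statement, assume $\cW$ is of small generation. Proposition \ref{prop:hypercompletion} directly gives $(\cE\Forcing\cW\Iso)\hyper = \cE\Forcing{\HCover\cW}\Iso$, while the first part identifies $(\cE\Forcing{\cW\topo}\Iso)\hyper$ with the same topos. The inclusion $\cW\topo\subseteq\cW$ produces the canonical morphism $\cE\Forcing{\cW\topo}\Iso\to \cE\Forcing\cW\Iso$; taking hypercompletions and applying the universal property of Proposition \ref{prop:luriehypercompletion}, this induces an algebraic morphism between the two hypercompletions. Since both are identified with $\cE\Forcing{\HCover\cW}\Iso$ as localizations of $\cE$ with the same associated congruence $\HCover\cW$, the uniqueness clause of the universal property of localizations (Theorem \ref{Luriethm1}) forces this induced morphism to be an equivalence.

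The only subtle point, and the main obstacle, is verifying that the canonical morphism induced on hypercompletions by the universal property agrees with the identification coming from the formulas above. This is handled by noting that both compositions $\cE\to (\cE\Forcing{\cW\topo}\Iso)\hyper$ and $\cE\to (\cE\Forcing\cW\Iso)\hyper$ are left-exact localizations with the same congruence $\HCover\cW$ (the computation of the congruence of the first composite uses that $\phi\topo(f)\in \Conn\infty$ iff every $\im{\Delta^n f}$ lies in $\cW\topo\cap\Mono=\cW\cap\Mono$, which is exactly the condition $f\in\HCover\cW$), so the two localizations agree up to unique equivalence.
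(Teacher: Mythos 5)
Your proposal is correct and follows essentially the same route as the paper: apply Proposition \ref{prop:hypercompletion} to the congruence $\cW\topo$ (of small generation by Proposition \ref{topcongaresmall}) and then invoke the identity $\HCover{(\cW\topo)}=\HCover\cW$ of Lemma \ref{lem:closurerelation1}, with the second assertion following directly. Your extra care in checking that the induced morphism on hypercompletions agrees with the identification, via uniqueness of localizations with a given congruence, is a sound elaboration of what the paper leaves as ``a direct consequence.''
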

\begin{proof}
We apply \cref{prop:hypercompletion} to the congruence $\cK\topo$, which is always of small generation by \cref{topcongaresmall}.
Then the first assertion follows from \cref{lem:closurerelation1}.
The second assertion is a direct consequence.
\end{proof}

\begin{remark}
The localization $\cE\Forcing{\cK\topo}\Iso\to \cE\Forcing\cK\Iso$ is cotopological by \cref{facttopcotop}.
The fact that it induces an equivalence between the hypercompletions is a general fact about cotopological localizations.
It is possible to prove that the category of hypercomplete topoi is the localization of the category of topoi inverting the cotopological localizations.
\end{remark}

The following result can be proved by showing that any hypercomplete congruence is of small generation (for presheaves categories this is what is done in \cite{TV:hag1}).
We deduce it from \cref{lem:hyperloctopopart} and use it to show that any hypercomplete congruence is of small generation.

\begin{proposition}
\label{lem:existhyperloc}
The localization with respect to any hypercomplete congruence exists.
\end{proposition}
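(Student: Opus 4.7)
The plan is to deduce this from \cref{lem:hyperloctopopart} by observing that a hypercomplete congruence $\cW$ equals its own hypercompletion.

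First, let $\cW$ be a hypercomplete congruence in $\cE$, so by \cref{def:hypercongruence} we have $\cW = \HCover\cW$. The topological part $\cW\topo$ is always of small generation by \cref{topcongaresmall}, and therefore the localization $\cE\Forcing{\cW\topo}\Iso$ exists by \cref{Luriethm1} (it is the localization at any univalent generator of the Grothendieck topology $\cW\cap\Mono$). Being a topos, $\cE\Forcing{\cW\topo}\Iso$ admits a hypercompletion by \cref{prop:luriehypercompletion}.

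Now apply \cref{lem:hyperloctopopart}: the hypercompletion of $\cE\Forcing{\cW\topo}\Iso$ is the topos $\cE\Forcing{\HCover\cW}\Iso$. Since $\cW = \HCover\cW$ by hypothesis, this hypercompletion represents precisely the forcing condition $\Forcing\cW\Iso$, proving that $\cE\Forcing\cW\Iso$ exists. The key point is that even though $\cW$ itself need not be of small generation, its topological part $\cW\topo$ is, and the passage from $\cW\topo$ to $\cW$ happens inside the already-existing hypercompletion functor, which requires no further smallness hypothesis.

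There is no real obstacle here, since all the work has been done in \cref{lem:hyperloctopopart}; the present lemma is just the observation that hypercomplete congruences are fixed points of the hypercompletion operation $\HCover{(-)}$, and that this operation always factors through the always-existing localization at $\cW\topo$ followed by the always-existing hypercompletion reflection.
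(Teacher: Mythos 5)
Your proof is correct and follows essentially the same route as the paper: both arguments reduce existence to the small generation of $\cW\topo$ (\cref{topcongaresmall}) and then identify $\cE\Forcing{\HCover\cW}\Iso$ with the hypercompletion of $\cE\Forcing{\cW\topo}\Iso$ via \cref{lem:hyperloctopopart}, using $\cW=\HCover\cW$ to conclude. The paper phrases this by first writing $\cW=\HCover{(\cW\topo)}$ via \cref{lem:closurerelation1}, but that is the same argument.
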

\begin{proof}
Let $\cK$ be a hypercomplete congruence, then $\cK = \HCover{(\cK\topo)}$ by \cref{lem:closurerelation1}.
Then, the localization $\cE\Forcing\cK\Iso$ exists by \cref{lem:hyperloctopopart}.
\end{proof}

Recall the notion of a hypercomplete topos from \cref{def:oo-connected}.
\begin{corollary}
\label{prop:recog-hyper}
A congruence $\cK$ is hypercomplete if and only if 
it is of small generation and 
the corresponding localization $\cE\Forcing\cK\Iso$ is a hypercomplete topos.
\end{corollary}
\begin{proof}
Let $\cK$ be a hypercomplete congruence. 
The localization $\cE\Forcing \cK\Iso$ exists by \cref{lem:existhyperloc}.
We have seen in \cref{Luriethm2} that the congruence $\cK_\phi$ associated to an algebraic morphism $\phi:\cE\to \cF$ is always of small generation.
This shows that $\cK$ is always of small generation.
Using \cref{prop:hypercompletion}, the hypercompletion of $\cE\Forcing\cK\Iso$ is $\cE\Forcing{\HCover\cK}\Iso$ and the two coincide if and only if $\cK= \HCover\cK$.
\end{proof}

Recall from \cref{prop:largestcong=racine}, that any acyclic class $\cA$ contains a largest congruence
\[
\decinfty \cA
\ :=\ 
\{f\in \cE\,|\, \forall n\geq 0\,, \Delta^n f \in \cA\}\,.
\]

\begin{lemma}$\quad$
\label{lem:equivPtopHcong}
\begin{enumerate}
\item\label{lem:equivPtopHcong:1} For an acyclic class $\cA$, we have
\[
\Cover\cA
\ =\ 
\Cover{\decinfty\cA}\ .
\]

\item\label{lem:equivPtopHcong:2} For a congruence $\cK$, we have
\[
\HCover\cK
\ =\ 
\decinfty{\Cover \cK}\ .
\]
\end{enumerate}
\end{lemma}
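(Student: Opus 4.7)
My plan is that both identities follow directly by unwinding the definitions of $\Cover{(-)}$, $\HCover{(-)}$, and $\suspinfty{(-)}$, and then invoking \cref{prop:racinetopo} (which says $\suspinfty\cA \cap \Mono = \cA\cap \Mono$) for part \eqref{lem:equivPtopHcong:1}. No genuinely new input is needed.

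For part \eqref{lem:equivPtopHcong:1}, I would observe that since $\suspinfty\cA \subseteq \cA$, the inclusion $\Cover{\suspinfty\cA} \subseteq \Cover\cA$ is immediate. For the reverse inclusion, if $f \in \Cover\cA$ then by definition $\im f \in \cA$. Since $\im f$ is always a monomorphism, \cref{prop:racinetopo} gives $\im f \in \cA \cap \Mono = \suspinfty\cA \cap \Mono \subseteq \suspinfty\cA$, hence $f \in \Cover{\suspinfty\cA}$.

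For part \eqref{lem:equivPtopHcong:2}, I would simply unfold both sides. On the one hand, by \cref{def:hypercovering},
\[
\HCover\cW \;=\; \big\{\, f \,\big|\, \forall n\geq 0,\ \im{\Delta^n f} \in \cW \,\big\}.
\]
On the other hand, since $\Delta^n f \in \Cover\cW$ means precisely $\im{\Delta^n f} \in \cW$, the definition of $\suspinfty{(-)}$ gives
\[
\suspinfty{\Cover\cW} \;=\; \big\{\, f \,\big|\, \forall n\geq 0,\ \Delta^n f \in \Cover\cW \,\big\} \;=\; \big\{\, f \,\big|\, \forall n\geq 0,\ \im{\Delta^n f} \in \cW \,\big\}.
\]
The two classes agree, so $\HCover\cW = \suspinfty{\Cover\cW}$.

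Since both arguments are direct unfoldings, there is no real obstacle. The only subtlety worth emphasizing in part \eqref{lem:equivPtopHcong:1} is the reliance on \cref{prop:racinetopo}: it is critical that passing from $\cA$ to its largest sub-congruence $\suspinfty\cA$ does not lose any monomorphisms, because the covering topology only sees maps through their images.
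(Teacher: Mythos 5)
Your proof is correct and follows essentially the same route as the paper's: part (1) is exactly the application of \cref{prop:racinetopo} combined with the fact that $\im f$ is a monomorphism, and part (2) is the same direct unfolding of the definitions of $\suspinfty{(-)}$, $\Cover{(-)}$, and $\HCover{(-)}$. No differences worth noting.
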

\begin{proof}
\eqref{lem:equivPtopHcong:1}
By \cref{prop:racinetopo}, we know that $\decinfty\cA\cap \Mono = \cA\cap \Mono$. Hence,
\[
\Cover{\decinfty\cA}
\ =\ 
\{f\,|\,\im f\in\decinfty\cA \}
\ =\ 
\{f\,|\,\im f\in\cA \}
\ =\ 
\Cover\cA\ .
\]

\noindent\eqref{lem:equivPtopHcong:2}
By definition, $g\in \Cover\cK \Leftrightarrow \im g\in \cK$. 
Hence,
\[
\decinfty{\Cover \cK}
\ =\ 
\{f\,|\,\forall k\geq 0, \Delta^kf\in\Cover\cK\}
\ =\ 
\{f\,|\,\forall k\geq 0, \im{\Delta^kf}\in\cK\}
\ =\ 
\HCover\cK\ .
\qedhere
\]
\end{proof}

\begin{remark}
\Cref{lem:equivPtopHcong}~\eqref{lem:equivPtopHcong:2} provides a direct proof of 
\cref{lem:fromCongtoTop}~\eqref{lem:fromCongtoTop:3} (not using congruences of small generation).
\end{remark}

Recall from \cref{thm:adjCongAcyclic} that the inclusion of congruences in acyclic classes has a right adjoint $\cA\mapsto \decinfty\cA$.

\begin{theorem}[Equivalence covering topologies/hypercomplete congruences]
\label{thm:equivPtopHcong}
The maps $\cC \mapsto \decinfty\cC$ and $\cK\mapsto \Cover \cK$ define inverse isomorphisms between the poset $\HCong(\cE)$ of hypercomplete congruences and the poset $\CTop(\cE)$ of covering topologies.
\[
\begin{tikzcd}
\HCong(\cE)
\ar[rr, "\Cover{(-)}",shift left = 1.6]
\ar[from =rr,"{\decinfty-}", shift left = 1.6, "\simeq"']
&&\CTop(\cE)
\end{tikzcd}
\]

\end{theorem}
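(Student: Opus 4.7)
The proof will be a direct deduction from the preceding lemmas, with the two composite maps handled symmetrically by \cref{lem:equivPtopHcong}. The content of the theorem is essentially already packaged in \cref{lem:equivPtopHcong} together with \cref{lem:image-acyclic}, so I do not anticipate any serious obstacle; the plan is simply to assemble these results carefully.

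My first step will be well-definedness. For $\cW \in \HCong(\cE)$, the class $\Cover\cW$ is a covering topology by \cref{lem:fromCongtoTop}\eqref{lem:fromCongtoTop:2}, since it coincides with the covering topology $\Cover{(\cW\cap \Mono)}$ generated by the Grothendieck topology $\cW\cap \Mono$. For $\cC \in \CTop(\cE)$, \cref{lem:racine-acyclic}\eqref{lem:carrier:2} guarantees that $\suspinfty\cC$ is a congruence; I plan to defer verifying its hypercompleteness until after the round-trip computation below, where it will drop out for free.

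The second and main step is to verify that the two composites are identities. Starting from a hypercomplete congruence $\cW$, I compute
\[
\suspinfty{\Cover\cW} \;=\; \HCover\cW \;=\; \cW,
\]
where the first equality is \cref{lem:equivPtopHcong}\eqref{lem:equivPtopHcong:2} and the second uses the hypercompleteness of $\cW$. Starting from a covering topology $\cC$, \cref{lem:equivPtopHcong}\eqref{lem:equivPtopHcong:1} yields $\Cover{\suspinfty\cC} = \Cover\cC$; then \cref{lem:image-acyclic}, applied to the acyclic class $\cC \supseteq \Surj$, gives $f \in \cC$ if and only if $\im f \in \cC$, i.e.\ $\Cover\cC = \cC$. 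This also delivers the deferred hypercompleteness of $\suspinfty\cC$:
\[
\HCover{\suspinfty\cC} \;=\; \suspinfty{\Cover{\suspinfty\cC}} \;=\; \suspinfty{\Cover\cC} \;=\; \suspinfty\cC,
\]
again by \cref{lem:equivPtopHcong}\eqref{lem:equivPtopHcong:2}.

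Finally, both assignments are manifestly order-preserving: the condition ``$\im f \in \cW$'' defining $\Cover\cW$, and the condition ``$\Delta^n f \in \cC$ for all $n \geq 0$'' defining $\suspinfty\cC$, are each monotone in the relevant class. Being mutually inverse and monotone, the two maps constitute the claimed isomorphism of posets $\HCong(\cE) \simeq \CTop(\cE)$.
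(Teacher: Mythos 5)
Your proposal is correct and follows essentially the same route as the paper: well-definedness via \cref{lem:fromCongtoTop} and \cref{lem:racine-acyclic}, and the two round-trip identities via \cref{lem:equivPtopHcong} together with \cref{lem:image-acyclic}. The only (harmless) difference is that you obtain the hypercompleteness of $\suspinfty\cC$ formally from the identity $\Cover{\suspinfty\cC}=\cC$, whereas the paper checks it directly using \cref{prop:racinetopo} and the stability of acyclic classes under composition --- both arguments rest on the same underlying facts.
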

\begin{proof}
For any congruence $\cK$, the class $\Cover \cK$ is a covering topology by \cref{lem:fromCongtoTop}.
This proves that the top map is well defined.
For any acyclic class $\cC$ the class $\decinfty\cC$ is a congruence by \cref{thm:adjCongAcyclic}.
We need to see that it is a hypercomplete congruence when $\cC$ is a covering topology, that is when $\Surj\subseteq \cC$.
By definition, we have
\[
\decinfty\cC
\quad:=\quad
\big\{f\in \cE \ |\ \forall k\geq 0,\, \Delta^k f \in \cC\big\}\,.
\]
A map $f$ is a $\decinfty\cC$-hypercovering if and only if all the maps $\im{\Delta^kf}$ (for $k\geq 0$) are in $\decinfty\cC$.
By \cref{prop:racinetopo}, we know that $\decinfty\cC\cap \Mono = \cC\cap \Mono$.
Thus, $f$ is a $\decinfty\cC$-hypercovering if and only if all the maps $\im{\Delta^kf}$ are in $\cC$.
By hypothesis on $\cC$, all the maps $\coim{\Delta^kf}$ are also in $\cC$.
Acyclic classes are stable by composition, this shows that $f$ is a $\decinfty\cC$-hypercovering if and only if all the maps $\Delta^kf$ are in $\cC$ if and only if $f\in \decinfty\cC$.
This proves that the second morphism is well defined.
Let us see now that they are inverse to each other.
Using \cref{lem:equivPtopHcong}~\eqref{lem:equivPtopHcong:1} for a covering topology, we have 
\[
\Cover{\decinfty\cC}
\ =\ 
\Cover\cC
\ =\ 
\cC\ .
\]
Conversely, using \cref{lem:equivPtopHcong}~\eqref{lem:equivPtopHcong:2} for a hypercomplete congruence, we have 
\[
\decinfty{\Cover \cK}
\ =\ 
\HCover\cK
\ =\ 
\cK\ .
\qedhere
\]
\end{proof}

Composing the isomorphism of \cref{thm:equivPtopHcong} with the one of \cref{thm:covering-top} between covering topologies and extended Grothendieck topologies, we get the following generalization of \cite[Theorem 3.8.3]{TV:hag1}.

\begin{corollary}[Generalized To\"en--Vezzosi bijection]
\label{generalTV}
The adjunction
\[
\begin{tikzcd}
\GTop(\cE) \ar[rr,shift left = 1.6, "\HCover{((-)\ac)}"] \ar[from=rr,shift left = 1.6, "-\cap \Mono"]
&&\HCong(\cE)
\end{tikzcd}
\]    
is an isomorphism of posets.
\end{corollary}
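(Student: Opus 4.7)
The plan is to deduce this corollary by composing the two isomorphisms of posets already established in the paper, namely \cref{thm:covering-top} (identifying $\GTop(\cE)\simeq\CTop(\cE)$ via $\Cover{(-)}$ and $-\cap\Mono$) and \cref{thm:equivPtopHcong} (identifying $\CTop(\cE)\simeq\HCong(\cE)$ via $\suspinfty{(-)}$ and $\Cover{(-)}$). Since both of these are already known to be adjunctions whose unit and counit are isomorphisms, their composite is automatically an adjoint isomorphism between $\GTop(\cE)$ and $\HCong(\cE)$; the only real work is to check that the composite functors agree with the ones written in the statement, namely $\HCover{((-)\ac)}$ and $-\cap\Mono$.

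First I would handle the direction $\HCong(\cE)\to\GTop(\cE)$. The composite sends $\cW\mapsto \Cover\cW\cap\Mono$. For a monomorphism $m$ one has $\im m=m$, so $m\in\Cover\cW$ iff $m\in\cW$; hence $\Cover\cW\cap\Mono=\cW\cap\Mono$, matching the corollary's right adjoint.

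Next I would handle the direction $\GTop(\cE)\to\HCong(\cE)$. The composite sends $\cG\mapsto\suspinfty{\Cover\cG}$. Applying \cref{lem:equivPtopHcong}~\eqref{lem:equivPtopHcong:2} to the congruence $\cG\ac$ gives $\HCover{(\cG\ac)}=\suspinfty{\Cover{(\cG\ac)}}$, so I need to verify that $\Cover{(\cG\ac)}=\Cover\cG$. For any map $f$, the image $\im f$ is a monomorphism, and by \cref{cor:GacMono=G} we have $\cG\ac\cap\Mono=\cG$; therefore $\im f\in\cG\ac$ iff $\im f\in\cG$, which is exactly the content of $\Cover{(\cG\ac)}=\Cover\cG$. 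This identifies the composite with $\HCover{((-)\ac)}$.

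As an independent sanity check, I would directly verify that the unit and counit of the asserted adjunction are isomorphisms. The counit requires $\HCover{(\cG\ac)}\cap\Mono=\cG$: applying \cref{lem:thcov=iso} at the level $n=-1$ gives $\HCover{(\cG\ac)}\cap\Mono=\cG\ac\cap\Mono$, which equals $\cG$ by \cref{cor:GacMono=G}. The unit requires $\HCover{((\cW\cap\Mono)\ac)}=\cW$ for $\cW\in\HCong(\cE)$: the inner expression is $\cW\topo$ by \cref{def:topopart}, and by \cref{lem:closurerelation1} we have $\HCover{(\cW\topo)}=\HCover\cW$, which equals $\cW$ by hypothesis of hypercompleteness. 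No step should be an obstacle, since each equality needed has been recorded earlier in the paper; the only subtle point is the use of \cref{cor:GacMono=G}, which itself relied on the factorization system from \cref{LTtop}, to pass between $\cG$ and $\cG\ac$ on monomorphisms.
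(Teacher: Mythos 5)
Your proposal is correct and follows exactly the paper's route: the paper proves \cref{generalTV} precisely by composing the isomorphisms of \cref{thm:covering-top} and \cref{thm:equivPtopHcong}, leaving the identification of the composite maps implicit. Your additional verifications that $\Cover\cW\cap\Mono=\cW\cap\Mono$, that $\Cover{(\cG\ac)}=\Cover\cG$ via \cref{cor:GacMono=G}, and the unit/counit sanity checks are all sound and simply make explicit what the paper (partly in \cref{rem:maindiag}) takes for granted.
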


\subsection{Topological v. Hypercomplete congruences}
\label{sec:mainthm}

In this section we put together our results to construct the bijection between topological congruences and hypercomplete congruences in \cref{thm:adjTCongHCong}.
All the equivalence results proved in this paper are summarized in \cref{rem:maindiag} and Diagram~\eqref{maindiag}.

\bigskip

For a topos $\cE$, we have shown in \cref{topcore,thm:hypercompletion} the existence of two adjunctions
\[
\begin{tikzcd}
\TCong(\cE) \ar[rr, "j", hook] \ar[from =rr,"(-)\topo", shift left = 3]
&&\Cong(\cE)\ar[from=rr, "i", hook'] \ar[rr,"{\HCover{(-)}}", shift left = 3]
&& \HCong(\cE)\,.
\end{tikzcd}
\]

\begin{lemma}
\label{lem:closurerelation}
For any congruence $\cK$, we have always the relations
\[
\HCover{(\cK\topo)} = \HCover\cK
\qquad\text{and}\qquad
(\HCover\cK)\topo = \cK\topo\,.
\]
\end{lemma}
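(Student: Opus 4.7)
The first identity $\HCover{(\cW\topo)} = \HCover\cW$ has already been established as \cref{lem:closurerelation1}, so I would just cite it. All the work goes into the second identity $(\HCover\cW)\topo = \cW\topo$, and by unfolding the definition $(-)\topo = (-\cap\Mono)\ac$, this reduces to showing
\[
\HCover\cW\cap\Mono \quad=\quad \cW\cap\Mono\,.
\]

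The plan is to quote \cref{lem:thcov=iso}, which says precisely that $\cW\cap\Trunc n = \HCover\cW\cap\Trunc n$ for every $n\geq -1$. Taking $n=-1$ gives the desired equality of classes of monomorphisms (since $\Trunc{-1}=\Mono$). Applying the acyclic closure to both sides then yields $(\HCover\cW\cap\Mono)\ac = (\cW\cap\Mono)\ac$, which by definition of the topological part is exactly $(\HCover\cW)\topo = \cW\topo$.

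There is no real obstacle here: the lemma is a purely formal consequence of two facts already in hand, namely (i) \cref{lem:closurerelation1} for the first equality, and (ii) the $n=-1$ case of \cref{lem:thcov=iso} (itself proved by the observation that a monomorphism $m\in\HCover\cW$ is both $(-1)$-truncated and $\cW$-hypercovering, hence becomes \oo connected and $(-1)$-truncated — that is, invertible — in the localization along the small-generation congruence generated by $\{\im{\Delta^k m}\,|\,k\geq 0\}\subseteq\cW$). Once the equality of classes of monomorphisms is noted, passing through $(-)\ac$ is automatic.

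One minor point worth flagging in the writeup: although \cref{lem:thcov=iso} is stated so that it applies to any congruence $\cW$, here we are applying the $(-\cap\Mono)$ part of the identity to $\HCover\cW$, which is a congruence by \cref{lem:fromCongtoTop}~\eqref{lem:fromCongtoTop:3} (so $(\HCover\cW)\topo$ is a legitimate object to speak of). Thus the proof fits in two short lines, and the lemma is best understood as a bookkeeping corollary of \cref{lem:thcov=iso} and \cref{lem:closurerelation1}, packaging the key compatibilities needed to set up the triple adjunction of \cref{thm:tripleadj}.
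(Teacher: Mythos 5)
Your proof is correct and matches the paper's own argument exactly: the first equality is cited from \cref{lem:closurerelation1}, and the second follows from the $n=-1$ case of \cref{lem:thcov=iso} (i.e. $\HCover\cW\cap\Mono=\cW\cap\Mono$) by applying $(-)^{\mathsf{a}}$ to both sides. The extra remark that $\HCover\cW$ is a congruence by \cref{lem:fromCongtoTop} is a harmless bonus.
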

\begin{proof}
The first relation is \cref{lem:closurerelation1}.
To get the second one, recall that $\HCover\cK \cap \Mono = \cK \cap \Mono$ by \cref{lem:thcov=iso} (for $n=-1$), then
\[
(\HCover\cK)\topo
\ =\ 
(\HCover\cK \cap \Mono)\ac
\ =\ 
(\cK \cap \Mono)\ac
\ =\ 
\cK\topo\,.
\qedhere
\]
\end{proof}

\begin{theorem}
\label{thm:adjTCongHCong}
The composite adjunction
\[
\begin{tikzcd}
\TCong(\cE)
\ar[rr,shift left = 1.6, "\HCover{j(-)}", "\simeq"']
\ar[from=rr,shift left = 1.6, "(i(-))\topo"]
&&\HCong(\cE)
\end{tikzcd}
\]    
is an isomorphism of posets.
In particular the poset $\HCong(\cE)$ is small.
\end{theorem}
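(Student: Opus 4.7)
The plan is to verify that the two round-trip composites are identities on $\TCong(\cE)$ and $\HCong(\cE)$ respectively, which will imply that the given adjunction is an isomorphism of posets. The required identities fall out almost immediately from the two closure relations of \cref{lem:closurerelation} combined with the characterizations of being topological (\cref{lem:topofix}) and hypercomplete (\cref{def:hypercongruence}).

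More precisely, I would proceed as follows. Given a topological congruence $\cW \in \TCong(\cE)$, the composite sends it to $\big(\HCover{j(\cW)}\big)\topo = (\HCover\cW)\topo$. By the second formula of \cref{lem:closurerelation}, this equals $\cW\topo$, and because $\cW$ is topological, $\cW\topo = \cW$ by \cref{lem:topofix}. Conversely, given a hypercomplete congruence $\cV \in \HCong(\cE)$, the composite sends it to $\HCover{j((i(\cV))\topo)} = \HCover{(\cV\topo)}$, which by the first formula of \cref{lem:closurerelation} equals $\HCover\cV$; since $\cV$ is hypercomplete, $\HCover\cV = \cV$ by the defining condition in \cref{def:hypercongruence}. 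Thus both round trips are identities, so the adjunction between $\TCong(\cE)$ and $\HCong(\cE)$ is an isomorphism of posets.

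For the final assertion, smallness of $\HCong(\cE)$ follows from the isomorphism together with the smallness of $\TCong(\cE)$ established in \cref{thm:equivTcongGtop} (which itself follows from the smallness of $\GTop(\cE)$, i.e. \cref{Grothtopsmall}). Alternatively, one can invoke \cref{thm:equivPtopHcong} together with \cref{thm:covering-top}, since $\CTop(\cE)$ is small for the same reason.

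There is really no hard step: the entire argument is a formal consequence of two equations that were already established. The only subtlety worth mentioning is that the adjunction makes sense without assuming small generation — the existence of the localizations $\cE \Forcing{\cW\topo}\Iso$ and $\cE\Forcing{\HCover\cW}\Iso$ was handled earlier (\cref{topcongaresmall}, \cref{lem:existhyperloc}), so the posets $\TCong(\cE)$ and $\HCong(\cE)$ are genuinely the fixed points of the coreflection $(-)\topo$ and the reflection $\HCover{(-)}$ on the full poset $\Cong(\cE)$, and any adjunction whose unit and counit are isomorphisms on the respective fixed subposets restricts to an isomorphism between them.
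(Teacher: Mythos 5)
Your proof is correct and follows essentially the same route as the paper's: both round trips are computed via the two identities of \cref{lem:closurerelation} and collapsed using $\cW\topo=\cW$ for topological congruences and $\HCover\cV=\cV$ for hypercomplete ones, with smallness inherited from $\TCong(\cE)$. No gaps.
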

\begin{proof}
Using \cref{lem:closurerelation}, we have
\begin{equation}
\label{adj1}
\HCover{j(i(\cK)\topo)}
\ =\ 
\HCover{(\cK\topo)}
\ =\ 
\HCover\cK
\ =\ 
\cK
\end{equation}
for any hypercomplete congruence $\cK$.
And we have
\begin{equation}
\label{adj2}
i(\HCover{j(\cK)})\topo
\ =\ 
(\HCover\cK)\topo
\ =\ 
\cK\topo
\ =\ 
\cK    
\end{equation}
for any topological congruence $\cK$.
Finally, the smallness assertion is a consequence of that of $\TCong(\cE)$ (see \cref{thm:equivTcongGtop}).
\end{proof}

\begin{remark}
\label{rem:maindiag}
Summarizing our results, we have proven the existence of the following diagram (to be compared with the one of \cref{rem:tripleadjacyclic}).
\begin{equation}
\label{maindiag}
\begin{tikzcd}
\TCong(\cE)
\ar[rr, "j", hook]
\ar[from =rr,"(-)\topo", shift left = 3]
&&
\Cong(\cE)
\ar[from=rr, "i", hook']
\ar[rr,"{\HCover{(-)}}", shift left = 3]
\ar[ddl,"-\cap \Mono" description]
\ar[ddr,"\Cover{(-)}" description]
&&
\HCong(\cE)
\\
\\
&
\GTop(\cE)
\ar[rr,equal,"\text{\cref{thm:covering-top}}"']
\ar[luu,equal,"\text{\cref{thm:equivTcongGtop}}","{\cG\mapsto\cG\ac}"']
&&\CTop(\cE)
\ar[ruu,equal,"\text{\cref{thm:equivPtopHcong}}"',"{\cC\mapsto\decinfty{\cC}}"]
\end{tikzcd}    
\end{equation}

Notice that we have two different ways to prove the bijection between $\TCong(\cE)$ and $\HCong(\cE)$: the upper path of \cref{thm:adjTCongHCong}, or the lower path of composing the bijections of \cref{thm:equivTcongGtop,thm:covering-top,thm:equivPtopHcong}.
Let us see that the two paths produce the same bijection $\TCong(\cE)\simeq\HCong(\cE)$.
From the left to the right, the lower path gives
\[
\cK
\quad\mapsto\quad
\cK\cap\Mono
\quad\mapsto\quad
\Cover{(\cK\cap \Mono)}
\quad\mapsto\quad
\decinfty{\Cover{(\cK\cap \Mono)}}
\]
and we need to check that this is $\cK\mapsto\HCover \cK$.
But this is \cref{lem:equivPtopHcong}~\eqref{lem:equivPtopHcong:2}.

And from the right to the left
\[
\cK
\quad\mapsto\quad
\Cover\cK
\quad\mapsto\quad
\Cover\cK\cap \Mono
\quad\mapsto\quad
(\Cover\cK\cap \Mono)\ac
\]
which we need to see is $\cK\mapsto \cK\topo= (\cK\cap \Mono)\ac$.
This follows from $\Cover\cK\cap \Mono = \cK\cap \Mono$:
by \cref{lem:fromCongtoTop}~\eqref{lem:fromCongtoTop:2}, we have $\Cover\cK = \Cover{(\cK\cap \Mono)}$,
and by \cref{thm:covering-top}, we have $\Cover{(\cK\cap \Mono)}\cap \Mono = \cK\cap \Mono$.

\end{remark}

We now state our interpretation of the equivalence between topological and hypercomplete congruences.
This result is to be compared with \cref{thm:tripleadj-acyclic}, see the proof.

\begin{theorem}
\label{thm:tripleadj}
The morphism of posets $t:=\im- = \Mono\cap -:\Cong(\cE) \to \GTop(\cE)$ admits 
\begin{enumerate}
\item a fully faithful left adjoint $j'$ whose image is the subposet $\TCong(\cE)$ of topological congruences, and
\item a fully faithful right adjoint $i'$ whose image is the subposet $\HCong(\cE)$ of hypercomplete congruences.
\end{enumerate}
\[
\begin{tikzcd}
\Cong(\cE) \ar[rr,"t" description] \ar[from=rr, shift left = 3,"i'",hook'] \ar[from=rr, shift right = 3,"j'"', hook']
&& \GTop(\cE)
\end{tikzcd}
\]
\end{theorem}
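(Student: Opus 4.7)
The plan is to exhibit both adjoints explicitly and verify all the claimed properties using the factorizations of $t$ through $\TCong(\cE)$ and $\HCong(\cE)$ already established. Define two functors $R, S:\GTop(\cE)\to\Cong(\cE)$ by
\[
R(\cG)\ :=\ \cG\ac\qquad\text{and}\qquad S(\cG)\ :=\ \HCover{\cG\ac}.
\]
By definition of topological congruence the image of $R$ lies in $\TCong(\cE)$; by \cref{lem:hypercompletion} the image of $S$ lies in $\HCong(\cE)$. Both are sections of $t$: using \cref{cor:GacMono=G}, $t(R(\cG))=\cG\ac\cap\Mono=\cG$; using \cref{lem:thcov=iso} (at $n=-1$) together with \cref{cor:GacMono=G}, $t(S(\cG))=\HCover{\cG\ac}\cap\Mono=\cG\ac\cap\Mono=\cG$. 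Consequently both are fully faithful.

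For the universal properties I verify two equivalences. First, $R(\cG)\subseteq\cW\iff\cG\subseteq t(\cW)$: the forward direction follows from $\cG\subseteq\cG\ac$ and $\cG\subseteq\Mono$, while the backward direction uses that $\cW$ is acyclic, so $\cG\subseteq\cW\cap\Mono\subseteq\cW$ forces $\cG\ac\subseteq\cW$. This identifies $R$ as \emph{left} adjoint to $t$. Second, $t(\cW)\subseteq\cG\iff\cW\subseteq S(\cG)$: for the forward direction, assuming $\cW\cap\Mono\subseteq\cG$, any $f\in\cW$ has iterated diagonals $\Delta^n f\in\cW$ (congruence), so $\im\Delta^n f\in\cW\cap\Mono\subseteq\cG\subseteq\cG\ac$, whence $f\in\HCover{\cG\ac}=S(\cG)$; for the backward direction, $\cW\subseteq\HCover{\cG\ac}$ yields $\cW\cap\Mono\subseteq\HCover{\cG\ac}\cap\Mono=\cG$ as computed above. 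This identifies $S$ as \emph{right} adjoint to $t$. (Conceptually, $R$ arises from composing the adjunction $\TCong\hookrightarrow\Cong \dashv (-)\topo$ of \cref{topcore} with the isomorphism $\TCong\simeq\GTop$ of \cref{thm:equivTcongGtop}, while $S$ arises from composing the adjunction $\HCover{(-)}\dashv\HCong\hookrightarrow\Cong$ of \cref{thm:hypercompletion} with the isomorphism $\HCong\simeq\GTop$ of \cref{generalTV}.)

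To match the images with the stated subposets: given $\cW\in\TCong(\cE)$, $\cW=(\cW\cap\Mono)\ac=R(t(\cW))$, so $\cW$ lies in the image of $R$; combined with the forward inclusion $\Im(R)\subseteq\TCong(\cE)$ this gives equality. Given $\cW\in\HCong(\cE)$, $\cW=\HCover\cW=\HCover{(\cW\cap\Mono)\ac}=S(t(\cW))$ using \cref{lem:closurerelation} (namely $\HCover{\cW\topo}=\HCover\cW$), so $\cW$ lies in the image of $S$; with the forward inclusion $\Im(S)\subseteq\HCong(\cE)$ this gives equality.

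There is no serious obstacle: the proof is a direct assembly of the earlier-established adjunctions \cref{topcore,thm:hypercompletion} and isomorphisms \cref{thm:equivTcongGtop,generalTV,thm:adjTCongHCong}, with the two key saturation identities $\cG\ac\cap\Mono=\cG$ (\cref{cor:GacMono=G}) and $\HCover\cW\cap\Mono=\cW\cap\Mono$ (\cref{lem:thcov=iso}) carrying all the computational content. The only point requiring care is correctly matching each adjoint to its universal property: the functor $\cG\mapsto\cG\ac$ with image $\TCong(\cE)$ is characterized by $R(\cG)\subseteq\cW\iff\cG\subseteq t(\cW)$, and $\cG\mapsto\HCover{\cG\ac}$ with image $\HCong(\cE)$ is characterized by $t(\cW)\subseteq\cG\iff\cW\subseteq S(\cG)$; these identify them respectively as the bottom and top arrows in the diagram of the theorem.
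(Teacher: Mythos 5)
Your proof is correct and follows essentially the same route as the paper's: the paper likewise defines the two adjoints by $\cG\mapsto\cG\ac$ and $\cG\mapsto\HCover{\cG\ac}$ and verifies the two adjunctions by reducing everything to the saturation identities $\cG\ac\cap\Mono=\cG$ (\cref{cor:GacMono=G}) and $\HCover\cW\cap\Mono=\cW\cap\Mono$ (\cref{lem:thcov=iso}). One point worth flagging: what you prove --- and what the paper's own proof also establishes --- is that the \emph{left} adjoint is $\cG\mapsto\cG\ac$ with image $\TCong(\cE)$ and the \emph{right} adjoint is $\cG\mapsto\HCover{\cG\ac}$ with image $\HCong(\cE)$, whereas the enumerated statement of the theorem attaches the two images to the opposite adjoints. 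Your assignment is the mathematically correct one (it is forced by $jt(\cW)=\cW\topo\subseteq\cW\subseteq\HCover\cW=it(\cW)$), so the discrepancy is a slip in the statement rather than in your argument; still, it would be better to say this explicitly than to transpose the labels silently.
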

\begin{proof}
The triple adjunction of the statement is defined as the composition of the triple adjunctions of \cref{thm:adjCongAcyclic,thm:tripleadj-acyclic}
\[
\begin{tikzcd}
\Cong(\cE)
\ar[rr, hook]
\ar[from=rr, shift right = 3,"(-)\cong"']
\ar[from=rr, shift left = 3,"\decinfty-"]
&& \Acyclic(\cE)
\ar[rr, "\im-" description]
\ar[from=rr, shift right = 3,"(-)\ac"', hook']
\ar[from=rr, shift left = 3,"\Cover{(-)}", hook']
&&\GTop(\cE)
\end{tikzcd}\,.
\]
The composition of the middle morphisms is clearly $t$.
The composition of the left adjoint is $\cG\mapsto j'(\cG) := \cG\cong = \cG\ac$, and its image is all topological congruences by \cref{thm:equivTcongGtop}.
For an extended Grothendieck topology, we have $\cG\ac\cap \Mono = \cG$ by \cref{cor:GacMono=G} and this proves $i'$ is fully faithful.
The composition of the right adjoints is $\cG\mapsto i'(\cG):=\decinfty{\Cover\cG} = \HCover{(\cG\ac)}$ by \cref{thm:equivPtopHcong} and its image is all hypercomplete congruence by the same result.
The morphism $j'$ must be fully faithful since $i'$ is. 
This can also be seen directly using that $\HCover{(\cG\ac)}\cap \Mono = \cG$ by 
\cref{lem:thcov=iso} and \cref{cor:GacMono=G}.
\end{proof}

\begin{remark}
\label{rem:tripleadj:1}
The triple adjunction of \cref{thm:tripleadj} shows that an extended Grothendieck topology $\cG$ is associated to a whole spectrum of congruences, which is the fiber of the map $\Cong(\cE)\to \GTop(\cE)$ at $\cG$.
This fiber has a minimal element which is given by the topological congruence $\cG\ac$ and a maximal element given by the hypercomplete congruence $(\cG\ac)\hcov$.

\end{remark}

\begin{remark}
\label{rem:tripleadj:2}
Using the equivalences $\TCong(\cE) = \GTop(\cE) = \CTop(\cE) = \HCong(\cE)$
and Diagram~\eqref{maindiag}, the triple adjunction of \cref{thm:tripleadj} can be presented in other ways, more suited for some applications.
\[
\begin{tikzcd}
\Cong(\cE)
\ar[rr,"{(-)\topo}" description]
\ar[from=rr, shift left = 4,"{\HCover{(-)}}",hook']
\ar[from=rr, shift right = 4,"can."', hook']
&& \TCong(\cE)
\end{tikzcd}
\qquad
\begin{tikzcd}
\Cong(\cE)
\ar[rr,"{\HCover{(-)}}" description]
\ar[from=rr, shift left = 4,"can.",hook']
\ar[from=rr, shift right = 4,"{(-)\topo}"', hook']
&& \HCong(\cE)
\end{tikzcd}
\]
\[
\begin{tikzcd}
\Cong(\cE) \ar[rr,"t" description] \ar[from=rr, shift left = 3,"\HCover{((-)\ac)}",hook'] \ar[from=rr, shift right = 3,"(-)\ac"', hook']
&& \GTop(\cE)
\end{tikzcd}
\qquad
\begin{tikzcd}
\Cong(\cE)
\ar[rrr,"{\Cover{(-)}}" description]
\ar[from=rrr, shift left = 4,"{\decinfty-}",hook']
\ar[from=rrr, shift right = 4,"{(-\cap\Mono)\ac}"', hook']
&&& \CTop(\cE)
\end{tikzcd}
\]
\end{remark}

\subsection{Sheaves and hypersheaves}
\label{sec:GTsheaf}

In this last section, we define a notion of sheaf and hypersheaf for an extended Grothendieck topology (\cref{def:Gsheaf,def:GHsheaf}).
We prove that the category of sheaves is the left-exact localization generated by the topology (\cref{prop:Gsheaf}).
and that the category of hypersheaves is the hypercompletion of this left-exact localization (\cref{prop:GHsheaf}).

\bigskip

Let $\cG$ be an extended Grothendieck topology on a topos $\cE$.
We have associated to $\cG$ several objects:
\begin{enumerate}[label=--]
\item the covering topology $\Cover\cG$ (\cref{def:cover}).
\item the topological congruence $\cG\ac$ (\cref{ex:topcong:1});
\item the hypercomplete congruence $\HCover\cG$ (\cref{def:hypercovering}~\eqref{def:hypercovering:hypercov}).
\end{enumerate}
We recall some of the relations between these objects:
\[
\cG
\quad\subseteq\quad
\cG\ac
\quad\subseteq\quad
\HCover\cG
\quad\subseteq\quad
\Cover\cG\,;
\]
\begin{enumerate}[label=--]
\item $\im{\cG\ac} = \cG\ac \cap \Mono = \cG$ (\cref{cor:GacMono=G});
\item $\im{\Cover\cG} = \Cover\cG \cap \Mono = \cG$ (\cref{lem:covGmono=G});
\item $\im{\HCover\cG} = \HCover\cG \cap \Mono = \cG$ (\cref{lem:thcov=iso});
\item $\decinfty{\Cover\cG} = \HCover\cG$ (\cref{lem:equivPtopHcong});
\item $(\HCover\cG)\topo = \cG\ac$ (\cref{lem:closurerelation}).
\end{enumerate}

\medskip

\begin{definition}[Sheaf for a topology]
\label{def:Gsheaf}

Let $\cE$ be a topos and $\cG$ an extended Grothendieck topology on $\cE$.
We say that an object $X\in \cE$ is a {\it $\cG$-sheaf} if it is local for the class $\cG$.
The category of sheaves is defined as
\[
\Sh\cE\cG
\ :=\ 
\Loc \cE\cG\,.
\]
A sheaf for a Lawvere--Tierney topology or a covering topology can be defined as a sheaf for the associated extended Grothendieck topology (but we shall not need these notions).
\end{definition}

Every object of $\cE$ is a sheaf for the minimal extended Grothendieck topology $\cG=\Iso$,
and only the terminal object is a sheaf for the maximal extended Grothendieck topology $\cG=\Mono$.

\begin{remark}[Sheaf = $\Sigma$-sheaf]
\label{rem:Gsh=Sigmash}
The notion of sheaf for an extended Grothendieck topology $\cG$ is compatible with the notion of $\Sigma$-sheaf of \cref{def:ABFJmain}, since for an extended Grothendieck topology, we have $(\cG\diag)\bc = \cG$.
\end{remark}

\begin{proposition}[Universal property of sheaves]
\label{prop:Gsheaf}
The subcategory $\Sh\cE\cG \subseteq \cE$ is reflective, and the reflector $\rho:\cE\to \Sh\cE\cG$ is the topological localization with the following universal properties:
\[
\Sh\cE\cG
\ =\ 
\cE\Forcing \cG\Iso
\ =\ 
\cE\Forcing {\cG\ac}\Iso
\ =\ 
\cE\Forcing {\Cover\cG}\Surj
\ =\ 
\cE\Forcing {\HCover\cG}{\Conn\infty}\,.
\]
In particular, if $\cG=\cC\cap \Mono$ is the topology associated to a covering topology $\cC$, we have
$\Sh\cE{\cC\cap \Mono} = \cE\Forcing\cC\Surj$.
\end{proposition}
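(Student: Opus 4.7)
The plan is to reduce the entire proposition to the single equality $\Sh\cE\cG = \cE\Forcing\cG\Iso$, from which reflectivity, the topological-localization property, and the remaining forcing descriptions all follow mechanically.

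First I would invoke \cref{Gtoparesmall} to write $\cG = \{v\}\bc$ for a univalent monomorphism $v:T\to V$. Since every element of $\cG$ is a monomorphism, its diagonal is invertible, so $\cG\diag = \cG$; and since $\cG$ is closed under base change by definition, $\cG\bc = \cG$. Therefore $(\{v\}\diag)\bc = \cG$, and \cref{def:ABFJmain} gives $\Sh\cE{\{v\}} = \Loc\cE\cG = \Sh\cE\cG$, matching the new definition with the earlier one. Because $\{v\}$ is a singleton, \cref{thm:ABFJmain} then yields $\Sh\cE\cG = \Loc\cE{\{v\}\cong} = \cE\Forcing{\{v\}}\Iso$, exhibiting $\Sh\cE\cG$ as a reflective subcategory whose reflector is a left-exact localization (\cref{Luriethmloc}).

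Next I would observe that $\{v\}\cong = \cG\ac$: the inclusion $\subseteq$ is clear since $v\in\cG\subseteq\cG\ac$, and the reverse holds because $\cG = \{v\}\bc\subseteq\{v\}\cong$ forces $\cG\ac\subseteq\{v\}\cong$, as $\{v\}\cong$ is acyclic. Combined with \cref{thm:forcing}~\eqref{thm:forcing:1}, this gives the first three equalities $\Sh\cE\cG = \cE\Forcing{\{v\}}\Iso = \cE\Forcing{\cG\ac}\Iso = \cE\Forcing\cG\Iso$; and since $\cG\ac$ is a topological congruence by definition (\cref{deftopcong}), the reflector is a topological localization.

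The remaining two forcing descriptions are then deduced by the standard image-factorization manipulations. For the covering topology, $\cE\Forcing{\Cover\cG}\Surj = \cE\Forcing{\im{\Cover\cG}}\Iso = \cE\Forcing\cG\Iso$ by \cref{thm:forcing}~\eqref{thm:forcing:2} and the identity $\im{\Cover\cG} = \cG$ (\cref{lem:covGmono=G}). For the hypercovering version, I would use that $\HCover\cG$ is a congruence (\cref{lem:fromCongtoTop}~\eqref{lem:fromCongtoTop:3}), so $(\HCover\cG)\diag = \HCover\cG$ by \cref{lem:caraccong}; then \cref{thm:forcing}~\eqref{thm:forcing:4} collapses $\cE\Forcing{\HCover\cG}{\Conn\infty}$ to $\cE\Forcing{\im{\HCover\cG}}\Iso$, and $\im{\HCover\cG} = \cG$ (the case $n=-1$ of \cref{lem:thcov=iso}) completes the chain. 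The final ``in particular'' assertion is immediate from the bijection of \cref{thm:covering-top}, which gives $\Cover{(\cC\cap\Mono)} = \cC$. The only real subtlety is the initial matching of the two notions of sheaf, which is precisely what the identity $(\cG\diag)\bc = \cG$ achieves; everything else is a straightforward application of the forcing calculus of \cref{thm:forcing}.
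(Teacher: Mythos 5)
Your proposal is correct, and its overall architecture matches the paper's: identify $\Sh\cE\cG$ with the left-exact localization at the topological congruence $\cG\ac$, then run the forcing calculus of \cref{thm:forcing} to obtain the remaining descriptions, and finish the ``in particular'' clause with \cref{thm:covering-top}. The one place where you genuinely diverge is the pivotal identification $\Loc\cE\cG=\Loc\cE{\cG\ac}$: the paper gets this from \cref{cor:topcongexists} together with the saturated-class description of $\Sigma\ac$ in [ABFJ:HS, Corollary 3.2.19], whereas you reduce to a single univalent generator $v$ via \cref{Gtoparesmall}, observe that $(\{v\}\diag)\bc=\cG$ (the higher diagonals of a monomorphism being invertible), and then invoke \cref{thm:ABFJmain} for the set $\{v\}$. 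This buys you a proof that stays entirely inside the present paper's toolkit rather than appealing to the external corollary, at the cost of routing through the classification of local classes by univalent monomorphisms. Likewise, for the $\Conn\infty$ equality you apply \cref{thm:forcing}~\eqref{thm:forcing:4} together with $\HCover\cG\cap\Mono=\cG$ (\cref{lem:thcov=iso}) directly, where the paper quotes \cref{thm:topopart} with $(\HCover\cG)\topo=\cG\ac$; these are essentially the same computation. The only step you leave implicit is that the inclusion $\{v\}\cong\subseteq\cG\ac$ requires $\cG\ac$ to be a congruence, which holds by \cref{sigmaac=sigmacong} because $\cG$ consists of monomorphisms; worth a word, but not a gap.
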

\begin{proof}
Let $\cG\ac$ be the topological congruence associated to $\cG$.
By \cref{cor:topcongexists}, we have $\cE\Forcing \cG\Iso=\Loc \cE{\cG\ac}$.
We need to prove that $\Loc \cE{\cG\ac} = \Loc \cE\cG$.
This is consequence of the description of $\Sigma\ac$ in terms of saturated classes of \cite[Corollary 3.2.19]{ABFJ:HS}.
This proves that $\cE\Forcing \cG\Iso=\Loc \cE\cG = \Sh\cE\cG$.
The equivalences of forcing conditions 
$
\cE\Forcing \cG\Iso
=
\cE\Forcing {\cG\ac}\Iso
=
\cE\Forcing {\Cover\cG}\Surj
$
are a consequence of \cref{thm:forcing} using $\im{\Cover{(\cG\ac)}}= \cG$.
The equivalence
$
\cE\Forcing {\cG\ac}\Iso
=
\cE\Forcing {\HCover\cG}{\Conn\infty}
$
is \cref{thm:topopart} using $(\HCover\cG)\topo = \cG\ac$.
Finally, the last assertion is a consequence of \cref{thm:covering-top}.
\end{proof}

\begin{remark}
It is also possible to prove the equivalence of forcing conditions
$\Forcing {\HCover\cG}{\Conn\infty} = \Forcing {\HCover\cG}\Surj$
using that the iterated diagonals of a hypercovering are hypercoverings.
\end{remark}

\medskip

The following notion of hypersheaf generalizes the idea of hyperdescent of \cite{TV:hag1}.

\begin{definition}[Hypersheaf for a topology]
\label{def:GHsheaf}
Let $\cE$ be a topos and $\cG$ an extended Grothendieck topology on $\cE$.
We say that an object $X\in \cE$ is a {\it $\cG$-hypersheaf} if it is local for all 
$\cG$-hypercoverings.
The category of hypersheaves is defined as
\[
\HSh\cE\cG
\ :=\ 
\Loc \cE{\HCover\cG}\,.
\]
\end{definition}

Any hypersheaf is a sheaf since $\cG\subseteq \HCover\cG$.

\smallskip

\begin{proposition}[Universal property of hypersheaves]
\label{prop:GHsheaf}
The subcategory $\HSh\cE\cG \subseteq \cE$ is reflective, and the reflector $\rho:\cE\to \HSh\cE\cG$ is the hypercomplete localization with the following universal property:
\[
\HSh\cE\cG
\ =\ 
\cE\Forcing {\HCover\cG}\Iso \,.
\]
Moreover, $\HSh\cE\cG$ is the hypercompletion of $\Sh\cE\cG$.
\end{proposition}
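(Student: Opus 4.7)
The plan is to assemble the statement directly from the machinery already in place, without needing new constructions. The key observation is that $\HCover\cG$, being defined as $\HCover{(\cG\ac)}$, is a hypercomplete congruence: it is a congruence by \cref{lem:fromCongtoTop}~\eqref{lem:fromCongtoTop:3}, and it equals its own hypercompletion by \cref{lem:hypercompletion}. Therefore the localization $\cE\Forcing{\HCover\cG}\Iso$ exists by \cref{lem:existhyperloc} (even though $\HCover\cG$ need not obviously be of small generation).

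Next I would identify $\cE\Forcing{\HCover\cG}\Iso$ with $\HSh\cE\cG$. By \cref{Luriethm1} (or equivalently \cref{thm:ABFJmain}), a localization at a congruence $\cW$ is given by reflecting onto $\Loc\cE\cW$. Since $\HCover\cG$ is a congruence, this yields
\[
\cE\Forcing{\HCover\cG}\Iso \ =\ \Loc\cE{\HCover\cG}\ =\ \HSh\cE\cG,
\]
where the second equality is \cref{def:GHsheaf}. This already shows $\HSh\cE\cG$ is reflective and that the reflector is a hypercomplete localization.

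For the final assertion, I would combine \cref{prop:Gsheaf} with \cref{prop:hypercompletion}. By \cref{prop:Gsheaf}, $\Sh\cE\cG = \cE\Forcing{\cG\ac}\Iso$, and $\cG\ac$ is topological, hence of small generation by \cref{topcongaresmall}. So \cref{prop:hypercompletion} applies to $\cW = \cG\ac$ and gives that the hypercompletion of $\Sh\cE\cG$ is $\cE\Forcing{\HCover{(\cG\ac)}}\Iso$. By the very definition $\HCover\cG = \HCover{(\cG\ac)}$, this is precisely $\cE\Forcing{\HCover\cG}\Iso = \HSh\cE\cG$, completing the proof.

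There is no real obstacle: the argument is a bookkeeping exercise in the equivalences and adjunctions already established in \cref{sec:hypercoverings}. The only mildly delicate point is that we cannot directly invoke \cref{prop:hypercompletion} with $\cW = \HCover\cG$ (because we have not shown this to be of small generation in general); the trick is to apply it instead to the topological congruence $\cG\ac$ and use the identity $\HCover{(\cG\ac)} = \HCover\cG$ to conclude.
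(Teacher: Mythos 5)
Your proof is correct and follows essentially the same route as the paper: existence of the localization via \cref{lem:existhyperloc}, identification with $\Loc\cE{\HCover\cG}$ via \cref{Luriethm1}, and the hypercompletion statement via \cref{prop:hypercompletion} applied to the small-generation congruence $\cG\ac$ together with $\HCover{(\cG\ac)}=\HCover\cG$. The only point to make explicit is that invoking \cref{Luriethm1} (or \cref{thm:ABFJmain}) for $\HCover\cG$ also requires small generation, which — as the paper notes — follows from \cref{Luriethm2} once the localization is known to exist.
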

\begin{proof}
Localizations with respect to hypercomplete congruences exist by \cref{lem:existhyperloc}.
Therefore any hypercomplete congruence is of small generation by \cref{Luriethm2}.
Then the result is \cref{Luriethm1}.
The last statement is \cref{prop:hypercompletion}.
\end{proof}


\providecommand{\bysame}{\leavevmode\hbox to3em{\hrulefill}\thinspace}
\providecommand{\MR}{\relax\ifhmode\unskip\space\fi MR }

\end{document}